\newcommand\NN{{\mathbb N}}
\newcommand\RR{{{\mathbb R}}}
\newcommand\CC{{\mathbb C}}
\def\SS {\mathbb{S}}
\newcommand\cA{{\mathcal A}}
\newcommand\cB{{\mathcal B}}
\newcommand\cD{{\mathcal D}}
\newcommand\cE{{\mathcal E}}
\newcommand\cH{{\mathcal H}}
\newcommand\cF{{\mathcal F}}
\newcommand\cL{{\mathcal L}}
\newcommand\cS{{\mathcal S}}
\newcommand\cW{{\mathcal W}}
\newcommand\cX{{\mathcal X}}
\newcommand\pP{{\bf P}}
\newcommand\iI{{\bf I}}
\newcommand\ra{{\rangle}}
\newcommand\la{{\langle}}
\newtheorem{theo}{Theorem}[section]
\newtheorem{lemm}[theo]{Lemma}
\newtheorem{defi}[theo]{Definition}
\newtheorem{coro}[theo]{Corollary}
\newtheorem{prop}[theo]{Proposition}
\newtheorem{rema}[theo]{Remark}
\begin{document}

\title[Qualitative properties of solutions]
{ Boltzmann equation without angular cutoff in the whole space: \\
III, Qualitative properties of solutions}

\author{R. Alexandre}
\address{R. Alexandre,
Department of Mathematics, Shanghai Jiao Tong University
\newline\indent
Shanghai, 200240, P. R. China
\newline\indent
and \newline\indent Irenav, Arts et Metiers Paris Tech, Ecole
Navale,
\newline\indent
Lanveoc Poulmic, Brest 29290 France }
\email{radjesvarane.alexandre@ecole-navale.fr}
\author{Y. Morimoto }
\address{Y. Morimoto, Graduate School of Human and Environmental Studies,
Kyoto University
\newline\indent
Kyoto, 606-8501, Japan} \email{morimoto@math.h.kyoto-u.ac.jp}
\author{S. Ukai}
\address{S. Ukai, 17-26 Iwasaki-cho, Hodogaya-ku, Yokohama 240-0015, Japan}
\email{ukai@kurims.kyoto-u.ac.jp}
\author{C.-J. Xu}
\address{C.-J. Xu, School of Mathematics and Statistics , Wuhan University 430072,
Wuhan, P. R. China
\newline\indent
and \newline\indent
Universit\'e de Rouen, UMR 6085-CNRS,
Math\'ematiques
\newline\indent
Avenue de l'Universit\'e,\,\, BP.12, 76801 Saint
Etienne du Rouvray, France } \email{Chao-Jiang.Xu@univ-rouen.fr}
\author{T. Yang}
\address{T. Yang, Department of mathematics, City University of Hong Kong,
Hong Kong, P. R. China} \email{matyang@cityu.edu.hk}

\subjclass[2000]{35A02, 35B40, 35B65,35H10, 35S05, 35Q20, 82B40}

\date{30 October 2010}

\keywords{Boltzmann equation, non-cutoff cross sections,
hypoellipticity, uniqueness, non-negativity, convergence rate.}

\begin{abstract}
This is a continuation of our series of works for the inhomogeneous
Boltzmann equation. We study qualitative properties of
classical solutions, precisely, the full regularization in all variables, uniqueness, non-negativity
and convergence rate to the equilibrium. Together with the results of Parts I and II about the well posedness of the Cauchy problem around Maxwellian, we conclude this series with a satisfactory mathematical theory for Boltzmann equation without angular cutoff.

\end{abstract}

\maketitle

\tableofcontents

\section{Introduction}\label{sect-IV-1}

Following our series of works
\cite{amuxy4-2,amuxy4-3}, extending results from
\cite{amuxy3b,amuxy3}, this Part III is concerned with qualitative properties associated with solutions to the Cauchy problem for the
inhomogeneous Boltzmann equation
\begin{equation}\label{IV-1.1}
f_t+v\cdot\nabla_x f=Q(f, f)\,,\,\,\,\,\,\,f|_{t=0}=f_0.
\end{equation}
We refer the reader for the complete framework, definitions and
bibliography, to our previous papers \cite{amuxy4-2,amuxy4-3}. General details about Boltzmann equation for non cutoff cross sections can be found in \cite{alex-review,cip,villani2}. Let us just recall herein that the
Boltzmann bilinear collision operator is given by
\[
Q(g, f)=\int_{\RR^3}\int_{\mathbb S^{2}}B\left({v-v_*},\sigma
\right)
 \left\{g'_* f'-g_*f\right\}d\sigma dv_*\,,
\]
where $f'_*=f(t,x,v'_*), f'=f(t,x,v'), f_*=f(t,x,v_*), f=f(t,x,v)$,
and for $\sigma\in \mathbb S^{2}$, the pre- and post-collisional velocities are
linked by the relations
$$
v'=\frac{v+v_*}{2}+\frac{|v-v_*|}{2}\sigma,\,\,\, v'_*
=\frac{v+v_*}{2}-\frac{|v-v_*|}{2}\sigma \ .
$$
The non-negative cross section
 $B(z, \sigma)$ depends only on $|z|$ and the scalar product
$\frac{z}{|z|}\,\cdot\, \sigma$. As in the previous parts, we assume
that it takes the form
\begin{equation*}
B(|v-v_*|, \cos \theta)=\Phi (|v-v_*|) b(\cos \theta),\,\,\,\,\,
\cos \theta=\frac{v-v_*}{|v-v_*|} \, \cdot\,\sigma\, , \,\,\,
0\leq\theta\leq\frac{\pi}{2},
\end{equation*}
where
\begin{equation}\label{4-2-1.2}
\Phi(|z|)=\Phi_\gamma(|z|)= |z|^{\gamma},\quad
 b(\cos \theta)\approx \theta^{-2-2s} \ \
 \mbox{when} \ \ \theta\rightarrow 0+,
\end{equation}
for some $\gamma>-3$ and $0<s<1$.

\smallskip

In the present work, we are concerned with qualitative properties of classical solutions to the Boltzmann equation, under the previous assumptions. By qualitative properties, we mean specifically regularization properties, positivity, uniqueness of solutions and asymptotic trend to global equilibrium.

Let us recall that in a close to equilibrium framework,  the existence of such classical solutions was proven in our series of papers \cite{amuxy4-2,amuxy4-3} and using a different method, by Gressmann and Strain \cite{gr-st-0,gr-st,gr-st-1}. We refer also to \cite{amuxy6} for bounded local solutions.

The first qualitative property which will be addressed here is concerned with regularization properties of classical solutions, that is, the immediate smoothing effect on the solution. For the homogeneous Boltzmann equation,  after the works of Desvillettes \cite{D95,desv-ajout1,desv-ajout2}, this issue has now a long history \cite{al-saf,al-saf-1,chen-li-xu1,desv-wen1,HMUY,MU,MUXY-DCDS,ukai}. All these works deal with smoothed type kinetic part for the cross sections, which therefore rules out the more physical assumption above, that is, including the singular behavior for relative velocity near $0$. We refer the reader to our forthcoming work \cite{amuxy5} for this issue.

Regularization effect for the inhomogeneous Boltzmann equation was studied in our previous works \cite{amuxy-nonlinear-3,amuxy3}, but for Maxwellian type molecules or smoothed kinetic parts for the cross section. Nevertheless, we have introduced many technical tools, some of which are helpful for tackling the singular assumption above. In particular, by improving the pseudo-differential calculus and functional estimates from \cite{amuxy-nonlinear-3,amuxy3}, we shall be able to prove our regularity result.

We shall use the following standard weighted Sobolev space defined, for $k, \ell \in \RR$, as
$$
H^k_\ell = H^k_\ell (\RR^3_v ) = \lbrace f\in \cS ' (\RR^3_v ) ; \ W_\ell f \in H^k (\RR^3_v ) \rbrace
$$
and for any open set  $\Omega \subset \RR^3_x$
$$
H^k_\ell (\Omega \times \RR^3_{v} ) = \lbrace f\in \cD ' (\Omega \times \RR^3_{ v} ) ; \ W_\ell f
\in H^k (\Omega \times \RR^3_{ v} ) \rbrace
$$
where $W_\ell (v) = \langle v\rangle^\ell = (1+ | v|^2 )^{\ell/2}$ is always the weight for $v$ variables. Herein, $(\cdot, \cdot)_{L^2} = (\cdot,\cdot)_{L^2 (\RR^3_v)}$ denotes the usual scalar product in $L^2=L^2 (\RR^3)$ for $v$ variables. Recall that $L^2_\ell=H^0_\ell$.

\begin{theo}\label{theo-IV-1.1}
Assume \eqref{4-2-1.2} holds true, with  $0<s<1$, $\gamma>\max\{-3,
-3/2-2s\}$, $0<T\leq+\infty$. Let $\Omega$ be an open domain of $\RR^3_x$. Let $f\in L^\infty([0, T]; H^5_\ell(\Omega\times\RR^3))$, for any $\ell\in\NN$, be a solution of Cauchy problem
\eqref{IV-1.1}. Moreover, assume that $f$ satisfies the following local coercivity estimate : for any compact $K\subset\Omega$ and $0<T_1<T_2<T$, there exist two constants $\eta_0>0, C_0>0$ such that
\begin{equation}\label{coercivity-nonlinear-a}
-(Q(f,\, h),\,\,h)_{L^2(\RR^7)}\geq \eta_0\|h\|^2_{H^s_{\gamma/2}(\RR^7)}-C_0
\|h\|^2_{L^2_{\gamma/2+s}(\RR^7)}\,
\end{equation}
for any $h\in C^1_0(]T_1, T_2[; C^\infty_0(K; H^{+\infty}_\ell(\RR^3)))$. Then we have
$$
f\in C^\infty(]0, T[\times\Omega; \cS(\RR^3))\, .
$$
\end{theo}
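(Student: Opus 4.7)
The plan is a bootstrap argument that first converts the coercivity estimate into fractional regularity in $v$, then transfers part of this gain to the kinetic variables $(t,x)$ via the hypoelliptic structure of the free transport, and iterates. I would work on localized versions $\varphi f$ with $\varphi\in C^\infty_0((T_1,T_2)\times\mathrm{int}(K))$, which satisfies the same equation up to the source $((\partial_t+v\cdot\nabla_x)\varphi)\,f$, lying in the same weighted class as $f$ by assumption. This reduces matters to a global problem on $\RR^7$ with compactly supported data.

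For the first gain in $v$, I would apply a mollifying pseudo-differential multiplier $M_\delta \langle D_v\rangle^\alpha$ (with $\alpha>0$ small, $\delta\to 0$) to the localized equation, and take the $L^2(\RR^7)$ inner product with $M_\delta \langle D_v\rangle^\alpha(\varphi f)$. The free transport term leaves only a lower order commutator with $v\cdot\nabla_x$, while the coercivity assumption \eqref{coercivity-nonlinear-a}, applied with $h=M_\delta\langle D_v\rangle^\alpha(\varphi f)$, yields a positive $\eta_0\|h\|^2_{H^s_{\gamma/2}}$ on the right. The main task is to absorb the commutator $[\langle D_v\rangle^\alpha, Q(f,\cdot)](\varphi f)$ and the trilinear upper bound contributions into this good term, paying with the input regularity $f\in H^5_\ell$ and with the trilinear/commutator estimates on $Q$ established in Parts I--II. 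Letting $\delta\to 0$ produces $\varphi f\in L^2_{t,x}H^{s+\alpha}_v$ with weights, for any $\ell$.

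The next step exploits the Kolmogorov-type structure of the equation. Having $(\partial_t+v\cdot\nabla_x)(\varphi f)$ controlled in a weighted $L^2$-type space together with a gain of order $s+\alpha$ in $v$, the pseudo-differential hypoelliptic/averaging lemma developed for the non-cutoff setting in \cite{amuxy-nonlinear-3,amuxy3} yields a simultaneous gain of a positive fraction $\kappa(s,\alpha)>0$ of derivatives in $(t,x)$. Combined with the $v$-gain, this upgrades $\varphi f$ to $H^{5+\epsilon}_\ell(\RR^7)$ for some $\epsilon>0$ and any $\ell\in\NN$. A bootstrap then repeats: with $f$ one notch smoother in all variables, both the commutator errors and the nonlinear upper bounds for $Q(f,\cdot)$ become tamer, and the same loop gains another fraction of a derivative. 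Iterating with shrinking cutoffs $\varphi$ gives $f\in H^k_\ell$ on every compact subset of $(0,T)\times\Omega$ for all $k,\ell$, hence $f\in C^\infty((0,T)\times\Omega;\cS(\RR^3_v))$.

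The main obstacle, and the reason the threshold $\gamma>-3/2-2s$ appears, is the commutator step: the singular kinetic factor $|v-v_*|^\gamma$ combined with the angular singularity $\theta^{-2-2s}$ makes the commutators $[\langle D_v\rangle^\alpha, Q(g,\cdot)]$ and the cross-weighted trilinear estimates delicate, and one must use a sharp refinement of the pseudo-differential calculus from Parts I--II to keep them strictly below the $H^s_{\gamma/2}$ gain produced by coercivity. A secondary technical point is that the hypoelliptic transfer must be performed with $v$-weights $\langle v\rangle^\ell$, which requires commuting the weight through the transport operator $v\cdot\nabla_x$ and producing no unfavorable loss; this is where the assumption $f\in H^5_\ell$ for every $\ell\in\NN$ is used at every iteration.
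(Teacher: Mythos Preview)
Your outline is correct and matches the paper's own strategy: mollify, use the coercivity \eqref{coercivity-nonlinear-a} to gain $H^s_{\gamma/2}$ in $v$, control the commutators $[\text{mollifier},Q(f,\cdot)]$ and $[\text{mollifier},v\cdot\nabla_x]$ by the pseudo-differential estimates of Section~\ref{sect-IV-2}, then transfer a fraction of this gain to $(t,x)$ via the hypoelliptic transport lemma, and bootstrap with shrinking cutoffs. The paper organizes the loop slightly differently from what you sketch: it first gains exactly $s$ in $v$ with the bare mollifier $P_{N,\ell}=\psi_2 W_\ell S_N(D_x)S_N(D_v)$ (no extra $\langle D_v\rangle^\alpha$), then iterates the $(t,x)$ gain in fixed steps $s_0=s(1-s)/(s+1)$ until reaching $\Lambda^{1+\varepsilon}_{t,x}$, and only afterwards inserts $\Lambda^\lambda_v$ into the mollifier to climb the $v$-ladder in steps of size $s$; the point of this ordering is that the commutator $[\Lambda^\lambda_v,\,v]\cdot\nabla_x$ costs one $x$-derivative, which is paid for by the $(t,x)$-gain already secured.
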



Classical solutions satisfying such a local coercivity estimate do exist \cite{amuxy4-2,amuxy4-3}, see Corollary \ref{coro-coercivity-1} in next section.

Our next result is related to uniqueness of solutions. We shall consider function spaces
with exponential decay in the velocity variable, for $m\in\RR$
$$
\tilde{\cE}^m_0(\RR^6)=\Big\{g\in\cD'(\RR^6_{x, v});\, \exists \,\rho>0\,
\, \mbox{ s.t.} \,\, e^{\rho <v>^2} g\in L^\infty(\RR_x^3; H^m(\RR^3_{v}) )\Big\},
$$
and for $T>0$
\begin{eqnarray*}
\tilde {\mathcal E}^m([0,T]\times{\mathbb R}^6_{x, v})&=&\Big\{f\in
C^0([0,T];{\mathcal D}'({\mathbb R}^6_{x, v}));\, \exists \,\rho>0
\\
&&\hskip 0.5cm \mbox{ s.t. } \,\, e^{\rho \langle v \rangle^2} f\in L^\infty([0,
T]\times \RR^3_x ;\,\, H^m({\mathbb R}^3_{v})) \Big\}.
\end{eqnarray*}

\begin{theo}\label{theo-IV-1.2}
Assume that $0<s<1$ and $\max \lbrace -3, -3/2-2s \rbrace <\gamma < 2-2s $. Let $f_0\geq 0$ and $ f_0 \in \tilde \cE^{0}_0(\RR^6)$.
Let $ 0<T < +\infty$ and
suppose that $f \in
\tilde { \mathcal E}^{2s}([0,T]\times{\mathbb R}^6_{x, v})$
is a non-negative solution to the Cauchy problem \eqref{IV-1.1}. Then any solution in the function space  $\tilde { \mathcal E}^{2s}([0,T]\times{\mathbb R}^6_{x, v})$ coincides with $f$.
\end{theo}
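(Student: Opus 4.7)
The strategy is a weighted energy / Gronwall argument applied to the difference $w=f-g$ of two non-negative solutions in $\tilde{\cE}^{2s}$. By the bilinearity of $Q$ and the fact that $w|_{t=0}=0$,
\begin{equation*}
\partial_t w+v\cdot\nabla_x w=Q(f,w)+Q(w,g),\qquad w|_{t=0}=0.
\end{equation*}

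Because $\tilde{\cE}^{2s}$ is an $L^\infty_x$-type space, a standard $L^2_{x,v}$ energy estimate is not directly available, so I would introduce a slowly decaying polynomial weight $\omega_N(x)=(1+|x|^2)^{-N/2}$ in $x$ (with $N>3$ so that $\omega_N\in L^1_x$) together with the exponential weight $e^{2\rho\langle v\rangle^2}$ in $v$, for $\rho>0$ strictly smaller than the decay rate granted by the hypothesis on the solutions. The natural Lyapunov functional is then
\begin{equation*}
E(t)=\int_{\RR^6}\omega_N(x)\, e^{2\rho\langle v\rangle^2}\,|w(t,x,v)|^2\, dv\, dx,
\end{equation*}
which is finite, satisfies $E(0)=0$, and whose vanishing on $[0,T]$ forces $w\equiv 0$ almost everywhere.

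Differentiating $E$ along the equation produces three kinds of terms. The transport contribution integrates by parts in $x$ into $\int(v\cdot\nabla\omega_N)e^{2\rho\langle v\rangle^2}w^2\, dv\, dx$, controlled via $|\nabla\omega_N|\leq\tfrac{N}{2}\omega_N$; the residual $|v|$ factor is absorbed either by a small Gevrey-type decrease of the exponent $\rho$ in $t$ (abstract Cauchy-Kovalevskaya style), or by moving it into the dissipative piece of the coercivity estimate. For $(Q(f,w),w)_{L^2_v}$, pointwise-in-$x$ coercivity, valid because $f\geq 0$ has exponential velocity decay and in the spirit of hypothesis \eqref{coercivity-nonlinear-a}, yields the lower bound $\eta_0\|w\|^2_{H^s_{\gamma/2}}-C_0\|w\|^2_{L^2_{\gamma/2+s}}$ in $v$. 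For $(Q(w,g),w)_{L^2_v}$, a trilinear upper bound of the form
\begin{equation*}
|(Q(w,g),w)_{L^2_v}|\leq C\,\|g\|_{H^{2s}_v(\mathrm{weighted})}\,\|w\|_{L^2_v(\mathrm{weighted})}\,\|w\|_{H^s_{\gamma/2}(\mathrm{weighted})}
\end{equation*}
transfers the angular singularity onto the smoother factor $g$, and Young's inequality absorbs the $H^s_{\gamma/2}$ factor on $w$ into the coercive dissipation. After integrating in $x$ against $\omega_N$, one obtains a differential inequality $\tfrac{d}{dt}E(t)\leq C(t)E(t)$ with $C(t)$ integrable on $[0,T]$, and Gronwall together with $E(0)=0$ conclude $E\equiv 0$, hence $w\equiv 0$.

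The main obstacle is the trilinear upper bound when the rough factor $w$ has only $L^2_v$ regularity: the angular singularity $b(\cos\theta)\sim\theta^{-2-2s}$ must be distributed as $s+s$ derivatives onto the two $w$-factors while leaving $2s$ derivatives on $g$. The hypothesis $\gamma<2-2s$ is precisely the threshold at which the kinetic growth $|v-v_*|^\gamma$ remains compatible with this $H^{2s}$ regularity budget after commutation with the exponential velocity weight; the pseudo-differential calculus of \cite{amuxy-nonlinear-3,amuxy3} provides the framework for such an estimate. A secondary subtlety is the treatment of the transport flux in the $L^\infty_x$ setting via the slow-decay weight $\omega_N$, which requires either interpolating the residual velocity moment into the coercive dissipation or a Gevrey-type adjustment of the exponent $\rho$.
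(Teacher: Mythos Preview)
Your overall architecture coincides with the paper's proof (Section~\ref{sect-IV-7}, Proposition~\ref{prop-unique}, case~1)): a weighted $L^2$ Gronwall on the difference, polynomial decay in $x$, exponential weight in $v$ with a time-decreasing exponent, and the splitting $Q(f,f)-Q(g,g)=Q(f,w)+Q(w,g)$. Your ``Gevrey-type decrease of $\rho$'' is exactly the paper's substitution $g_j=\mu_\kappa(t)^{-1}f_j$ with $\mu_\kappa(t)=e^{-(\rho-\kappa t)\langle v\rangle^2}$; this produces an extra damping term $\kappa\langle v\rangle^2 g$ in the equation, which is what absorbs the moment losses and is precisely where the restriction $(s+\gamma/2)^+<1$, i.e.\ $\gamma+2s<2$, enters. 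One refinement worth noting: the paper's spatial weight is the mixed $\varphi(v,x)=(1+|v|^2+|x|^2)^{\alpha/2}$ rather than a pure $\omega_N(x)$, so that $|v\cdot\nabla_x(\varphi^{-2})|\lesssim\varphi^{-2}$ with \emph{no} residual $|v|$ factor (Lemma~\ref{weight}).

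There is, however, a genuine gap in your treatment of $(Q(f,w),w)$. You invoke a coercivity lower bound ``in the spirit of \eqref{coercivity-nonlinear-a}'', asserting it holds because $f\ge 0$ with exponential decay. This is false: a bound $-(Q(f,h),h)\ge \eta_0\|h\|_{H^s_{\gamma/2}}^2-C\|h\|_{L^2}^2$ requires a uniform \emph{lower} bound on $f$ (local mass/entropy), which is not part of the hypotheses here; the paper explicitly remarks that Theorem~\ref{theo-IV-1.2} does \emph{not} assume coercivity. What non-negativity alone yields is the sign $\cD(\mu_\kappa f,h)=\iiint B(\mu_\kappa f)_*(h-h')^2\,d\sigma\,dv\,dv_*\ge 0$; via Lemmas~\ref{differ-Q-cD}--\ref{differ-Gam-Q} one gets $(\Gamma^t(g_1,h),h)\le -\tfrac14\cD(\mu_\kappa g_1,h)+C\|g_1\|_{H^{2s'}}\|h\|_{L^2_{s+\gamma/2}}^2$, so the Dirichlet form can be \emph{discarded} but cannot absorb an $H^s$ norm of $h$.

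The paper therefore closes the estimate without ever producing $\|w\|_{H^s}$. For $(\Gamma^t(w,g_2),\,\cdot\,)$ it applies the trilinear upper bound (Lemma~\ref{upp-l-2}) with the full shift $m=s$, giving $\lesssim\|w\|_{L^2}\,\|g_2\|_{H^{2s}_{\ell+(\gamma+2s)^+}}\,\|h\|_{L^2_{-\ell}}$ --- both $w$-slots at $L^2$. Together with the estimate above (which costs $\|g_1\|_{H^{2s'}}$), one obtains directly $\frac{d}{dt}\|W_{\varphi,\ell}g\|_{L^2(\RR^6)}^2\le C\big(\|g_1\|_{L^\infty_{t,x}H^{2s}}+\|g_2\|_{L^\infty_{t,x}H^{2s}_{\ell-\gamma/2+(\gamma+2s)^+}}\big)\|W_{\varphi,\ell}g\|_{L^2(\RR^6)}^2$. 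Thus both $\tilde{\cE}^{2s}$ hypotheses are used, but only through upper bounds; the fix to your argument is to replace the coercivity--absorption step by this sharper trilinear estimate that places all $2s$ derivatives on the known solution and none on $w$.
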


\noindent
{\bf Remark.}
{\it

1) Note that the solutions considered above are not necessarily classical ones. Moreover, Theorem \ref{theo-IV-1.2} does not require the coercivity. On the other hand, if we suppose the coercivity, then we can get the uniqueness in the function space
$\tilde { \mathcal E}^{s}([0,T]\times{\mathbb R}^6_{x, v})$, without the non-negativity assumption, see precisely Theorem \ref{uniqueness} in Section \ref{sect-IV-7}.

2) We can also remove the restriction $\gamma+2s<2$, if we consider the small perturbation around Maxwellian, see precisely Theorem \ref{uniqueness-small-pur} in Section \ref{sect-IV-7}.

3) Finally, in the soft potential case $\gamma+2s\leq 0$, we can refine the above uniqueness results which can be applied to the solution of Theorem 1.4 of \cite{amuxy4-2}, see precisely Theorem \ref{unique-x-global} in Section \ref{sect-IV-7}.
}
\smallskip

Our next issue is about the non-negativity of solutions. We shall use the following modified weighted Sobolev spaces: For $k\in\NN,\,\,\ell\in\RR$
\begin{align*}
{\tilde \cH}^k_\ell(\RR^6)&=\Big\{f\in\cS'(\RR^6_{x, v})\, ;\,\,
\|f\|^2_{{\tilde \cH}^k_\ell(\RR^6)}=\sum_{|\alpha|+|\beta|\leq N}\|\tilde W_{\ell-|\beta|}
\partial^\alpha_\beta f\|^2_{L^2(\RR^6)}<+\infty\,\Big\}\, ,
\end{align*}
where $\tilde W
_\ell=(1+|v|^2)^{| s+\gamma/2|\,\ell/2}$.

Combining with the existence results of \cite{amuxy4-2,amuxy4-3} and the above Theorem \ref{theo-IV-1.2}, one has

\begin{theo}\label{theo-IV-1.3}
Let $0<s<1$, $\gamma>\max\{-3, -3/2-2s\}$, $k\geq 6$. There exist $\varepsilon_0 >0 $ and $\ell_0$ such that the Cauchy problem \eqref{IV-1.1} admits a unique global solution $f=\mu+{\mu}^{1/2}\,g$ for initial datum $f_0=\mu+{\mu}^{1/2}\,g_0$ satisfying

\noindent{\bf 1) } $g\in {L^\infty([0, +\infty[;  {H}^{k}_{\ell_0}(\RR^6)))}$, if $\gamma+2s> 0$ and $
\|g_0\|_{{H}^{k}_{\ell_0}(\RR^6)}\leq \varepsilon_0$.

\noindent{\bf 2) } $g\in {L^\infty([0, +\infty[;{\tilde \cH}^{k}_{\ell_0}(\RR^6))}$, if $\gamma+2s\leq 0$ and $
\|g_0\|_{{\tilde \cH}^{k}_{\ell_0}(\RR^6)}\leq \varepsilon_0$.

If $f_0=\mu+{\mu}^{1/2}\,g_0\geq 0$, then the above  solution $f=\mu+{\mu}^{1/2}\,g\geq 0$.

\end{theo}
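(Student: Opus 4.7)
The existence and uniqueness of $f=\mu+\mu^{1/2}g$ under the respective smallness assumptions are already provided by \cite{amuxy4-2,amuxy4-3}; the genuinely new content of Theorem \ref{theo-IV-1.3} is the preservation of non-negativity. My approach is to construct a sequence of non-negative solutions to cutoff-approximated problems and then identify the limit with $f$ by invoking the uniqueness result in the small-perturbation setting (item 2 of the remark following Theorem \ref{theo-IV-1.2}, which removes the restriction $\gamma+2s<2$).

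First, fix a smooth truncation $b_n(\cos\theta)$ of $b(\cos\theta)$, supported in $[1/n,\pi/2]$, with $b_n\leq b$ and $b_n\to b$ pointwise, and let $Q_n$ denote the Boltzmann operator with kernel $\Phi(|z|)b_n$. For each $n$, the Cauchy problem
\begin{equation*}
\partial_t f_n+v\cdot\nabla_x f_n=Q_n(f_n,f_n),\qquad f_n|_{t=0}=f_0,
\end{equation*}
is of Grad cutoff type, so the classical mild-formulation argument (decomposition $Q_n=Q_n^+-f_n L_n(f_n)$ together with Picard iteration along characteristics) ensures $f_n\geq 0$ whenever $f_0\geq 0$. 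Setting $f_n=\mu+\mu^{1/2}g_n$ and inspecting the global energy estimates of \cite{amuxy4-2,amuxy4-3}, every constant depends on $b$ only through quantities dominated by $\int_0^{\pi/2}\theta^{2}\,b(\cos\theta)\sin\theta\,d\theta$, finite by \eqref{4-2-1.2}, so these constants can be taken uniform in $n$. For $n$ large, this produces global-in-time $g_n$ with $\sup_n\|g_n\|_{L^\infty_tH^k_{\ell_0}}\leq 2\varepsilon_0$ (respectively in $\tilde\cH^k_{\ell_0}$ when $\gamma+2s\leq 0$).

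These uniform bounds allow extraction of a limit $f^\star=\mu+\mu^{1/2}g^\star$ in the same class. Standard compactness (velocity averaging combined with the higher regularity bounds) yields strong convergence on bounded sets, and splitting the $\sigma$-integral into $\{\theta<\delta\}$ (controlled via the uniform $H^s_v$ estimate from \eqref{coercivity-nonlinear-a} together with symmetric cancellations of Cancellation-Lemma type) and $\{\theta\geq\delta\}$ (on which $b_n\to b$ uniformly) permits passage to the limit inside $Q_n(f_n,f_n)$; the function $f^\star$ then solves \eqref{IV-1.1} with datum $f_0$, and $f^\star\geq 0$ as an a.e.\ limit of non-negative functions. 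Since $\mu^{1/2}$ provides Gaussian decay in $v$ and since $k\geq 6$ suffices, by Sobolev embedding in $x$, to place $f$ and $f^\star$ in $L^\infty_x H^{2s}_v$ with an exponential weight, both solutions belong to $\tilde\cE^{2s}([0,T]\times\RR^6)$ for every $T>0$. The small-perturbation uniqueness statement then forces $f=f^\star\geq 0$. The principal difficulty is the uniform-in-$n$ tracking of all the bilinear, commutator and coercivity estimates of \cite{amuxy4-2,amuxy4-3}, a routine but bookkeeping-heavy verification; a secondary subtlety is the passage to the limit inside the singular operator $Q_n$, which hinges on the uniform $H^s_v$ bound from \eqref{coercivity-nonlinear-a} to tame the $\{\theta<\delta\}$ contribution.
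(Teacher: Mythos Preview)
Your approach is a legitimate alternative, but it differs substantially from the paper's argument. The paper does \emph{not} introduce any angular cutoff. Instead it exploits the linear iteration scheme
\[
\partial_t f^{n+1}+v\cdot\nabla_x f^{n+1}=Q(f^n,f^{n+1}),\qquad f^{n+1}|_{t=0}=f_0,
\]
which is precisely the approximation already used in \cite{amuxy4-2,amuxy4-3} to construct the solution. Non-negativity is propagated along this iteration by a direct Stampacchia-type argument: writing $g^{n+1}=\mu_\kappa(t)^{-1}f^{n+1}$, one multiplies the equation by $(g^{n+1})_-\,\varphi(x,v)^{-2}$ (with $\varphi=(1+|v|^2+|x|^2)^{\alpha/2}$, $\alpha>3/2$) and derives a Gronwall inequality for $\|\varphi^{-1}(g^{n+1})_-\|_{L^2(\RR^6)}^2$. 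The structural point is that the cross term
\[
\iiiint B\,\mu_{\kappa,*}\,(g^n_*)'\,(g^{n+1}_+)'\,(g^{n+1})_-\,\varphi^{-2}\,dvdv_*d\sigma dx
\]
has a sign once $f^n\ge 0$, while the remaining term $\Gamma^t(g^n,g^{n+1}_-)$ is handled by the coercivity and commutator lemmas of Section~\ref{sect-IV-7}. Since the approximating sequence converges to the solution in the topology of \cite{amuxy4-2,amuxy4-3}, the limit is non-negative; uniqueness is not even needed for this step.

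Your cutoff route trades this structural sign argument for the classical gain--loss positivity, at the price of two nontrivial verifications you flag as routine. The first, uniformity in $n$ of the energy machinery of \cite{amuxy4-2,amuxy4-3}, is more delicate than you suggest: the coercivity there is stated in terms of the triple norm $|||\cdot|||_{\Phi_\gamma}$ built from the \emph{full} $b$, and for $b_n$ this norm no longer dominates $\|\cdot\|_{H^s_{\gamma/2}}$ uniformly; one must check that the nonlinear upper bounds that close the energy estimate are themselves expressed through the same (degenerating) triple norm, so that the closure is scale-invariant in $n$. This is true, but it requires rerunning the trilinear estimates with $b_n$ rather than merely bounding constants by $\int\theta^2 b\sin\theta\,d\theta$. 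The second point, passage to the limit inside $Q_n$, is indeed standard once uniform $H^s_v$ control is available, but note that the uniform coercivity \eqref{coercivity-nonlinear-a} you invoke is a statement about the \emph{non-cutoff} operator, not about $Q_n$; the uniform $H^s_v$ bound for $g_n$ must come instead from the a priori $H^k_\ell$ energy. In short, your strategy works, but the paper's argument is shorter and avoids both the cutoff existence theory and the compactness step by recycling the very iteration that produced the solution.
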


\noindent
{\bf Remark.}
{\it
The existence of global solution was proved in \cite{amuxy4-2,amuxy4-3}, while the uniqueness follows from Theorem \ref{theo-IV-1.2}, more precisely Theorem \ref{uniqueness-small-pur}, in Section \ref{sect-IV-7}.

}
\smallskip

One of the basic issues in the mathematical
theory for  Boltzmann equation theory is about the convergence of solutions
to equilibrium. This topic has been recently renewed and
complemented by proofs of optimal  convergence rates in the whole
space, see for example
\cite{Duan-U-Y-Zhao,glassey,kawashima,villani2,Y-Yu} and references
therein. This is closely related to the
study of the hypocoercivity theory that
 is about the interplay of a conservative operator and
a degenerate diffusive operator which gives the convergence
to the equilibrium. Note that this kind of interplay also gives
the full regularization.

For later use, denote
$$
\mathcal{N}=\mbox{span}\{\mu^{\frac 12}, \mu^{\frac 12}v_i, \mu^{\frac12} |v|^2,\quad i=1,2,3\},
$$
as the null space of the linearized Boltzmann collision operator, and $\pP$ the projection operator
to $\mathcal{N}$ in $L^2(\RR^3_v)$.

 For the problem considered in this paper,
we have the following convergence rate estimates.

\begin{theo} \label{theo-IV-1.4}
Let $0<s<1$ and $f=\mu +\mu^{1/2}\, g $ be a global solution of the Cauchy problem \eqref{IV-1.1} with initial datum $f_0=\mu+{\mu}^{1/2}\,g_0$. We have the
following two cases:

\noindent{\bf 1)} Let $\gamma+2s>0$, $ N\ge
6, \ell>3/2+2s+\gamma$. There exists $\varepsilon_0>0$
such that if $
\|g_0\|^2_{L^1(\RR^3_x; L^2(\RR^3_v))} + \|
g_0\|^2_{H^N_\ell(\RR^6)}\leq \varepsilon_0 $
and $
g\in L^\infty([0, +\infty[\,;\,\,H^{N}_\ell(\RR^6))$, then we have for
all $t>0$,
$$
\|g(t)\|^2_{L^2(\RR^6)}= \|
\pP g(t)\|^2_{L^2(\RR^6)}+\|({\bf I}-\pP)g(t)\|^2_{L^2(\RR^6)}\lesssim (1+t)^{-3/2},
$$
and
$$
\sum_{1\leq|\alpha|\leq N}\|\partial^\alpha
\pP g(t)\|^2_{L^2(\RR^6)}+\sum_{|\alpha|\leq
N}\|\partial^\alpha({\bf I}-\pP)g(t)\|^2_{L^2(\RR^6)}\lesssim (1+t)^{-5/2}.
$$

\noindent{\bf 2)} Let~~ $ \max\{-3, -\frac 32 -2s\} < \gamma \le -2s, N\ge 6, \ell\ge N+1$. There exists $\varepsilon_0>0$
such that if $\|g_0\|^2_{{\tilde \cH}^{N}_\ell(\RR^6)}\leq \varepsilon_0$ and $
g\in
L^\infty([0, +\infty[\,;\,\,{\tilde \cH}^{N}_\ell(\RR^6))$, then we have
for all $t>0$,
$$
\sup_{x\in \RR^3}\|g(t)\|^2_{H^{N-3}(\RR^3_v)}\lesssim (1+t)^{-1}.
$$
\end{theo}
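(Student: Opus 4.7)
\textbf{Proof proposal for Theorem \ref{theo-IV-1.4}.} The plan rests on the macro--micro decomposition $g=\pP g+(\iI-\pP)g$ together with a weighted energy method; the two regimes differ in how the spectral/coercive structure is exploited.

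\emph{Part 1) Hard potentials, $\gamma+2s>0$.} Rewrite \eqref{IV-1.1} as $g_t+v\cdot\nabla_x g+\cL g=\Gamma(g,g)$, where $\cL$ denotes the linearized collision operator. The coercivity \eqref{coercivity-nonlinear-a} applied to $f=\mu$ gives $(\cL h,h)\gtrsim \|(\iI-\pP)h\|^2_{H^s_{\gamma/2}}$, which under $\gamma+2s>0$ controls $\|(\iI-\pP)h\|^2_{L^2}$. The first step is a standard weighted energy estimate for $\partial^\alpha g$, $|\alpha|\leq N$, the trilinear remainder being closed by the nonlinear upper bounds of \cite{amuxy4-2,amuxy4-3} together with the smallness of $g$. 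This yields only microscopic dissipation, so the next step is to extract dissipation for $\pP g$ via the local conservation laws: multiplying \eqref{IV-1.1} by $\mu^{1/2},\mu^{1/2}v_i,\mu^{1/2}|v|^2$ produces fluid-type equations for the coefficients of $\pP g$, from which a Kawashima-type interaction functional $J(t)$ is constructed satisfying
$$
\tfrac{d}{dt}J(t)+c\,\|\nabla_x\pP g\|^2_{H^{N-1}}\lesssim \|(\iI-\pP)g\|^2_{H^N}.
$$
Combining both estimates yields a Lyapunov functional $\cE_N$ whose dissipation controls both parts of $g$. To convert coercivity into a decay rate, one performs Fourier analysis in $x$ on the linearized operator $\cB=v\cdot\nabla_x+\cL$: a low-frequency eigenvalue expansion shows that its $\pP$-part dissipates like $|\xi|^2/(1+|\xi|^2)$, giving the heat-kernel bound $\|e^{-t\cB}g_0\|_{L^2_{x,v}}\lesssim (1+t)^{-3/4}\|g_0\|_{L^1_xL^2_v}$. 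Inserting this linear decay into the Duhamel formula and bootstrapping on $(1+t)^{3/2}\cE_N(t)$ yields $(1+t)^{-3/2}$ for $\|g\|^2_{L^2}$ and an extra $(1+t)^{-1}$ per $x$-derivative.

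\emph{Part 2) Soft potentials, $\max\{-3,-3/2-2s\}<\gamma\leq -2s$.} Now $H^s_{\gamma/2}$ is strictly weaker than $L^2$ in $v$, so the pure Fourier-decay argument collapses; the remedy is to trade velocity moments for time decay. In the space $\tilde\cH^N_\ell$, whose weight $\tilde W_{\ell-|\beta|}$ decreases with each $v$-derivative, one derives an energy inequality
$$
\tfrac{d}{dt}\|g\|^2_{\tilde\cH^N_\ell}+c\,\|g\|^2_{\tilde\cH^N_{\ell-|\gamma/2+s|}}\leq 0,
$$
again through the nonlinear upper bounds and smallness. Interpolating between the uniform-in-time bound on $\|g\|_{\tilde\cH^N_\ell}$ and the weaker dissipation norm yields a differential inequality $y'(t)+c\,y(t)^{1+\theta}\leq 0$, whose solution decays algebraically. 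A careful choice of $\theta$, matched to the drop from regularity $N$ to $N-3$, produces the rate $(1+t)^{-1}$; the sup-in-$x$ form follows from the Sobolev embedding $H^2(\RR^3_x)\hookrightarrow L^\infty$, using that $N\geq 6$ leaves enough $x$-derivatives.

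\emph{Main obstacles.} The crux of Part 1) is constructing the macroscopic interaction functional $J(t)$ within the non-cutoff framework: the classical Kawashima construction relies on pointwise velocity bounds unavailable for singular kernels, and these must be replaced by the fractional estimates from Parts I/II, which interact delicately with the $H^s_{\gamma/2}$ coercivity on microscopic modes. In Part 2) the technical heart is the velocity-weighted energy inequality: the loss of $|\gamma/2+s|$ units of weight per $v$-derivative has to be absorbed exactly by the chosen weights $\tilde W_{\ell-|\beta|}$, which requires precise moment bookkeeping in the upper bounds of $\Gamma(g,g)$ and is the reason for the non-standard choice of space $\tilde\cH^N_\ell$.
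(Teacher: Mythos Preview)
Your outline for Part 1) is essentially the paper's strategy: macro--micro energy estimates plus a Fourier-side dissipation mechanism for the macroscopic part, fed into a Duhamel bootstrap on time-weighted functionals $M(t),M_0(t)$. The paper implements the Fourier step via Kawashima's compensating function $S(\omega)$ rather than an eigenvalue expansion, but it explicitly notes that either route yields the same $L^p$--$L^q$ estimate. One correction to your ``main obstacle'': the construction of $S(\omega)$ is purely moment-based (thirteen moments) and does not rely on pointwise velocity bounds, so nothing there is sensitive to angular cutoff; the only place the non-cutoff structure enters is the coercivity $(\cL h,h)\gtrsim |||(\iI-\pP)h|||^2$ needed for property (iii), and that is available from Part~I.

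Part 2) is where your proposal diverges from the paper and has a genuine gap. The paper does \emph{not} use the interpolation-then-ODE argument $y'+cy^{1+\theta}\le 0$. The obstruction to your scheme is that in the whole space $\RR^3_x$ the dissipation $\cD_{N,\ell}$ contains $\|\nabla_x\cA\|^2_{H^{N-1}}$ but \emph{not} $\|\cA\|^2_{L^2_x}$; hence no interpolation can yield $\cD_{N,\ell}\gtrsim \|g\|_{\tilde\cH^N_\ell}^{2(1+\theta)}$ for the full norm, since the zero-frequency macroscopic component is simply absent from the dissipation. The paper's remedy is different: it introduces a strictly smaller functional
\[
\bar\cE_{N-1,\ell-1}\sim \|\nabla_x\cA\|^2_{H^{N-2}(\RR^3_x)}+\|\nabla_x g_2\|^2_{\tilde\cH^{N-2}_{\ell-1}(\RR^6)},
\]
built only from terms carrying at least one $x$-derivative, so that $\bar\cE_{N-1,\ell-1}\lesssim\cD_{N,\ell}$ and therefore $\int_0^\infty\bar\cE_{N-1,\ell-1}\,dt<\infty$. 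Rerunning the macro and micro estimates of \cite{amuxy4-2} with the index range shifted to $1\le|\alpha|\le N-1$ then gives
\[
\frac{d}{dt}\bar\cE_{N-1,\ell-1}\ \lesssim\ \|\nabla_x\cA\|^2_{H^{N-1}}\,\bar\cE_{N-1,\ell-1},
\]
with the prefactor $a(t)=\|\nabla_x\cA\|^2_{H^{N-1}}$ also time-integrable. The rate $(1+t)^{-1}$ then comes not from an autonomous ODE but from an elementary Gronwall-type lemma of Deckelnick: if $f\ge 0$, $\int^\infty f<\infty$ and $f'\le af$ with $\int^\infty a<\infty$, then $f(t)\lesssim(1+t)^{-1}$. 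Finally, the drop from $N$ to $N-3$ is not what fixes the decay exponent; one order is lost so that $\bar\cE_{N-1,\ell-1}$ sits below $\cD_{N,\ell}$, and two more are spent on the Sobolev (Agmon) embedding $\|h\|_{L^\infty(\RR^3_x)}\lesssim\|\nabla_x h\|_{L^2}^{1/2}\|\nabla_x^2 h\|_{L^2}^{1/2}$ to pass to $\sup_x$.
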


We emphasize that the above convergence rate for the hard potential
case is optimal in the sense that it is the same for the linearized
problem through either spectrum analysis in \cite{Pao}, or direct
Fourier transform using the compensating function introduced in
\cite{kawashima}. However, the convergence rate for soft potential
is not optimal. In fact, how to obtain an optimal convergence rate
even for the cutoff soft potential is still an unsolved problem
\cite{strain-guo, ukai-asano}.

We also would like to mention that the above convergence rate is
for  the whole space setting. If the problem is instead
considered on the torus with small perturbation, then the
exponential decay for hard potential can be obtained, and this point
is a direct consequence of the energy estimates given in \cite{amuxy4-3} by
using Poincar\'e inequality (this is for example the case considered
in \cite{gr-st}).

\smallskip

Before presenting the plan of the paper we want to give some comments on our proofs. First of all, our proof of regularization property applies to the classical
solutions obtained in \cite{amuxy4-2,amuxy4-3}. Note that from those existence theorems, one can show that if the initial data
 satisfying $\|g_0\|_{H^{k}_{l}}\le \epsilon_k$ for $k\ge 6$ and
 $l\ge l_0$ for some $l_0$, the solution is also in $H^k$  when $\epsilon_k$ is small. However, the current existence theory
  does not yield that  $g\in H^{k+N}$, under the condition that $g_0\in H^{k+N}$  for  $N>0$ if $||g_0||_{H^{k+N}}$ is not small.
   Therefore, we can not just mollify the initial data to
 study the full regularity by working formally on the smooth solution.
 Instead, we need analytic tools from peudo-differential theory
  and harmonic analysis to study the gain of regularity rigorously. In fact, it is a standard technic for the hypoellipticity of linear differential operators \cite{hormander,M-X,oleinik}. The same comments apply for the uniqueness and positivity
issues for which we give also rigorous proofs.
\smallskip

The paper is organized as follows. In Section \ref{sect-IV-2}, we give the functional analysis of the collision operator, including upper bounds, commutators estimates and coercivity. In Section \ref{sect-IV-3}, we prove Theorem \ref{theo-IV-1.1} giving the regularization of solutions. Section \ref{sect-IV-7} is devoted to precise versions of uniqueness results related to Theorem \ref{theo-IV-1.2}, while Section \ref{sect-IV-8}   proves the non-negativity of solutions. Finally the last Section proves Theorem \ref{theo-IV-1.4} about the convergence of solutions to equilibrium.

{\bf Notations:} Herein, letters $f$, $g, \cdots$ stand for various suitable  functions, while $C$, $c, \cdots$ stand for various numerical constants, independent from functions $f$, $g, \cdots$ and which may vary from line to line. Notation $A\lesssim B$ means that there exists a constant $C$ such that $A \le C B$, and similarly for $A\gtrsim B$. While $A \sim B$ means that there exist two generic constants $C_1, C_2 >0$ such that
$$C_1A \leq B \leq C_2 A.$$

\section{Functional analysis of the collision operator}\label{sect-IV-2}
\smallskip \setcounter{equation}{0}

In this section, we study the upper bound and commutators
estimates for the collision operator $Q(\,\cdot,\,\cdot\,)$. Since
it is only an operator with respect to velocity variable, in this section,
our analysis is on $\RR^3_v$, forgetting variable $x$. In what follows, we denote $\tilde\Phi_{\gamma}$ by $\tilde\Phi_\gamma (z) = (1+|z|^2)^{\gamma /2}$. $Q_{\tilde\Phi_\gamma}$ will denote the collision operator defined with the modified kinetic factor $\tilde\Phi_{\gamma}$.

\subsection{Upper bound estimate}
For $0< s<1$,
$\gamma\in\RR$, we proved the following upper bounded estimate
(Theorem 2.1 of \cite{amuxy-nonlinear-3})
\begin{equation}\label{upper-I}
| (Q_{\tilde{\Phi}_\gamma} (f,  g),\, h )|  \lesssim ||f||_{L^1_{\ell^++(\gamma+2s)^+}} || g||_{H^{m+s}_{(\ell+\gamma+2s)^+}} \|h\|_{H^{s-m}_{-\ell}}\,,
\end{equation}
for any $m, \ell\in\RR$, and the estimate of commutators with weight (Lemma 2.4 of \cite{amuxy-nonlinear-3})
\begin{equation}\label{commutators-I}
\Big|\Big(W_\ell\,Q_{\tilde{\Phi}_\gamma} (f, g)-Q_{\tilde{\Phi}_\gamma}
(f, W_\ell g),\, h\Big)\Big|\lesssim
\|f\|_{L^1_{\ell+(2s-1)^++\gamma^+}}
\|g\|_{H^{(2s-1+\epsilon)^+}_{\ell+(2s-1)^++\gamma^+}}\,  \|h\|_{L^{2}}\,,
\end{equation}
for any $\ell\in\RR$.

For the singular type of kinetic factors considered herein
$|v-v_*|^\gamma$, we need to take into account the singular behavior
close to $0$. Therefore, we decompose the kinetic factor in two
parts. Let $0\leq \phi (z)\leq 1$ be a smooth radial function with
value $1$ for $z$ close to $0$, and $0$ for large values of $z$. Set
$$
\Phi_\gamma (z) = \Phi_\gamma (z) \phi (z) + \Phi_\gamma (z) (1-\phi (z)) = \Phi_c (z) + \Phi_{\bar c} (z).
$$
And then correspondingly we can write
$$
Q (f, g) = Q_c (f, g) + Q_{\bar c} (f, g),
$$
where the kinetic factor in the collision operator is defined
according to the decomposition respectively. Since $\Phi_{\bar c}
(z)$ is smooth, and $\Phi_{\bar c} (z)\leq \tilde{\Phi}_\gamma(z)$,
$Q_{\bar c} (f, g)$ has similar properties as for
$Q_{\tilde{\Phi}_\gamma} (f, g)$ as regards upper bounds and
commutators estimatations, which means that \eqref{upper-I} and
\eqref{commutators-I} hold true for $Q_{\bar c} (f, g)$.

{}From now on, we concentrate on the study the singular part $Q_{c}
(f, g)$, referring for the smooth part $Q_{\bar c} (f, g)$ to
\cite{amuxy-nonlinear-3}. Note that in \cite{amuxy4-2}, the same decomposition was also used, but for the modified operator $\Gamma (f,g)$. Here,
the absence of the gaussian factor slightly adds some more
difficulties.
\begin{prop} \label{upper-c}
Let $0< s<1,  \gamma > \max\{-3, -2s-3/2\}$ and $m \in\  [s-1,s]$. Then we have
\[
| (Q_c (f,  g),h )|  \lesssim \|f\|_{L^2} ||g||_{H^{s+ m}} \|h\|_{H^{s -m}}\,.
\]
\end{prop}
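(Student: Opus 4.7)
The plan is to split the bilinear form using the standard identity
\[
f'_* g' - f_* g = f'_*(g'-g) + (f'_* - f_*)\,g,
\]
producing a ``difference'' term $T_1$ which will carry the full $H^{s+m}\times H^{s-m}$ regularity gain, and a ``cancellation'' term $T_2$ amenable to the Alexandre-Desvillettes-Villani-Wennberg cancellation lemma. The key observation is that $\Phi_c$ is compactly supported near the origin, so the whole analysis is local in $v-v_*$, and the $L^2$-norm of $f$ will be paired against the weight $\Phi_c(v-v_*)$ via a Cauchy-Schwarz in $v_*$; this plays the role that the $L^1_k$-norm does in \eqref{upper-I}.

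For $T_2$, I would perform the standard regular change of variable $v_* \mapsto v'_*$ (Jacobian $1/\cos^3(\theta/2)$), after which the $\sigma$-integral collapses, by the cancellation lemma, into a convolution of $g$ against a kernel $S$ with $|S(z)| \lesssim |z|^{\gamma}\phi(z)$; here the compensating factor $\cos^{-3}(\theta/2) - 1 = O(\theta^2)$ kills the $\theta^{-2s}$ part of $b$ because $s<1$. The resulting trilinear form $\int f_*\,(g * S)(v)\, h(v)\, dv\,dv_*$ is then controlled, using Cauchy-Schwarz in $v_*$ together with the local $L^2$-integrability of $\Phi_c$, by $\|f\|_{L^2}\|g\|_{L^2}\|h\|_{L^2}$, which is dominated by the right-hand side of the claim.

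The main term $T_1$ is handled by the pseudo-differential approach of \cite{amuxy-nonlinear-3}. For fixed $v_*$, the operator
\[
\mathcal{L}_{v_*}[g](v) := \int_{\mathbb S^2} b(\cos\theta)\, \Phi_c(v-v_*)\bigl(g(v')-g(v)\bigr)\, d\sigma
\]
has, by Bobylev's formula, a Fourier symbol majorized by $\Phi_c(v-v_*)\,\langle\xi\rangle^{2s}$ up to smooth bounded factors in the angular direction; this yields an operator of order $2s$ weighted by $\Phi_c(v-v_*)$. Distributing the $2s$ derivatives as $H^{s+m}$ on $g$ and $H^{s-m}$ on $h$ via a fractional Leibniz / interpolation argument (valid for the full range $m\in[s-1,s]$), one arrives at
\[
|T_1| \lesssim \int |f(v_*)|\,\|\Phi_c(\cdot - v_*)\|_{L^2_v}\,\|g\|_{H^{s+m}}\,\|h\|_{H^{s-m}}\, dv_*,
\]
and a final Cauchy-Schwarz in $v_*$ closes the estimate.

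The main obstacle is the sharp tracking of the weight: a naive splitting of $\Phi_c$ evenly between the two factors demands only $\gamma > -3/2$, whereas the statement requires $\gamma > -3/2 - 2s$. To reach this, the $2s$ derivatives must not be pulled out uniformly in $v_*$; instead, the symbolic bound on $\mathcal{L}_{v_*}$ should retain an extra gain of $|v-v_*|^{2s}$ on the support of $\Phi_c$, which is then transferred onto $g$ and $h$ through a Hardy-type inequality. Getting this transfer clean while covering the full endpoint range $m = s$ and $m = s-1$ (where one of the two factors carries no derivatives at all) is the technical heart of the proof.
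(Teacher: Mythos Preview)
Your physical-space strategy --- split $f'_*g'-f_*g = f'_*(g'-g)+(f'_*-f_*)g$, treat the second piece by the cancellation lemma and the first as a weighted pseudo-differential operator --- is a different route from the paper's, and it runs into the very obstacle you flag without actually overcoming it.

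The paper does \emph{not} split in physical space. It writes the full trilinear form via Bobylev's identity in Fourier variables,
\[
(Q_c(f,g),h)=\iiint b\Big(\tfrac{\xi}{|\xi|}\!\cdot\!\sigma\Big)\big[\hat\Phi_c(\xi_*-\xi^-)-\hat\Phi_c(\xi_*)\big]\hat f(\xi_*)\,\hat g(\xi-\xi_*)\,\overline{\hat h(\xi)}\,d\xi\,d\xi_*\,d\sigma,
\]
and splits according to $|\xi^-|\lessgtr\tfrac12\la\xi_*\ra$. On the small-angle region a Taylor expansion of $\hat\Phi_c$ (using $|\nabla^k\hat\Phi_c(\xi_*)|\lesssim\la\xi_*\ra^{-(3+\gamma+k)}$) \emph{couples} the $2s$ angular gain to the $\hat\Phi_c$ decay: the resulting kernel obeys bounds of the type $\la\xi_*\ra^{-(3+\gamma+2s)}\la\xi\ra^{s-m}\la\xi-\xi_*\ra^{s+m}$, and the condition $\gamma+2s>-3/2$ is exactly $\la\xi_*\ra^{-(3+\gamma+2s)}\in L^2$. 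The asymmetric distribution of regularity between $g$ and $h$ is then read off directly from the frequency weights, uniformly in $m\in[s-1,s]$.

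Your $T_2$ argument, by contrast, is genuinely stuck at $\gamma>-3/2$ at the endpoint $m=s$. The cancellation lemma yields $\iint f_*\,S(v-v_*)\,g\,h$ with $|S(z)|\sim|z|^{\gamma}\phi(z)$. To get $\|f\|_{L^2}$ you must Cauchy--Schwarz in $v_*$, which requires $|z|^{\gamma}\phi(z)\in L^2$, i.e.\ $\gamma>-3/2$. For smaller $\gamma$ one can invoke Hardy--Littlewood--Sobolev, but that costs $H^{-\gamma/2-3/4}$ on \emph{both} $g$ and $h$; at $m=s$ you have only $h\in L^2$, so there is no regularity budget on $h$ to pay for it. The same issue blocks your $T_1$: even granting the pseudo-differential heuristic (which is delicate since $\Phi_c(v-v_*)$ is singular at $v=v_*$ and not a H\"ormander symbol), the bound you wrote contains $\|\Phi_c(\cdot-v_*)\|_{L^2}$, again forcing $\gamma>-3/2$. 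Your proposed Hardy fix --- trading $|v-v_*|^{-2s}$ for $2s$ extra derivatives --- is not carried out, and at $m=s$ it would require putting derivatives on $h$ that the statement does not allow.

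In short, the missing idea is that the coupling of the kinetic singularity $|v-v_*|^{\gamma}$ with the angular gain of order $2s$ must happen \emph{before} separating $g$ and $h$; the paper achieves this by Taylor-expanding $\hat\Phi_c$ in Fourier space, where the two effects combine into a single decay rate $\la\xi_*\ra^{-(3+\gamma+2s)}$. Your physical-space splitting separates them too early.
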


\begin{rema}\label{weight-version123} As will be clearer from the proof below,
the following precise estimates are also available: if $\gamma +2s
>0$, we have
$$
| (Q_c (f,  g), h )|  \lesssim || f||_{L^1} || g||_{H^{s+ m}} \|h\|_{H^{s-m}}\,.
$$
and moreover if $\gamma +2s > -1$, we have
\[
| (Q_c (f,  g); h )|  \lesssim  \|f\|_{L^{ 3/2}}|| g||_{H^{s+ m}} \|h\|_{H^{s-m}}\,.
\]
\end{rema}

For the proof of Proposition \ref{upper-c}, we shall follow some of
the arguments form \cite{amuxy4-2}. First of all, by using the formula from
the Appendix of \cite{al-1}, and as in \cite{amuxy4-2}, one has
\begin{align*}
( Q_c(f, g), h ) =& \iiint b \Big({\xi\over{ | \xi |}} \cdot \sigma \Big) [ \hat\Phi_c (\xi_* - \xi^- ) - \hat \Phi_c (\xi_* ) ] \hat f (\xi_* ) \hat g(\xi - \xi_* ) \overline{{\hat h} (\xi )} d\xi d\xi_*d\sigma .\\
= & \iiint_{ | \xi^- | \leq {1\over 2} \la \xi_*\ra }  \cdots\,\, d\xi d\xi_*d\sigma
+ \iiint_{ | \xi^- | \geq {1\over 2} \la \xi_*\ra } \cdots\,\, d\xi d\xi_*d\sigma \,\\
=& A_1(f,g,h)  +  A_2(f,g,h) \,\,.
\end{align*}
Then, we write $A_2(f,g,h)$ as
\begin{align*}
A_2 &=  \iiint b \Big({\xi\over{ | \xi |}} \cdot \sigma \Big) {\bf 1}_{ | \xi^- | \ge {1\over 2}\la \xi_*\ra }
\hat\Phi_c (\xi_* - \xi^- ) \hat f (\xi_* ) \hat g(\xi - \xi_* ) \overline{{\hat h} (\xi )} d\xi d\xi_*d\sigma .\\
&- \iiint b \Big({\xi\over{ | \xi |}} \cdot
 \sigma \Big){\bf 1}_{ | \xi^- | \ge {1\over 2}\la \xi_*\ra } \hat \Phi_c (\xi_* ) \hat f (\xi_* ) \hat g(\xi - \xi_* ) \overline{{\hat h} (\xi )} d\xi d\xi_*d\sigma \\
&= A_{2,1}(f,g,h) - A_{2,2}(f,g,h)\,.
\end{align*}
While for $A_1$, we use the Taylor expansion of $\hat \Phi_c$ at
order $2$ to have
$$
A_1 = A_{1,1} (f,g,h) +A_{1,2} (f,g,h)
$$
where
$$
A_{1,1} = \iiint b\,\, \xi^-\cdot (\nabla\hat\Phi_c)( \xi_*)
{\bf 1}_{ | \xi^- | \leq {1\over 2} \la \xi_*\ra }  \hat f (\xi_* ) \hat g(\xi - \xi_* ) \bar{\hat h} (\xi ) d\xi d\xi_*d\sigma,
$$
and $A_{1,2} (F,G,H)$ is the remaining term corresponding to the second order term in the Taylor expansion of $\hat\Phi_c$. The $A_{i,j}$ with
$i,j=1,2$ are estimated by the following lemmas.
\begin{lemm}\label{A-1}
We have
\[
| A_{1,1} |+| A_{1,2}|  \lesssim \|f\|_{L^2} || f||_{H^{s+ m}} \|h\|_{H^{s -m}}\,.
\]
\end{lemm}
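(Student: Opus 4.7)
The plan is to reduce both $A_{1,1}$ and $A_{1,2}$ to a bilinear convolution-type estimate on the Fourier side, after extracting all the angular singularities via cancellation. The key analytic inputs are sharp pointwise bounds on $\hat\Phi_c$ and its first two derivatives. Since $\Phi_c(z)=|z|^\gamma\phi(z)$ with $\phi\in C_c^\infty$, a standard splitting of $\int|z|^\gamma\phi(z)e^{-i\eta\cdot z}dz$ at the scale $|z|\sim 1/|\eta|$, combined with integration by parts on the outer piece, yields for $\gamma>-3$
\[
\bigl|(\nabla^j\hat\Phi_c)(\eta)\bigr|\lesssim \langle\eta\rangle^{-3-\gamma-j},\qquad j=0,1,2,
\]
with the convention that nonnegative exponents on the right are replaced by a bound of $1$. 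On the Taylor-remainder domain $\{|\xi^-|\le\langle\xi_*\rangle/2\}$, the whole segment from $\xi_*$ to $\xi_*-\xi^-$ stays within distance $\sim\langle\xi_*\rangle$ of the origin, so these bounds apply uniformly at every evaluation point.

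Next I would treat $A_{1,2}$. Writing it in Taylor-remainder form with a $|\xi^-|^2(\nabla^2\hat\Phi_c)(\xi_*-\tau\xi^-)$ factor, and using $|\xi^-|=|\xi|\sin(\theta/2)$, the angular integral is the classical one
\[
\int_{\mathbb{S}^{2}} b(\cos\theta)\,|\xi^-|^2\,\mathbf{1}_{|\xi^-|\leq\langle\xi_*\rangle/2}\,d\sigma\;\lesssim\;|\xi|^{2s}\langle\xi_*\rangle^{2-2s},
\]
coming from the truncation $\theta\lesssim\min(1,\langle\xi_*\rangle/|\xi|)$ together with $\int_0^{\theta_{\max}}\theta^{1-2s}d\theta\sim\theta_{\max}^{2-2s}$. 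Combined with $|\nabla^2\hat\Phi_c|\lesssim\langle\xi_*\rangle^{-5-\gamma}$, the bilinear kernel for $A_{1,2}$ is dominated by $|\xi|^{2s}\langle\xi_*\rangle^{-3-2s-\gamma}$, and the hypothesis $\gamma>-3/2-2s$ ensures $\langle\xi_*\rangle^{-3-2s-\gamma}\in L^2(d\xi_*)$. Distributing $|\xi|^{2s}$ via $|\xi|^{2s}\lesssim \langle\xi-\xi_*\rangle^{s+m}\langle\xi\rangle^{s-m}+\langle\xi_*\rangle^{s+m}\langle\xi\rangle^{s-m}$, where the exponents $s\pm m$ are both nonnegative thanks to $m\in[s-1,s]$, and applying Plancherel together with Cauchy-Schwarz in $\xi_*$, yields $|A_{1,2}|\lesssim\|f\|_{L^2}\|g\|_{H^{s+m}}\|h\|_{H^{s-m}}$.

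Finally I would treat $A_{1,1}$. A direct estimate would require $\int b|\xi^-|\,d\sigma$, which diverges at $\theta=0$ for every $s\in(0,1)$; the remedy, and the core of the argument, is angular symmetrization. The cutoff $\mathbf{1}_{|\xi^-|\leq\langle\xi_*\rangle/2}$ depends only on $\theta$, hence is invariant under azimuthal rotations of $\sigma$ about the $\xi$-axis; averaging $\xi^-=(\xi-|\xi|\sigma)/2$ over those rotations replaces it by its $\xi$-component $\tfrac{|\xi|(1-\cos\theta)}{2}\,\xi/|\xi|$, effectively turning the linear factor $\xi^-$ into the quadratic factor $|\xi|^{-1}|\xi^-|^2$. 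The resulting angular integral then falls into the class estimated above, producing a kernel of size $|\xi|^{2s-1}\langle\xi_*\rangle^{-2-2s-\gamma}$, pointwise no worse than the $A_{1,2}$ kernel; paired with $|\nabla\hat\Phi_c(\xi_*)|\lesssim\langle\xi_*\rangle^{-4-\gamma}$, the remainder of the argument is identical to the $A_{1,2}$ case.

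The main obstacle is the angular symmetrization in $A_{1,1}$: without it, $b\sim\theta^{-2-2s}$ times $|\xi^-|\sim|\xi|\theta$ gives an integrand of order $\theta^{-1-2s}$, non-integrable at $\theta=0$ for all $s\in(0,1)$. It is essential to notice that the symmetrization axis is $\xi$ (not $\xi_*$), and that the truncation $\mathbf{1}_{|\xi^-|\leq\langle\xi_*\rangle/2}$ is azimuthally symmetric about that very axis — these two features together recover the crucial $(1-\cos\theta)\sim\theta^2$ factor and restore convergence. A secondary, purely bookkeeping subtlety is distributing $|\xi|^{2s}$ between $\hat g$ and $\hat h$ to fit the asymmetric Sobolev pair $(H^{s+m},H^{s-m})$, which is precisely what the assumption $m\in[s-1,s]$ allows.
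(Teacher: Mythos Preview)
Your overall strategy matches the paper's: angular symmetrization for $A_{1,1}$ (correct observation that the cutoff is azimuthally invariant about the $\xi$-axis), pointwise bounds $|\nabla^j\hat\Phi_c(\eta)|\lesssim\langle\eta\rangle^{-3-\gamma-j}$, and a Cauchy--Schwarz bilinear estimate on the Fourier side. But the last step has a genuine gap that breaks the argument for $\gamma$ near the lower endpoint $-3/2-2s$.

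Your unified kernel bound $|\xi|^{2s}\langle\xi_*\rangle^{-3-2s-\gamma}$ for $A_{1,2}$ is correct but too crude. After your splitting, the second summand $\langle\xi_*\rangle^{s+m}\langle\xi\rangle^{s-m}$ leaves a kernel $\langle\xi\rangle^{s-m}\langle\xi_*\rangle^{-(3+\gamma+s-m)}$ with \emph{no} localization in $\xi-\xi_*$. Any Cauchy--Schwarz you run on this piece will force either $\langle\xi_*\rangle^{-(3+\gamma+s-m)}\in L^2_{\xi_*}$ (i.e.\ $\gamma>m-s-3/2$; with $m=s$ this becomes $\gamma>-3/2$, strictly stronger than the hypothesis), or $\int\langle\xi-\xi_*\rangle^{-2(s+m)}\,d\xi<\infty$ (i.e.\ $s+m>3/2$, not guaranteed). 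The paper avoids this by splitting into the three frequency regions $\{\sqrt 2\,|\xi|\le\langle\xi_*\rangle\}$, $\{\sqrt 2\,|\xi|\ge\langle\xi_*\rangle\ge|\xi|/2\}$, $\{\langle\xi_*\rangle\le|\xi|/2\}$; the middle region, where $\langle\xi\rangle\sim\langle\xi_*\rangle\gtrsim\langle\xi-\xi_*\rangle$, supplies the indicator $\mathbf{1}_{\langle\xi-\xi_*\rangle\lesssim\langle\xi_*\rangle}$ in the problematic kernel piece. With that indicator the inner $\xi$-integral is finite (of order $\langle\xi_*\rangle^{3-2(s+m)}$ or $\log\langle\xi_*\rangle$), and the remaining $\xi_*$-integrability reduces exactly to $3+2(\gamma+2s)>0$. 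So the three-region split is not cosmetic; it is what makes the estimate close at the threshold.

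A smaller slip: $s+m\ge 0$ does \emph{not} follow from $m\in[s-1,s]$ --- take $s<1/2$ and $m=s-1$, so $s+m=2s-1<0$. This is easy to patch but does not address the main gap above.
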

\begin{proof}
Considering firstly $A_{1,1}$, by writing
\[
\xi^- = \frac{|\xi|}{2}\left(\Big(\frac{\xi}{|\xi|}\cdot \sigma\Big)\frac{\xi }{|\xi|}-\sigma\right)
+ \left(1- \Big(\frac{\xi}{|\xi|}\cdot \sigma\Big)\right)\frac{\xi}{2},
\]
we see that the integral corresponding to the first term on the right hand side vanishes because of the symmetry
on $\SS^2$.
Hence, we have
\[
A_{1,1}= \iint_{\RR^6} K(\xi, \xi_*)
\hat f (\xi_* ) \hat g(\xi - \xi_* ) \bar{\hat h} (\xi ) d\xi d\xi_* \,,
\]
where
\[
K(\xi,\xi_*) = \int_{\SS^2}
 b \Big({\xi\over{ | \xi |}} \cdot \sigma \Big)
\left(1- \Big(\frac{\xi}{|\xi|}\cdot \sigma\Big)\right)\frac{\xi}{2}\cdot
(\nabla\hat\Phi_c)( \xi_*)
{\bf 1}_{ | \xi^- | \leq {1\over 2} \la \xi_*\ra } d \sigma \,.
\]
Note that $| \nabla \hat \Phi_c (\xi_*) | \lesssim {1\over{\la
\xi_*\ra^{3+\gamma +1}}}$, from the Appendix of \cite{amuxy4-2}. If $\sqrt 2
|\xi| \leq \la \xi_* \ra$, then $|\xi^-| \leq \la \xi_* \ra/2$ and
this imply  the fact that $0 \leq \theta \leq \pi/2$, and we have
\begin{align*}
|K(\xi,\xi_*)| &\lesssim \int_0^{\pi/2} \theta^{1-2s} d \theta\frac{ \la \xi\ra}{\la \xi_*\ra^{3+\gamma +1}}
\lesssim \frac{1  }{\la \xi_*\ra^{3+\gamma}}\left(
\frac{\la \xi \ra}{\la \xi_*\ra}\right) \,.
\end{align*}
On the other hand, if $\sqrt 2 |\xi| \geq \la \xi_* \ra$, then
\begin{align*}|K(\xi,\xi_*)| &\lesssim \int_0^{\pi\la \xi_*\ra /(2|\xi|)} \theta^{1-2s} d \theta\frac{ \la \xi\ra}{\la \xi_*\ra^{3+\gamma +1}}
\lesssim \frac{1  }{\la \xi_*\ra^{3+\gamma}}\left(
\frac{\la \xi \ra}{\la \xi_*\ra}\right)^{2s-1}\,.
\end{align*}
Hence we obtain
\begin{align}\label{later-use1}
|K(\xi,\xi_*)|
\lesssim
\frac{1  }{\la \xi_*\ra^{3+\gamma}}\left\{
\left(
\frac{\la \xi \ra}{\la \xi_*\ra}\right){\bf 1}_{\sqrt 2 |\xi| \leq \la \xi_* \ra}
+{\bf 1}_{ \sqrt 2 |\xi| \geq  \la \xi_* \ra \geq |\xi|/2}
+
\left(
\frac{\la \xi \ra}{\la \xi_*\ra}\right)^{2s}
{\bf 1}_{\la \xi_* \ra \leq |\xi|/2}\right\}\,.
\end{align}
Notice that
\begin{equation}\label{equivalence-relation}
\left \{ \begin{array}{ll}
\la \xi \ra \lesssim \la \xi_* \ra \sim \la \xi-\xi_*\ra   &\mbox{on supp ${\bf 1}_{\la \xi_* \ra\geq \sqrt 2 |\xi|}$}\\
\la \xi \ra \sim \la \xi-\xi_*\ra   &\mbox{on supp ${\bf 1}_{\la \xi_*\ra \leq |\xi |/2 } $} \\
\la \xi \ra \sim \la \xi_* \ra \gtrsim   \la \xi-\xi_*\ra  &
\mbox{on supp ${\bf 1}_{\sqrt 2 |\xi| \geq \la \xi_*\ra \geq | \xi|/2 }$\,.}
\end{array}
\right.
\end{equation}
Replacing the factors
$\la\xi \ra/\la \xi_*\ra$ and $(\la\xi \ra/\la \xi_*\ra)^{2s}$ on the right hand side of \eqref{later-use1}
by
\[
\left(\frac{\la \xi -\xi_*\ra}{\la \xi_* \ra}\right)^{s+m} \left(\frac{\la \xi \ra}{\la \xi_* \ra}\right)^{s-m}
\qquad \mbox{and } \qquad \frac{\la \xi \ra^{s+m} \la \xi-\xi_*\ra^{s-m}}{\la \xi_*\ra^{2s}}\,,
\]
respectively, we obtain
\begin{align}\label{kernel-estimate}
|K(\xi,\xi_*)| &\lesssim
\frac{\la \xi\ra^{s-m}  \la \xi-\xi_*\ra^{s+m} }{\la \xi_*\ra^{3+\gamma +2s}}
+ \frac{{\bf 1}_{ \la \xi -\xi *\ra \lesssim \la \xi_{*}  \ra}}{\la \xi_*\ra^{3+\gamma +s-m}
\la \xi-\xi_*\ra^{s+m} }   \la \xi\ra^{s-m}  \la \xi-\xi_*\ra^{s+m}\,.
\end{align}
Putting $\tilde {\hat g}(\xi)=  \la \xi \ra^{s+m}  \hat g(\xi), \tilde {\hat h}(\xi)=  \la \xi \ra^{s -m} \hat h(\xi)$,
we have by the Cauchy-Schwarz inequality
\begin{align*}
|A_{1,1}|^2 &\lesssim \int_{\RR^6}
\frac{|\hat f(\xi_*)| }{\la \xi_*\ra^{3+\gamma +2s}}|\tilde {\hat g}(\xi-\xi_*)|^2 d\xi d\xi_*
\int_{\RR^6}
\frac{|\hat f(\xi_*)| }{\la \xi_*\ra^{3+\gamma +2s}}|\tilde {\hat h}(\xi)|^2 d\xi d\xi_*\\
&+
 \int_{\RR^6}
\frac{|\hat f(\xi_*)|^2 }{\la \xi_*\ra^{6+2\gamma +2s-2m}} \frac{{\bf 1}_{ \la \xi -\xi *\ra \lesssim \la \xi_{*}  \ra}}{
\la \xi-\xi_*\ra^{2s+2m} }d\xi d\xi_*
\int_{\RR^6}
|\tilde {\hat g}(\xi-\xi_*)|^2 |\tilde {\hat h}(\xi)|^2 d\xi d\xi_*\\
&= \cA \cB +  \cD \cE\,.
\end{align*}
Since $\la \xi_*\ra^{-(3+\gamma +2s)} \in L^2$, the Cauchy-Schwarz inequality again shows
\[
\cA \lesssim \left(  \int_{\RR^3}\frac{|\hat f(\xi_*)|}{\la \xi_*\ra^{3+\gamma +2s}} d\xi_* \right) \|g\|_{H^{s+m}}
\lesssim \|f\|_{L^2}\|g\|_{H^{s+m}}
\,,\enskip \cB \lesssim \|f\|_{L^2}\|h\|_{H^{s-m}}\,.
\]
Note that
\[
\int\frac{{\bf 1}_{ \la \xi -\xi *\ra \lesssim \la \xi_{*}  \ra}}{
\la \xi-\xi_*\ra^{2s+2m} } d\xi \enskip \lesssim
\left \{ \begin{array}{lcl}
\displaystyle  \frac{1}{\la \xi_*\ra^{-3+2s+2m} }& \mbox{if} & s+m <3/2\\
 \log \la \xi_*\ra & \mbox{if} & s+m \ge 3/2\,.
\end{array}
\right. \]
Since $3+2(\gamma +2s) >0$ and $6+2 \gamma+ 2(s-m) >0$, we get
$\cD \le \|f \|_{L^2}^2 $, which concludes the desired bound for $A_{1,1}$.

Remark that if $\gamma +2s >0$ then we obtain $|A_{1,1} | \lesssim
\|F \|_{L^1} \|G\|_{H^{s+m}} \|H\|_{H^{s-m}}$ because
 $\|\hat F\|_{L^\infty} \leq \|F\|_{L^1}$.
If $0 \geq \gamma +2s > -3/2$ then we can just estimate
$|A_{1,1} | \lesssim \|F \|_{L^2} \|G\|_{H^{s+m}} \|H\|_{H^{s-m}}$.
If $0 \geq \gamma +2s > -1$
then
$|A_{1,1} | \lesssim \|F \|_{L^{3/2} }\|G\|_{H^s} \|H\|_{H^s}$.
Those follow from  the H\"older inequality and $\|\hat F\|_{L^p} \leq \|F\|_{L^q}$ with
$1/p +1/q=1$.

Now we consider  $A_{1,2} (f, g, h)$, which comes from the second order term of the Taylor expansion. Note that
$$
A_{1,2} = \iiint  b \Big({\xi\over{ | \xi |}} \cdot \sigma \Big)\int^1_0 d\tau (\nabla^2\hat \Phi_c) (\xi_* -\tau\xi^- ) \cdot\xi^- \cdot\xi^- \hat F (\xi_* ) \hat G(\xi - \xi_* ) \bar{\hat H} (\xi ) d\sigma d\xi d\xi_*\, .
$$
Again from the Appendix of \cite{amuxy4-2}, we have
$$
| (\nabla^2\hat \Phi_c) (\xi_* -\tau\xi^- ) | \lesssim {1\over{\la  \xi_* -\tau \xi^-\ra^{3+\gamma +2}}}
\lesssim
 {1\over{\la \xi_*\ra^{3+\gamma +2}}},
$$
because $|\xi^-| \leq \la \xi_*\ra/2$.
Similar to $A_{1,1}$, we can obtain
\[
|A_{1,2}| \lesssim
 \iint_{\RR^6} \tilde K(\xi, \xi_*)
\hat f (\xi_* ) \hat g(\xi - \xi_* ) \bar{\hat h} (\xi ) d\xi d\xi_* \,,
\]
where $\tilde K(\xi,\xi_*)$ has the following upper bound
\begin{align}\label{later-use2}
\tilde K(\xi,\xi_*) &\lesssim \int_0^{\min(\pi/2, \,\, \pi\la \xi_*\ra /(2|\xi|))} \theta^{1-2s} d \theta
\frac{ \la \xi\ra^2}{\la \xi_*\ra^{3+\gamma +2}}\\
&\lesssim
\frac{1  }{\la \xi_*\ra^{3+\gamma}}\left\{
\left(
\frac{\la \xi \ra}{\la \xi_*\ra}\right)^2{\bf 1}_{\sqrt 2 |\xi| \leq \la \xi_* \ra}
+{\bf 1}_{ \sqrt 2 |\xi| \geq  \la \xi_* \ra \geq |\xi|/2}
+\left(
\frac{\la \xi \ra}{\la \xi_*\ra}\right)^{2s}
{\bf 1}_{\la \xi_* \ra \leq |\xi|/2}\right\}\,,\notag
\end{align}
from which we obtain the same inequality as \eqref{kernel-estimate} for $\tilde K(\xi,\xi_*)$.
Hence we obtain
 the desired bound for $A_{1,2}$. And this completes
the proof of the lemma.
\end{proof}

\begin{lemm}\label{A-2}
We have also
\[
 | A_{2,1} |+| A_{2,2}|
 \lesssim \|f\|_{L^2} || f||_{H^{s+ m}} \|h\|_{H^{s -m}}\,.
\]
\end{lemm}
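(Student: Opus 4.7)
The plan is to obtain kernel estimates of the same form as \eqref{kernel-estimate} by exploiting that the support condition $|\xi^-|\geq \tfrac12\la\xi_*\ra$ truncates the angular singularity of $b$. Indeed, since $|\xi^-|=|\xi|\sin(\theta/2)$, the indicator forces $\sin(\theta/2)\geq \la\xi_*\ra/(2|\xi|)$ and in particular $|\xi|\gtrsim \la\xi_*\ra$ for the region to be nonempty; integrating the singular factor $b(\cos\theta)\approx \theta^{-2-2s}$ from this lower cutoff yields
$$
\int_{\SS^2}b(\cos\theta)\,{\bf 1}_{|\xi^-|\geq \la\xi_*\ra/2}\,d\sigma\lesssim \Bigl(\frac{\la\xi\ra}{\la\xi_*\ra}\Bigr)^{2s}{\bf 1}_{\la\xi\ra\gtrsim \la\xi_*\ra}.
$$
This will be the analog, in the region $|\xi^-|\geq \la\xi_*\ra/2$, of the angular estimates behind \eqref{later-use1}--\eqref{later-use2}.

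For $A_{2,2}$ the factor $\hat\Phi_c(\xi_*)$ is independent of $\sigma$, so I would integrate in $\sigma$ first and combine with the decay $|\hat\Phi_c(\xi_*)|\lesssim \la\xi_*\ra^{-3-\gamma}$ from the Appendix of \cite{amuxy4-2}. This produces a kernel bounded by $\la\xi_*\ra^{-(3+\gamma)}(\la\xi\ra/\la\xi_*\ra)^{2s}{\bf 1}_{\la\xi\ra\gtrsim\la\xi_*\ra}$, which matches the third term of \eqref{later-use1}. Using \eqref{equivalence-relation}, on the support one has $\la\xi\ra\sim \la\xi-\xi_*\ra$, and the factor $\la\xi\ra^{2s}$ can be split as $\la\xi\ra^{s-m}\la\xi-\xi_*\ra^{s+m}$, giving exactly \eqref{kernel-estimate}. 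The Cauchy--Schwarz scheme of Lemma \ref{A-1} then yields the desired bound, the key integrability $\la\xi_*\ra^{-(3+\gamma+2s)}\in L^2$ being equivalent to the hypothesis $\gamma+2s>-3/2$.

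For $A_{2,1}$ the argument $\xi_*-\xi^-$ of $\hat\Phi_c$ depends on $\sigma$, so one cannot integrate $\sigma$ out at the start. I would apply Cauchy--Schwarz directly in $(\xi,\xi_*,\sigma)$, symmetrically distributing $b(\cos\theta)^{1/2}|\hat\Phi_c(\xi_*-\xi^-)|^{1/2}|\hat f(\xi_*)|^{1/2}$ between the two halves and absorbing $\la\xi-\xi_*\ra^{s+m}$ into the $g$-factor and $\la\xi\ra^{s-m}$ into the $h$-factor. In each resulting integral the pointwise bound $|\hat\Phi_c(\xi_*-\xi^-)|\lesssim \la\xi_*-\xi^-\ra^{-3-\gamma}$ is treated by a case split on the support: when $|\xi_*|\leq |\xi^-|/2$ we have $\la\xi_*-\xi^-\ra\sim \la\xi^-\ra\gtrsim \la\xi_*\ra$, which recovers the missing decay in $\xi_*$; when $|\xi_*|\geq |\xi^-|/2\geq \la\xi_*\ra/4$, the inequality $\la\xi_*\ra\lesssim |\xi|$ transfers the missing powers into $\la\xi\ra^{2s}$. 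A change of variables $\xi\mapsto \xi-\xi_*$ in the $g$-half and the $\sigma$-truncation estimate above then reduce each piece to integrals of the same type as in Lemma \ref{A-1}.

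The main obstacle is precisely the $\sigma$-coupling of $\hat\Phi_c(\xi_*-\xi^-)$ in $A_{2,1}$, which prevents the clean kernel-estimate reduction used for $A_{1,1}$, $A_{1,2}$, and $A_{2,2}$. The weight distribution before Cauchy--Schwarz must be chosen so that, after the case split and the change of variable, both halves collapse to integrals of the form $\int |\hat f(\xi_*)|^2\la\xi_*\ra^{-2(3+\gamma+2s)}\,d\xi_*$, whose finiteness again forces the borderline condition $\gamma+2s>-3/2$ of the proposition.
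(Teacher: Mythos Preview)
Your treatment of $A_{2,2}$ is essentially the paper's: integrate $\sigma$ first against $|\hat\Phi_c(\xi_*)|\lesssim\la\xi_*\ra^{-3-\gamma}$ and recover the kernel bound \eqref{kernel-estimate}. One small imprecision: on the support $\sqrt2|\xi|\ge\la\xi_*\ra$ you do \emph{not} always have $\la\xi\ra\sim\la\xi-\xi_*\ra$; when $\sqrt2|\xi|\ge\la\xi_*\ra\ge|\xi|/2$ one has instead $\la\xi\ra\sim\la\xi_*\ra\gtrsim\la\xi-\xi_*\ra$, which is precisely what produces the second term of \eqref{kernel-estimate}. This is easily patched.

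For $A_{2,1}$, however, there is a genuine gap. In your Case~2 ($|\xi_*|\ge|\xi^-|/2$) you obtain $|\xi^-|\sim|\xi_*|\lesssim|\xi|$, but this gives \emph{no} information on $|\xi_*-\xi^-|$: one can have $\xi_*\approx\xi^-$, so that $|\hat\Phi_c(\xi_*-\xi^-)|$ is merely $O(1)$ with no decay at all. Saying that ``$\la\xi_*\ra\lesssim|\xi|$ transfers the missing powers into $\la\xi\ra^{2s}$'' does not account for the missing factor $\la\xi_*\ra^{-3-\gamma}$ that you need to make the kernel integrable; and the change of variables $\xi\mapsto\xi-\xi_*$ you suggest does not decouple $\hat\Phi_c(\xi_*-\xi^-)$ from $\sigma$. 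The paper handles this by two ingredients you are missing. First, it separates the set $\{|\xi_*\cdot\xi^-|\le\tfrac12|\xi^-|^2\}$, on which $|\xi_*-\xi^-|\ge|\xi_*|$ so the argument reduces to $A_{2,2}$; your Case~1 is a strict subset of this region. Second, on the remaining piece $A_{2,1,p}$ (where $|\xi^-|\lesssim|\xi_*|$), the paper performs a further three-way split and, crucially, the change of variables $u=\xi_*-\xi^-$ in $\xi_*$ (for fixed $\xi,\sigma$), which turns $\hat\Phi_c(\xi_*-\xi^-)$ into the integrable $\hat\Phi_c(u)$. On two of these pieces the paper also exploits that the angular factor $b$ is \emph{bounded} (since $\la\xi_*\ra\ge|\xi|/2$ and $|\xi^-|\ge\la\xi_*\ra/2$ force $\theta$ away from $0$), which is what makes the Cauchy--Schwarz split work there. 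Without the $u=\xi_*-\xi^-$ substitution your Cauchy--Schwarz scheme cannot close in Case~2.
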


\begin{proof}
In view of the definition of $A_{2,2}$, the fact that $|\xi| \sin(\theta/2) =|\xi^-| \geq \la \xi_*\ra/2$ and
$\theta \in [0,\pi/2]$ imply $\sqrt 2 |\xi| \geq \la \xi_*\ra$.
We can then
directly compute the spherical integral appearing inside $A_{2,2}$ together with $\Phi$
as follows:
\begin{align}\label{A-2-2}
&\int  b \Big({\xi\over{ | \xi |}} \cdot \sigma \Big)\Phi(\xi_*) {\bf 1}_{ | \xi^- | \ge {1\over 2}\la \xi_*\ra } d\sigma
 \lesssim  {1\over{\la \xi_* \ra^{3+\gamma }}} \frac{\la  \xi\ra^{2s} }{\la \xi_*\ra^{2s}}{\bf 1}_{\sqrt 2 |\xi| \geq \la \xi_* \ra} \\
 &\lesssim
\frac{\la \xi\ra^{s-m}  \la \xi-\xi_*\ra^{s+m} }{\la \xi_*\ra^{3+\gamma +2s}}
+ \frac{{\bf 1}_{ \la \xi -\xi *\ra \lesssim \la \xi_{*}  \ra}}{\la \xi_*\ra^{3+\gamma +s-m}
\la \xi-\xi_*\ra^{s+m} }   \la \xi\ra^{s-m}  \la \xi-\xi_*\ra^{s+m}\,,\notag
\end{align}
which yields the desired estimate for $A_{2, 2}$.

We now turn to
$$
A_{2,1}=  \iiint b\,\, {\bf 1}_{ | \xi^- | \ge {1\over 2} \la  \xi_*\ra }\hat \Phi_c (\xi_* - \xi^-) \hat f (\xi_* ) \hat g(\xi - \xi_* ) \bar{\hat h} (\xi ) d\sigma d\xi d\xi_* .
$$
Firstly, note that we can  work on the set $| \xi_* \,\cdot\,\xi^-| \ge {1\over 2} | \xi^-|^2$. In fact, on the complementary of this
set, we have
 $| \xi_* \,\cdot\,\xi^-| \leq {1\over 2} | \xi^-|^2$ so that
 $|\xi_* -\xi^-| \gtrsim | \xi_*|$, and in this case,
we can proceed in the same way as for $A_{2,2}$. Therefore, it suffices to estimate
\begin{align*}
A_{2,1,p}&=  \iiint b\,\, {\bf 1}_{ | \xi^- | \ge {1\over 2} \la \xi_*\ra }{\bf 1}_{| \xi_* \,\cdot\,\xi^-| \ge {1\over 2} | \xi^-|^2}\hat \Phi_c (\xi_* - \xi^-) \hat f (\xi_* ) \hat g(\xi - \xi_* ) \overline{{\hat h} (\xi )} d\sigma d\xi d\xi_* \,.
\end{align*}
By
\[
{\bf 1}= {\bf 1}_{\la \xi_* \ra \geq |\xi|/2} {\bf 1}_{\la\xi-\xi_* \ra \leq \la \xi_* - \xi^- \ra}
+  {\bf 1}_{\la \xi_* \ra \geq |\xi|/2} {\bf 1}_{\la\xi-\xi_* \ra > \la \xi_* - \xi^-\ra}
+  {\bf 1}_{\la \xi_* \ra < |\xi|/2}
\]
we decompose
\begin{align*}
A_{2,1,p}
=
A_{2,1,p}^{(1)} + A_{2,1,p}^{(2)} +A_{2,1,p}^{(3)} \,.
\end{align*}
On the sets for
above integrals, we have $\la \xi_* -\xi^- \ra \lesssim \,
\la \xi_* \ra$, because $| \xi^- | \lesssim | \xi_*|$
that follows from  $| \xi^-|^2 \le 2 | \xi_* \cdot\xi ^-| \lesssim |\xi^-|\, | \xi_*|$.
Furthermore, on the sets for $A_{2,1,p}^{(1)}$ and $A_{2,1,p}^{(2)}$  we have $\la \xi \ra \sim \la \xi_* \ra$,
so that $\sup \Big(b\,\, {\bf 1}_{ | \xi^- | \ge {1\over 2} \la \xi_*\ra } {\bf 1}_{\la \xi_* \ra \geq |\xi|/2}\Big) \lesssim
{\bf 1}_{|\xi^- |\leq |\xi|/\sqrt 2}$ and
$\la \xi_* -\xi^- \ra \lesssim \,
\la \xi \ra$.
Hence we have, in view of $s-m \geq 0$,
\begin{align*}
|A_{2,1,p}^{(1)} | ^2  \lesssim& \iiint \frac{
|\hat \Phi_c (\xi_* - \xi^-) |^2 |\hat f (\xi_* )|^2  }
{\la \xi_* -\xi^- \ra^{2s-2m}}\frac{ {\bf 1}_{\la\xi-\xi_* \ra \leq \la \xi_* - \xi^- \ra}}{\la\xi-\xi_* \ra^{2s+2m}}d\xi d\xi_* d \sigma\\
& \times \iiint  |\la\xi-\xi_* \ra^{s+m}\hat g(\xi - \xi_* )|^2 |\la\xi \ra^{s-m}{\hat h} (\xi ) |^2 d\sigma d\xi d\xi_*\,.
\end{align*}
If $\gamma +2s >0$ then by the change of variables $\xi_*-\xi^- \rightarrow u$ we have
\begin{align*}
|A_{2,1,p}^{(1)} | ^2  \lesssim&  \|\hat F\|^2_{L^\infty} \int
{\la u \ra^{-( 6 +2\gamma+ 2s-2m)}}    \int \frac{ {\bf 1}_{\la w \ra \leq \la u \ra}}{\la  w \ra^{2s+2m}}dw  du\,\,  \|G\|^2_{H^{s+m}} \|H\|_{H^{s-m}}^2 \\
\lesssim &
\|F\|^2_{L^1} \|G\|^2_{H^{s+m}} \|H\|_{H^{s-m}}^2 \,.
\end{align*}
If $ \gamma + 2s >-3/2$ then  with $u = \xi_* -\xi^-$ we have
\begin{align*}
|A_{2,1,p}^{(1)} | ^2  \lesssim&  \int |\hat f(\xi_*)|^2 \left\{
\sup_{u}
{\la u \ra^{-( 6 +2\gamma+ 2s-2m)}}    \int \frac{ {\bf 1}_{\la \xi^+ -u \ra \leq \la u \ra}}{\la  \xi^+ -u \ra^{2s+2m}}d\xi^+\right\} d\xi_*\,\,
 \|g\|^2_{H^{s+m}} \|h\|_{H^{s-m}}^2\\
\lesssim &
\|f\|^2_{L^2}  \|g\|^2_{H^{s+m}} \|h\|_{H^{s-m}}^2 \,,
\end{align*}
because $d\xi \sim d \xi^+$ on the support of ${\bf 1}_{|\xi^- |\leq |\xi|/\sqrt 2}$\,.
In the  case $\gamma +2s >-1$,
  by the H\"older inequality and the change of variables $u = \xi_* -\xi^-$ we have
\begin{align*}
|A_{2,1,p}^{(1)} | ^2  \lesssim& \left( \int |\hat f(\xi_*)|^3 d\xi_*\right)^{2/3} \\
&\times  \left(\int \left(
{\la u \ra^{-( 6 +2\gamma+ 2s-2m)}}    \int \frac{ {\bf 1}_{\la \xi^+ -u \ra \leq \la u \ra}}{\la  \xi^+ -u \ra^{2s+2m}}d\xi^+\right)^3
 du\right)^{1/3}
\,\,   \|g\|^2_{H^{s+m}} \|h\|_{H^{s-m}}^2 \\
\lesssim &
\|f\|^{2}_{L^{3/2}}  \|g\|^2_{H^{s+m}} \|h\|_{H^{s-m}}^2 \,.
\end{align*}
As for $A_{2,1,p}^{(2)}$ we have by the Cauchy-Schwarz inequality
\begin{align*}
|A_{2,1,p}^{(2)} | ^2  \lesssim& \iiint \frac{
|\hat \Phi_c (\xi_* - \xi^-) | |\hat f (\xi_* )|  }
{\la \xi_* -\xi^- \ra^{2s}}|{\la\xi-\xi_* \ra^{s+m}} \hat g(\xi -\xi_*)|^2 d \sigma d\xi d\xi_* \\
& \times \iiint \frac{
|\hat \Phi_c (\xi_* - \xi^-) | |\hat f (\xi_* )|  }
{\la \xi_* -\xi^- \ra^{2s}} |\la\xi \ra^{s-m}{\hat h} (\xi ) |^2 d\sigma d\xi d\xi_*\,.
\end{align*}
Since we have
\[
\iint \frac{
|\hat \Phi_c (\xi_* - \xi^-) | |\hat f (\xi_* )|  }
{\la \xi_* -\xi^- \ra^{2s}} d \xi_* d\sigma \lesssim \left \{ \begin{array}{ll}
\|f\|_{L^1} & \mbox{if} \enskip \gamma +2s >0\\
\|f\|_{L^2} & \mbox {if} \enskip \gamma +2s >-3/2\,,\\
\|f\|_{L^{3/2}} & \mbox {if} \enskip  \gamma +2s >-1\,,
\end{array} \right.
\]
we have the desired estimates for $A_{2,1,p}^{(2)}$.

On the set $A_{2,1,p}^{(3)}$ we have $\la \xi \ra \sim \la \xi - \xi_*\ra$. Hence
\begin{align*}
|A_{2,1,p}^{(3)} | ^2  \lesssim& \iiint b\,\, {\bf 1}_{ | \xi^- | \ge {1\over 2} \la \xi_*\ra }\frac{
|\hat \Phi_c (\xi_* - \xi^-) | |\hat f (\xi_* )|  }
{\la \xi\ra^{2s}}|{\la\xi-\xi_* \ra^{s+m}} \hat g(\xi -\xi_*)|^2 d \sigma d\xi d\xi_* \\
& \times \iiint b\,\, {\bf 1}_{ | \xi^- | \ge {1\over 2} \la \xi_*\ra }\frac{
|\hat \Phi_c (\xi_* - \xi^-) | |\hat f (\xi_* )|  }
{\la \xi \ra^{2s}} |\la\xi \ra^{s-m}{\hat h} (\xi ) |^2 d\sigma d\xi d\xi_*\,.
\end{align*}
We use the change of variables in $\xi_*$,  \, $u= \xi_* -\xi^-$.
Note that $| \xi ^-| \ge {1\over 2} \la u +\xi^-\ra $ implies  $|\xi^-| \geq \la u\ra/\sqrt {10}$.
If $\gamma + 2s >0$ then we have
\begin{align*}
\iint b\,\, {\bf 1}_{ | \xi^- | \ge {1\over 2} \la \xi_*\ra }\frac{
|\hat \Phi_c (\xi_* - \xi^-) | |\hat f (\xi_* )|  }
{\la \xi\ra^{2s}} d \sigma d\xi_*
&\lesssim \|\hat f\|_{L^\infty} \int \left (\frac{|\xi|}{\la u\ra}\right)^{2s}
\la u \ra^{-(3+\gamma)} \la \xi\ra^{-2s} du\\
&\lesssim \|\hat f\|_{L^\infty}\,.
\end{align*}
On the other hand, if $\gamma +2s >-3/2$ ( or  $0 \geq \gamma +2s >-1$ )
then this integral is upper bounded  by
\begin{align*}
&\iint b\,\, {\bf 1}_{ | \xi^- | \ge {1\over 2} \la \xi_*\ra }\frac{
|\hat \Phi_c (\xi_* - \xi^-) |}{\la \xi \ra^{2s/p} \la \xi_* -\xi^-\ra^{2s/q}}\frac{\la \xi_*\ra^{2s/q} |\hat f (\xi_* )|  }
{\la \xi \ra^{2s/q}}
 d \sigma d\xi_*
\\
&\leq \left( \iint b\, {\bf 1}_{ | \xi^- | \ge {1\over 2} \la \xi_* \ra }\frac{
|\hat \Phi_c (\xi_* - \xi^-) |^p}{\la \xi \ra^{2s} \la \xi_* -\xi^-\ra^{2sp/q}} d\sigma d \xi_*\right)^{1/p}
\left(\iint b\,   {\bf 1}_{ | \xi^- | \ge {1\over 2} \la \xi_*\ra }\frac{\la \xi_*\ra^{2s} |\hat f (\xi_* )|^q }
{\la \xi \ra^{2s}} d \sigma d\xi_* \right)^{1/q}\\
&\leq \left( \iint b\, {\bf 1}_{ | \xi^- |  \gtrsim \la u \ra }\frac{
|\hat \Phi_c (u) |^p}{\la \xi \ra^{2s} \la u \ra^{2sp/q}} d\sigma d u\right)^{1/p}
\|\hat f\|_{L^q} \lesssim \int \frac{du}{\la u \ra^{p(3+\gamma+2s)}} \|f\|^p\,,
\end{align*}
where $1/p +1/q =1, p= 2$ ( or $ p=3/2$).
Hence
we also obtain the desired estimates for $A_{2,1,p}^{(3)}$. The proof of the lemma is complete
\end{proof}

Proposition \ref{upper-c} is then a direct consequence of Lemmas
\ref{A-1} and \ref{A-2}, while the statements of Remark
\ref{weight-version123} are mentioned in the proof of the two
previous lemmas.

\subsection{Estimate of commutators with weights}
The following estimation on commutators will now be proved. Because
of the weight loss related to the Bolzmann equation, test functions
involve these weights, and therefore, this estimation is quite
necessary.
\begin{prop}\label{IV-prop-2.5}
Let $0<s<1, \gamma>\max\{-3, -2s -3/2\}$.  For any $ \ell, \beta, \delta \in \RR$
$$
\Big|\Big(W_\ell\,Q_c(f, g)-Q_c
(f, W_\ell g),\, h\Big)\Big|\lesssim
\|f\|_{L^2_{\ell-1-\beta-\delta}}
\|g\|_{H^{(2s-1+\epsilon)^+}_\beta}||h||
_{L^2_ \delta}.
$$
\end{prop}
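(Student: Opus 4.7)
The starting point is the explicit commutator identity
$$W_\ell(v)\,Q_c(f,g)(v) - Q_c(f, W_\ell g)(v) = \int_{\RR^3\times \SS^2} b(\cos\theta)\,\Phi_c(|v-v_*|)\, f(v'_*)\, g(v')\,[W_\ell(v)-W_\ell(v')]\,d\sigma\, dv_*,$$
in which the ``loss'' parts have cancelled. A crucial simplification here, in contrast with the global operator $Q$, is that $\Phi_c$ is supported on $\{|v-v_*|\le R\}$ for a fixed $R$; throughout the integration domain the four weights $\langle v\rangle, \langle v_*\rangle, \langle v'\rangle, \langle v'_*\rangle$ are therefore mutually equivalent up to universal constants, so weight factors may be shifted freely between primed and unprimed variables at no essential cost.

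Next I would Taylor-expand
$$W_\ell(v) - W_\ell(v') = \nabla W_\ell(v)\cdot(v-v') + \tilde r(v,v'),\qquad |\tilde r(v,v')|\lesssim |v-v'|^2\,\langle v\rangle^{\ell-2},$$
and treat the two contributions separately. The quadratic remainder is directly controllable: since $|v-v'|^2 = |v-v_*|^2\sin^2(\theta/2)$ and $b(\cos\theta)\sin^2(\theta/2)$ is integrable over $\SS^2$ for $s<1$, the pre--post collisional change of variables (which moves the primes off $f'_*\, g'$) reduces this piece to a finite-kernel integral bounded by $\|f\|_{L^2_{\ell-1-\beta-\delta}}\|g\|_{L^2_\beta}\|h\|_{L^2_\delta}$ via Cauchy--Schwarz. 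For the linear term I would further split $g(v')=g(v)+[g(v')-g(v)]$. The $g(v)$ contribution is handled through the spherical symmetry identity
$$\int_{\SS^2} b(\cos\theta)(v-v')\,d\sigma = \frac{v-v_*}{2}\int_{\SS^2} b(\cos\theta)(1-\cos\theta)\,d\sigma,$$
and since $b(\cos\theta)(1-\cos\theta)\sim \theta^{-2s}$ is integrable for $s<1$, after the $\sigma$-integration one obtains a pointwise bound by $|v-v_*|\,\langle v\rangle^{\ell-1}\,|g(v)|$; the desired estimate then follows via Cauchy--Schwarz with the weight splitting $\ell-1 = (\ell-1-\beta-\delta)+\beta+\delta$.

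The main obstacle lies in the remaining piece
$$\iiint b(\cos\theta)\,\Phi_c(|v-v_*|)\,f(v'_*)\,[g(v')-g(v)]\,h(v)\,\nabla W_\ell(v)\cdot(v-v')\,d\sigma\, dv_*\, dv.$$
Here the factor $|v-v'|=|v-v_*|\sin(\theta/2)$ eliminates one power of the angular singularity, leaving a kernel of order $\theta^{-1-2s}$ in $b(\cos\theta)\sin(\theta/2)$. When $s<1/2$ this kernel is integrable on $\SS^2$ and Cauchy--Schwarz directly yields the bound with $g\in L^2_\beta$, so the $\epsilon$ in the exponent is merely a safety margin. When $s\ge 1/2$, however, the remaining angular singularity must be absorbed by using the finite-difference $g(v')-g(v)$, and this is precisely where the $(2s-1+\epsilon)^+$ fractional regularity of $g$ becomes essential; the corresponding estimate is a trace-type Sobolev bound obtained by passing to the Fourier variable and mimicking the kernel decomposition carried out in the proofs of Lemmas \ref{A-1}--\ref{A-2}. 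The delicate part is the final bookkeeping: distributing the weight $\ell-1$ produced by $\nabla W_\ell$ among the three arguments as $\ell-1-\beta-\delta$, $\beta$, and $\delta$ while preserving the precise Sobolev index, and ensuring that the weighted Fourier estimate closes without requiring any additional regularity on $h$ beyond $L^2_\delta$.
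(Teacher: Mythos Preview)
Your plan—Taylor expand $W_\ell$, isolate $g(v')-g(v)$, shift weights freely on $\mathrm{supp}\,\Phi_c$—is in the right spirit, but two points need repair.

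The symmetry step for the ``$g(v)$ contribution'' is misapplied: that integrand still contains $f(v'_*)$, which depends on $\sigma$, so you cannot simply pull the identity $\int_{\SS^2}b(\cos\theta)(v-v')\,d\sigma=\tfrac{v-v_*}{2}\int b(1-\cos\theta)\,d\sigma$ through. The term does in fact vanish, but the correct justification is a Carleman change of variables (for fixed $v$ and $v'_*$ the kernel $b\,\Phi_c$ becomes radial in $w=v-v'$, and the integrand is odd in $w$); this is also what underlies the paper's terse ``symmetry'' claim for its $J_1$.

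The substantive gap is your ``main obstacle''. Gesturing at a Fourier argument ``mimicking Lemmas~\ref{A-1}--\ref{A-2}'' does not explain how the $(2s-1+\epsilon)^+$ derivatives land on $g$ alone while $f,h$ remain in weighted $L^2$; those lemmas are trilinear upper bounds on $(Q_c(f,g),h)$, not square finite-difference estimates. The paper's key move is to apply the pre-post change $(v,v_*)\leftrightarrow(v',v'_*)$ to the \emph{entire} commutator at the outset, so that $f$ sits at the unprimed $v_*$ throughout. One then writes $g=g'+(g-g')$: for the $(g-g')$ piece (the paper's $J_2$), Cauchy--Schwarz with the angular splitting $\sin(\theta/2)=\sin^{s+\epsilon}\cdot\sin^{1-s-\epsilon}$ isolates a factor of the shape $\iiint b\sin^{2-2s-2\epsilon}(\theta/2)\Phi_c^{\gamma+2-2s}|f_*|\,|\la v\ra^\beta g-(\la v\ra^\beta g)'|^2$, and Lemma~\ref{estimates-square} (applied with effective exponents $(2s-1+\epsilon)^+$ and $\gamma+2-2s$) gives precisely $\|f\|_{L^2_{\ell-1-\beta-\delta}}\|g\|_{H^{(2s-1+\epsilon)^+}_\beta}^2$, with no derivative on $h$. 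Your ordering leaves $f$ at $v'_*$ in the hard term, where this lemma is not directly available; the fix is to move the pre-post change to the very beginning.
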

The next two lemmas are a preparation for the complete proof of this
Proposition.
\begin{lemm} \label{estimates-facile}
If $\lambda <3/2$ then
\begin{align}\label{good}
\iint_{|v-v_*| \leq 1}  |f(v_*)|\, \frac{ |g(v)|^2}{|v-v_*|^\lambda}dv dv_* \lesssim \|f\|_{L^2} \|g\|^2_{L^2}\,.
\end{align}
If $3/2< \lambda <3$ then
$$
\iint_{|v-v_*| \leq 1}  |f(v_*)|\, \frac{ |g(v)|^2}{|v-v_*|^\lambda}dv dv_*  \lesssim \|f\|_{L^2} \|g\|_{H^{\frac{\lambda}{2}-\frac 34}}\,.
$$
\end{lemm}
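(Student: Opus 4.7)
The statement splits naturally into two regimes of $\lambda$ that I would handle with rather different tools: when $\lambda < 3/2$ the truncated kernel is locally square-integrable in its difference variable, so a single Cauchy-Schwarz suffices, whereas when $\lambda > 3/2$ we must borrow fractional regularity from $g$ via Hardy-Littlewood-Sobolev combined with a Sobolev embedding.

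For the first bound, my plan is to integrate first in $v_*$ with $v$ held fixed and apply Cauchy-Schwarz:
\begin{equation*}
\int_{|v-v_*|\leq 1} |f(v_*)|\,|v-v_*|^{-\lambda}\,dv_* \leq \|f\|_{L^2}\Big(\int_{|u|\leq 1}|u|^{-2\lambda}\,du\Big)^{1/2}.
\end{equation*}
The factor on the right is finite exactly when $2\lambda < 3$, which is the hypothesis. Multiplying this uniform-in-$v$ bound by $|g(v)|^2$ and integrating in $v$ closes the first estimate.

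For the second bound, I would first drop the cutoff ${\bf 1}_{|v-v_*|\leq 1}$ (which only enlarges the integral) and recognise the remaining $v$-integral as the Riesz convolution $|\cdot|^{-\lambda} * |g|^2$ evaluated at $v_*$. A Cauchy-Schwarz in $v_*$ extracts $\|f\|_{L^2}$, after which the Hardy-Littlewood-Sobolev inequality
\begin{equation*}
\bigl\||\cdot|^{-\lambda} * |g|^2\bigr\|_{L^2(\RR^3)} \lesssim \bigl\||g|^2\bigr\|_{L^p(\RR^3)}, \qquad \frac{1}{p} = \frac{3}{2}-\frac{\lambda}{3},
\end{equation*}
applies because $3/2 < \lambda < 3$ places $p$ in the admissible range $(1,2)$. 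Writing $\||g|^2\|_{L^p} = \|g\|_{L^{2p}}^2$ with $2p = 12/(9-2\lambda) \in (2,4)$, the standard Sobolev embedding $H^r(\RR^3)\hookrightarrow L^{2p}(\RR^3)$ with $r/3 = 1/2 - 1/(2p)$ delivers precisely $r = \lambda/2 - 3/4$, matching the announced exponent.

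The main obstacle is really just exponent bookkeeping: the hypothesis $3/2 < \lambda < 3$ must simultaneously place $\lambda$ in the HLS range $(0,3)$, make $p$ strictly between $1$ and $2$, and keep the Sobolev exponent $r = \lambda/2 - 3/4$ inside $[0, 3/2)$ so that the embedding into $L^{2p}$ is available. All three checks reduce at once to the stated range of $\lambda$, so no further analytic input is required.
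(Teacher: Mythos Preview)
Your proof is correct and follows essentially the same route as the paper: Cauchy--Schwarz against the locally $L^2$ kernel for $\lambda<3/2$, and Hardy--Littlewood--Sobolev followed by the Sobolev embedding $H^{\lambda/2-3/4}(\RR^3)\hookrightarrow L^{2p}(\RR^3)$ for $3/2<\lambda<3$. The only cosmetic difference is that the paper invokes the bilinear form of HLS directly, whereas you first extract $\|f\|_{L^2}$ by Cauchy--Schwarz and then apply the convolution form; the exponent bookkeeping is identical.
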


\begin{proof}
Since $|v_*|^{-\lambda} {\bf 1}_{|v_*| \leq 1} \in L^2$ for $\lambda <3/2$, it follows from
the Cauchy-Schwarz inequality that if $\lambda < 3/2$ then
\begin{align*}
\iint_{|v-v_*| \leq 1}  |f_*|\, \frac{ |g|^2}{|v-v_*|^\lambda}dv dv_*
&\le \int |g|^2 \Big( \int_{|v-v_*| \leq 1}  |v- v_*|^{-2\lambda}dv_* \Big)^{1/2}
\Big(\int |f_*|^2 dv_*\Big)^{1/2}dv\\
&\lesssim \|f\|_{L^2} \|g\|^2_{L^2}\,.
\end{align*}
It follows from the Hardy-Littlewood-Sobolev inequality that if $3/2 < \lambda <3$ then
\begin{align*}
\iint_{|v-v_*| \leq 1}  |f_*|\, \frac{ |g|^2}{|v-v_*|^\lambda}dv dv_*
&\lesssim  \|f\|_{L^2}\,\,  \|g^2\|_{L^{p} }\enskip \enskip \mbox{with} \enskip \frac{1}{p} = \frac 3 2 - \frac{\lambda}{3} <1\\
&
\lesssim  \|f\|_{L^2}\,\,   \|g\|^2_{H^{\frac{\lambda}{2}-\frac 34}}\,
\end{align*}
because of the Sobolev embedding theorem.
\end{proof}

\begin{lemm} \label{estimates-square}
Let $0<s<1$ and $\gamma >\max\{-3, -2s -3/2\}$. Then
$$
\iiint b \Phi_c^{\gamma} |f(v_*)| |g(v)-g(v')|^2 dv dv_*d\sigma  \lesssim \|f\|_{L^2} \|g\|_{H^{s}}^2.
$$
\end{lemm}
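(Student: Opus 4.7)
I split the angular kernel via a smooth cutoff $\chi$ supported near $\theta=0$: $b = b\chi + b(1-\chi)$. For the bounded part $b(1-\chi) \in L^1(\SS^2)$, I use $|g(v)-g(v')|^2 \le 2|g(v)|^2 + 2|g(v')|^2$. The $|g(v)|^2$ piece reduces, after the finite $\sigma$-integration, to $\iint \Phi_c(v-v_*)|f(v_*)||g(v)|^2\,dv\,dv_*$, which is controlled by $\|f\|_{L^2}\|g\|_{H^s}^2$ via Lemma~\ref{estimates-facile} applied with $\lambda=-\gamma$; note that the hypothesis $\gamma>-2s-3/2$ is exactly the threshold that keeps the Sobolev exponent $\max(0,-\gamma/2-3/4)$ below $s$. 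The $|g(v')|^2$ piece is then reduced to the same form by the change of variables $v\mapsto v'$ at fixed $(v_*,\sigma)$, whose Jacobian $4/\cos^{2}(\theta/2)$ is bounded for $\theta\le \pi/2$ and under which $|v-v_*|$ is comparable to $|v'-v_*|$.

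For the singular part $b\chi$ I exploit the cancellation in $|g(v)-g(v')|^2$. Setting $z=v'-v=\tfrac{|u|}{2}(\sigma-\hat u)$ with $|u|=|v-v_*|$ and $|z|=|u|\sin(\theta/2)$, the identity $b(\cos\theta)\sim \theta^{-2-2s}$ together with the switch $\theta\mapsto |z|$ and the azimuthal parametrization $\phi$ gives
\[
b\chi(\cos\theta)\,d\sigma \;\sim\; \frac{4|u|^{2s}}{|z|^{1+2s}}\,d|z|\,d\phi.
\]
After substituting this, the plan is to perform a global change of variables $(u,\sigma)\mapsto (z,|u|,\gamma')$ (the azimuthal parameter $\gamma'$ parametrizing $\hat u$ on the constraint circle $\hat u\cdot\hat z=-|z|/|u|$), which produces the identity $du\,d\sigma \sim |z|^{-1}\,dz\,d|u|\,d\gamma'$. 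Putting everything together,
\[
I_{\mathrm{small}} \;\lesssim\; \int dv \int dy\;\frac{|g(v)-g(v+y)|^2}{|y|^{3+2s}}\,M(v,y),
\]
where $M(v,y)$ aggregates the remaining $(|u|,\gamma')$-integral of $|f(v-u)|$ weighted by $|u|^{\gamma+2s}\phi(|u|)\mathbf{1}_{|u|\ge \sqrt{2}|y|}$.

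To conclude, I would bound $M$ in $L^\infty$ by $\|f\|_{L^2}$ using Young's convolution inequality: after the aforementioned change of variables the weight collapses (once combined with the $d\hat y$-integration implicit in the Gagliardo representation) into a full three-dimensional convolution of $|f|$ with the radial kernel $\Psi(u)=|u|^{\gamma+2s}\phi(|u|)$. Since $2(\gamma+2s)>-3$ exactly when $\gamma>-2s-3/2$, one has $\Psi\in L^2(\RR^3)$ and Young's inequality gives $\|M\|_{L^\infty}\lesssim \|f\|_{L^2}\|\Psi\|_{L^2}\lesssim \|f\|_{L^2}$. Combined with the Gagliardo identity $\int dv\,dy\,|g(v)-g(v+y)|^2/|y|^{3+2s} \sim [g]_{\dot H^s}^2$, this produces the claimed bound $I_{\mathrm{small}}\lesssim \|f\|_{L^2}\|g\|_{H^s}^2$.

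\textbf{Main obstacle.} The delicate step is the bound on $M(v,y)$. Written directly, the $(|u|,\gamma')$-integral parametrizes a two-dimensional surface in $u$-space (the plane $\{u\cdot\hat z=-|z|\}$), so naive estimation would give a planar trace integral of $|f|$ that is not controlled by $\|f\|_{L^2}(\RR^3)$. The key is to combine the auxiliary $d\hat z$-average (which is present because we integrate $z\in\RR^3$ rather than only its magnitude) with the planar $d\gamma'\,d|u|$-integration: this precisely restores the missing transverse direction and yields a full three-dimensional convolution structure in $u$. The threshold $\gamma+2s>-3/2$ is exactly what is needed for the resulting kernel $\Psi$ to belong to $L^2(\RR^3)$, matching the sharpness of the hypothesis stated in the lemma.
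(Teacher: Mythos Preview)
Your treatment of the bounded-angle part $b(1-\chi)$ is correct and is essentially what the paper does for the cancellation term via Lemma~\ref{estimates-facile}. The difficulty you identify for the singular part is real, but your proposed resolution does not work. The claim that the $d\hat y$-average ``restores the missing transverse direction'' and yields a genuine three-dimensional convolution cannot be executed: the Gagliardo integrand $|g(v)-g(v+y)|^2$ depends on the full vector $y$, not only on $|y|$, so the spherical average over $\hat y$ cannot be separated from it. Concretely, you are left with
\[
I_{\mathrm{small}}\;\lesssim\;\int dv\int dy\;\frac{|g(v)-g(v+y)|^2}{|y|^{3+2s}}\,M(v,y),
\]
where for each fixed $(v,y)$ the factor $M(v,y)$ is a weighted integral of $|f(v-\cdot)|$ over the \emph{plane} $\{u:u\cdot\hat y=-|y|\}$. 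A pointwise bound $M(v,y)\lesssim\|f\|_{L^2(\RR^3)}$ would require controlling a two-dimensional trace of an $L^2(\RR^3)$ function, which is impossible in general; and any attempt to average $M$ over $\hat y$ first forces you to also average $|g(v)-g(v+y)|^2$ over $\hat y$, which destroys the Gagliardo structure (the resulting object is a Besov-type quantity not comparable to $\|g\|_{H^s}$).

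The paper avoids this obstacle entirely by a different, much shorter route: it uses the algebraic identity
\[
\Big(Q_c(|f|,g),g\Big)_{L^2}=-\tfrac12\iiint b\,\Phi_c\,|f_*|\,(g-g')^2\,d\sigma\,dv_*\,dv+\tfrac12\iiint b\,\Phi_c\,|f_*|\,\big({g'}^2-g^2\big)\,d\sigma\,dv_*\,dv.
\]
The left-hand side is already controlled by Proposition~\ref{upper-c} with $m=0$, giving $\lesssim\|f\|_{L^2}\|g\|_{H^s}^2$. The second term on the right is handled by the cancellation lemma of \cite{al-1}, which converts it into $\int|f_*|\,S(v-v_*)\,g(v)^2\,dv\,dv_*$ with $|S(z)|\lesssim |z|^\gamma{\bf 1}_{|z|\lesssim 1}$; this is then bounded exactly by your Lemma~\ref{estimates-facile} argument. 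Thus the Dirichlet-type integral $\iiint b\Phi_c|f_*|(g-g')^2$ is obtained as the difference of two already-estimated quantities, and the delicate Carleman change of variables is never needed.
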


\begin{proof}
Note that
\begin{align*}
\Big(Q_c^{\Phi_{\gamma}} (|f|, g),g\Big) =&- \frac{1}{2}\iiint b \Phi_c^{\gamma} |f_*| |g-g'|^2 dv dv_*d\sigma \\
&+ \frac{1}{2}\iiint b \Phi_c^{\gamma} |f_*| \Big( g'^2 -g^2\Big) dv dv_*d\sigma \,.
\end{align*}
Since Proposition \ref{upper-c} with $m=0$ is applicable to the left hand side,
it suffices to consider the second term of the right hand side.
It follows from the cancellation lemma of \cite{al-1} (more precisely the formula  (29)  there) that
\begin{align*}
\iiint b \Phi_c^{\gamma} |f_*| \Big( g'^2 -g^2\Big) dv dv_*d\sigma =  \int |f(v_*)| S(v-v_*) g(v)^2 dv dv_*\,,
\end{align*}
where
\begin{align*}
S(v-v_*) =& \Phi_{\gamma}(v-v_*)\phi(v-v_*) \Big( 2\pi \int_0^{\pi/2} b(\cos \theta) \sin \theta
\left(\frac{1}{\cos^{3+\gamma+1}( \theta/2)}- 1\right) d\theta\Big)\\
&+  \Phi_{\gamma}(v-v_*)\Big( 2\pi \int_0^{\pi/2}
\frac{b(\cos \theta) \sin \theta  }{\cos^{3+\gamma+1}( \theta/2)}\left(\phi\Big(
\frac{v-v_*}{\cos (\theta/2)}\Big) -  \phi(v-v_*) \right) d\theta\Big)\,.
\end{align*}
The integral of the second term on the right hand side can be written as $\tilde \phi(v-v_*)$ whose support
is contained in $\{ 0 < |v-v_*| \lesssim 1\}$.
Since  $s > -\gamma/2 -3/4$, the estimation for the first term just follows from Lemma
\ref{estimates-facile} because the case $\gamma = -3/2$ can be treated as $\gamma -\varepsilon$
for any small $\varepsilon >0$.
\end{proof}

\noindent{\em Proof of Proposition \ref{IV-prop-2.5}.} We write
\begin{align*}\label{J-estimate}
\Big(W_\ell Q_c(f,g) - Q_c(f, W_\ell g), h \Big)
=& \iiint b\Phi_c f_*'g'\Big(W_\ell(v) - W_\ell(v')
\Big) h dv dv_* d\sigma \notag\\
= &
 \iiint b\Phi_c \Big(W_\ell(v') - W_\ell(v)
\Big)  f_* g'h' dv dv_* d\sigma \\
&+ \iiint b\Phi_c\Big(W_\ell(v') - W_\ell(v)
\Big) f_* \Big(g-g'\Big) h'dv dv_* d\sigma \notag \\
=& J_1 + J_2\,. \notag
\end{align*}
Set $v_\tau = v + \tau(v'-v)$ for $\tau \in [0,1]$ and notice that
\begin{align*}
|W_\ell (v') -W_\ell (v) | \lesssim  \int_0^1 W_{\ell-1} (v_\tau) d\tau |v-v_*| \sin (\theta/2)\,.
\end{align*}
On the support of $\phi(v-v_*)$ we have for a large $C >0$
\begin{align*}
  &\la v_*\ra \lesssim  \frac{1}{2}   + \frac{1} {C}(|v_*|- |v -v_*| )\leq        \frac{1}{2}   + \frac{1} {C}(|v_*|- |v_\tau -v_*| )\leq \la v_\tau \ra \\
  &\leq 1+ |v_*| +|v_\tau-v_*| \leq 1 + |v_*| +|v-v_*|
\lesssim \la v_* \ra\,,
\end{align*}
so that $\la v_\tau \ra \sim \la v_* \ra \sim \la v \ra\sim \la v' \ra$.
The Cauchy-Schwarz inequality shows
\begin{align*}
|J_2|^2 &\lesssim \iiint b(\cos \theta)  \sin^{2s+2\varepsilon} (\theta/2)\Phi_c^{\gamma+2s} |\la v_* \ra^{\ell-1-\beta-\delta } f_*|
\, |\la v' \ra^{\delta} h'|^2 dv dv_* d\sigma\\
&\times \Big(\iiint b(\cos \theta)  \sin^{2 -2s-2\varepsilon}(\theta/2)\Phi_c^{\gamma+2-2s} |\la v_* \ra^{\ell-1-\beta-\delta} f_*||\la v\ra^\beta g- (\la v\ra^\beta g)'|^2 dv dv_* d\sigma\\
&\  + \iiint b(\cos \theta)  \sin^{2 -2s-2\varepsilon}(\theta/2)\Phi_c^{\gamma+2-2s} |\la v_* \ra^{\ell-1-\beta-\delta} f_*|
\big(\la v\ra^\beta - \la v'\ra^\beta\big)^2 |g'|^2 dv dv_* d\sigma\Big)\\
&= J_{2,1} \times \Big( J_{2,2}^{(1)} + J_{2,2}^{(2)}\Big)\,.
\end{align*}
Take  the change of variables $v \rightarrow v'$ for $J_{2,1} $.  Since $-(\gamma+ 2s) < 3/2$, it follows from \eqref{good} that
\[ J_{2,1} \lesssim \iint_{|v'-v_*|\lesssim 1}  |\la v_* \ra^{\ell-1-\beta-\delta } f_*|
\,\frac{ |\la v' \ra^{\delta} h'|^2 }{ |v'-v_*|^{-\gamma-2s}}  dv' dv_* \lesssim
\|f \|_{L^2_{\ell-1{-\beta}-\delta} }\|h\|^2_{L^2_\delta}
\]
Apply  Lemma \ref{estimates-square} with $s =( 2s-1+ \varepsilon )^+$ and $\gamma = \gamma +2-2s$
to $J_{2,2}^{(1)}$. Then
\[
J_{2,2}^{(1)} \lesssim
 \|f\|_{L^2_{\ell-1{-\beta}-\delta} } \|g\|^2 _{H^{(2s-1+ \varepsilon)^+}_\beta}
\]
because $
\max\{(2s-1 +\varepsilon)^+ , -
\frac{\gamma}{2}+s -1 -\frac 3 4\} = (2s-1 +\varepsilon)^+$. Since \eqref{good} also implies
\[
J_{2,2}^{(2)} \lesssim \iint_{|v'-v_*|\lesssim 1}   |v'-v_*|^{\gamma+4-2s} |\la v_* \ra^{\ell-1-\beta-\delta-2} f_*|
|\la v'\ra^\beta g'|^2 dv' dv_*  \lesssim \|f\|_{L^2_{\ell-3{-\beta}-\delta} } \|g\|^2 _{L^2_\beta}\,,
\]
we obtain the desired bound for $J_2$.
As for $J_1$ we use the Taylor expansion
\[
\la v' \ra^\ell - \la v \ra^\ell = \nabla \Big( \la v' \ra^\ell\Big)\cdot (v'-v)
+ \frac{1}{2} \int_0^1 \nabla^2 \Big( \la v_\tau \ra^\ell \Big)d\tau(v'-v)^2 \,.
\]
Then, it follows from the symmetry that the integral corresponding to the first term vanishes, so that
we have
\begin{align*}
\Big|J_1\Big|\lesssim & \left|\iiint b(\cos \theta)\Phi_c \sin^2( \theta/2)|v'-v_*|^{2}\nabla^2 \Big( \la v_\tau\ra^\ell\Big)| f_* g'h '|dv dv_* d\sigma \right|\\
\lesssim &  \iint_{|v'-v_*|\lesssim 1} |v'-v_*|^{\gamma+2} |\la v_* \ra^{\ell-2-\beta-\delta } f_*|
\, |(\la v \ra^{\beta} g)'| |(\la v \ra^{\delta}h)'|  dv' dv_* \,\\
\lesssim& \|f\|_{L^2_{\ell-2{-\beta}-\delta} } \|g\|^2 _{L^2_\beta}\|h\|^2 _{L^2_\delta}\,,
\end{align*}
which completes the proof of the of Proposition \ref{IV-prop-2.5}.

Now using \eqref{commutators-I} with $Q_{\bar c}(f, g)$ and the Proposition \ref{IV-prop-2.5}, we get
\begin{prop} Let $0<s<1, \gamma>\max\{-3, -2s -3/2\}$. For any $\ell\in\RR$,
\begin{equation}\label{IV-4.3}
\Big|\Big(W_\ell\,Q(f, g)-Q
(f, W_\ell g),\, h\Big)\Big|\lesssim
\|f\|_{L^2_{\ell+3/2+(2s-1)^++\gamma^++\epsilon}}
\|g\|_{H^{(2s-1+\epsilon)^+}_{\ell+(2s-1)^++\gamma^+}}
||h||_{L^2}.
\end{equation}
\end{prop}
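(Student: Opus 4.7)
The strategy is to follow the decomposition $Q = Q_c + Q_{\bar c}$ used throughout this section and combine two already-established commutator bounds, one for each piece. The singular part is controlled by Proposition \ref{IV-prop-2.5}, which was proved precisely so that its parameters can be tuned to fit into such a combination, while the smooth part is covered by the bound \eqref{commutators-I} from \cite{amuxy-nonlinear-3}: as the authors observe at the end of Section~2.1, since $\Phi_{\bar c} \leq \tilde \Phi_\gamma$ pointwise and $\Phi_{\bar c}$ is smooth, \eqref{commutators-I} applies verbatim to $Q_{\bar c}$ in place of $Q_{\tilde \Phi_\gamma}$.

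For the smooth part I would apply the $Q_{\bar c}$-version of \eqref{commutators-I} and then convert the $L^1$-norm on $f$ to an $L^2$-norm by Cauchy--Schwarz against the weight $\langle v\rangle^{-3/2-\epsilon}\in L^2(\RR^3_v)$, losing $3/2+\epsilon$ moments. This produces exactly the weight $\ell + 3/2 + (2s-1)^+ + \gamma^+ + \epsilon$ on $f$ appearing on the right-hand side of \eqref{IV-4.3}, with the $g$- and $h$-norms already matching by construction of \eqref{commutators-I}.

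For the singular part I would invoke Proposition \ref{IV-prop-2.5} with $\delta = 0$ and $\beta = \ell + (2s-1)^+ + \gamma^+$, which forces the $g$- and $h$-norms to coincide on the nose with those on the right-hand side of \eqref{IV-4.3}. The induced $f$-norm is $\|f\|_{L^2_{-1-(2s-1)^+-\gamma^+}}$, and this is dominated by $\|f\|_{L^2_{\ell+3/2+(2s-1)^++\gamma^++\epsilon}}$ in the parameter range relevant to the Boltzmann application (with a harmless downward adjustment of $\beta$ if $\ell$ is very negative, which only strengthens the $g$-norm). Summing the two resulting inequalities yields \eqref{IV-4.3}. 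No genuine analytic difficulty remains once Proposition \ref{IV-prop-2.5} and \eqref{commutators-I} are in hand; the one subtle point is the simultaneous reconciliation of the three weight exponents on $f$, $g$, and $h$ across the two estimates, which is what pins down the specific choice $\delta = 0$, $\beta = \ell + (2s-1)^+ + \gamma^+$ above.
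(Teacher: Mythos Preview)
Your approach is exactly the one indicated in the paper: the authors' entire proof reads ``Now using \eqref{commutators-I} with $Q_{\bar c}(f, g)$ and the Proposition \ref{IV-prop-2.5}, we get'', i.e.\ split $Q=Q_c+Q_{\bar c}$, apply \eqref{commutators-I} to the smooth part and Proposition \ref{IV-prop-2.5} to the singular part, and combine. Your $L^1\to L^2$ conversion on the $f$-norm via Cauchy--Schwarz against $\langle v\rangle^{-3/2-\epsilon}$ and your parameter choice $\delta=0$, $\beta=\ell+(2s-1)^++\gamma^+$ are precisely the details the paper leaves implicit; your caveat about very negative $\ell$ is a fair observation that the paper does not address either, but in all applications in Section~\ref{sect-IV-3} the weight $\ell$ is large and positive.
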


We can now prove the upper bound estimate with weights.

\begin{prop}\label{prop-IV-2.5b}
Let $0<s<1, \gamma>\max\{-3, -2s-3/2\}$.
Then we have, for any $\ell\in\RR$ and
$m\in [s-1, s]$,
$$
\big | \big( Q(f,\, g) ,h \big) \big|\lesssim
\big(\|f\|_{L^1_{\ell^++(\gamma+2s)^+}}+
 \|f\|_{L^{2}}\big) \| g \|_{H^{\max\{s+m,\, (2s-1+\epsilon)^+\}}_{(\ell +\gamma+2s)^+}}
 \|h\|_{H^{s-m}_{-\ell}}\, .
 $$
\end{prop}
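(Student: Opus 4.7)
The plan is to reduce the weighted upper bound to the previously established unweighted bounds (Proposition \ref{upper-c} with Remark \ref{weight-version123}, and inequality \eqref{upper-I}) combined with the commutator estimates (Proposition \ref{IV-prop-2.5} and inequality \eqref{IV-4.3}). Throughout, we split $Q=Q_c+Q_{\bar c}$ according to the decomposition $\Phi_\gamma=\Phi_c+\Phi_{\bar c}$ introduced earlier, so each half can be treated by its natural tool: \eqref{upper-I}/\eqref{commutators-I} for the smooth part $Q_{\bar c}$ (which behaves like $Q_{\tilde\Phi_\gamma}$), and Propositions \ref{upper-c}/\ref{IV-prop-2.5} for the singular part $Q_c$.

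The mechanism for putting the weight where we want it is the standard commutator trick. Setting $\tilde h=W_{-\ell}h$, so that $\|\tilde h\|_{H^{s-m}}=\|h\|_{H^{s-m}_{-\ell}}$, and using that multiplication by $W_\ell$ is self-adjoint in $L^2$, we write
\[
(Q(f,g),h)=(W_\ell Q(f,g),\tilde h)=(Q(f,W_\ell g),\tilde h)+\bigl(W_\ell Q(f,g)-Q(f,W_\ell g),\tilde h\bigr).
\]
For the main term with $Q_{\bar c}$, \eqref{upper-I} applied with zero weight on $\tilde h$ yields the $\|f\|_{L^1_{\ell^++(\gamma+2s)^+}}$ contribution, after identifying $\|W_\ell g\|_{H^{s+m}_{(\gamma+2s)^+}}$ with $\|g\|_{H^{s+m}_{\ell+(\gamma+2s)^+}}$. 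For the main term with $Q_c$, Proposition \ref{upper-c} gives
\[
|(Q_c(f,W_\ell g),\tilde h)|\lesssim \|f\|_{L^2}\,\|W_\ell g\|_{H^{s+m}}\,\|\tilde h\|_{H^{s-m}}=\|f\|_{L^2}\,\|g\|_{H^{s+m}_\ell}\,\|h\|_{H^{s-m}_{-\ell}}\,,
\]
producing the $\|f\|_{L^2}$ contribution. The commutator term is handled by \eqref{IV-4.3}, which already packages \eqref{commutators-I} for $Q_{\bar c}$ together with Proposition \ref{IV-prop-2.5} for $Q_c$; paired against $\tilde h\in L^2$, it produces an estimate involving $\|f\|_{L^2_{\cdots}}$ (which is dominated either by $\|f\|_{L^2}$ or by $\|f\|_{L^1_{\ell^++(\gamma+2s)^+}}$ depending on the sign of $\gamma$) and $\|g\|_{H^{(2s-1+\epsilon)^+}_{\cdots}}$, with the regularity already absorbed into $\max\{s+m,(2s-1+\epsilon)^+\}$.

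The main obstacle lies not in any new analysis but in the bookkeeping required to collapse the weight exponents produced by this procedure into the compact form $(\ell+\gamma+2s)^+$ on $g$. A case analysis is needed: for $\gamma+2s\ge 0$ and $\ell\ge 0$ one has $\ell\le(\ell+\gamma+2s)^+$, so the $\|g\|_{H^{s+m}_\ell}$ from the $Q_c$-main term is subsumed. In the soft-potential regime $\gamma+2s<0$ the naive $\ell+(\gamma+2s)^+$ can exceed $(\ell+\gamma+2s)^+$; to recover the sharper form one transfers a weight $W_{(\ell+\gamma+2s)^+}$ instead of $W_\ell$, absorbing the surplus $W_{\ell-(\ell+\gamma+2s)^+}$ into the test function (this is still admissible because $-\ell\le -(\ell+\gamma+2s)^+$ in that regime) and, where convenient, exchanges the $\|f\|_{L^2}$ bound of Proposition \ref{upper-c} for the refined $L^1$ or $L^{3/2}$ bounds of Remark \ref{weight-version123}. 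Once the weight accounting is settled, assembly of the inequalities is mechanical.
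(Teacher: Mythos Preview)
Your strategy --- split $Q=Q_c+Q_{\bar c}$, move the weight $W_\ell$ onto $g$ via a commutator, and bound the main and commutator pieces separately --- is exactly the paper's approach. The paper, however, executes it slightly differently, and that difference matters for one step where your proposal goes wrong.

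The paper does \emph{not} apply the commutator trick to $Q_{\bar c}$. Since \eqref{upper-I} already allows an arbitrary weight $-\ell$ on $h$, one applies it directly to $(Q_{\bar c}(f,g),h)$ and gets the sharp weight $(\ell+\gamma+2s)^+$ on $g$ and $\ell^++(\gamma+2s)^+$ on $f$ with no commutator needed. The commutator trick is reserved for $Q_c$ alone, and there the paper uses Proposition~\ref{IV-prop-2.5} (not the packaged \eqref{IV-4.3}) with the specific choice $\beta=\ell$, $\delta=0$, yielding $\|f\|_{L^2_{-1}}\le\|f\|_{L^2}$.

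Your route instead applies the commutator trick to the full $Q$ and then invokes \eqref{IV-4.3} for the combined commutator. That estimate carries $\|f\|_{L^2_{\ell+3/2+(2s-1)^++\gamma^++\epsilon}}$, which you assert is ``dominated either by $\|f\|_{L^2}$ or by $\|f\|_{L^1_{\ell^++(\gamma+2s)^+}}$.'' This is false: a weighted $L^2$ norm with positive index is not controlled by the unweighted $L^2$ norm, nor by any $L^1$ norm. The reason \eqref{IV-4.3} has that large weight is precisely because it bounds the $Q_{\bar c}$-commutator contribution from \eqref{commutators-I} (an $L^1$ estimate) via Cauchy--Schwarz into $L^2$, picking up the extra $3/2+\epsilon$. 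If you separate the pieces and use \eqref{commutators-I} for $Q_{\bar c}$ in its native $L^1$ form, you still get $\|f\|_{L^1_{\ell+(2s-1)^++\gamma^+}}$, whose index can exceed $\ell^++(\gamma+2s)^+$ (take e.g.\ $\gamma=-1.2$, $s=0.9$). The clean fix is simply to drop the commutator on $Q_{\bar c}$ and apply \eqref{upper-I} directly, as the paper does; then only the $Q_c$-commutator survives and Proposition~\ref{IV-prop-2.5} with $\beta=\ell$, $\delta=0$ gives $\|f\|_{L^2_{-1}}$.

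Your closing paragraph on weight bookkeeping for $g$ is a fair observation (indeed the $Q_c$ main term produces $\|g\|_{H^{s+m}_\ell}$, not $\|g\|_{H^{s+m}_{(\ell+\gamma+2s)^+}}$), but the proposed fix of transferring only $W_{(\ell+\gamma+2s)^+}$ does not help: you would end with $\|h\|_{H^{s-m}_{-(\ell+\gamma+2s)^+}}$, which dominates $\|h\|_{H^{s-m}_{-\ell}}$ in the soft regime rather than being dominated by it. The paper simply leaves the $Q_c$ piece with weight $\ell$ on $g$; in every later application the slack between $\ell$ and $(\ell+\gamma+2s)^+$ is immaterial.
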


\begin{proof} Using \eqref{upper-I}, for any $m, \ell\in\RR$,
$$
\Big|\Big(Q_{\bar c} (f,  g),\, h \Big)\Big|  \lesssim ||f||_{L^1_{\ell^++(\gamma+2s)^+}} || g||_{H^{m+s}_{(\ell+\gamma+2s)^+}} \|h\|_{H^{s-m}_{-\ell}}\,.
$$
On the other hand, for any $\ell\in\RR$ we have
$$
\Big|\Big(Q_c(f, g),\, h \Big)\Big| \leq \Big|\Big(Q_c(f,  W_\ell g),\,  W_{-\ell} h \Big)\Big|
+ \Big|\Big(W_\ell Q_c(f, g) - Q_c(f, W_\ell g),\,   W_{-\ell}  h \Big)\Big|\,,
$$
then Proposition \label{weight-radja-version123} implies, for $m \in
[s-1,s]$
$$
\Big|\Big(Q_c(f,  W_\ell g),\,  W_{-\ell} h \Big)\Big|\lesssim
\|f \|_{L^2} \|g\|_{H^{s+m}_\ell} \|h\|_{H^{s-m}_{-\ell}}
$$
and Proposition \ref{IV-prop-2.5}, for any $\ell, \beta, \delta\in\RR$
$$
\Big|\Big(W_\ell Q_c(f, g) - Q_c(f,  W_\ell g),\, W_{-\ell}h\Big)\Big|
\lesssim
\|f \|_{L^2_{\ell-1-\beta-\delta}} \|g\|_{H^{(2s-1+\epsilon)^+}_\beta} \|W_{-\ell}h\|_{L^{2}_{\delta}}\,.
$$
We choose $\delta=0, \beta=\ell$, since for $m \in [s-1,s]$,
$s-m\geq 0$, ending the proof of Proposition.
\end{proof}

\subsection{Coercivity of collision operators}
We study now the coercivity estimate for a small perturbation of $\mu$.
For any $0<s<1$ and $\gamma>-3$, we recall the non-isotropic norm
associated with the cross-section $B(v-v_*, \sigma)$ introduced in
\cite{amuxy4-2}
\begin{equation}\label{trip-norm}
||| g|||^2_{\Phi_\gamma}= \int B(v-v_*, \sigma) \mu_*\, \big(g'-g\,\big)^2
 +\int B(v-v_*, \sigma)g^2_* \big(\sqrt{\mu'}\,\, - \sqrt{\mu}\,\big)^2=J_1(g)+J_2(g)
\end{equation}
where the integration  is
over~~$\RR^3_v\times\RR^3_{v_\ast}\times\SS^2_\sigma$. The following
link with weighted Sobolev norm was shown previously
\begin{align}\label{IV-2.2}
C_1 \left\{\left\|
g\right\|^2_{H^s_{\gamma/2}(\RR^3_v)}+\left\|
g\right\|^2_{L^2_{s+\gamma/2}(\RR^3_v)}\right\}
\leq ||| g |||^2_{\Phi_\gamma} \leq  C_2 \left\|
g\right\|^2_{H^s_{s+\gamma/2}(\RR^3_v)}\,,
\end{align}
where $C_1, C_2>0$ are two generic constants. Recall that in the definition
of the non-isotropic norm, we obtain an equivalent norm if we replace $\mu$ by any positive power of $\mu$.

The coercivity of the linearized
operator $-Q(\mu,\, h)$ is given by the next result
\begin{prop}\label{prop-coercivity-1}
There exists $C>0$ such that
$$
-\Big(Q(\mu, h),\,h\Big)_{L^2(\RR^3_v)}\geq \frac 12|||h|||^2_{\Phi_{\gamma}}-C\|h\|^2_{L^2_{\gamma/2+s}}\,.
$$
\end{prop}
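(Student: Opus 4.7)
My plan is to symmetrize $-(Q(\mu,h),h)_{L^2}$ so that $J_1(h)$ and $J_2(h)$ appear explicitly, with all remaining terms controlled by $\|h\|^2_{L^2_{\gamma/2+s}}$. The key algebraic identity I would use is
\begin{equation*}
\mu_* h - \mu'_* h' \;=\; \sqrt{\mu_*\mu'_*}\,(h-h') \;+\; (\sqrt{\mu_*}-\sqrt{\mu'_*})\bigl(\sqrt{\mu_*}\,h + \sqrt{\mu'_*}\,h'\bigr),
\end{equation*}
which separates the contribution of the $h$-difference from that of the $\mu$-difference. Substituting into the weak form of $-(Q(\mu,h),h)$ yields $I_1+I_2$, with $I_1$ targeting $J_1(h)$ and $I_2$ targeting $J_2(h)$ after the exchange $v\leftrightarrow v_*$.

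For $I_1=\iiint B\sqrt{\mu_*\mu'_*}(h-h')h\,d\sigma dv dv_*$, the identity $h(h-h')=\tfrac12(h-h')^2+\tfrac12(h^2-(h')^2)$ converts the first summand into $\tfrac12\iiint B\sqrt{\mu_*\mu'_*}(h-h')^2$, which equals $\tfrac12 J_1(h)$ up to the norm equivalence under positive powers of $\mu$ (recalled in the excerpt); the discrepancy $\iiint B\sqrt{\mu_*}(\sqrt{\mu_*}-\sqrt{\mu'_*})(h-h')^2$ is absorbed into $\|h\|^2_{L^2_{\gamma/2+s}}$ using $|\sqrt{\mu_*}-\sqrt{\mu'_*}|\lesssim \sin(\theta/2)|v-v_*|$ and the integrability of $b(\cos\theta)\sin^{2+\varepsilon}(\theta/2)$. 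The second summand $\tfrac12\iiint B\sqrt{\mu_*\mu'_*}(h^2-(h')^2)$ vanishes by the pre/post-collisional involution $(v,v_*)\leftrightarrow(v',v'_*)$, under which $\sqrt{\mu_*\mu'_*}$ is invariant while $h^2 - (h')^2$ flips sign.

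For $I_2$, after the $v\leftrightarrow v_*$ swap I would use the identity
\begin{equation*}
\sqrt{\mu}\,h_* + \sqrt{\mu'}\,h'_* = (\sqrt{\mu}-\sqrt{\mu'})h_* + \sqrt{\mu'}(h_* + h'_*),
\end{equation*}
which produces $I_2 = J_2(h) + R_1 + R_2$ with $R_1=\iiint B(\sqrt{\mu}-\sqrt{\mu'})\sqrt{\mu'}\,h_*^2$ and $R_2=\iiint B(\sqrt{\mu}-\sqrt{\mu'})\sqrt{\mu'}\,h_*h'_*$. The remainder $R_1$ is of cancellation-lemma type (the factor $h_*^2$ is $\sigma$-independent): Taylor-expanding $\sqrt{\mu}$ together with the cancellation lemma of \cite{al-1} (formula (29), as used in Lemma \ref{estimates-square}) produces an effective kernel of size $|v-v_*|^{\gamma+1}\sqrt{\mu(v)}|\nabla\sqrt{\mu}(v)|$ whose $dv$-integration, thanks to the radial symmetry of $\mu$, contributes weight $\gamma/2$ (not $(\gamma+1)/2$), giving $|R_1|\lesssim \|h\|^2_{L^2_{\gamma/2+s}}$. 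For $R_2$, the pre/post involution sends $(\sqrt{\mu}-\sqrt{\mu'})\sqrt{\mu'}h_*h'_* \mapsto -(\sqrt{\mu}-\sqrt{\mu'})\sqrt{\mu}\,h_*h'_*$, so that $2R_2 = -\iiint B(\sqrt{\mu}-\sqrt{\mu'})^2 h_*h'_*$, and Cauchy--Schwarz combined with the same involution on the squared factor yields $|R_2|\le \tfrac12 J_2(h)$. Hence $I_2 \ge \tfrac12 J_2(h) - C\|h\|^2_{L^2_{\gamma/2+s}}$.

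Putting it all together, $-(Q(\mu,h),h) = I_1+I_2 \ge \tfrac12 J_1(h) + \tfrac12 J_2(h) - C\|h\|^2_{L^2_{\gamma/2+s}} = \tfrac12 |||h|||^2_{\Phi_\gamma} - C\|h\|^2_{L^2_{\gamma/2+s}}$, as required. The hardest part is the analysis of $R_1$: neither $b(\cos\theta)\sim\theta^{-2-2s}$ nor $\sqrt{\mu}-\sqrt{\mu'}$ is individually $\sigma$-integrable, their combined first-order Taylor expansion yields only a $\sin^2(\theta/2)$-gain that barely tames $b$, and one must further exploit the radial symmetry of $\mu$ to avoid the apparent weight $(\gamma+1)/2$ and land on the correct $\gamma/2$, which lies below the allowed $\gamma/2+s$.
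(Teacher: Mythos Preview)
Your treatment of $I_1$ contains a genuine gap. You write the discrepancy as $\iiint B\sqrt{\mu_*}(\sqrt{\mu_*}-\sqrt{\mu'_*})(h-h')^2$ and claim it is absorbed into $\|h\|^2_{L^2_{\gamma/2+s}}$ via $|\sqrt{\mu_*}-\sqrt{\mu'_*}|\lesssim\sin(\theta/2)|v-v_*|$ together with the integrability of $b(\cos\theta)\sin^{2+\varepsilon}(\theta/2)$. But that integrability requires \emph{two} factors of $\sin(\theta/2)$, and your bound supplies only one; $b(\cos\theta)\sin(\theta/2)$ is not $\sigma$-integrable when $s\ge 1/2$. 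Moreover the residual factor $(h-h')^2$ is exactly the singular object that $J_1$ measures, so it cannot simply be traded for an $L^2$-norm of $h$. The invocation of the ``norm equivalence under positive powers of $\mu$'' is also misplaced: that remark concerns replacing $\mu_*$ by $\mu_*^\alpha$, whereas $\sqrt{\mu_*\mu'_*}$ is not a power of $\mu_*$ at all. A repair exists---symmetrize under the pre/post involution to get $\tfrac14\iiint B(\sqrt{\mu_*}-\sqrt{\mu'_*})^2(h-h')^2$ and then use the interpolated bound $(\sqrt{\mu_*}-\sqrt{\mu'_*})^2\lesssim \min(1,|v-v_*|^2\theta^2)(\mu_*^{1/2}+(\mu'_*)^{1/2})$ to land on the weight $\gamma/2+s$---but this is not the argument you gave.

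The paper's proof sidesteps all of this with a one-line observation: by the pre/post-collisional involution $\iiint B\mu'_*h'h=\iiint B\mu_*hh'$, so directly
\[
-(Q(\mu,h),h)=\iiint B\mu_*\,h(h-h')=\tfrac12 J_1(h)+\tfrac12\iiint B\mu_*\bigl(h^2-(h')^2\bigr),
\]
with $\mu_*$ (not $\sqrt{\mu_*\mu'_*}$) appearing from the outset. Then $J_1=|||h|||^2_{\Phi_\gamma}-J_2$, and one only needs the known bounds $|J_2(h)|\lesssim\|h\|^2_{L^2_{\gamma/2+s}}$ and, by the cancellation lemma, $\bigl|\iiint B\mu_*(h^2-(h')^2)\bigr|\lesssim\|h\|^2_{L^2_{\gamma/2}}$. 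In particular your entire $I_2=J_2+R_1+R_2$ machinery is unnecessary: since $|J_2|$ is itself a lower-order term, there is no need to extract $\tfrac12 J_2$ from $I_2$, and the ``hardest part'' $R_1$ never arises in the paper's argument.
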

\begin{proof} Though the statement follows from \cite{amuxy4-2}, we give a proof for the convenience of the reader. By definition,
\begin{align*}
-\Big(Q(\mu, h),\,h\Big)_{L^2(\RR^3_v)}
=&-\int B(v-v_*, \sigma)\big(\mu'_*h'-\mu_* h\big)hd\sigma d v_* dv\\
=&-\int B(v-v_*, \sigma)\mu_* h \big(h'- h\big)d\sigma d v_* dv
\\
=&\frac 1 2\int B(v-v_*, \sigma)\mu_*\big(h'- h\big)^2d\sigma d v_* dv
\\
&+\frac 12\int B(v-v_*, \sigma)\mu_*\big(h^2- h'^2\big)d\sigma d v_* dv\\
=&\frac 12 J_1(h)+I=\frac 12 |||h|||^2_{\Phi_\gamma}-\frac 12 J_2(h)+I\,.
\end{align*}
Then we have from \cite{amuxy4-2}
$$
|J_2(h)|\leq C_1\|h\|^2_{L^2_{\gamma/2+s}}\,,
$$
and the cancellation Lemma \cite {al-1} implies
$$
|I|\leq C\|h\|^2_{L^2_{\gamma/2+s}},$$
 thus proving proposition
\ref{prop-coercivity-1}.
\end{proof}

Let us note than another proof is also possible by using instead the Appendix.

\begin{lemm}\label{differ-Q-cD}
Let $0<s<1$ and $\gamma > \max \{-3, -2s -3/2\}$.
If we put
 \begin{align*}
 \cD(\sqrt{\mu} \,f ,\,g)  = \int B \big (\sqrt{\mu}\, f\big)_ * \Big( g- g'\Big)^2 dv dv_* d\sigma\,,
\end{align*}
then there exists a $C >0$ such that
\begin{align}\label{df}
\left|\Big( Q(\sqrt{\mu} \,f, g), g\Big)_{L^2{(\RR^3)}}+ \frac12 \cD(\sqrt{\mu}\, f, g) \right|\le C
\left\{ \begin{array}{lc}
 \|f\|_{L^2} \| g \|^2_{ L^2_{\gamma/2}}
&\mbox{if $\gamma >-3/2$}\\
 \|f\|_{L^2} \| g \|^2_{ H^{s'}_{\gamma/2}}
 &\mbox{if $-3/2 \ge \gamma$}\\
 \|f\|_{H^{2s'} } \| g \|^2_{ L^2_{\gamma/2 + s'}}
 &\mbox{if $-3/2 \ge \gamma$}
\end{array} \right.
\end{align}
for any $s' \in ]0,s[ $ satisfying $\gamma +2s' >-3/2$ and $s' < 3/4$.
\end{lemm}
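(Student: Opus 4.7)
My plan is to isolate the defect $\cE := (Q(\sqrt{\mu}f, g), g)_{L^2} + \tfrac12\cD(\sqrt{\mu}f, g)$ algebraically as a single cancellation-type integral and then apply Lemma~\ref{estimates-facile} together with the Hardy-Littlewood-Sobolev inequality. The key observation is that the pre/post-collisional change of variables $(v, v_*, \sigma) \mapsto (v', v'_*, (v-v_*)/|v-v_*|)$ (unit Jacobian, $B$-invariance) yields $\iiint B F'_* g'g = \iiint B F_* gg'$, hence $-(Q(F,g),g)_{L^2} = \iiint B F_* g(g-g')$; combined with the pointwise identity $\tfrac12(g-g')^2 - g(g-g') = \tfrac12((g')^2 - g^2)$, this produces
$$\cE = \frac12\iiint B(v-v_*,\sigma)\,(\sqrt{\mu}f)_*\bigl[(g')^2 - g^2\bigr]\,d\sigma\, dv_*\, dv.$$
The potential commutator-type remainder $\iiint B(F'_* - F_*)g'g$ vanishes for the same pre/post-swap reason (since $g'g = gg'$), which is the structural insight that makes the rest of the argument clean.

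Next, I apply the cancellation lemma of \cite{al-1} (in the form recalled during the proof of Lemma~\ref{estimates-square}) to write $\cE = \tfrac12\int (\sqrt{\mu}f)(v_*)\,g^2(v)\,S(v-v_*)\, dv\, dv_*$ with $|S(z)| \lesssim |z|^\gamma$ near the origin (plus a rapidly decaying piece coming from the smooth part $\Phi_{\bar c}$ and from the $\tilde\phi$-correction in $S$). The angular constant $2\pi\int_0^{\pi/2}b(\cos\theta)\sin\theta\bigl[(\cos(\theta/2))^{-3-\gamma}-1\bigr]\,d\theta$ is finite because the integrand is $O(\theta^{1-2s})$ as $\theta \to 0^+$. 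The problem thus reduces to the weighted convolution
$$\mathcal{I} := \iint|\sqrt{\mu}f(v_*)|\,|v-v_*|^\gamma\, g^2(v)\, dv\, dv_*.$$
Splitting $\mathcal{I}$ into the regions $|v-v_*|\le 1$ and $|v-v_*|>1$, the far region contributes $\lesssim \|f\|_{L^2}\|g\|^2_{L^2_{\gamma/2}}$ in all three cases, since the exponential decay of $\sqrt{\mu}$ absorbs any polynomial loss in $v_*$ while the $\la v\ra^{|\gamma|}$ weight passes to $g$.

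The near region, where $\la v\ra \sim \la v_*\ra$, is controlled by Lemma~\ref{estimates-facile} with $\lambda = -\gamma$. Case~1 of that lemma ($\lambda < 3/2$, i.e., $\gamma > -3/2$) gives the first announced bound directly. Case~2 ($3/2 < \lambda < 3$, i.e., $-3 < \gamma \le -3/2$) gives $\|f\|_{L^2}\|g\|^2_{H^{-\gamma/2-3/4}_{\gamma/2}}$, which under the hypothesis $\gamma + 2s' > -3/2$ (equivalently $s' > -\gamma/2 - 3/4$) is dominated by the second announced bound $\|f\|_{L^2}\|g\|^2_{H^{s'}_{\gamma/2}}$. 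For the third bound I rewrite the near integral as $\int g^2(v)\,[(\sqrt{\mu}|f|)\ast K](v)\,dv$ with $K(z) = |z|^\gamma\chi_{|z|\le 1}$ and apply the Hardy-Littlewood-Sobolev inequality ($K$ is locally a Riesz potential of order $3+\gamma\in (0, 3/2)$) combined with the Sobolev embedding $H^{2s'}(\RR^3)\hookrightarrow L^{6/(3-4s')}(\RR^3)$ (which forces $s' < 3/4$) to transfer $2s'$ derivatives onto $f$ while absorbing the weight $\la v\ra^{s'}$ into $g$. The main obstacle I expect is this third bound: the exponents coming from HLS, Sobolev embedding, and H\"older must be simultaneously consistent with $s' \in ]0, s[$, $\gamma + 2s' > -3/2$, and $s' < 3/4$, and landing precisely on $\|f\|_{H^{2s'}}\|g\|^2_{L^2_{\gamma/2+s'}}$ requires careful interpolation rather than a direct application.
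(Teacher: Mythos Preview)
Your reduction of the defect to the cancellation term $\tfrac12\iiint B(\sqrt{\mu}f)_*\bigl[(g')^2-g^2\bigr]$, the application of the cancellation lemma of \cite{al-1}, the near/far splitting, and the use of Lemma~\ref{estimates-facile} for the first two bounds all coincide exactly with the paper's proof.

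The only divergence is in the third bound. Where you propose Hardy--Littlewood--Sobolev combined with the Sobolev embedding $H^{2s'}\hookrightarrow L^{6/(3-4s')}$ (and correctly anticipate that the exponents do not line up cleanly), the paper instead splits the kernel as $|v-v_*|^\gamma = |v-v_*|^{\gamma+2s'}\cdot|v-v_*|^{-2s'}$ and applies Cauchy--Schwarz in $v_*$:
\[
J(f,g)\lesssim \int\Bigl(\int|v-v_*|^{2(\gamma+2s')}\mu_*\,dv_*\Bigr)^{1/2}\Bigl(\int\frac{f_*^2}{|v-v_*|^{4s'}}\,dv_*\Bigr)^{1/2}g^2\,dv.
\]
The first inner integral is $\lesssim\la v\ra^{2(\gamma+2s')}$ since $\gamma+2s'>-3/2$, and the second is $\lesssim\|f\|_{H^{2s'}}^2$ by Pitt's (equivalently, the translated Hardy) inequality, valid precisely when $4s'<3$, i.e.\ $s'<3/4$. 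This lands directly on $\|f\|_{H^{2s'}}\|g\|_{L^2_{\gamma/2+s'}}^2$ with no interpolation needed, and explains transparently why both constraints $\gamma+2s'>-3/2$ and $s'<3/4$ appear. Your HLS route can be made to work, but the paper's Cauchy--Schwarz plus Pitt argument is shorter and avoids the exponent bookkeeping you flagged as the main obstacle.
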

\begin{proof}
The left hand side of \eqref{df} equals
\[ \frac{1}{2} \int B \sqrt{\mu}_* \,f_{*}
 \Big( g^2 - \big (g \big)'^2 \Big) d\sigma dvdv_*\,,
\]
and by the cancellation lemma  of \cite{al-1}
\begin{align*}
&\lesssim \int  \sqrt{\mu}_* \, |f_{*} | \int |v-v_*|^\gamma  g^2 dv dv_* = J(f,g)\,.
\end{align*}
Divide the integral to  $\{|v-v_*| \leq 1\}$ and another region, if necessary. Then
it follows from Lemma \ref{estimates-facile} that we obtain the first two estimates.
The third estimate is a direct consequence of  Pitt's inequality,
\begin{align*}
J(f,g)
&\lesssim \int \Big(\int|v-v_*|^{2(\gamma+2s')} \mu_{*} dv_*\Big)^{1/2}\Big(\int \frac{f_*^2}
{|v-v_*|^{4s'}}  dv_*\Big)^{1/2}
|g|^2 dv  \\
&\lesssim \|f\|_{H^{2s'}} \| g \|^2_{ L^2_{\gamma/2+s'}}\,,
\end{align*}
where we choose $0<2s'<3/2$.
\end{proof}

\begin{lemm}\label{upper-triple-Dirichlet}
Let $0<s<1$ and $\gamma >\max\{-3,\,-3/2-2s\}$.  Then for any $N \in \NN$ we have
\begin{align}\label{triple-dirichlet}
\cD(\sqrt \mu\,| f|\, , g) \lesssim \|f\|_{L^2_{-N}} |||g|||^2_{\Phi_\gamma}\,.
\end{align}
\end{lemm}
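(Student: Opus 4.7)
The strategy is to reduce the Dirichlet form estimate to an upper bound on the bilinear form $(Q(\sqrt\mu|f|, g), g)$, using Lemma \ref{differ-Q-cD}, and then to exploit the Gaussian factor $\sqrt\mu$ to replace every weighted $L^p$ norm of $\sqrt\mu|f|$ that appears by the single quantity $\|f\|_{L^2_{-N}}$. Concretely, applying Lemma \ref{differ-Q-cD} with $f$ replaced by $|f|$ gives
$$\tfrac{1}{2}\cD(\sqrt\mu|f|, g) \leq \bigl|\bigl(Q(\sqrt\mu|f|, g),\,g\bigr)\bigr| + C\,R(|f|, g),$$
where $R$ denotes one of the three controlled remainders on the right-hand side of \eqref{df}.

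To bound the bilinear term I would apply Proposition \ref{prop-IV-2.5b} with $m=0$ and a weight parameter $\ell$ chosen so that the product of weighted $H^s$ norms of $g$ on the right-hand side can be matched, via the equivalence \eqref{IV-2.2}, against the lower bound $\|g\|^2_{H^s_{\gamma/2}} + \|g\|^2_{L^2_{s+\gamma/2}} \lesssim |||g|||^2_{\Phi_\gamma}$. For the Gaussian absorption, Cauchy--Schwarz together with the fact that $\la v\ra^M \mu^{1/2}(v) \in L^q(\RR^3_v)$ for every $q \in [1,\infty]$ and every $M \in \RR$ yields
$$\bigl\|\sqrt\mu\,|f|\bigr\|_{L^p_w} \lesssim \|f\|_{L^2_{-N}}$$
for any $p \in [1,\infty]$ and $w \in \RR$, provided $N$ is taken large enough (depending on $p, w$). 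The same Gaussian trick, combined with the lower bound in \eqref{IV-2.2}, disposes of the remainder $R(|f|, g)$, whose controlled form involves only $\|f\|_{L^2}$ (or $\|f\|_{H^{2s'}}$) and weighted $L^2$ (or $H^{s'}$) norms of $g$ that are themselves dominated by $|||g|||^2_{\Phi_\gamma}$.

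The main obstacle is choosing $\ell$ in Proposition \ref{prop-IV-2.5b} so that both weighted $H^s$ norms of $g$ sit on the correct side of the equivalence \eqref{IV-2.2}. A direct application produces a product of the form $\|g\|_{H^s_{(\ell+\gamma+2s)^+}}\|g\|_{H^s_{-\ell}}$, and one must prevent either factor from exceeding the upper bound $\|g\|^2_{H^s_{s+\gamma/2}}$ of the triple norm, which is itself strictly larger than $|||g|||^2_{\Phi_\gamma}$. I would handle this by splitting the hard potential case $\gamma + 2s > 0$ from the soft case $\gamma + 2s \leq 0$, and in the soft case using the extra Gaussian decay carried by $\sqrt\mu|f|$ (for instance via $\sqrt\mu \leq C_\varepsilon\, \mu^{1-\varepsilon}$ with small $\varepsilon > 0$) to trade off polynomial moments for some of the decay, thereby absorbing the weight mismatch.
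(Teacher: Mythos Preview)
Your strategy has a genuine gap at precisely the point you flagged as the ``main obstacle'', and the workaround you sketch does not close it.

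Applying Proposition \ref{prop-IV-2.5b} with $m=0$ and $h=g$ gives a right-hand side of the shape $\|g\|_{H^s_{(\ell+\gamma+2s)^+}}\|g\|_{H^s_{-\ell}}$. For this product to be controlled by $|||g|||^2_{\Phi_\gamma}$ you need both weight indices $\le \gamma/2$, by the lower bound in \eqref{IV-2.2}. But that forces $\ell\ge -\gamma/2$ and (when the positive part is active) $\ell\le -\gamma/2-2s$ simultaneously, which is impossible since $s>0$. The deficit of $2s$ in total weight is structural: the bilinear upper bounds distribute the factor $|v-v_*|^{\gamma+2s}$ entirely onto the last two slots, while the triple norm only controls $\|g\|^2_{H^s_{\gamma/2}}$. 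Your proposed fix of using the Gaussian $\sqrt\mu$ to ``trade off polynomial moments'' cannot help here, because $\sqrt\mu$ lives on the $v_*$ variable inside the collision integral, whereas the excess weight sits on $v$ and $v'$; for $\gamma<0$ there is no way to transfer positive $v$-weight out of $|v-v_*|^\gamma$. Note also that the one estimate in the paper that \emph{does} give $|||g|||^2_{\Phi_\gamma}$ on the right, namely Proposition \ref{prop-IV-2.5}, is itself deduced from Lemma \ref{upper-triple-Dirichlet}, so invoking it would be circular.

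The paper avoids the whole detour through $(Q(\cdot,g),g)$ and estimates $\cD(\sqrt\mu|f|,g)$ directly. The key move is to open up $(g-g')^2$ and insert the weight by writing $\la v\ra^{\gamma/2}g-\la v'\ra^{\gamma/2}g'$ plus the commutator $(\la v'\ra^{\gamma/2}-\la v\ra^{\gamma/2})g$; the first piece becomes a Dirichlet form of the weighted function $\la v\ra^{\gamma/2}g$ (for which one can quote the $\gamma=0$ case or Lemma \ref{estimates-square}), and the second is lower order because $|\la v'\ra^{\gamma/2}-\la v\ra^{\gamma/2}|$ gains a factor $\sin(\theta/2)$. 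This manipulation uses the collision geometry inside the integral in a way that the black-box bilinear bound cannot, and that is exactly what recovers the missing $2s$ of weight.
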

\begin{proof}
Put $F= \sqrt \mu |f|$. Then \eqref{triple-dirichlet} in the case $\gamma \ge 0$ follows from
Lemma 3.2 of \cite{amuxy4-2} with $f^2 = F$.
Suppose that $\gamma<0$.  In view of $\Phi_\gamma = \Phi_c + \Phi_{\overline c}$ \,, we write
\[
\cD(F,g) = \int b \Big(\Phi_c + \Phi_{\overline c}\Big)F_* \Big (g-g'\Big)^2 dv dv_* d \sigma=\cD_c(F,g)
+ \cD_{\overline c}(F,g)\,.
\]
We have
\begin{align*}
\cD_c(F,g) &\lesssim \int b\Phi_c \big( \la v_*\ra^{|\gamma|} F_* \big) \Big ( \la v \ra^{\gamma/2} g-
\la v' \ra^{\gamma/2} g'\Big)^2 dv dv_* d \sigma\\
& \quad + \int b\Phi_c \big( \la v_*\ra^{|\gamma|} F_* \big)\Big ( \la v'\ra^{\gamma/2} -
\la v \ra^{\gamma/2} \Big)^2 g^2 dv dv_* d \sigma\\
&= \cD_c^{(1)}(F,g) + \cD_c^{(2)}(F,g)\,,
\end{align*}
because $\la v_*\ra \sim \la v+ \tau(v'-v) \ra$ for any  $\tau \in [0,1]$ on the support of $\phi(v-v_*)$,
  as stated in the proof of
Proposition \ref{IV-prop-2.5}. Since
\[
\Big| \la v' \ra^{\gamma/2} -
\la  v \ra^{\gamma/2} \Big|
\lesssim \int_0^1 \la v+ \tau(v'-v) \ra^{\gamma/2-1}d\tau |v-v_*|\sin \theta/2 \lesssim \la v\ra^{\gamma/2}|v-v_*|\sin \theta/2
\]
on the support of $\phi(v-v_*)$, it follows from the Cauchy-Schwarz inequality that
\[\cD_c^{(2)}(F,g) \lesssim \int\Big(\int_{|v-v_*|\le 1} |v-v_*|^{\gamma+2}\la v_*\ra^{|\gamma|}F_* dv_*\Big)
\Big(\la v\ra^{\gamma/2}g\Big)^2 dv\lesssim \|F\|_{L^2_{|\gamma|}}\|g\|^2_{L^2_{\gamma/2}}\,.
\]
By means of Lemma \ref{estimates-square} we have $\cD_c^{(1)}(F,g) \lesssim \|F\|_{L^2_{|\gamma|}}\|g\|^2_{H^s_{\gamma/2}}$.
On the other hand, noting that $\Phi_{\overline c} \lesssim \la v-v_*\ra^\gamma \le \la v_* \ra^{|\gamma|}
 \la v \ra^{\gamma}$ we have
\begin{align*}
\cD_{\overline c}(F,g) &\lesssim \int b \big( \la v_*\ra^{|\gamma|} F_* \big) \Big ( \la v \ra^{\gamma/2} g-
\la v' \ra^{\gamma/2} g'\Big)^2 dv dv_* d \sigma\\
& \quad + \int b \big( \la v_*\ra^{|\gamma|} F_* \big)\Big ( \la v' \ra^{\gamma/2} -
\la v \ra^{\gamma/2} \Big)^2 g^2 dv dv_* d \sigma\\
&= \cD_{\overline c}^{(1)}(F,g) + \cD_{\overline c}^{(2)}(F,g)\,.
\end{align*}
It follows from Lemma 3.2 of \cite{amuxy4-2} with $\gamma =0$ together with Proposition 2.4 of  \cite{amuxy4-2}  that
\[
\cD_{\overline c}^{(1)}(F,g) \lesssim \|\la v \ra^{|\gamma|}F\|_{L^1_{2s}} |||\la v \ra^{\gamma/2}g|||^2_{\Phi_0}
\lesssim \|F\|_{L^1_{|\gamma|+2s} }|||g|||^2_{\Phi_\gamma} \,.
\]
Since with $v_\tau = v+ \tau (v'-v) $ we have
\begin{align*}
\Big| \la v' \ra^{\gamma/2} -
\la  v \ra^{\gamma/2} \Big|
&\lesssim \int_0^1 \la v_\tau \ra^{\gamma/2-1}d\tau |v-v_*|\sin \theta/2\\
& \lesssim \la v_*\ra^{|\gamma|/2 +1} \int_0^1 \la v_\tau -v_* \ra^{\gamma/2-1}|v-v_*|\sin \theta/2\\
&\lesssim \la v_*\ra^{|\gamma|/2 +1}\la v -v_* \ra^{\gamma/2-1}|v-v_*|\sin \theta/2
\lesssim  \la v_*\ra^{|\gamma| +1}\la v \ra^{\gamma/2}\sin \theta/2
\end{align*}
we see that
\[
 \cD_{\overline c}^{(2)}(F,g) \lesssim \int \la v_*\ra^{3|\gamma| +2}F_* \Big( \la v\ra^{\gamma/2} g\Big)^2 dv dv_*
 \lesssim \|F\|_{L^1_{3|\gamma|+2}}\|g\|^2_{L^2_{\gamma/2}}.
 \]
Summing up above four estimates, in view of \eqref{IV-2.2} we also obtain the desired estimate
\eqref{triple-dirichlet} in the case $\gamma <0$.
\end{proof}
By means of Lemma \ref{differ-Q-cD} and Lemma \ref{upper-triple-Dirichlet}, in view of \eqref{IV-2.2}
we get
the following upper bounded estimate, which  is needed in order to prove the
non linear coercivity for small perturbative solution.

\begin{prop}\label{prop-IV-2.5}
Let $0<s<1$ and $\gamma >\max\{-3,\,-3/2-2s\}$.  Then we have
$$
\big | \big( Q(\sqrt{\mu}\,g,\, h) ,h \big)_{L^2(\RR^3_v)} \big|\lesssim  \|g\|_{L^2} \, |||h|||^2_{\Phi_\gamma}\,.
$$
\end{prop}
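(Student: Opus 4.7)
The plan is to combine the two preceding lemmas in a straightforward way: use Lemma \ref{differ-Q-cD} to replace the inner product $(Q(\sqrt{\mu}\,g, h), h)$ by (half of) the Dirichlet functional $\cD(\sqrt{\mu}\,g, h)$ up to a controlled error, bound the Dirichlet functional by Lemma \ref{upper-triple-Dirichlet}, and finally absorb the error term into $|||h|||^2_{\Phi_\gamma}$ via the equivalence \eqref{IV-2.2}.

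More precisely, I would first write
$$
\bigl|(Q(\sqrt{\mu}\,g, h), h)_{L^2}\bigr| \le \tfrac12 |\cD(\sqrt{\mu}\,g, h)| + \bigl|(Q(\sqrt{\mu}\,g, h), h)_{L^2} + \tfrac12 \cD(\sqrt{\mu}\,g, h)\bigr|,
$$
and apply Lemma \ref{differ-Q-cD} to the second term. Identifying the $f$ of that lemma with our $g$ and the $g$ of that lemma with $h$, the two relevant cases (the first when $\gamma>-3/2$, the second when $\gamma\le -3/2$, choosing $s'\in (0,s)$ with $\gamma+2s'>-3/2$) both yield a bound of the form $\|g\|_{L^2}\, \|h\|^2_{H^{s'}_{\gamma/2}}$ (with $s'=0$ in the first case). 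By \eqref{IV-2.2}, since $s'\le s$, one has $\|h\|^2_{H^{s'}_{\gamma/2}} \lesssim \|h\|^2_{H^s_{\gamma/2}} + \|h\|^2_{L^2_{s+\gamma/2}} \lesssim |||h|||^2_{\Phi_\gamma}$, which gives the desired form.

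For the Dirichlet term, the sign of $g$ is harmless: from the defining formula,
$$
|\cD(\sqrt{\mu}\,g, h)| = \Bigl|\int B (\sqrt{\mu}\,g)_*(h-h')^2\, dv\,dv_*\,d\sigma\Bigr| \le \cD(\sqrt{\mu}\,|g|, h),
$$
so Lemma \ref{upper-triple-Dirichlet} (applied with, say, $N=0$) yields $\cD(\sqrt{\mu}\,|g|, h) \lesssim \|g\|_{L^2} |||h|||^2_{\Phi_\gamma}$. Summing the two contributions gives the claim. There is no real obstacle here; the only small points of care are (i) passing to $|g|$ inside the Dirichlet form before invoking Lemma \ref{upper-triple-Dirichlet}, and (ii) checking in the soft case $\gamma\le -3/2$ that the exponent $s'$ needed in Lemma \ref{differ-Q-cD} can be chosen with $s'\le s$, so that the equivalence \eqref{IV-2.2} indeed controls $\|h\|^2_{H^{s'}_{\gamma/2}}$ by the triple-bar norm.
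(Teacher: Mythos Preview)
Your proof is correct and follows exactly the approach indicated in the paper, which simply states that the proposition follows ``by means of Lemma \ref{differ-Q-cD} and Lemma \ref{upper-triple-Dirichlet}, in view of \eqref{IV-2.2}''. You have filled in the details accurately, including the two points of care you flag: passing to $|g|$ before applying Lemma \ref{upper-triple-Dirichlet}, and the feasibility of choosing $s'\in\,]0,s[$ with $\gamma+2s'>-3/2$ and $s'<3/4$ (this interval is nonempty since $\gamma>\max\{-3,-3/2-2s\}$).
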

\begin{rema}
If we proceed as in the proof of Proposition 3.1 from
\cite{amuxy4-2}, we can prove
$$
\big | \big( Q(\sqrt{\mu}\,g,\, f) ,h \big)_{L^2(\RR^3_v)} \big|\lesssim\Big\{ |||g|||_{\Phi_\gamma} + \| g \|_{W^{1, \infty}}\Big \}|||f|||_{\Phi_\gamma}|||h|||_{\Phi_\gamma}\,
$$
for any $\gamma>-3$.
\end{rema}

{}From Proposition \ref{prop-coercivity-1}, Proposition
\ref{prop-IV-2.5} and \eqref{IV-2.2}, we can deduce the following
non linear coercivity for the small perturbation $g$.
\begin{coro}\label{coro-coercivity-1}
Let $0<s<1$ and $\gamma >\max\{-3,\,-3/2-2s\}$.
There exist $\eta_0>0, \epsilon_0>0$ and $C>0$ such that if
$ 
 \|g\|_{L^2(\RR^3_v)}\leq \epsilon_0,
$
then we have
$$
\Big(-Q(\mu+\sqrt{\mu}\,g,\, h ),\,h\Big)_{L^2(\RR^3_v)}\geq  \eta_0|||h|||^2_{\Phi_{\gamma}}-C\|h\|^2_{L^2_{\gamma/2+s}}
\gtrsim \eta_0||h||^2_{H^s_{\gamma/2}}-C\|h\|^2_{L^2_{\gamma/2+s}}\,.
$$
\end{coro}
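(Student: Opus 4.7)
The plan is to combine the three ingredients already available: the linearized coercivity of $-Q(\mu,\cdot)$ in Proposition \ref{prop-coercivity-1}, the upper bound for the perturbation $Q(\sqrt{\mu}g,\cdot)$ in Proposition \ref{prop-IV-2.5}, and the norm equivalence \eqref{IV-2.2} relating $|||\cdot|||_{\Phi_\gamma}$ to weighted Sobolev norms. The strategy is absorption: the perturbation contributes an error proportional to $\|g\|_{L^2}|||h|||^2_{\Phi_\gamma}$, which can be absorbed into the coercive linear part provided $\|g\|_{L^2}$ is sufficiently small.

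First I would split, by bilinearity of $Q$ in its first argument,
\begin{equation*}
-\bigl(Q(\mu+\sqrt{\mu}\,g,\,h),\,h\bigr)_{L^2(\RR^3_v)}=-\bigl(Q(\mu,h),h\bigr)_{L^2(\RR^3_v)}-\bigl(Q(\sqrt{\mu}\,g,h),h\bigr)_{L^2(\RR^3_v)}.
\end{equation*}
Proposition \ref{prop-coercivity-1} provides the lower bound
\begin{equation*}
-\bigl(Q(\mu,h),h\bigr)_{L^2(\RR^3_v)}\geq \tfrac{1}{2}|||h|||^2_{\Phi_\gamma}-C\|h\|^2_{L^2_{\gamma/2+s}},
\end{equation*}
while Proposition \ref{prop-IV-2.5} yields a constant $C_1>0$ such that
\begin{equation*}
\bigl|\bigl(Q(\sqrt{\mu}\,g,h),h\bigr)_{L^2(\RR^3_v)}\bigr|\leq C_1\,\|g\|_{L^2(\RR^3_v)}\,|||h|||^2_{\Phi_\gamma}.
\end{equation*}

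Combining the two estimates gives
\begin{equation*}
-\bigl(Q(\mu+\sqrt{\mu}\,g,\,h),\,h\bigr)_{L^2(\RR^3_v)}\geq \Bigl(\tfrac{1}{2}-C_1\|g\|_{L^2}\Bigr)|||h|||^2_{\Phi_\gamma}-C\|h\|^2_{L^2_{\gamma/2+s}}.
\end{equation*}
I would then choose $\epsilon_0>0$ so small that $C_1\epsilon_0\leq \tfrac{1}{4}$; under the hypothesis $\|g\|_{L^2}\leq\epsilon_0$ the prefactor of the triple norm is bounded below by $\eta_0:=\tfrac{1}{4}>0$. Finally, the left-hand inequality of \eqref{IV-2.2} converts the triple-norm lower bound into the stated $H^s_{\gamma/2}$ bound (up to adjusting $C$ by absorbing the $L^2_{s+\gamma/2}$ contribution into the remainder term), which finishes the proof.

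There is essentially no technical obstacle at this stage: all the heavy lifting, in particular the upper bound $|(Q(\sqrt{\mu}g,h),h)|\lesssim \|g\|_{L^2}|||h|||^2_{\Phi_\gamma}$ (which is the real novelty, since the gaussian factor $\sqrt{\mu}$ is used only through $\|g\|_{L^2}$ and the loss of the weight is compensated by the triple norm), has already been carried out in Lemmas \ref{differ-Q-cD} and \ref{upper-triple-Dirichlet}. The only care needed is to verify that the smallness threshold $\epsilon_0$ depends solely on the universal constants from Propositions \ref{prop-coercivity-1} and \ref{prop-IV-2.5}, so that $\eta_0$ and $C$ are indeed independent of $g$ and $h$.
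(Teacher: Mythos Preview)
Your proof is correct and follows exactly the approach the paper indicates: the corollary is stated as a direct consequence of Proposition~\ref{prop-coercivity-1}, Proposition~\ref{prop-IV-2.5}, and \eqref{IV-2.2}, and you have simply filled in the straightforward absorption argument that combines these three ingredients.
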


\subsection{Estimate of commutators with pseudo-differential operators}

We study now the commutators with pseudo-differential operators:
again in the next Sections, these will be used as a rigorous
replacement of formal derivatives, and when the operator is a
smoothed one, as completely justified test functions.
\begin{prop} \label{comu-deriv-2}
Let  $M_\lambda(\xi)= \la \xi \ra^\lambda$ for  $\lambda \ge 0$. Assume that  $0< s<1$ and   $\gamma>\max\{-3, -\frac 32 -2s\} $.
Let $\alpha, \beta,\rho  \ge 0$ satisfy
\begin{align}
&3+ \gamma +\alpha +\beta +\rho > 3/2 \, ,\label{111}\\
&\alpha + \beta \geq 2s -1\,,\label{333}\\
&\beta \le 1\,, \label{444}\\
& \alpha+ \beta+ \rho \ge s \,. \label{555}
\end{align}
If $\alpha + \lambda <3/2$ then  we have
\begin{align*}
\Big
| \Big (M_\lambda(D) \, Q_c (f,  g) - Q_c (f,  M_\lambda(D)\, g)  , h \Big ) \Big |
\lesssim  \| f\|_{{ H^\rho}} || M_\lambda(D)g||_{H^{ \alpha}} \, || h||_{H^{\beta}}\,.
\end{align*}
If $\alpha+\lambda \ge 3/2$ then $\rho= (\lambda - \beta)^+$ satisfies \eqref{111} and we have
\begin{align*}
\Big
| \Big (M_\lambda(D) \, Q_c (f,  g) - Q_c (f,  M_\lambda(D)\, g)  , h \Big ) \Big |
\lesssim  \| f\|_{{ H^{ (\lambda - \beta)^+}}} || M_\lambda(D)g||_{H^{ \alpha}} \, || h||_{H^{\beta}}\,.
\end{align*}
\end{prop}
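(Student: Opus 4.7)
The plan is to revisit the Fourier-side decomposition used in the proofs of Lemmas \ref{A-1}--\ref{A-2}, and insert the new symbol factor $M_\lambda(\xi)-M_\lambda(\xi-\xi_*)$ arising from the commutator. Since $M_\lambda(D)$ is a Fourier multiplier in $v$, applying Plancherel and the formula from the appendix of \cite{al-1} gives
\begin{align*}
&\bigl(M_\lambda(D)\,Q_c(f,g)-Q_c(f,M_\lambda(D)g),\,h\bigr) \\
&\quad= \iiint b\Bigl(\tfrac{\xi}{|\xi|}\cdot\sigma\Bigr)\,[\hat\Phi_c(\xi_*-\xi^-)-\hat\Phi_c(\xi_*)]\,\bigl[M_\lambda(\xi)-M_\lambda(\xi-\xi_*)\bigr]\hat f(\xi_*)\hat g(\xi-\xi_*)\overline{\hat h(\xi)}\,d\sigma d\xi d\xi_*.
\end{align*}
I would then repeat the splitting $\mathbf{1}_{|\xi^-|\le\la\xi_*\ra/2}+\mathbf{1}_{|\xi^-|\ge\la\xi_*\ra/2}$ and, inside the small-$\xi^-$ piece, perform the second-order Taylor expansion of $\hat\Phi_c$ about $\xi_*$, the first-order part vanishing by the $\sigma$-symmetry on $\SS^2$ exactly as in the treatment of $A_{1,1}$.

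The new ingredient is a systematic control of the factor $M_\lambda(\xi)-M_\lambda(\xi-\xi_*)$ in the dyadic regimes listed in \eqref{equivalence-relation}. When $\la\xi_*\ra\lesssim|\xi|$ we have $\la\xi-\xi_*\ra\sim\la\xi\ra$ and Taylor's formula gives $|M_\lambda(\xi)-M_\lambda(\xi-\xi_*)|\lesssim|\xi_*|\la\xi\ra^{\lambda-1}$; writing $\hat g(\xi-\xi_*)=\la\xi-\xi_*\ra^{-\lambda}\widehat{M_\lambda(D)g}(\xi-\xi_*)$ produces an extra multiplicative factor $|\xi_*|/\la\xi\ra$, which combines with the $\theta$-integration exactly as in \eqref{later-use1} to yield a kernel bound analogous to \eqref{kernel-estimate} but with a higher $\theta$-moment of $b(\cos\theta)$; condition \eqref{333} is precisely what guarantees the integrability of this moment. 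When instead $\la\xi_*\ra\gtrsim|\xi|$, I would bound $M_\lambda(\xi)$ and $M_\lambda(\xi-\xi_*)$ separately by $\la\xi_*\ra^\lambda$ and $\la\xi-\xi_*\ra^\lambda$, distributing the weight either onto $\hat f(\xi_*)$ or back onto $\widehat{M_\lambda(D)g}(\xi-\xi_*)$. Conditions \eqref{444}--\eqref{555} are designed precisely so that the Taylor remainder estimates and the angular integrations remain convergent across all regimes.

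Having redistributed the weights, the argument reduces---exactly as in Lemmas \ref{A-1}--\ref{A-2}---to bounding a double integral of the form $\iint K_\lambda(\xi,\xi_*)\hat f(\xi_*)\widehat{M_\lambda(D)g}(\xi-\xi_*)\overline{\hat h(\xi)}\,d\xi d\xi_*$ and applying Cauchy--Schwarz in $(\xi,\xi_*)$. The $\xi_*$-integrability of the kernel is controlled by a factor of type $\la\xi_*\ra^{-(3+\gamma+\alpha+\beta+\rho)}$, and hypothesis \eqref{111} is exactly what ensures this exponent exceeds $3/2$, so that after absorbing $\la\xi_*\ra^{\rho}|\hat f(\xi_*)|$ as an $L^2$-factor the integral converges and yields $\|f\|_{H^\rho}$. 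The main obstacle will be the case $\alpha+\lambda\ge3/2$: there the natural redistribution of the $\lambda$-weight breaks \eqref{111}, forcing one to place the entire $\lambda$-weight on $\la\xi-\xi_*\ra^\lambda$ rather than on $\la\xi\ra^\lambda$. The surviving kernel in $\xi$ then scales only like $\la\xi\ra^\beta$ and pairs cleanly with $\|h\|_{H^\beta}$ via Plancherel, but at the cost of transferring $\lambda-\beta$ derivatives onto $f$, which is exactly the choice $\rho=(\lambda-\beta)^+$ appearing in the second alternative of the statement.
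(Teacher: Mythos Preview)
Your plan is correct and mirrors the paper's proof: the same Fourier representation, the same $|\xi^-|\lessgtr\la\xi_*\ra/2$ splitting, the same three-regime bound on $M_\lambda(\xi)-M_\lambda(\xi-\xi_*)$ (the paper records this as a single estimate \eqref{mean-lambda}), and the same Cauchy--Schwarz closure using $\la\xi_*\ra^{-(3+\gamma+\alpha+\beta+\rho)}\in L^2$ under \eqref{111}.

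Two small corrections. First, the first-order term in the Taylor expansion of $\hat\Phi_c$ does \emph{not} vanish by the $\sigma$-symmetry: in the treatment of $A_{1,1}$ only the component of $\xi^-$ orthogonal to $\xi$ integrates to zero, and the surviving radial part produces the kernel $K(\xi,\xi_*)$ of \eqref{later-use1}. The commutator proof then multiplies this $K$ (and the second-order kernel $\tilde K$) by the $M_\lambda$ difference. Second, the role of \eqref{333} is not a $\theta$-integrability condition: the $\sigma$-integral is already absorbed into the bounds \eqref{later-use1}--\eqref{later-use2}, and \eqref{333} is what makes the residual factor $(\la\xi_*\ra/\la\xi\ra)^{1+\alpha+\beta-2s}$ bounded on the set $\la\xi_*\ra<|\xi|/2$. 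Likewise \eqref{444} is used on the set $\la\xi_*\ra\ge\sqrt{2}|\xi|$ to dominate $\la\xi\ra/\la\xi_*\ra$ by $(\la\xi\ra/\la\xi_*\ra)^\beta$, and \eqref{555} enters only in the large-$|\xi^-|$ piece (the analogue of $A_{2,1,p}^{(3)}$), which your sketch passes over but which is handled exactly as in Lemma~\ref{A-2} with the extra $M_\lambda$ factor inserted.
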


\begin{proof}
We recall \eqref{equivalence-relation}, that is,
\begin{equation*}
\left \{ \begin{array}{ll}
\la \xi \ra \lesssim \la \xi_* \ra \sim \la \xi-\xi_*\ra   &\mbox{on supp ${\bf 1}_{\la \xi_* \ra\geq \sqrt 2 |\xi|}$}\\
\la \xi \ra \sim \la \xi-\xi_*\ra   &\mbox{on supp ${\bf 1}_{\la \xi_*\ra \leq |\xi |/2 } $} \\
\la \xi \ra \sim \la \xi_* \ra \gtrsim   \la \xi-\xi_*\ra  &
\mbox{on supp ${\bf 1}_{\sqrt 2 |\xi| \geq \la \xi_*\ra \geq | \xi|/2 }$\,.}
\end{array}
\right.
\end{equation*}
Since $M_\lambda(\xi)$ is increasing function of $|\xi|$,  we have
\begin{align}\label{mean-lambda}
\Big| M_\lambda(\xi) - M_\lambda(\xi-\xi_*)\Big|
&\lesssim M_\lambda(\xi-\xi_*)
{\bf 1}_{  \la \xi_* \ra \geq | \xi |/2 }  + \frac{\la \xi_* \ra}{\la \xi \ra} M_\lambda( \xi-\xi_* ) {\bf 1}_{ \la \xi_*\ra < | \xi |/2 } \\
&\quad
+ M_\lambda(\xi-\xi_*)
\frac{M_\lambda(\xi_*)}{ \la \xi-\xi_*\ra^\lambda           }{\bf 1}_{\la \xi- \xi_* \ra  \leq \la \xi_*\ra}
\,,\notag
\end{align}
where we have used the mean value theorem to gain the second term of the right hand side. Since we have
\begin{align*}
( Q_c(f, g), h ) =& \iiint b \Big({\xi\over{ | \xi |}} \cdot \sigma \Big) [ \hat\Phi_c (\xi_* - \xi^- ) - \hat \Phi_c (\xi_* ) ] \hat f (\xi_* ) \hat g(\xi - \xi_* ) \overline{{\hat h} (\xi )} d\xi d\xi_*d\sigma,
\end{align*}
it follows that
\begin{align*}
 &\Big (M_\lambda(D) \, Q_c (f,  g) - Q_c (f,  M_\lambda(D)\, g)  , h \Big )  \\
& = \iiint b \Big({\xi\over{ | \xi |}} \cdot \sigma \Big) \Big( \hat\Phi_c (\xi_* - \xi^- ) - \hat \Phi_c (\xi_* )
\Big)\Big( M_\lambda(\xi) - M_\lambda(\xi-\xi_*)\Big) \hat f (\xi_* ) \hat g(\xi - \xi_* ) \overline{{\hat h} (\xi )} d\xi d\xi_*d\sigma .\\
 & = \iiint_{ | \xi^- | \leq {1\over 2} \la \xi_*\ra }  \cdots\,\, d\xi d\xi_*d\sigma
+ \iiint_{ | \xi^- | \geq {1\over 2} \la \xi_*\ra } \cdots\,\, d\xi d\xi_*d\sigma \,\\
&= B_1(f,g,h)  +  B_2(f,g,h) \,\,.
\end{align*}
The estimations for  $B_1(f,g,h)$ and $B_2(f,g,h)$ are almost similar as those
for $A_1(f,g,h)$ and $A_2(f,g,h)$ in the proof of Lemma \ref{weight-radja-version123},
by adding the extra factor $M_\lambda(\xi) - M_\lambda(\xi-\xi_*)$. Indeed, for $B_{1,j}(f,g,h)$ corresponding to $A_{1,j}(f,g,h)$, ($j=1,2$),
note that  it follows from \eqref{later-use1}, \eqref{later-use2} and \eqref{mean-lambda} that
\begin{align*}
&\Big|\big(|K(\xi,\xi_*)| +|\tilde K(\xi,\xi_*) |\big)\big( M_\lambda(\xi) - M_\lambda(\xi-\xi_*)\big) \Big| \notag \\
&\lesssim  \frac{\la \xi_*\ra^\rho   \la \xi- \xi_*\ra^\alpha \la \xi\ra^\beta  }{\la \xi_*\ra^{3+\gamma +\alpha +\beta +\rho}}
 M_\lambda(\xi-\xi_*)
\left ( {\bf 1}_{  \la \xi_* \ra \geq | \xi |/2 }
                   +   \left (\frac{\la \xi_* \ra}{\la \xi \ra}\right)^{1+\alpha +\beta-2s} {\bf 1}_{ \la \xi_*\ra < | \xi |/2}
\right) \\
& +  \frac{\la \xi_*\ra^\rho   \la \xi- \xi_*\ra^\alpha \la \xi\ra^\beta  }{\la \xi_*\ra^{3+\gamma +\beta +\rho}}
 M_\lambda(\xi-\xi_*)      \left(       \frac{M_\lambda(\xi_*) }{{\la \xi- \xi_*\ra^{\alpha+\lambda}}}
+  \frac{1 }{{\la \xi- \xi_*\ra^{\alpha}}}\right)
{\bf 1}_{\la \xi- \xi_* \ra  \leq \la \xi_*\ra} \,,
\end{align*}
where we have estimated the factor $\la \xi \ra/\la \xi_* \ra$ of $K$  on the supp ${\bf 1}_{\la \xi_* \ra\geq \sqrt 2 |\xi|}$
by $(\la \xi \ra/\la \xi_* \ra)^\beta$ because of \eqref{444}.
Noting \eqref{333} we have
\begin{align*}
&\Big|\big(|K(\xi,\xi_*)| +|\tilde K(\xi,\xi_*) |\big)\big( M_\lambda(\xi) - M_\lambda(\xi-\xi_*)\big) \Big|\\
 &\lesssim
 \frac{\la \xi_*\ra^\rho   \la \xi- \xi_*\ra^\alpha \la \xi\ra^\beta  }{\la \xi_*\ra^{3+\gamma +\alpha +\beta +\rho}}
 M_\lambda(\xi-\xi_*)         \left\{
\left ( \frac{\la \xi_* \ra}{\la \xi \ra} \right)^{1+\alpha +\beta-2s}+
\left(       \frac{M_\lambda(\xi_*) }{{\la \xi- \xi_*\ra^{\alpha+\lambda}}}
+  \frac{1 }{{\la \xi- \xi_*\ra^{\alpha}}}\right)
{\bf 1}_{\la \xi- \xi_* \ra  \leq \la \xi_*\ra}
\right\}\,.
\end{align*}
The Cauchy-Schwarz inequality shows
\begin{align*}
&|B_{1,1}|^2+
|B_{1,2}|^2 \\
&\lesssim
\Big( \iint_{\RR^6} \Big|\big(|K(\xi,\xi_*)| +|\tilde K(\xi,\xi_*) |\big)\big( M_\lambda(\xi) - M_\lambda(\xi-\xi_*)\big) \Big|
|\hat f (\xi_* ) ||\hat g(\xi - \xi_* )|| \bar{\hat h} (\xi ) |d\xi d\xi_*\Big)^2 \\
&\lesssim \int_{\RR^3}
\frac{|\la \xi_*\ra^\rho\hat f(\xi_*)| }{\la \xi_*\ra^{3+\gamma +\alpha +\beta +\rho}} d \xi _* \|M_\lambda(D)g \|^2_{H^\alpha}
\int_{\RR^3}
\frac{\la \xi_*\ra^\rho|\hat f(\xi_*)| }{\la \xi_*\ra^{3+\gamma +\alpha +\beta +\rho}}d \xi_* \| h \|^2_{H^\beta} \\
&+
 \int_{\RR^6}
\frac{|\la \xi_*\ra^\rho \hat f(\xi_*)|^2  }{\la \xi_*\ra^{6+2(\gamma +\beta + \rho)}}
\left( \frac{M_\lambda(\xi_*)^2{\bf 1}_{ \la \xi -\xi *\ra \lesssim \la \xi_{ *}  \ra}}{
\la \xi-\xi_*\ra^{2(\alpha +\lambda)} }   +
 \frac{{\bf 1}_{ \la \xi -\xi *\ra \lesssim \la \xi_{ *}  \ra}}{
\la \xi-\xi_*\ra^{2\alpha } } \right)d\xi d\xi_*  \|M_\lambda(D)g \|^2_{H^\alpha}\| h \|^2_{H^\beta} \,.
\end{align*}
Since it follows from \eqref{111} that $\la \xi_*\ra^{-(3+\gamma +\alpha +\beta +\rho)} \in L^2$,
the first term has the upper bound $\|f\|_{H^\rho}^2  \|M_\lambda(D)g \|^2_{H^\alpha}\| h \|^2_{H^\beta}$.
If $\alpha+ \lambda <3/2$ then
\[
\int\frac{{\bf 1}_{ \la \xi -\xi *\ra \lesssim \la \xi_{*}  \ra}}{
\la \xi-\xi_*\ra^{2(\alpha+\lambda)} } d\xi  \lesssim \frac{1}{\la \xi_*\ra^{-3+2\lambda +2\alpha}}\,,
\]
which gives the same upper bound for the second term.
If $\alpha+\lambda \ge 3/2$ then the condition $\gamma >-3$ implies
\[
 \int_{\RR^6}
\frac{|\la \xi_*\ra^\rho \hat f(\xi_*)|^2  }{\la \xi_*\ra^{6+2(\gamma +\beta + \rho)}}
\left( \frac{M_\lambda(\xi_*)^2{\bf 1}_{ \la \xi -\xi *\ra \lesssim \la \xi_{ *}  \ra}}{
\la \xi-\xi_*\ra^{2(\alpha +\lambda)} }   +
 \frac{{\bf 1}_{ \la \xi -\xi *\ra \lesssim \la \xi_{ *}  \ra}}{
\la \xi-\xi_*\ra^{2\alpha } } \right)d\xi d\xi_*
\lesssim
\|\la \xi_*\ra^{\lambda-\beta} \hat f(\xi_*)\|^2_{L^2}\,.\]
Thus $B_{1,j}$ $(j=1,2)$ have the desired upper bound.

The estimation for  $B_{2,2}(f,g,h)$ are almost the same as above, in view of \eqref{A-2-2}.
As for $B_{2,1}$,  it remains only to estimate
\begin{align*}
B_{2,1,p}&=  \iiint b\,\,{\bf 1}_{\la \xi_* \ra \leq \sqrt 2 |\xi|}
 {\bf 1}_{ | \xi^- | \ge {1\over 2} \la \xi_*\ra }{\bf 1}_{| \xi_* \,\cdot\,\xi^-| \ge {1\over 2} | \xi^-|^2}
\hat \Phi_c (\xi_* - \xi^-)\Big( M_\lambda(\xi) - M_\lambda(\xi-\xi_*)\Big) \\
& \qquad \qquad \times
\hat f (\xi_* ) \hat g(\xi - \xi_* ) \overline{{\hat h} (\xi )} d\sigma d\xi d\xi_* \,.
\end{align*}
By
\[
{\bf 1}= {\bf 1}_{\la \xi_* \ra \geq |\xi|/2} {\bf 1}_{\la\xi-\xi_* \ra \leq \la \xi_* - \xi^- \ra}
+  {\bf 1}_{\la \xi_* \ra \geq |\xi|/2} {\bf 1}_{\la\xi-\xi_* \ra > \la \xi_* - \xi^-\ra}
+  {\bf 1}_{\la \xi_* \ra < |\xi|/2}
\]
we decompose
\begin{align*}
B_{2,1,p}
=
B_{2,1,p}^{(1)} + B_{2,1,p}^{(2)} +B_{2,1,p}^{(3)} \,.
\end{align*}
On the sets for
above integrals, we have $\la \xi_* -\xi^- \ra \lesssim \,
\la \xi_* \ra$, because $| \xi^- | \lesssim | \xi_*|$
that follows from  $| \xi^-|^2 \le 2 | \xi_* \cdot\xi ^-| \lesssim |\xi^-|\, | \xi_*|$.
Furthermore, on the sets for $B_{2,1,p}^{(1)}$ and $B_{2,1,p}^{(2)}$  we have $\la \xi \ra \sim \la \xi_* \ra$,
so that $\sup \Big(b\,\, {\bf 1}_{ | \xi^- | \ge {1\over 2} \la \xi_*\ra } {\bf 1}_{\la \xi_* \ra \geq |\xi|/2}\Big) \lesssim
{\bf 1}_{|\xi^- |\leq |\xi|/\sqrt 2}$ and
$\la \xi_* -\xi^- \ra \lesssim \,
\la \xi \ra$.
Hence we have
\begin{align*}
&|B_{2,1,p}^{(1)} | ^2 \\
& \lesssim \iiint \frac{
|\hat \Phi_c (\xi_* - \xi^-) |^2 |\la \xi_* \ra^\rho \hat f (\xi_* )|^2 }
{\la \xi_* -\xi^- \ra^{2\beta+ 2\rho}}
\left(\frac{M_\lambda(\xi_*)^2 {\bf 1}_{\la\xi-\xi_* \ra \leq \la \xi_* - \xi^- \ra}}{\la\xi-\xi_* \ra^{2\alpha+2\lambda}}
+\frac{{\bf 1}_{\la\xi-\xi_* \ra \leq \la \xi_* - \xi^- \ra}}{\la\xi-\xi_* \ra^{2\alpha}}\right)
d\xi d\xi_* d \sigma\\
& \times \iiint  |\la\xi-\xi_* \ra^{\alpha} M_\lambda(\xi-\xi_*)\hat g(\xi - \xi_* )|^2 |\la\xi \ra^{\beta}{\hat h} (\xi ) |^2 d\sigma d\xi d\xi_*\,.
\end{align*}
Note that  $\la \xi_*\ra \sim \la \xi \ra \sim \la \xi^+ \ra \lesssim \la \xi^+ -u\ra + \la u\ra$ with
$u = \xi_* - \xi^-$.
Since  $3+ 2(\gamma + \alpha +\beta +\rho) >0$ then if $\alpha+\lambda <3/2$ we have
\begin{align*}
&|B_{2,1,p}^{(1)} | ^2
\lesssim  \int |\la \xi_* \ra^{\rho} \hat f(\xi_*)|^2 \\
&\times \left\{
\sup_{u}
{\la u \ra^{-( 6 +2\gamma+ 2\beta + 2\rho)}}    \int \frac{ {\bf 1}_{\la \xi^+ -u \ra \leq \la u \ra}(
\la \xi^+ -u\ra^{2\lambda } + \la u\ra^{2\lambda})}{\la  \xi^+ -u \ra^{2(\alpha+\lambda)}}d\xi^+\right\} d\xi_*\,\,
 \|M_\lambda(D)g\|^2_{H^{\alpha}} \|h\|_{H^{\beta}}^2\\
&\lesssim
\|f\|^2_{H^\rho}  \|M_\lambda(D)g\|^2_{H^{\alpha}} \|h\|_{H^{\beta}}^2 \,
\end{align*}
because $d\xi \sim d \xi^+$ on the support of ${\bf 1}_{|\xi^- |\leq |\xi|/\sqrt 2}$\,.
The case $\alpha + \lambda \ge 3/2$ can be considered by the same arguments as above.
As for $B_{2,1,p}^{(2)}$, we first note that
$\xi^+ = \xi-\xi_* +u$ implies
\[ (
\enskip M_\lambda(\xi_*) \sim \enskip ) \enskip M_\lambda(\xi^+) \lesssim \la \xi -\xi_* \ra^\lambda + \la u\ra^\lambda \lesssim \la \xi -\xi_* \ra^\lambda
\]
on the integral set, and hence we have by the Cauchy-Schwarz inequality
\begin{align*}
|B_{2,1,p}^{(2)} | ^2  \lesssim& \iiint \frac{
|\hat \Phi_c (\xi_* - \xi^-) | |\la \xi_*\ra^\rho \hat f (\xi_* )|  }
{\la \xi_* -\xi^- \ra^{\alpha +\beta +\rho }}|{\la\xi-\xi_* \ra^{\alpha}}M_\lambda(\xi-\xi_*) \hat g(\xi -\xi_*)|^2 d \sigma d\xi d\xi_* \\
& \times \iiint \frac{
|\hat \Phi_c (\xi_* - \xi^-) | |\la \xi\ra^\rho \hat f (\xi_* )|  }
{\la \xi_* -\xi^- \ra^{\alpha +\beta +\rho}} |{\la \xi \ra^\beta \hat h} (\xi ) |^2 d\sigma d\xi d\xi_*\\
\lesssim &
\|f\|^2_{H^\rho}  \|M_\lambda(D)g\|^2_{H^{\alpha}} \|h\|_{H^{\beta}}^2
\end{align*}
because  $\hat \Phi_c (u ) \la u \ra^{-( \alpha +\beta +\rho)} \in L^2$.

On the integral  set of  $B_{2,1,p}^{(3)}$ we have $\la \xi \ra \sim \la \xi - \xi_*\ra$ and
\[
|M_\lambda(\xi) -M_\lambda(\xi-\xi_*) |\lesssim \frac{\la \xi_*\ra}{\la \xi \ra} M_\lambda(\xi -\xi_*)\,,
\]
so that
\begin{align*}
|B_{2,1,p}^{(3)} | ^2  \lesssim& \iiint b\,\, {\bf 1}_{ | \xi^- | \ge {1\over 2} \la \xi_*\ra }\frac{
|\hat \Phi_c (\xi_* - \xi^-) | |\la \xi_*\ra^\rho \hat f (\xi_* )|  }
{\la \xi\ra^{\alpha + \beta+1} \la \xi_* \ra^{\rho-1}} |{\la\xi-\xi_* \ra^{\alpha}}M_\lambda(\xi-\xi_*)  \hat g(\xi -\xi_*)|^2 d \sigma d\xi d\xi_* \\
& \times \iiint b\,\, {\bf 1}_{ | \xi^- | \ge {1\over 2} \la \xi_*\ra }\frac{
|\hat \Phi_c (\xi_* - \xi^-) | |\la \xi_* \ra^\rho \hat f (\xi_* )|  }
{\la \xi\ra^{\alpha + \beta+1} \la \xi_* \ra^{\rho-1}} |\la\xi \ra^{\beta}{\hat h} (\xi ) |^2 d\sigma d\xi d\xi_*\,.
\end{align*}
We use the change of variables in $\xi_*$,  \, $u= \xi_* -\xi^-$.
Note that $| \xi ^-| \ge {1\over 2} \la u +\xi^-\ra $ implies  $|\xi^-| \geq \la u\ra/\sqrt {10}$.
Since $\la \xi_*-\xi^- \ra + \la \xi_*\ra \lesssim \la \xi \ra$, in view of  \eqref{333} and \eqref{555} we have
\begin{align*}
&
\iint b\,\, {\bf 1}_{ | \xi^- | \ge {1\over 2} \la \xi_*\ra }\frac{
|\hat \Phi_c (\xi_* - \xi^-) | |\la \xi_* \ra^\rho \hat f (\xi_* )|  }
{\la \xi\ra^{\alpha + \beta+1} \la \xi_* \ra^{\rho-1}}d\sigma d\xi_*\\
&=
\iint b\,\, {\bf 1}_{ | \xi^- | \ge {1\over 2} \la \xi_*\ra }\frac{
|\la \xi_* - \xi^- \ra^s \hat \Phi_c (\xi_* - \xi^-) |}{\la \xi \ra^{s} \la \xi_* -\xi^-\ra^{\alpha + \beta +\rho}}
\frac{\la \xi_*\ra^s |\la \xi_*\ra^\rho\hat f (\xi_* )|  }
{\la \xi \ra^{s}} \\
&\qquad \qquad \qquad \times \left(\frac{\la \xi_*-\xi^-\ra^{\alpha+ \beta+\rho-s}  \la \xi_*\ra^{1-s-\rho}}{
\la \xi \ra^{\alpha+ \beta+1-2s}           }\right)
 d \sigma d\xi_*
\\
&\lesssim \left( \iint b\, {\bf 1}_{ | \xi^- |  \gtrsim \la u \ra }\frac{\la u\ra^{2s}
|\hat \Phi_c (u) |^2}{\la \xi \ra^{2s} \la u \ra^{2(\alpha +\beta+\rho)}} d\sigma d u\right)^{1/2}
\left(\iint b\,   {\bf 1}_{ | \xi^- | \ge {1\over 2} \la \xi_*\ra }\frac{\la \xi_*\ra^{2s} |\la \xi_*\ra^\rho \hat f (\xi_* )|^2 }
{\la \xi \ra^{2s}} d \sigma d\xi_* \right)^{1/2}\\
&\lesssim
\| f \|_{H^\rho} \,,
\end{align*}
from which we also can obtain the desired bound for $B_{2,1,p}^{(3)}$. The proof of the proposition is complete.
\end{proof}

We give an application of Proposition \ref{comu-deriv-2}.
Let $S \in C_0^\infty(\RR)$ satisfy $0 \leq S\leq 1$ and
$$
S(\tau) = 1,\quad  |\tau|\leq 1; \qquad S(\tau) =0, \quad |\tau| \geq 2.
$$
Set $\Lambda_N^\lambda(D_v) = M_\lambda(D_v)\,S_N(D_v)= \langle D_v\rangle^\lambda\,S_N(D_v)$.
\begin{coro}\label{IV-coro-2.15}
Assume that  $0< s<1$ and   $\gamma>\max\{-3, -\frac 32 -2s\} $.
If $0 \le \lambda <3/2$ then  for any $\max\{2s-1 , s/2\} \le s' <s$ satisfying $\gamma +2s' >-3/2$ we have
\begin{align*}
\Big
| \Big (\Lambda_N^\lambda\, Q (f,  g) - Q (f,  \Lambda_N^\lambda\, g)  , h \Big ) \Big |
\lesssim  \| f\|_{H^{s'}_{3/2+(2s-1)^++\gamma^++\epsilon}} || g||_{H^{\lambda}_{(2s-1)^++\gamma^+}}  \, || h||_{H^{s'}}\,
\end{align*}
and moreover if $\lambda \ge 3/2$ then we have the same estimate with
$\|f\|_{H^{s'}}$ replaced by $\|f\|_{H^{\lambda -s'}}$.
\end{coro}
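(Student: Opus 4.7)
\medskip

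\noindent\textbf{Proof plan.} The strategy is to decompose $Q = Q_c + Q_{\bar c}$ as in the preamble of Proposition \ref{upper-c}, so that the commutator with $\Lambda_N^\lambda = M_\lambda(D_v) S_N(D_v)$ splits into a singular part, to which Proposition \ref{comu-deriv-2} applies, and a smooth part, which inherits the behaviour of the modified operator $Q_{\tilde\Phi_\gamma}$ and hence can be handled by the weighted upper bound \eqref{upper-I} together with the weight--commutator estimate \eqref{commutators-I}. The extra factor $S_N(D_v)$ is uniformly bounded on every Sobolev space and its symbol does not spoil any of the arguments in the proofs of Propositions \ref{upper-c} and \ref{comu-deriv-2}, so estimates obtained for $M_\lambda(D_v)$ alone transfer verbatim to $\Lambda_N^\lambda$, uniformly in $N$.

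\medskip

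\noindent For the singular part $Q_c$, I would apply Proposition \ref{comu-deriv-2} with the choice
\[
\alpha = 0, \qquad \beta = s', \qquad \rho = s' \ \ (\text{if } \lambda <3/2), \quad \rho = \lambda - s' \ \ (\text{if } \lambda \ge 3/2).
\]
The four conditions \eqref{111}--\eqref{555} then translate into, respectively, $3 + \gamma + 2s' > 3/2$ (i.e.\ $\gamma + 2s' > -3/2$), $s' \ge 2s-1$, $s' \le 1$, and $2s' \ge s$, which are precisely the assumptions $\max\{2s-1, s/2\} \le s' < s$ and $\gamma + 2s' > -3/2$ stated in the corollary; in the regime $\lambda \ge 3/2$ condition \eqref{111} becomes $3 + \gamma + \lambda > 3/2$, which follows from $\gamma > -3$ and $\lambda \ge 3/2$. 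The conclusion of Proposition \ref{comu-deriv-2} then reads
\[
\bigl|\bigl(\Lambda_N^\lambda Q_c(f,g) - Q_c(f, \Lambda_N^\lambda g),\, h\bigr)\bigr| \lesssim \|f\|_{H^{s'}} \,\|\Lambda_N^\lambda g\|_{L^2}\, \|h\|_{H^{s'}} \lesssim \|f\|_{H^{s'}} \,\|g\|_{H^\lambda}\, \|h\|_{H^{s'}},
\]
and $\|f\|_{H^{\lambda - s'}}$ in place of $\|f\|_{H^{s'}}$ when $\lambda \ge 3/2$, which is already stronger than what is required.

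\medskip

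\noindent For the smooth part $Q_{\bar c}$, the kinetic factor $\Phi_{\bar c}$ is bounded by $\tilde \Phi_\gamma$, so as observed right after Proposition \ref{upper-c} both \eqref{upper-I} and \eqref{commutators-I} hold for $Q_{\bar c}$. I would then estimate
\[
\Lambda_N^\lambda Q_{\bar c}(f,g) - Q_{\bar c}(f, \Lambda_N^\lambda g)
\]
by applying \eqref{upper-I} separately to each of the two summands, choosing the Sobolev and weight parameters there so that the $M_\lambda$ loss in regularity lands on the $g$-slot while the weight loss produces precisely $\|f\|_{L^1_{\ell^+ +(\gamma+2s)^+}}$ with $\ell = 3/2 + (2s-1)^+ + \gamma^+ + \epsilon$; Sobolev embedding $H^{s'}_{3/2 + (2s-1)^+ + \gamma^+ + \epsilon} \hookrightarrow L^1_{(2s-1)^+ + \gamma^+ + (\gamma+2s)^+}$ for $s' > 0$ then converts the $L^1$ norm into the stated $H^{s'}_\ell$ norm of $f$. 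This produces the weights $3/2+(2s-1)^+ + \gamma^+ +\epsilon$ on $f$ and $(2s-1)^+ + \gamma^+$ on $g$ that appear in the conclusion. Adding the singular and smooth contributions yields the desired inequality.

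\medskip

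\noindent\textbf{Main obstacle.} The delicate point is the $Q_{\bar c}$ piece: verifying that the weights produced by the two invocations of \eqref{upper-I} really match the target weights $3/2+(2s-1)^+ +\gamma^+ +\epsilon$ and $(2s-1)^+ +\gamma^+$, and that the $\epsilon$ loss (needed to pass from $L^1$ to an $L^2$-based Sobolev space via the Sobolev embedding with weight) can be absorbed. For $Q_c$, the principal difficulty is purely bookkeeping, namely verifying the four structural conditions \eqref{111}--\eqref{555} simultaneously in both regimes $\lambda < 3/2$ and $\lambda \ge 3/2$, which is what forces the lower bound $s' \ge \max\{2s-1, s/2\}$ and the compatibility constraint $\gamma + 2s' > -3/2$.
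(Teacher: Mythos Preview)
Your treatment of the singular part $Q_c$ is correct and matches the paper exactly: the choice $(\alpha,\beta,\rho)=(0,s',s')$ (or $(0,s',\lambda-s')$ when $\lambda\ge 3/2$) in Proposition~\ref{comu-deriv-2} is precisely what the paper does, and your verification of \eqref{111}--\eqref{555} is accurate.

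The genuine gap is in the $Q_{\bar c}$ piece. Applying \eqref{upper-I} separately to $\Lambda_N^\lambda Q_{\bar c}(f,g)$ and $Q_{\bar c}(f,\Lambda_N^\lambda g)$ cannot work, because \eqref{upper-I} distributes a total of $2s$ derivatives between $g$ and $h$; once the extra $\lambda$ derivatives from $\Lambda_N^\lambda$ are also placed on $g$ (or on $h$), the best you can obtain is of the form $\|g\|_{H^{\lambda}}\,\|h\|_{H^{2s}}$ (or any split with $a+b=\lambda+2s$), never $\|g\|_{H^{\lambda}}\,\|h\|_{H^{s'}}$ with $s'<s$. The commutator structure is essential here: the cancellation between the two terms buys back roughly one derivative, and you are discarding it.

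The paper instead invokes a genuine commutator estimate for $Q_{\bar c}$ (equivalently $Q_{\tilde\Phi_\gamma}$) with the pseudodifferential operator $\Lambda_N^{\lambda}$, namely Proposition~2.9 of \cite{amuxy-nonlinear-3}. Concretely, it writes
\[
\big(\Lambda_N^{\lambda}Q_{\bar c}(f,g)-Q_{\bar c}(f,\Lambda_N^{\lambda}g),\,h\big)
=\big(\Lambda^{-s'}\{\cdots\},\,\Lambda^{s'}h\big),
\]
splits the bracket into the two commutators $\Lambda_N^{\lambda-s'}Q_{\bar c}(f,g)-Q_{\bar c}(f,\Lambda_N^{\lambda-s'}g)$ and $Q_{\bar c}(f,\Lambda_N^{\lambda-s'}g)-\Lambda_N^{-s'}Q_{\bar c}(f,\Lambda_N^{\lambda}g)$, and bounds each by
\[
\|f\|_{L^2_{3/2+(2s-1)^++\gamma^++\epsilon}}\,\|g\|_{H^{\lambda-s'+(2s-1)^+}_{(2s-1)^++\gamma^+}}\,\|h\|_{H^{s'}}.
\]
Since $s'\ge (2s-1)^+$ this is dominated by $\|g\|_{H^{\lambda}_{(2s-1)^++\gamma^+}}$, and the $L^2$ weight on $f$ is absorbed into $H^{s'}_{3/2+(2s-1)^++\gamma^++\epsilon}$. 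So the weights you wrote down are correct, but to reach them you must replace the brute-force application of \eqref{upper-I} by a pseudodifferential commutator estimate for $Q_{\bar c}$.
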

\begin{proof} In the proof of Proposition \ref{comu-deriv-2}, instead of \eqref{mean-lambda} we use
\begin{align*}
\Big| \Lambda_N^\lambda(\xi) - \Lambda_N^\lambda(\xi-\xi_*)\Big|
&\lesssim M_\lambda(\xi-\xi_*)
{\bf 1}_{  \la \xi_* \ra \geq | \xi |/2 }  + \frac{\la \xi_* \ra}{\la \xi \ra} M_\lambda( \xi-\xi_* ) {\bf 1}_{ \la \xi_*\ra < | \xi |/2 } \\
&\quad
+ M_\lambda(\xi-\xi_*)
\frac{M_\lambda(\xi_*)}{ \la \xi-\xi_*\ra^\lambda           }{\bf 1}_{\la \xi- \xi_* \ra  \leq \la \xi_*\ra}
\,,\notag
\end{align*}
and choose
$$
(\alpha, \beta, \rho) = (0,s',s')  \enskip or \enskip  (0,s',\lambda- s')\,,
$$
we can get ,
\begin{align*}
\Big
| \Big (\Lambda_N^\lambda\, Q_c (f,  g) - Q_c (f,  \Lambda_N^\lambda\, g)  , h \Big ) \Big |
\lesssim  \| f\|_{H^{s'}} || M_\lambda (D_v)g||_{L^2} \, || h||_{H^{s'}}\, .
\end{align*}
On the other hand, using Proposition 2.9 of \cite{amuxy-nonlinear-3},
\begin{align*}
\Big| \Big (\Lambda_N^\lambda\, Q_{\bar c} (f,  g) &- Q_{\bar c} (f,  \Lambda_N^\lambda\, g)  , h \Big ) \Big |
= \Big| \Big (\Lambda^{-s'}\big(\Lambda_N^{\lambda}\, Q_{\bar c} (f,  g) - Q_{\bar c} (f,  \Lambda_N^\lambda\, g)\big),\, \Lambda^{s'} h \Big ) \Big |\\
&\leq \Big| \Big (\Lambda_N^{\lambda-s'}\, Q_{\bar c} (f,  g) - Q_{\bar c} (f,  \Lambda_N^{\lambda-s'}\, g),\, \Lambda^{s'} h \Big ) \Big |\\
&+\Big| \Big ( Q_{\bar c} (f,\, \Lambda_N^{\lambda-s'}\,g) - \Lambda_N^{-s'}\,Q_{\bar c} (f,  \Lambda_N^\lambda\, g)\big),\, \Lambda^{s'} h \Big ) \Big |\\
&\lesssim \| f\|_{L^2_{3/2+(2s-1)^++\gamma^++\epsilon}} ||g||_{H^{\lambda-s'+(2s-1)^+}_{(2s-1)^++\gamma^+}} \, || h||_{H^{s'}}\, ,
\end{align*}
which completes the proof of Corollary.
\end{proof}

\section{Full regularity of solutions}\label{sect-IV-3}
\smallskip \setcounter{equation}{0}

Let $f\,\in L^\infty([0, T[\,;\,\,{H}^{5}_\ell(\Omega\times\RR^3))$, for any
$\ell\in\NN$ be a solution of Cauchy problem \eqref{IV-1.1}. The
regularity of $f$ will now be considered. First of all, note that
$f\in C^1([0, T[;\, H^1_\ell(\Omega\times\RR^3))$ by using the equation.

For $\alpha\in\NN^6$, we recall the Leibniz formula
$$
\partial^\alpha\,Q(g, f)=\sum_{\alpha_1+\alpha_2=\alpha}
C^{\alpha_1}_{\alpha_2}Q(\partial^{\alpha_1}g,\,
\partial^{\alpha_2}f).
$$

Here and below,  $\phi$ denotes a cutoff  function satisfying
$\phi\in C^\infty_0$ and $0\leq \phi\leq 1$. Notation
$\phi_1\subset\subset\phi_2$ stands for two cutoff functions such
that $\phi_2=1$ on the support of $\phi_1$.

Take some smooth cutoff functions $\varphi,\, \varphi_2,
\varphi_3\in C^\infty_0(]T_1, T_2[)$ and $ \psi,\, \psi_2, \psi_3\in
C^\infty_0(K)$  such that $\varphi\subset\subset
\varphi_2\subset\subset \varphi_3$ and
$\psi\subset\subset\psi_2\subset\subset\psi_3$. Set
$f_1=\varphi(t)\psi(x) f$, $f_2=\varphi_2(t) \psi_2(x) f$ and
$f_3=\varphi_3(t) \psi_3(x) f$, so here we can suppose that $0<T<\infty$. For $\alpha\in\NN^6, |\alpha|\leq 5$, denote
$$
F=\partial^\alpha_{x, v} (\varphi(t)\psi(x) f)\in L^\infty(]T_1,
T_2[;\, L^2_\ell (\RR^6)).
$$
Then the Leibniz formula  yields the following equation :
\begin{equation}\label{5.3.1}
 F_t + v\,\cdot\,\partial_x {F }- Q(f,\,\, F)= G,
\enskip (t, x, v) \in \RR^7,
\end{equation}
where
\begin{equation}\label{5.3.2}\begin{array}{rcl}
G&=&\sum_{\alpha_1+\alpha_2=\alpha,\,\, 1\leq
|\alpha_1|}C^{\alpha_1}_{\alpha_2} Q\Big(\partial^{\alpha_1}{f_2},\,\,
\partial^{\alpha_2} f_1\Big)\Big\}+
\partial^\alpha\Big( \varphi_t \psi(x) f+ v\,\cdot\,\psi_x(x)
\varphi(t) f\Big)\\
&+&[\partial^\alpha,\,\,\,
v\,\cdot\,\partial_x](\varphi(t)\psi(x) f)\equiv  (A)+(B)+(C).\\
\end{array}
\end{equation}

Note carefully that a priori $F$ is not regular enough, and
therefore at that point, taking it as test function in the equation
of \eqref{5.3.1} is not allowed. This is one of the main
difficulties alluded to in the Introduction. Therefore, as in
\cite{amuxy-nonlinear-3}, we need to mollify $F$. This mollification process of
course complicates the analysis below, but is necessary if we want
to avoid formal proofs. The previous set of tools related to
commutators estimations will then be used. For this purpose, let
$S\in C^\infty_0(\RR)$ satisfy $0\leq S\leq 1$ and
$$
S(\tau)=1,\,\,\,\,|\tau|\leq 1;\,\,\,\,\,\,\,
S(\tau)=0,\,\,\,\,|\tau|\geq 2.
$$
Then
$$
S_N(D_x)S_N(D_v)=S(2^{-2N}|D_x|^2)S(2^{-2N}|D_v|^2)\, :\,\,
H^{-\infty}_\ell (\RR^6) \,\, \rightarrow\,\, H^{+\infty}_\ell  (\RR^6),
$$
is a regularization operator such that
$$
\|\big(S_N(D_x)S_N(D_v)f\big) -f\|_{L^2_\ell  (\RR^6)}\rightarrow 0,
\,\,\,\,\,\, \mbox{ as } N\rightarrow \infty.
$$
Set
$$
P_{N,\, \ell}=\psi_2(x) W_{\ell  }\,S_N(D_x)\, S_N(D_v).
$$
Then
$$
P_{N,\, \ell}\,\,F \in C^1_0(]T_1, T_2[; C^\infty_0(K; H^{+\infty}(\RR^3))),
$$
and we can take
$$
h=P^\star_{N,\, \ell}\,\,( P_{N,\, \ell}\,\,F) \in C^1(\RR; H^{+\infty}(\RR^6))
$$
as a test function for equation (\ref{5.3.1}).

It follows by integration by parts on $\RR^7=\RR^1_t\times\RR^3_x\times\RR^3_v$ that
\begin{align*}
&\Big([S_N(D_v),\,\, v]\,\cdot\,\nabla_x\, S_N(D_x) F,\,\, W_{\ell} P_{N, \ell}\,\,F\Big)_{L^2(\RR^7)}\\
&-\Big(P_{N,
\ell}\,Q({f}_2,\,  F),\, P_{N, \ell}\,\, F\Big)_{L^2(\RR^7)}
= \Big(G,\, h\Big)_{L^2(\RR^7)},
\end{align*}
where we used the fact
$$
\Big((\partial_t+v\,\cdot\,\nabla_x)\,  P_{N, \ell} F,\,\,  P_{N, \ell}\,\,F\Big)_{L^2(\RR^7)}=0.
$$
We get then
\begin{eqnarray*}
&&-\Big(Q({f},\, P_{N,\, \ell}\,F),\, P_{N,\, \ell}\,
F\Big)_{L^2(\RR^7)}= -\Big([S_N(D_v),\,\, v]\,\cdot\,\nabla_x\,
S_N(D_x) F,\,\, W_{\ell} P_{N, \ell }\,F\Big)_{L^2(\RR^7)}
\\
&&\,\,\,\,\,+\Big(P_{N,\,\ell}\,Q(f_2,\,  F)-Q({f}_2,\, P_{N,\,
\ell}\, F),\, P_{N,\, \ell}\,\,F\Big)_{L^2(\RR^7)}+ \Big(G,\, h\Big)_{L^2(\RR^7)}.
\end{eqnarray*}

Next, we follow the main steps in our previous works \cite{amuxy-nonlinear-3},
but need to be careful due to the singular behavior of the relative
velocity part of the kernel.
\subsection{Gain of regularity in $v$}
In this subsection, we will prove
a  partial smoothing effect  on the weak solution $F$ in
the velocity variable $v$ .

\begin{prop}\label{propIV-6.1}
Assume that $0<s<1, \gamma>\max\{-3, -2s-3/2\}$. Let $f\in L^\infty([0, T]; H^5_\ell (\Omega_x\times\RR^3_v))$, for any $\ell\in\NN$  be a solution of the equation \eqref{IV-1.1} satisfying the coercivity condition \eqref{coercivity-nonlinear-a}. Then one has
\begin{equation}\label{3.3}
\Lambda^s_v \big(\varphi(t)\psi(x)f\big)\in L^2(\RR_t; H^5_\ell(\RR^6)),
\end{equation}
for any big $\ell>0$ and any cut off function $\varphi\in C^\infty_0(]T_1, T_2[), \psi\in C^\infty_0(K)$.
\end{prop}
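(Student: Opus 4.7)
The strategy is a localized weighted energy estimate on equation \eqref{5.3.1} using the mollified test function $h=P^{\star}_{N,\ell}P_{N,\ell}F$ introduced above, followed by passage to the limit $N\to\infty$. The mollification is essential because $F\in L^\infty_tL^2_\ell$ is not by itself admissible in hypothesis \eqref{coercivity-nonlinear-a}, whereas $P_{N,\ell}F$ lies in $C^1_0(]T_1,T_2[;C_0^\infty(K;H^{+\infty}_\ell(\RR^3)))$. Pairing \eqref{5.3.1} with $h$ and integrating by parts in $t,x$ gives the identity
\begin{equation*}
-\bigl(Q(f,P_{N,\ell}F),P_{N,\ell}F\bigr)_{L^2(\RR^7)}
=\mathcal{T}_N+\mathcal{C}_N+(G,h)_{L^2(\RR^7)},
\end{equation*}
with $\mathcal{T}_N$ the transport commutator $-\bigl([S_N(D_v),v]\cdot\nabla_x S_N(D_x)F,\,W_\ell P_{N,\ell}F\bigr)_{L^2}$, $\mathcal{C}_N$ the collision commutator $\bigl(P_{N,\ell}Q(f_2,F)-Q(f_2,P_{N,\ell}F),P_{N,\ell}F\bigr)_{L^2}$, and $(G,h)$ the inhomogeneity. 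Since $P_{N,\ell}F$ is admissible, hypothesis \eqref{coercivity-nonlinear-a} bounds the left-hand side below by $\eta_0\|P_{N,\ell}F\|^2_{H^s_{\gamma/2}(\RR^7)}-C_0\|P_{N,\ell}F\|^2_{L^2_{\gamma/2+s}(\RR^7)}$; this delivers the coercive $H^s_v$-gain.

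Each remainder is to be bounded, uniformly in $N$ and $t$, by $C\|f\|^2_{L^\infty_tH^5_{\ell'}}$ (for some enlarged weight $\ell'$) plus a fraction $\varepsilon\|P_{N,\ell}F\|^2_{H^s_{\gamma/2}}$ absorbable on the left. For $\mathcal{T}_N$, the symbol of $[S_N(D_v),v_j]$ is $\mp i\,\partial_{\xi_j}S_N(\xi)=\mp i\,2^{-2N+1}\xi_jS'(2^{-2N}|\xi|^2)$, supported in $2^N\le|\xi|\lesssim 2^N$ and therefore uniformly of size $O(2^{-N})$; Cauchy--Schwarz yields $|\mathcal{T}_N|\lesssim 2^{-N}\|F\|_{L^2_\ell}\|P_{N,\ell}F\|_{L^2_\ell}$. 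For $\mathcal{C}_N$, the factors $\psi_2(x)$ and $S_N(D_x)$ of $P_{N,\ell}$ commute with $Q$ (which acts only in $v$), so only the commutators of $W_\ell$ and $S_N(D_v)$ with $Q(f_2,\cdot)$ survive. The first is controlled by Proposition~\ref{IV-prop-2.5} together with \eqref{commutators-I} for the regular part $Q_{\bar c}$ arising from the splitting $\Phi_\gamma=\Phi_c+\Phi_{\bar c}$; the second by Corollary~\ref{IV-coro-2.15} applied with $\lambda=0$ and with $s'\in{}]\max\{2s-1,s/2\},s[$ chosen so that $\gamma+2s'>-3/2$. Both estimates produce bounds of the form $\|f_2\|_{H^{s'}_{w}}\|F\|_{L^2_{w}}\|P_{N,\ell}F\|_{H^{s'}_{w}}$ with $s'<s$, which Young's inequality absorbs into the coercive term.

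For $(G,h)$ we split $G$ according to \eqref{5.3.2}. The contributions $(B)$ and $(C)$ carry no collision operator and cost at most five derivatives of $f$, hence are trivially bounded in terms of $\|f\|_{L^\infty_tH^5_\ell}$ and $\|h\|_{L^2}$. Part $(A)=\sum_{|\alpha_1|\ge 1}C^{\alpha_1}_{\alpha_2}Q(\partial^{\alpha_1}f_2,\partial^{\alpha_2}f_1)$ is treated via Proposition~\ref{prop-IV-2.5b}; the constraint $|\alpha_1|\ge 1$ forces $|\alpha_2|\le 4$, so $\partial^{\alpha_2}f_1\in L^\infty_tH^{s+m}_\ell$ for any $m$ with $s+m\le 1$. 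Choosing $m\in[s-1,s]$ with $0\le m\le 1-s$ (possible for every $s\in(0,1)$) places the $s$-derivatives on $\partial^{\alpha_2}f_1$, while the remaining $\|h\|_{H^{s-m}_{-\ell}}$ is interpolated between $\|P_{N,\ell}F\|_{L^2}$ and the coercive $\|P_{N,\ell}F\|_{H^s_{\gamma/2}}$ and absorbed.

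Collecting all bounds, integrating in $t\in\,]T_1,T_2[$, and taking $\varepsilon$ small yields
\begin{equation*}
\|P_{N,\ell}F\|^2_{L^2(]T_1,T_2[\,;\,H^s_{\gamma/2}(\RR^6))}\lesssim \|f\|^2_{L^\infty_tH^5_{\ell'}},
\end{equation*}
uniformly in $N$. Sending $N\to\infty$, using the strong convergence $S_N(D_x)S_N(D_v)\to\mathrm{Id}$ on $L^2_\ell$, weak-$*$ compactness, and lower semicontinuity, one obtains $\psi_2 W_\ell\Lambda^s_vF\in L^2_tL^2_{x,v}$ locally; summing over $|\alpha|\le 5$ and using that $\Lambda^s_v$ commutes with $\partial^\alpha_{x,v}$ yields \eqref{3.3}. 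The main difficulty is the uniform-in-$N$ control of $\mathcal{C}_N$ when $\gamma<0$: the singular kinetic factor $|v-v_*|^\gamma$ precludes a naive commutator estimate, and it is precisely here that the decomposition $\Phi_\gamma=\Phi_c+\Phi_{\bar c}$ and the kernel analysis underpinning Corollary~\ref{IV-coro-2.15} become indispensable.
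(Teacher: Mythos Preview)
Your overall strategy matches the paper's, but there is a genuine gap in the treatment of the collision commutator $\mathcal{C}_N$. You assert that ``the factors $\psi_2(x)$ and $S_N(D_x)$ of $P_{N,\ell}$ commute with $Q$ (which acts only in $v$), so only the commutators of $W_\ell$ and $S_N(D_v)$ with $Q(f_2,\cdot)$ survive.'' This is correct for the multiplier $\psi_2(x)$ but \emph{false} for $S_N(D_x)$: the operator $g\mapsto Q(f_2,g)$ acts in $v$ at each fixed $(t,x)$, but its ``coefficients'' depend on $x$ through $f_2(t,x,\cdot)$. Hence $S_N(D_x)Q(f_2,F)\neq Q(f_2,S_N(D_x)F)$, and the difference must be estimated. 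The paper isolates exactly this term as item $(2)$ in the decomposition of $\mathcal{C}_N$ and controls it via Lemma~\ref{lemm3.3}: writing $S_N(D_x)$ as convolution with $2^{3N}\hat S(2^N\cdot)$ and applying the mean value theorem in $x$ to $f_2$, one obtains a bound of the type
\[
\bigl|\bigl(S_N(D_x)Q(f_2,g)-Q(f_2,S_N(D_x)g),\,h\bigr)\bigr|\lesssim 2^{-N}\|\nabla_x f_2\|_{L^\infty_{t,x}L^2_{w}}\|g\|_{L^2_{t,x}H^s_{w}}\|h\|_{L^2_{t,x}H^s_{w}},
\]
which, after taking $g=S_N(D_v)F$ and using $\|2^{-N}S_N(D_v)F\|_{H^s}\lesssim\|F\|_{L^2}$, becomes uniform in $N$. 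Without this step your bound on $\mathcal{C}_N$ is incomplete.

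A minor point: in your estimate of $\mathcal{T}_N$ the factor $2^{-N}$ is not the final size. The symbol of $[S_N(D_v),v_j]$ is indeed $O(2^{-N})$, but it is paired with $\nabla_x S_N(D_x)F$, and $\nabla_x S_N(D_x)$ has operator norm $O(2^{N})$ on $L^2$; the product $2^{-N}\nabla_x S_N(D_x)$ is uniformly bounded, not small. So $|\mathcal{T}_N|\lesssim \|F\|_{L^2}\|W_\ell P_{N,\ell}F\|_{L^2}$ uniformly in $N$, which is what the argument needs and what the paper states. The rest of your outline (coercivity, Lemma~\ref{lemmIV-6.1} for $(G,h)$, use of \eqref{IV-4.3} and Corollary~\ref{IV-coro-2.15} for the $W_\ell$- and $S_N(D_v)$-commutators, passage to the limit) coincides with the paper's proof.
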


Similarly to \cite{amuxy-nonlinear-3},
$$
\Big|\Big([S_N(D_v),\,\, v]\,\cdot\, \nabla_x\,\, S_N(D_x) \,F,\,\, W_\ell P_{N, \ell  }\,\,F\Big)_{L^2(\RR^7)}\Big|\leq C \|f_1\|^{2}_{L^2([0, T]; H^5_{
\ell}(\RR^6))}.
$$
Then the coercivity assumption  \eqref{coercivity-nonlinear-a} implies
\begin{align}\label{coercivity-2}
&\eta_0||\Lambda^s_v W_{\gamma/2} P_{N,\, \ell}\,F||^2_{L^2(\RR^7)} \leq
C\|f_1\|^2_{L^2(\RR; H^5_{\ell+(\gamma/2+s)^+}(\RR^6))}
+ \Big|\Big(G,\, h\Big)_{L^2(\RR^7)}\Big|\\
&+\Big|\Big(P_{N,\,\ell}\,Q(f_2,\,  F)-Q(f_2,\, P_{N,\,
\ell}\, F),\, P_{N,\, \ell}\,\,F\Big)_{L^2(\RR^7)}\Big|.\notag
\end{align}

The proof of Proposition \ref{propIV-6.1} will be completed by
estimating the last two terms in \eqref{coercivity-2} through the
following three Lemmas.

\begin{lemm}\label{lemmIV-6.1}
Let $f_1\in L^\infty([0, T]; H^5_\ell (\RR^6)),\, \ell \geq 0$. Then, we have, for any $\varepsilon>0$,
$$
\left|\Big(G,\, h\Big)_{L^2(\RR^7)}\right|\leq
C_\varepsilon\|f_2\|^{4}_{L^\infty([0, T]; H^5_{\ell+2+(\gamma+2s)^+} (\RR^6))}+\varepsilon
\| P_{N,\, \ell}\, F\|^2_{L^2(\RR^4_{t, x}, H^s_{\gamma/2}(\RR^3_{v}))}.
$$
\end{lemm}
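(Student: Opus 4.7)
The natural strategy is to split $G = (A) + (B) + (C)$ along \eqref{5.3.2} and estimate each contribution to $(G,h)_{L^2(\RR^7)}$ separately, using the key identity $(G,h)_{L^2(\RR^7)} = (G, P^{\star}_{N,\ell}(P_{N,\ell}F))_{L^2} = (P_{N,\ell}G, P_{N,\ell}F)_{L^2}$. After this rewriting, Cauchy-Schwarz in the dual pairing $H^{-s}_{-\gamma/2}(\RR^3_v)$-$H^{s}_{\gamma/2}(\RR^3_v)$ followed by Young's inequality yields
$$
|(G,h)_{L^2(\RR^7)}|\le C_\varepsilon \|P_{N,\ell}G\|^2_{L^2(\RR^4_{t,x};\,H^{-s}_{-\gamma/2}(\RR^3_v))}+\varepsilon \|P_{N,\ell}F\|^2_{L^2(\RR^4_{t,x};\,H^s_{\gamma/2}(\RR^3_v))},
$$
so the task reduces to bounding $\|P_{N,\ell}G\|^2$ by $\|f_2\|^4_{L^\infty_t H^5_{\ell+2+(\gamma+2s)^+}}$ uniformly in $N$. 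Since $P_{N,\ell} = \psi_2 W_\ell S_N(D_x) S_N(D_v)$ maps $L^2_{\ell}(\RR^6) \to L^2(\RR^6)$ boundedly (the commutator $[W_\ell, S_N(D_v)]$ is uniformly bounded), it is enough to control $\|G\|$ in $L^2_{t,x}H^{-s}_{\ell-\gamma/2}(\RR^3_v)$.

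For the transport remainders $(B)$ and $(C)$, only at most $|\alpha| = 5$ total $(x,v)$-derivatives of $f$ appear, and the only polynomial $v$-factor is the single $\langle v \rangle$ coming from $v \cdot \psi_x \varphi f$ in $(B)$. Hence
$$
\|(B)\|_{L^2([0,T]; L^2_{\ell+1}(\RR^6))}+\|(C)\|_{L^2([0,T]; L^2_\ell(\RR^6))}\lesssim \|f_2\|_{L^\infty([0,T];\,H^5_{\ell+1}(\RR^6))}.
$$
Pairing with $h$ and applying the above Young split produces a quadratic $\|f_2\|^2$; combined with $\|f_2\|^2\le 1+\|f_2\|^4$ and an adjustment of constants, this is absorbed into the claimed quartic bound.

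For the nonlinear term $(A)=\sum_{|\alpha_1|\ge 1}C^{\alpha_1}_{\alpha_2}Q(\partial^{\alpha_1}f_2,\partial^{\alpha_2}f_1)$ with $|\alpha_1|+|\alpha_2|\le 5$, we apply Proposition \ref{prop-IV-2.5b} pointwise in $(t,x)$ with $m=0$, which gives
$$
|(Q(\partial^{\alpha_1}f_2,\partial^{\alpha_2}f_1),\tilde h)_{L^2(\RR^3_v)}|\lesssim \bigl(\|\partial^{\alpha_1}f_2\|_{L^1_{v,a}}+\|\partial^{\alpha_1}f_2\|_{L^2_v}\bigr)\|\partial^{\alpha_2}f_1\|_{H^s_{v,b}}\|\tilde h\|_{H^s_{v,-\ell'}},
$$
where $\ell'=\ell-\gamma/2$ and both weight exponents $a,b$ are $\le \ell+2+(\gamma+2s)^+$. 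Since $|\alpha_1|\ge 1$ forces $|\alpha_2|\le 4$, we can place the factor with the smaller multi-index in $L^\infty_{t,x}$ via the Sobolev embedding $H^2(\RR^3_x)\hookrightarrow L^\infty(\RR^3_x)$, while the other factor stays in $L^2_{t,x}$; in every case the $f$-derivatives used total at most $5$ in the isotropic $(x,v)$-sense. Squaring the pointwise estimate, integrating in $(t,x)$, and summing over $\alpha_1,\alpha_2$ produces $\|(A)\|^2_{L^2_{t,x}H^{-s}_{\ell-\gamma/2}}\lesssim \|f_2\|^4_{L^\infty_t H^5_{\ell+2+(\gamma+2s)^+}}$.

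The main technical obstacle lies in the bookkeeping of the polynomial $v$-weights as they propagate through $Q$, $W_\ell$ and the commutators with $S_N(D_v)$: verifying that the exponent $\ell+2+(\gamma+2s)^+$ is exactly sufficient to absorb both the $L^1_{v,a}$-norm arising from the upper bound of $Q$ and the extra power of $\langle v\rangle$ produced by the streaming term $v\cdot\psi_x\varphi f$ in $(B)$, while simultaneously keeping all constants uniform in the regularization parameter $N$. A subsidiary point is the choice of the parameter $m$ in Proposition \ref{prop-IV-2.5b} so that the $h$-factor lands in $H^s$ matching the coercivity norm $H^s_{\gamma/2}$ on the right-hand side, which forces $m=0$ in the above application.
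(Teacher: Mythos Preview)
Your proposal is correct and follows essentially the same approach as the paper. Both proofs split $G=(A)+(B)+(C)$, handle the linear remainders $(B)$ and $(C)$ by direct $L^2_\ell$ bounds, and treat the collisional term $(A)$ via Proposition~\ref{prop-IV-2.5b} with $m=0$ together with the case split $|\alpha_1|\le 3$ versus $|\alpha_1|>3$ and Sobolev embedding; the only organizational difference is that you first move $P_{N,\ell}$ onto $G$ and work by duality, whereas the paper pairs directly with $h=P^\star_{N,\ell}P_{N,\ell}F$ and invokes the uniform boundedness of $W_{-\ell}P^\star_{N,\ell}$ to extract the factor $\|P_{N,\ell}F\|_{L^2_{t,x}(H^s_{\gamma/2})}$---these are dual formulations of the same operator bound.
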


\begin{proof} By using the decomposition in \eqref{5.3.2}, it is obvious that
$$
(B)=\partial^\alpha\Big( \varphi_t \psi(x) f+
v\,\cdot\,\psi_x(x) \varphi(t) f\Big)\in L^2_\ell (\RR^7),
$$
and
$$
\|(B)\|_{L^2_\ell(\RR^7)}\leq C \|f_2\|_{L^\infty(\RR, H^5_{\ell+1}(\RR^6))}.
$$
Note that $[\partial^\alpha,\,\,\, v\,\cdot\,\partial_x]$ is a
differential operator of order $|\alpha|$ so that  we have
$$
\|(C)\|_{L^2_\ell(\RR^7)}\leq C \|f_2\|_{L^\infty(\RR, H^5_{\ell}(\RR^6))}.
$$
For the term $(A)$, recall that $\alpha_1+\alpha_2=\alpha$,
$|\alpha|\leq 5$ and $|\alpha_2|\leq 4$. Here we use the following upper
bounded estimate from Proposition \ref{prop-IV-2.5b}
\begin{equation}\label{IV-2.5b}
\Big|\Big(Q(f,\, g),\, h\Big)_{L^2(\RR^3_v)}\Big|\leq
C\|f\|_{L^2_{\ell-\gamma/2+2+(\gamma+2s)^+}(\RR^3_v)}
\|g\|_{H^s_{\ell-\gamma/2+(\gamma+2s)^+}(\RR^3_v)}
\|h\|_{H^s_{-\ell+\gamma/2}(\RR^3_v)}.
\end{equation}

Then, by separating the cases $|\alpha_1|\leq 3$ and
$|\alpha_1|> 3$, we get
\begin{align*}
&\Big|\Big(Q\big(\partial^{\alpha_1}
{f_2},\,\partial^{\alpha_2} f_1\big),\,P^\star_{N, \ell}
\, P_{N, \ell} F\Big)_{L^2(\RR^7)}\Big|\\
&\leq C\int \|\partial^{\alpha_1}
{f_2}\|_{L^2_{\ell-\gamma/2+2+(\gamma+2s)^+}(\RR^3_v)}
\|\partial^{\alpha_2} f_1\|_{H^s_{\ell-\gamma/2+(\gamma+2s)^+}(\RR^3_v)}
\|P^\star_{N, \ell}
\, P_{N, \ell} F\|_{H^s_{-\ell+\gamma/2}(\RR^3_v)} dx dt\\
&\leq C\|
{f_2}\|_{L^\infty(\RR, H^5_{\ell+2+(\gamma+2s)^+}(\RR^6))}
\|f_1\|_{L^2(\RR, H^{4+s}_{\ell+2+(\gamma+2s)^+}(\RR^6))}
\| P_{N, \ell} F\|_{L^2(\RR^4_{t, x}, H^s_{\gamma/2}(\RR^3_v))}\,.
\end{align*}
Here we used the fact that $W_{-\ell}\,P^\star_{N, \ell}$ is a
uniformly (with respect to $N, \ell$) bounded operator.
This ends the proof of Lemma \ref{lemmIV-6.1} by Cauchy Schwarz
inequality.
\end{proof}

We turn now to estimating the commutators of the regularization
operator with the collision operator which are given in the
following two Lemmas.

The next lemma is about the commutator of the collision operator
with a mollification w.r.t. $x$ variable.

\begin{lemm}\label{lemm3.3}
Let $0< s <1,\, \gamma>\max\{-3, -2s-3/2\}$. For any suitable functions $f$ and $h$
with the following norms well defined, one has
\begin{eqnarray}\label{3.10+1}
&&\Big|\Big(S_N(D_x) Q(f,\,\, g )-Q(f,\,\, S_N(D_x)\, g),\,\,h\Big)_{L^2(\RR^7)}\Big| \\
&&\,\,\,\,\,\leq C 2^{-N} \|\nabla_x f\|_{L^\infty(\RR^4_{t,
x},\,\, L^2_{\ell+2-\gamma/2+(2s+\gamma)^+}(\RR^3_v))}
\|g\|_{L^2(\RR^4_{t, x},\,\,
H^s_{\ell-\gamma/2+(2s+\gamma)^+}(\RR^3_v))}
\|h\|_{L^2(\RR^4_{t, x},\,\,
H^s_{-\ell+\gamma/2}(\RR^3_v))}.\nonumber
\end{eqnarray}
for a constant $C$ independent of $N$.
\end{lemm}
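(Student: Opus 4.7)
The plan is to exploit the fact that the collision operator $Q$ acts only in the velocity variable, while $S_N(D_x)$ acts only in $x$, so the commutator measures precisely the $x$-variation of the first argument $f$. Writing $S_N(D_x)$ as convolution with its kernel $K_N(y) = 2^{3n}K(2^N y)$ (where $K = \cF^{-1}(S(|\cdot|^2))$ is Schwartz), and using that $Q(\cdot,\cdot)(v)$ is evaluated pointwise in $x$, I would represent the commutator as
\begin{align*}
\bigl[S_N(D_x)Q(f,g) - Q(f, S_N(D_x)g)\bigr](t,x,v)
&= \int K_N(x-y)\,Q\bigl(f(t,y,\cdot)-f(t,x,\cdot),\ g(t,y,\cdot)\bigr)(v)\,dy.
\end{align*}

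Next, I would Taylor expand $f(t,y,\cdot) - f(t,x,\cdot) = \int_0^1 (y-x)\cdot (\nabla_x f)(t,x+\tau(y-x),\cdot)\,d\tau$, which moves a factor $(y-x)$ out front. The key structural observation is that $(y-x)K_N(x-y)$ has $L^1_x$ norm exactly $2^{-N}\int|z||K(z)|\,dz$, since $yK_N(y) = 2^{-N}\tilde K_N(y)$ with $\tilde K_N(y) = 2^{3N}(2^N y)K(2^N y)$ another integrable mollifier. Thus the commutator takes the form
\begin{align*}
2^{-N}\int_0^1 \int \tilde K_N(x-y)\cdot Q\bigl((\nabla_x f)(t,x+\tau(y-x),\cdot),\ g(t,y,\cdot)\bigr)(v)\,dy\,d\tau,
\end{align*}
which already exhibits the gain $2^{-N}$.

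I would then pair with $h$ and estimate the velocity integral pointwise in $(t,x,y,\tau)$ by Proposition \ref{prop-IV-2.5b} (the weighted upper bound, in the form \eqref{IV-2.5b} used in Lemma \ref{lemmIV-6.1}): for each fixed time and spatial point
\begin{align*}
&\bigl|\bigl(Q((\nabla_x f)(t,x+\tau(y-x),\cdot),\ g(t,y,\cdot)),\ h(t,x,\cdot)\bigr)_{L^2(\RR^3_v)}\bigr| \\
&\qquad \lesssim \|(\nabla_x f)(t,x+\tau(y-x),\cdot)\|_{L^2_{\ell-\gamma/2+2+(2s+\gamma)^+}} \|g(t,y,\cdot)\|_{H^s_{\ell-\gamma/2+(2s+\gamma)^+}} \|h(t,x,\cdot)\|_{H^s_{-\ell+\gamma/2}}.
\end{align*}
Placing the $\nabla_x f$ factor in $L^\infty_{t,x}$ makes the translation by $\tau(y-x)$ harmless, so that factor comes out of all the integrals.

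Finally, integrating the remaining product $\|g(t,y,\cdot)\|_{H^s_\cdot}\|h(t,x,\cdot)\|_{H^s_\cdot}$ against $|\tilde K_N(x-y)|$ in the $(x,y)$ variables and applying Young's inequality with $\|\tilde K_N\|_{L^1}$ uniformly bounded in $N$, followed by Cauchy-Schwarz in $(t,x)$, yields exactly the bound \eqref{3.10+1}. The proof contains no real obstacle beyond bookkeeping: the only point that deserves care is the translation argument in the $\nabla_x f$ factor (hence the necessity of placing it in $L^\infty_{t,x}L^2_v$ rather than $L^2_{t,x}L^2_v$), and verifying that the weight/Sobolev indices in Proposition \ref{prop-IV-2.5b} line up with those announced in the statement.
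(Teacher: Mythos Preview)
Your proposal is correct and follows essentially the same approach as the paper: write the commutator via the convolution kernel of $S_N(D_x)$, Taylor expand $f(t,y,\cdot)-f(t,x,\cdot)$ to extract $\nabla_x f$ and the factor $(y-x)$ that yields the $2^{-N}$ gain, then apply the pointwise-in-$(t,x)$ upper bound \eqref{IV-2.5b} and finish with Young's inequality and Cauchy--Schwarz. The only cosmetic difference is that the paper absorbs the $2^{-N}$ into $g$ rather than pulling it out front.
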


\begin{proof} Let us introduce $\tilde{K}_N(z)=
2^{3N}\hat{S}(2^N z)2^N z$. Note that $\tilde{K}_N \in L^1(\RR^3)$
uniformly with respect to $N$. Then for any smooth function $\tilde{h}$, one has
\begin{eqnarray*}
&&\Big(\big(S_N(D_x) \, Q(f,\,\,g)-Q(f,\,\,S_N(D_x)
g)\big),\,\, h\Big)_{L^2(\RR^7)}
= \int_0^1\Big\{ \int_{\RR_t} \int_{\RR_x^3 \times \RR_y^{3}}
\tilde{K}_N(x-y)\\
&&\,\,\,\,\,\,\,\,\,\,\,\times \Big(Q\big(\nabla_x f(t,+\tau(x-y),\, \cdot\,),\,\,
2^{-N}g(t, y, \, \cdot\,)\big),\,\, h(t,x, \,
\cdot\,)\Big)_{L^2(\RR_v^3)}dtdxdy\Big\} d\tau .
\end{eqnarray*}
By applying \eqref{IV-2.5b}, the right hand side of
this equality can be estimated from above by
\begin{align*}
&C\Big\{\sup_{t,x} ||\nabla_x f(t, x,
\,\cdot\,)||_{L^2_{\ell+2-\gamma/2+(2s+\gamma)^+}(\RR^3_v)}
\Big \}\times \\
&\qquad \int_{\RR_t}\int_{
\RR_x^3}\big(|\tilde{K}_N|*||2^{-N} g (t,\,\cdot\,)
||_{H^s_{\ell-\gamma/2+(2s+\gamma)^+}(\RR^3_v)}
\big)(x)||h(t, x, \cdot)||_{H^s_{-\ell+\gamma/2}(\RR^3_v)}dxdt\\
&\qquad
\leq C 2^{-N}\|\nabla_x f\|_{L^\infty(\RR^4_{t, x}; L^2_{\ell+2-\gamma/2+(2s+\gamma)^+}(\RR^3_v))}
\|g\|_{L^2(\RR_{t,x}^4;\, H^s_{\ell-\gamma/2+(2s+\gamma)^+}(\RR^3_v))}
||h||_{L^2(\RR_{t,x}^4;
\, H^s_{-\ell+\gamma/2}(\RR^3_v))},
\end{align*}
which completes the proof of the lemma.
\end{proof}

We now apply \eqref{3.10+1} with $g\,\sim\,S_N(D_v)g$, and use the
fact that a regularization operator $S_N(D_v)$ w.r.t. $v$ variable
has the property that, for any $p$
$$
\|2^{-N}S_N(D_v)g(t, x, \,\cdot\,) \|_{H^s_{p}(\RR^3_v)}\leq \|2^{-N}S_N(D_v)g(t, x, \,\cdot\,) \|_{H^1_{p}(\RR^3_v)}\leq C
\|g(t, x, \,\cdot\,)\|_{L^2_{p}(\RR_v^3)},
$$
where $C$ is a constant independent on $N$. It follows that
\begin{eqnarray}\label{3.10+1b}
&&\Big|\Big(S_N(D_x) Q(f,\,\, S_N(D_v)g )-Q(f,\,\, S_N(D_x)\, S_N(D_v)g),\,\,h\Big)_{L^2(\RR^7)}\Big| \\
&&\,\,\,\,\,\leq C  \|\nabla_x f\|_{L^\infty(\RR^4_{t,
x},\,\, L^2_{\ell+2-\gamma/2+(2s+\gamma)^+}(\RR^3_v))}
\|g\|_{L^2(\RR^4_{t, x},\,\,
L^2_{\ell-\gamma/2+(2s+\gamma)^+}(\RR^3_v))}
\|h\|_{L^2(\RR^4_{t, x},\,\,
H^s_{-\ell+\gamma/2}(\RR^3_v))}.\nonumber
\end{eqnarray}

\smallbreak\noindent
{\bf Completion of proof of Proposition \ref{propIV-6.1}.}

As regards the commutator terms in \eqref{coercivity-2}, we have
\begin{eqnarray*}
&&\Big(P_{N, \ell }\, Q({f}, F)-Q({f},\, P_{N, \ell }\, F), \,\, P_{N,
\ell}\,F\Big)_{L^2(\RR^7)}\\
&=&\Big(S_{N}(D_v)\,Q({f}, F)-Q({f},\, S_{N}(D_v)\, F), \,\,
S^\star_N(D_x)\psi_2(x) W_\ell P_{N,
\ell }\,F\Big)_{L^2(\RR^7)}\\
&+&\Big(S_{N} (D_x)\, Q({f}, S_{N}(D_v)\,F)-Q({f},\, S_{N}
(D_x)S_{N}(D_v)\, F), \,\,\psi_2(x) W_\ell  P_{N, \ell
}\,F\Big)_{L^2(\RR^7)}\nonumber\\
&+&\Big( W_\ell \, Q({f},  S_{N}
(D_x)S_{N}(D_v)\,F)-Q({f},\, P_{N, \ell }\, F), \,\, \psi_2(x) P_{N, \ell
}\,F\Big)_{L^2(\RR^7)}\,.
\\
&=&(1)+(2)+(3).
\end{eqnarray*}
Note that $[W_\ell,\,\,S_N(D_v)]$ is also a uniformly
bounded operator from $L^2$ to $L^2_{\ell-1}$ with respect to the parameter $N$.

Using Corollary \ref{IV-coro-2.15} with $\lambda=0$, we have, for
$0<s'<s, \gamma+2s'>-3/2$,
\begin{align*}
|(1)|&\leq C \|f_2\|_{L^\infty(\RR^4_{t, x}\, ,\,\, H^{s'}_{3/2+\gamma^+
+(2s-1)^+}(\RR^3_v))}\|F\|_{L^2_{(\gamma^+ +2s-1)^+} (\RR^7)} \|W_{\ell}P_{N,\,
\ell}\,F\|_{L^2(\RR^4_{t, x}\, ,\,\,
H^{s'}(\RR^3_v))}
\\
&\leq \varepsilon\|\Lambda^{s}_v W_{\gamma/2} P_{N, \ell }\,
g\|^2_{L^2(\RR^7)}+ C_\varepsilon\|f_3\|^{4}_{H^5_{2\ell}(\RR^7)}.
\end{align*}
We can use \eqref{3.10+1b} to show that
\begin{align*}
|(2)|&\leq \leq C  \|\nabla_x f_2\|_{L^\infty(\RR^4_{t,
x},\,\, L^2_{\ell+2-\gamma/2+(2s+\gamma)^+}(\RR^3_v))}
\|F\|_{L^2(\RR^4_{t, x},\,\,
L^2_{\ell-\gamma/2+(2s+\gamma)^+}(\RR^3_v))}
\|W_{\ell}P_{N,\,
\ell}\,F\|_{L^2(\RR^4_{t, x},\,\,
H^s_{-\ell+\gamma/2}(\RR^3_v))}\\
&\leq \varepsilon \|\Lambda^s_v W_{\gamma/2} P_{N,\,
\ell}\, g\|^2_{L^2(\RR^7_{t, x, v})}+ C_\varepsilon\|f_3\|^{4}_{H^5_{2 \ell}(\RR^7)}\,.
\end{align*}
Finally, \eqref{IV-4.3}) implies that
\begin{align*}
|(3)| &\leq C \|f_2\|_{L^\infty(\RR^4_{t, x}\, ,\,\,
L^2_{\ell+2+(2s-1)^++\gamma^+}(\RR^3_v))}\|S_{N}(D_x)\,
S_{N}(D_v)\,F\|_{L^2(\RR_{t,x}^4\, ,\,
H^{(2s-1+\delta)^+}_{\ell+(2s-1)^++\gamma^+}(\RR_v^3))} \|P_{N,\, \ell }\,F\|_{L^2(\RR^7)}\\
&\leq \varepsilon \|\Lambda^s_v W_{\gamma/2} P_{N,\,
\ell}\, F\|^2_{L^2(\RR^7_{t, x, v})}+ C_\varepsilon\|f_3\|^{4}_{H^5_{2 \ell}(\RR^7)}\,.
\end{align*}
In summary, we have obtained the following estimate for the second
term on the right hand side of \eqref{coercivity-2}
\begin{align*}
&\Big|\Big(P_{N, \ell  }\, Q({f}_2, F)-Q({f}_2,\, P_{N, \ell  }\,
F), \,\, P_{N, \ell  }\,g\Big)_{L^2(\RR^7)}\Big|\\
&\leq \varepsilon \|\Lambda^s_v W_{\gamma/2} P_{N,\,
\ell}\, F\|^2_{L^2(\RR^7_{t, x, v})}+ C_\varepsilon\|f_3\|^{4}_{H^5_{2 \ell}(\RR^7)}\,.
\end{align*}

Finally, it holds that from \eqref{coercivity-2} and
\eqref{IV-2.2} that
$$
\|\Lambda^{s}_v W_{\gamma/2} P_{N, \ell}\, F\|^2_{L^2(\RR^7)}\leq C\Big(1+
\|f_3\|^{4}_{H^5_{2\ell}(\RR^7)}\Big),
$$
where the constant $C$ is independent of  $N$. Therefore,
Proposition \ref{propIV-6.1} is proved by taking the limit
$N\rightarrow\infty$.

\subsection{Gain of regularity in $(t, x)$}

In \cite{amuxy-nonlinear-b}, by using a generalized uncertainty
principle, we proved a hypo-elliptic estimate, as regards a
transport equation in the form of
\begin{equation}\label{2.1}
 f_t + v\cdot\nabla_x f = g \in D '({\RR}^{2n+1}) ,
\end{equation}
where $(t,x,v) \in \RR^{1 + n +n} = \RR^{2n +1}$.

\begin{lemm}\label{lemm2.1}
Assume that $ g \in H^{-s'} (\RR^{2n+1})$, for some $0\leq s' <1$.
Let $f\in L^2 (\RR^{2n+1}) $ be a weak solution of the transport
equation (\ref{2.1}), such that $\Lambda^s_v\, f \in L^2
(\RR^{2n+1})$ for some $0<s\leq 1$. Then it follows that
\[
\Lambda_x^{s (1-s')/(s +1)}f\in L^2_{-\frac{s s'}{s +1}}(\RR^{2n+1})
, \enskip \enskip \Lambda_t^{s (1-s')/(s +1)}f\in L^2_{-\frac{s}{s
+1}}(\RR^{2n+1}),
\]
where $\Lambda_\bullet=(1+|D_\bullet|^2)^{1/2}$.
\end{lemm}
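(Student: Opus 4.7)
The plan is to establish this hypoelliptic gain by a phase-space decomposition. After a partial Fourier transform in $(t,x)$, writing $F(\tau,\xi,v)=\mathcal{F}_{t,x}f$ and $G=\mathcal{F}_{t,x}g$, the transport equation becomes the pointwise algebraic identity
\[
i(\tau+v\cdot\xi)\,F(\tau,\xi,v)=G(\tau,\xi,v),
\]
while the hypotheses translate into $\|\langle D_v\rangle^s F\|_{L^2_{\tau,\xi,v}}<\infty$ and $G\in H^{-s'}(\RR^{2n+1})$ in the joint frequency variables $(\tau,\xi,\eta)$. The guiding idea is to exploit the region where $|\tau+v\cdot\xi|$ is bounded below (the symbol is pointwise invertible) against the transverse thin region where it is not (where the $H^s_v$ control compensates).

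Fix a threshold $\lambda=\lambda(\tau,\xi)>0$ to be optimized, let $A=\{|\tau+v\cdot\xi|\geq\lambda\}$, and split $F=F\mathbf{1}_A+F\mathbf{1}_{A^c}$. On $A$ one has the pointwise bound $|F|\leq\lambda^{-1}|G|$, whose contribution to a weighted $L^2$-norm of $F$ is controlled by $\lambda^{-1}\|g\|_{H^{-s'}}$ by duality, at the cost of shifting $\langle\tau,\xi,\eta\rangle^{s'}$ onto the test function. On $A^c$, the integration set in $v$ is a slab of thickness $\lambda/|\xi|$ in the direction $\xi/|\xi|$, so a one-dimensional thin-slab interpolation between $L^2_v$ and $H^s_v$ yields
\[
\int_{A^c}|F(v)|^2\,dv\lesssim \Bigl(\frac{\lambda}{|\xi|}\Bigr)^{\!2\theta}\|F\|^{2(1-\theta)}_{L^2_v}\|\Lambda_v^s F\|^{2\theta}_{L^2_v}
\]
for an appropriate interpolation exponent $\theta\in(0,1]$ (two regimes depending on whether $s$ is above or below the Sobolev threshold $1/2$).

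Balancing these two contributions dictates the choice $\lambda\sim\langle\xi\rangle^{s/(s+1)}$ in the relevant frequency regime, which produces exactly the claimed $\langle\xi\rangle^{s(1-s')/(s+1)}$-gain of $x$-regularity. The weight $\langle v\rangle^{-ss'/(s+1)}$ appears because on $A$ the dual variable $\eta$ of $v$ has to absorb the $H^{-s'}$-loss placed on the test function, and the inequality $|\eta|\lesssim\langle v\rangle\langle\xi,\tau\rangle$ transfers part of this loss onto $\langle v\rangle$. The estimate for $\Lambda_t$ is entirely analogous, except that on $A^c$ one has $|\tau|\lesssim\lambda+|v||\xi|$, which enlarges the $v$-weight to $-s/(s+1)$.

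The main technical obstacle is the rigorous treatment of the source on the region $A$: since $g$ is only a distribution in $H^{-s'}$, the pointwise inversion $F=G/i(\tau+v\cdot\xi)$ has to be interpreted through duality against test functions adapted to the $\lambda$-decomposition, and the multiplier $\mathbf{1}_A/(\tau+v\cdot\xi)$ must be shown to behave as a bounded pseudodifferential symbol in a suitable anisotropic class mixing $(\tau,\xi,v,\eta)$. This is precisely where a generalized uncertainty-principle argument is needed, converting the formal phase-space balancing into a rigorous operator-theoretic bound.
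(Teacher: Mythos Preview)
The paper does not actually prove this lemma: it is quoted from the earlier work \cite{amuxy-nonlinear-b} (``Uncertainty principle and kinetic equations''), and in the present paper it is merely stated and then applied in Proposition~\ref{prop4.1} and Proposition~\ref{IV-prop4.3}. So there is no proof here to compare against in the strict sense; the paper only records that the original argument proceeds via a ``generalized uncertainty principle''.

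Your sketch follows the classical velocity-averaging route (Fourier in $(t,x)$, split according to the size of $|\tau+v\cdot\xi|$, invert on the large set, use thin-slab localisation plus $H^s_v$ control on the small set, then optimise the threshold). This is a legitimate alternative strategy, and the balancing $\lambda\sim\langle\xi\rangle^{s/(s+1)}$ indeed produces the exponent $s(1-s')/(s+1)$. The approach of \cite{amuxy-nonlinear-b} is somewhat different in packaging: rather than a hard cutoff in $|\tau+v\cdot\xi|$, it relies on an operator inequality of Fefferman--Phong/uncertainty-principle type that directly trades localisation in $v$ against regularity in $v$, which handles the $H^{-s'}$ right-hand side more cleanly than the duality argument you outline at the end.

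Two points in your sketch would need tightening before it becomes a proof. First, the ``thin-slab interpolation'' step: the inequality $\int_{A^c}|F|^2\,dv\lesssim(\lambda/|\xi|)^{2\theta}\|F\|_{L^2_v}^{2(1-\theta)}\|\Lambda_v^sF\|_{L^2_v}^{2\theta}$ does not hold for the sharp indicator $\mathbf 1_{A^c}$ without a smoothed cutoff, and the correct exponent is $\theta=s$ (not a free interpolation parameter), obtained by estimating the commutator between the slab cutoff and $\Lambda_v^s$; the alleged dichotomy at $s=1/2$ is not the relevant mechanism here. Second, your derivation of the $v$-weights is heuristic: the passage ``$|\eta|\lesssim\langle v\rangle\langle\xi,\tau\rangle$'' is not a pointwise relation between dual and primal variables, and a correct treatment requires either a commutator estimate between $\langle v\rangle^{-a}$ and the $(t,x)$-Fourier multipliers, or the uncertainty-principle machinery of \cite{amuxy-nonlinear-b}, which is exactly where that paper's method does the work you defer to your final paragraph.
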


As mentioned earlier, this hypo-elliptic estimate together with Proposition \ref{propIV-6.1} are used to obtain
the partial regularity in the variable $(t,x)$.
With this partial regularity in $(t,x)$, by
applying the Leibniz type estimate on the fractional differentiation
on the solution, we will show some improved regularity in
all variables, $v$ and $(x,t)$. Then the hypo-elliptic
estimate can be used again to get higher regularity in
the variable $(x,t)$. This procedure can be continued to
obtain at least one order higher differentiation regularity
in $(t,x)$ variable.

To proceed, recall (see for example \cite{amuxy-nonlinear-3} a Leibniz type
formula for fractional derivatives with respect to variable $(t,
x)$. Let $0<\lambda<1$. Then there exists a positive constant
$C_\lambda\neq 0$ such that for any $f\in \cS(\RR^n)$, one has
\begin{equation}\label{4.6}
|D_{y}|^{\lambda} f(y)=\cF^{-1}
\big(|\xi|^{\lambda}\hat{f}(\xi)\big)
=C_\lambda\int_{\RR^n}\frac{f(y)-f(y+h)}{|h|^{n+\lambda}} d h.
\end{equation}

\smallbreak
First of all, we have the following  proposition on the gain of regularity
in the variable $(t,x)$ through uncertainty principle as in \cite{amuxy-nonlinear-3}.

\begin{prop}\label{prop4.1}
Under the hypothesis of Theorem \ref{theo-IV-1.1}, one has
\begin{equation}\label{4.1}
\Lambda^{s_0}_{t, x}\, f_1\in L^2([0, T]; H^5_\ell(\RR^6)),
\end{equation}
for any $\ell\in\NN$ and $0<s_0=\frac{s(1-s)}{(s+1)}$.
\end{prop}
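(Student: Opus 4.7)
The plan is to apply the hypoelliptic estimate of Lemma~\ref{lemm2.1} (with $s'=s$) to the differentiated, cut-off solution. For each multi-index $\alpha\in\NN^6$ with $|\alpha|\le 5$, set $F=\partial^\alpha f_1$. Since $f_1=\varphi(t)\psi(x)f$ and $Q$ is local in $(t,x)$, multiplying the equation by $\varphi\psi$ gives $(f_1)_t+v\cdot\nabla_x f_1=Q(f,f_1)+\varphi_t\psi f+v\cdot\psi_x\varphi f$. Applying $\partial^\alpha$ and using the Leibniz rule one lands on the transport equation
\[
F_t+v\cdot\nabla_x F=Q(f,F)+G,
\]
with $G$ decomposed exactly as in \eqref{5.3.2} into the pieces (A)+(B)+(C). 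The verification of the first hypothesis of Lemma~\ref{lemm2.1} is free: Proposition~\ref{propIV-6.1} already yields $\Lambda_v^s f_1\in L^2(\RR_t;H^5_\ell(\RR^6))$ for every $\ell\in\NN$, so $\Lambda_v^s F\in L^2(\RR^7)$ with any desired polynomial $v$-weight, and $F\in L^2(\RR^7)$ by construction.

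The heart of the proof is therefore showing that the right-hand side lies in $H^{-s}(\RR^7)$. I would use the elementary embedding $L^2(\RR^4_{t,x};H^{-s}(\RR^3_v))\hookrightarrow H^{-s}(\RR^7)$ (immediate from $(1+|\tau|^2+|\xi|^2+|\eta|^2)^{-s/2}\le(1+|\eta|^2)^{-s/2}$) and estimate everything by duality in the $v$-variable alone. For $Q(f,F)$ apply Proposition~\ref{prop-IV-2.5b} with $m=0$:
\[
|(Q(f,F),h)_{L^2_v}|\lesssim\bigl(\|f\|_{L^1_{\cdot}}+\|f\|_{L^2_{\cdot}}\bigr)\,\|F\|_{H^s_{\cdot}}\,\|h\|_{H^s_{-\ell}},
\]
then Cauchy--Schwarz over $(t,x)$ combined with $f\in L^\infty_{t,x}H^5_\ell$ and the $L^2_{t,x}H^s_{\cdot}$-bound on $F$ from Proposition~\ref{propIV-6.1}. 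The (A)-terms with $|\alpha_1|\ge 1$ are estimated identically, distributing the five derivatives between $\partial^{\alpha_1}f_2\in L^\infty_{t,x}L^2_{\cdot}$ (since $|\alpha_1|\le 5$ and $f_2\in L^\infty_tH^5_\ell$) and $\partial^{\alpha_2}f_1$ (since $|\alpha_2|\le 4<5$, one extra $v$-derivative of order $s$ is still under control). The (B) and (C) pieces are manifestly in $L^2(\RR^7)\hookrightarrow H^{-s}(\RR^7)$.

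Once the two hypotheses are secured, Lemma~\ref{lemm2.1} with $s'=s$ gives
\[
\Lambda_{t}^{s_0}F,\ \Lambda_{x}^{s_0}F\in L^2_{-\sigma}(\RR^7),\qquad s_0=\tfrac{s(1-s)}{s+1},
\]
for an explicit $\sigma=\sigma(s)>0$. Because Proposition~\ref{propIV-6.1} holds for \emph{arbitrary} $v$-weight $\ell$, this loss of $\sigma$ is absorbed by starting from a slightly larger weight. Summing the resulting bound over all $|\alpha|\le 5$ (with the appropriate $v$-weight $\ell$) yields the announced conclusion $\Lambda_{t,x}^{s_0}f_1\in L^2([0,T];H^5_\ell(\RR^6))$.

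The main obstacle is the third paragraph: the weighted upper bound for the singular collision operator. The decomposition $Q=Q_c+Q_{\bar c}$ is unavoidable here since $\Phi_\gamma$ blows up at the origin, so Proposition~\ref{prop-IV-2.5b} must be used in its full form (the smooth-kernel estimates \eqref{upper-I} from \cite{amuxy-nonlinear-3} do not suffice). Fortunately the cost is only a fixed polynomial weight in $v$, which the hypothesis $f\in L^\infty_tH^5_\ell$ for every $\ell$ makes harmless; this is precisely the reason the argument closes.
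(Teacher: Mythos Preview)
Your strategy is exactly the paper's: feed $F=\partial^\alpha f_1$ into Lemma~\ref{lemm2.1} with $s'=s$, using Proposition~\ref{propIV-6.1} for the $\Lambda_v^s F\in L^2$ hypothesis and the upper bounds of Section~\ref{sect-IV-2} to place $Q(f,F)+G$ in $H^{-s}$; the weight loss from the lemma is indeed absorbed by the freedom in $\ell$.

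There is one imprecision in your handling of the (A)-terms. The claim $\partial^{\alpha_1}f_2\in L^\infty_{t,x}L^2_v$ fails when $|\alpha_1|\in\{4,5\}$: from $f_2\in L^\infty_tH^5_\ell(\RR^6)$ you only get $\partial^{\alpha_1}f_2\in L^\infty_tL^2_{x,v}$, with no $x$-derivatives left for the Sobolev embedding into $L^\infty_x$. The fix is the case split the paper performs in the proof of Lemma~\ref{lemmIV-6.1}: when $|\alpha_1|\le 3$ proceed as you wrote, and when $|\alpha_1|\ge 4$ swap the roles, placing $\partial^{\alpha_1}f_2$ in $L^\infty_tL^2_{x,v}$ and using $|\alpha_2|\le 1$ to put $\partial^{\alpha_2}f_1\in L^\infty_tH^4_\ell\hookrightarrow L^\infty_{t,x}H^s_v$ via Sobolev embedding in $x$. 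With this adjustment the argument closes as written.
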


Therefore, under the hypothesis $f\in L^\infty([0, T]; H^5_\ell(\RR^6))$ for all $\ell\in\NN$, it follows
that for any $ \ell\in\NN$ we have
\begin{equation}\label{4.2}
\Lambda^{s}_{v}(\varphi(t)\psi(x) f) \, \in L^2([0, T]; H^5_\ell(\RR^6)),\,\, \hskip
0.5cm \Lambda^{s_0}_{t, x}(\varphi(t)\psi(x) f) \, \in L^2([0, T]; H^5_\ell(\RR^6))\, .
\end{equation}

\smallbreak This partial regularity in $(t, x)$ variable will now be
improved.

\begin{prop}\label{IV-prop4.2}
Let $0<\lambda< 1$. Suppose that $f\in L^\infty([0, T]; H^5_\ell(\Omega\times\RR^3))$ for all $\ell\in\NN$ is a solution of the equation (\ref{IV-1.1}), and for any cutoff functions
$\varphi, \psi$, we have
\begin{equation}\label{4.1+0}
\Lambda^{s}_{v}(\varphi(t)\psi(x) f) \, \in L^2([0, T]; H^5_\ell(\RR^6)),\quad \Lambda^{\lambda}_{t, x}(\varphi(t)\psi(x) f)\in L^2([0, T]; H^5_\ell(\RR^6)).
\end{equation}
Then, one has
$$
\Lambda^s_v\Lambda^{\lambda}_{t, x}
(\varphi(t)\psi(x) f)\in L^2([0, T]; H^5_\ell(\RR^6)),
$$
for any $\ell\in\NN$ and any cutoff functions $\varphi, \psi$.
\end{prop}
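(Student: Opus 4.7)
The plan is to run an energy argument parallel to the proof of Proposition \ref{propIV-6.1}, but applied to $\tilde F := \Lambda^\lambda_{t,x}\partial^\alpha (\varphi(t)\psi(x)f)$ for multi-indices $|\alpha|\leq 5$, in place of $F=\partial^\alpha(\varphi(t)\psi(x)f)$. The decisive observation is that $\Lambda^\lambda_{t,x}$ acts only on $(t,x)$ and therefore commutes exactly with the transport operator $\partial_t+v\cdot\nabla_x$ and with the Leibniz differentiations $\partial^\alpha$. Applying $\Lambda^\lambda_{t,x}$ to equation \eqref{5.3.1} thus yields
\begin{equation*}
\partial_t \tilde F + v\cdot \nabla_x \tilde F - Q(f,\tilde F) = \Lambda^\lambda_{t,x} G + R,
\end{equation*}
where $G$ is the source in \eqref{5.3.2} and the extra term $R := \Lambda^\lambda_{t,x}Q(f,F) - Q(f,\Lambda^\lambda_{t,x}F)$ is the commutator between $\Lambda^\lambda_{t,x}$ and the collision operator $Q(f,\cdot)$.

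Since $\tilde F$ is not regular enough to be used as a test function, I would mollify it with the regularization $P_{N,\ell}$ introduced above \eqref{coercivity-2}, test the equation against $h=P^\ast_{N,\ell}P_{N,\ell}\tilde F$, and invoke Corollary \ref{coro-coercivity-1} to extract the coercive term $\eta_0\|\Lambda^s_v W_{\gamma/2}P_{N,\ell}\tilde F\|^2_{L^2(\RR^7)}$ on the left, in direct analogy with \eqref{coercivity-2}. The right-hand side then splits into three families of terms: (i) the mollifier commutators $[S_N(D_v),v]\cdot \nabla_x S_N(D_x)\tilde F$ and $P_{N,\ell}Q(f,\tilde F)-Q(f,P_{N,\ell}\tilde F)$, controlled exactly as in the proof of Proposition \ref{propIV-6.1} using Corollary \ref{IV-coro-2.15} and \eqref{3.10+1b}, since the bounds depend only on weighted Sobolev norms of $\tilde F = \Lambda^\lambda_{t,x}F$, which are finite by hypothesis \eqref{4.1+0}; (ii) the pairing $(\Lambda^\lambda_{t,x}G,h)_{L^2(\RR^7)}$, treated analogously to Lemma \ref{lemmIV-6.1} by expanding $G$ via \eqref{5.3.2} and using Proposition \ref{prop-IV-2.5b} together with $\Lambda^\lambda_{t,x}f_2\in L^2([0,T];H^5_\ell(\RR^6))$; and (iii) the new commutator term $(R,h)_{L^2(\RR^7)}$.

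The main obstacle is controlling $(R,h)_{L^2(\RR^7)}$. I would use the Leibniz-type formula \eqref{4.6}, which after a short manipulation exploiting the bilinearity of $Q$ in its first slot gives, with $y=(t,x)\in \RR^4$,
\begin{equation*}
R(t,x,v) = C_\lambda \int_{\RR^4}\frac{Q_v\bigl(f(y,\cdot) - f(y+h,\cdot),\,F(y+h,\cdot)\bigr)}{|h|^{4+\lambda}}\, dh.
\end{equation*}
Pairing with the test function and applying Proposition \ref{prop-IV-2.5b} to the inner $L^2_v$-bracket pointwise in $(y,h)$ reduces matters to a singular integral in $h$ with kernel $|h|^{-4-\lambda}$. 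For $|h|\geq 1$ the kernel is integrable and the contribution is handled by the $L^\infty_{t,x}$ control on $f$ combined with the $L^2_{t,x}$ control on $F$ from \eqref{4.1+0}. For $|h|\leq 1$ I would use the standard modulus-of-continuity characterisation
\begin{equation*}
\int_{|h|\leq 1}\frac{\|f_3(\cdot) - f_3(\cdot +h)\|^2_{L^2([0,T];\,H^5_{\ell'}(\RR^6))}}{|h|^{4+2\lambda}}\,dh \lesssim \|\Lambda^\lambda_{t,x}f_3\|^2_{L^2([0,T];\,H^5_{\ell'}(\RR^6))},
\end{equation*}
valid since $0<\lambda<1$, to transform the singular integral, via Cauchy--Schwarz, into a product of $\|\Lambda^\lambda_{t,x}f_3\|_{L^2H^5_\ell}$ with a factor absorbable into $\eta_0\|\Lambda^s_v W_{\gamma/2}P_{N,\ell}\tilde F\|^2_{L^2(\RR^7)}$. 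Passing $N\to\infty$ and summing over $|\alpha|\leq 5$ yields the claimed conclusion $\Lambda^s_v\Lambda^\lambda_{t,x}(\varphi\psi f)\in L^2([0,T];H^5_\ell(\RR^6))$.
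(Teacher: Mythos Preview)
Your proposal is correct and follows essentially the same approach as the paper: both arguments run the energy method with the mollifier $P_{N,\ell}$, extract the $\Lambda^s_v$ gain from the assumed coercivity, and control the commutator $[\Lambda^\lambda_{t,x},Q(f,\cdot)]$ via the fractional Leibniz representation \eqref{4.6}. The only cosmetic difference is the order of operations---the paper first mollifies $F$ to $F_{N,\ell}$ and then tests the resulting equation with $|D_{t,x}|^{2\lambda}F_{N,\ell}$, whereas you apply $\Lambda^\lambda_{t,x}$ to the equation first and mollify afterward---and you should invoke the assumed coercivity \eqref{coercivity-nonlinear-a} rather than Corollary \ref{coro-coercivity-1}, since the latter requires a smallness hypothesis not present in Proposition \ref{IV-prop4.2}.
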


Set
$$
F_{N, \ell}=P_{N, \ell}\, F= \psi_2(x) S_N(D_x)\, W_{\ell}\,\,
S_N(D_v)\partial^\alpha(\varphi(t)\psi(x) f),
$$
where $ \alpha\in\NN^6, |\alpha|\leq 6$ and $ \ell \in\NN$. Then \eqref{4.1+0} yields
$$
 \|\Lambda^{s}_{v} F_{N, \ell}\|_{L^2(\RR^7)}\leq C
\|\Lambda^{s}_{v}\partial^\alpha (\varphi(t)\psi(x) f)
\|_{L^2_\ell(\RR^7)},
$$
and
$$
 \|\Lambda^{\lambda}_{t, x} F_{N, \ell}\|_{L^2(\RR^7)}\leq C
\|\Lambda^{\lambda}_{t, x}\partial^\alpha (\varphi(t)\psi(x) f)
\|_{L^2_\ell(\RR^7)},
$$
where the constant $C$ is independent on $N$.

\smallbreak
It follows that $F_{N, \ell}$ satisfies the following equation
\begin{equation}\label{4.3}
\partial_t(F_{N, \ell}) + v\,\cdot\,\partial_x \,(F_{N, \ell}) = Q({f},\,\, F_{N, \ell})
+ G_{N, \ell},
\end{equation}
where $G_{N, l}$ is given by
\begin{align*}
 G_{N, \ell}=&  W_{ \ell}\, \Big[S_N(D_v),\,\, v\Big]\,\cdot\,\nabla_x S_N(D_x) F
+\Big(P_{N, \ell}\,  Q\big({f}_2,\,\, F\big)-Q\big({f}_2,\,\,
P_{N, \ell}\,  F\big)\Big)+ P_{N, l}\, G,
\end{align*}
with $G$ defined in \eqref{5.3.2}.

We now choose $ |D_{t, x}|^{\lambda}
|D_{t, x}|^{\lambda} F_{N, \ell} $ as a test function for equation
(\ref{4.3}). It follows that
\begin{align*}
&\|\Lambda^{s}_v \Lambda^{\lambda}_{t, x} P_{N, \ell }\,F \|_{L^2(\RR^7)}\notag\\
&\leq C \Big|\Big( |D_{t, x}|^{\lambda}Q({f}_2,\,\, F_{N,
\ell})-Q({f}_2,\,\, |D_{t, x}|^{\lambda}F_{N,
\ell}),\,\,  |D_{t, x}|^{\lambda} F_{N,
l}\Big)_{L^2(\RR^7)}\Big|\\
&\quad+\Big|\Big(|D_{t, x}|^{\lambda}G_{N, \ell},\,\,  |D_{t, x}|^{\lambda} F_{N,
l}\Big)_{L^2(\RR^7)}\Big|\, .
\end{align*}
Using the formula \eqref{4.6}, the proof of the Proposition
\ref{IV-prop4.2} is similar to the corresponding result in \cite{amuxy-nonlinear-3}, here we omit the cut-off function, it is easy to trait as before.

We can then get the following regularity result on the solution with
respect to the $(t, x)$ variable.
\begin{prop}\label{IV-prop4.3}
Under the hypothesis of Theorem \ref{theo-IV-1.1}, one has
\begin{equation}\label{4.1+19}
\Lambda^{1+\varepsilon}_{t, x}\,
(\varphi(t)\psi(x) f)\in L^2([0, T];\, H^5_\ell(\RR^6)),
\end{equation}
for any $\ell  \in\NN$ and some $\varepsilon>0$.
\end{prop}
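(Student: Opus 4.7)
The plan is to bootstrap from the partial regularity $\Lambda^{s_0}_{t,x}(\varphi\psi f)\in L^2([0,T];H^5_\ell)$ of Proposition \ref{prop4.1} past the threshold $1$ in $(t,x)$, by iterating a gain of fixed positive size. Each iteration combines the current $(t,x)$-regularity with the $v$-regularity via Proposition \ref{IV-prop4.2}, and then converts the resulting joint regularity into extra $(t,x)$-regularity through the hypoelliptic estimate of Lemma \ref{lemm2.1}.

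I set up the induction hypothesis: for some $0<\lambda_k<1$, and for arbitrary smooth cutoffs $\varphi\in C^\infty_0(]T_1,T_2[)$, $\psi\in C^\infty_0(K)$,
$$\Lambda^{\lambda_k}_{t,x}(\varphi(t)\psi(x)f)\in L^2([0,T];H^5_\ell(\RR^6))\quad\mbox{for all }\ell\in\NN.$$
Combined with the always-available $\Lambda^s_v(\varphi\psi f)\in L^2([0,T];H^5_\ell)$ from Proposition \ref{propIV-6.1}, hypothesis \eqref{4.1+0} of Proposition \ref{IV-prop4.2} is satisfied, so
$$\Lambda^s_v\Lambda^{\lambda_k}_{t,x}(\varphi\psi f)\in L^2([0,T];H^5_\ell(\RR^6)).$$

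Then I apply $\Lambda^{\lambda_k}_{t,x}$ to the localized transport equation \eqref{5.3.1}, taking slightly larger cutoffs in the initial step so that the supports still fit in $]T_1,T_2[\times K$. Since $\Lambda^{\lambda_k}_{t,x}$ is a Fourier multiplier in $(t,x)$ alone, it commutes exactly with $\partial_t+v\cdot\nabla_x$, so the source term of the differentiated equation is $\Lambda^{\lambda_k}_{t,x}G$ with $G$ as in \eqref{5.3.2}. Using the Leibniz-type representation \eqref{4.6} of fractional derivatives together with the upper bound Proposition \ref{prop-IV-2.5b} bilinearly (exactly in the spirit of the proof of Proposition \ref{IV-prop4.2}), one shows $\Lambda^{\lambda_k}_{t,x}G\in H^{-s'}(\RR^{7})$ for a fixed $s'\in(0,1)$ depending only on $s$ (for instance $s'=s$, with an arbitrarily small extra loss absorbed by the abundant $H^5_\ell$ regularity in $v$). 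Lemma \ref{lemm2.1} applied to $\Lambda^{\lambda_k}_{t,x}(\varphi\psi f)$ then yields, for any shrunk cutoffs $\tilde\varphi\subset\subset\varphi$, $\tilde\psi\subset\subset\psi$,
$$\Lambda^{\lambda_k+\kappa}_{t,x}(\tilde\varphi\tilde\psi f)\in L^2([0,T];H^5_\ell),\qquad \kappa:=\frac{s(1-s')}{s+1}>0.$$

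Because $\kappa$ is a fixed positive number independent of $k$, after at most $N\le\lceil(1-s_0)/\kappa\rceil+1$ iterations starting from $\lambda_0=s_0$ we reach $\lambda_N>1$, which proves \eqref{4.1+19} with $\varepsilon=\lambda_N-1>0$. The finitely many shrinkings of the cutoffs are harmless because the original cutoffs $\varphi,\psi$ in the statement are arbitrary. The main technical obstacle I anticipate is the bound $\Lambda^{\lambda_k}_{t,x}G\in H^{-s'}$: this is not a trivial consequence of Proposition \ref{prop-IV-2.5b} because of the nonlocal nature of $\Lambda^{\lambda_k}_{t,x}$, and it requires a commutator analysis via \eqref{4.6} which must be carried out uniformly in $k$, splitting the regularity $\lambda_k$ appropriately between the two factors of $Q$ so that both the $v$-weighted norm of one factor and the $H^{s+m}$ norm of the other remain controlled by the inductive information.
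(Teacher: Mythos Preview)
Your approach is essentially the paper's: alternate Proposition \ref{IV-prop4.2} (to get $\Lambda^s_v\Lambda^{\lambda_k}_{t,x}F\in L^2_\ell$) with the hypoelliptic Lemma \ref{lemm2.1} (applied with $s'=s$, so that the gain per step is exactly $s_0=s(1-s)/(s+1)$), and iterate until the accumulated exponent exceeds $1$. The paper writes this simply as $\lambda_k=ks_0$ and picks $k_0$ with $k_0s_0<1\le (k_0+1)s_0$.

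One genuine slip: when you apply $\Lambda^{\lambda_k}_{t,x}$ to \eqref{5.3.1} and rewrite it as a pure transport equation, the source is $\Lambda^{\lambda_k}_{t,x}\big(Q(f,F)+G\big)$, not $\Lambda^{\lambda_k}_{t,x}G$ alone; the term $Q(f,F)$ sits on the right once you move it across. The paper records this explicitly. Showing $\Lambda^{\lambda_k}_{t,x}Q(f_2,F)\in H^{-s}$ is precisely where the freshly obtained joint regularity $\Lambda^s_v\Lambda^{\lambda_k}_{t,x}F\in L^2_\ell$ from Proposition \ref{IV-prop4.2} is used, together with Proposition \ref{prop-IV-2.5b} (with $m=0$) and the Leibniz representation \eqref{4.6} to distribute $\Lambda^{\lambda_k}_{t,x}$ between the two entries of $Q$. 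Your closing remarks about splitting $\lambda_k$ between the two factors already contain the right idea, so the correction is immediate; just be aware that without the $\Lambda^s_v\Lambda^{\lambda_k}_{t,x}F$ information this term cannot be placed in $H^{-s}$ and the bootstrap would stall.
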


\begin{proof} By fixing $s_0=\frac{s(1-s)}{(s+1)}$, then
\eqref{4.2} and Proposition \ref{IV-prop4.2} with $\lambda=s_0$ imply
$$
\Lambda^{s}_{v}\Lambda^{s_0}_{t, x} F\in L^{2}_\ell(\RR^7).
$$
It follows that,
\begin{equation*}\label{4.1+17}
 (\Lambda^{s_0}_{t, x} F)_t + v\,\cdot\,\partial_x {(\Lambda^{s_0}_{t, x} F)} +L_1 (\Lambda^{s_0}_{t, x} F)
  =\Lambda^{s_0}_{t, x} Q(f_2,\,\, F)+ \Lambda^{s_0}_{t, x}  G\in
H^{-s}_\ell(\RR^7).
\end{equation*}
By applying Lemma \ref{lemm2.1} with $s'=s$, we can deduce that
\begin{equation*}\label{4.1+18}
\Lambda^{s_0+s_0}_{t, x}
(F)\in L^2_\ell  (\RR^7),
\end{equation*}
for any $\ell  \in\NN$. If $2s_0<1$, by using again Proposition \ref{IV-prop4.2}
with $\lambda=2s_0$ and Lemma \ref{lemm2.1} with $s'=s$, we have
$$
\Lambda^{s}_{v} (\varphi(t)\psi(x) f),\,\,\Lambda^{2s_0}_{t, x}
(F)\in L^2_\ell (\RR^7)\,\,\Rightarrow\,
\Lambda^{3s_0}_{t, x} (F)\in L^2_\ell (\RR^7).
$$
Choose $k_0\in\NN$ such that
$$
k_0 s_0<1,\,\,\,\,\,\,\,\,\, (k_0+1) s_0=1+\varepsilon>1.
$$
Finally, (\ref{4.1+19}) follows from (\ref{4.1}) and Proposition
\ref{IV-prop4.2} with $\lambda=k_0s_0$ by induction. And this completes
the proof of the proposition \ref{IV-prop4.3}.
\end{proof}

\subsection{Full regularity of solution}

The above preparations will be used for the proof of the full
regularity of solution in Theorem \ref{theo-IV-1.1}, by using an
induction argument.

{}From Proposition \ref{IV-prop4.2} and Proposition
\ref{IV-prop4.3}, it follows that for any $\alpha\in\NN, |\alpha|\leq 5$ and any $\ell\in\NN$,
$$
\Lambda^{s}_{v}\, \partial^\alpha(\varphi(t)\psi(x) f),\,\, \Lambda_{t, x}
\, \partial^\alpha(\varphi(t)\psi(x) f) \in L^2_\ell(\RR^7).
$$

These will be used to get the high order regularity
with respect to the variable $v$.
\begin{prop}\label{IV-prop5.1}
Let $s\leq\lambda< 1$. Suppose that, for any cutoff functions $\varphi\in
C^\infty_0(]0, T[), \psi\in C^\infty_0(\RR^3)$, any $\alpha\in\NN, |\alpha|\leq 5$ and all $\ell\in\NN$,
\begin{equation}\label{5.2+0}
\Lambda^{\lambda}_{v}\, \partial^\alpha(\varphi(t)\psi(x) f),\,\,\,\Lambda_{t, x}
\, \partial^\alpha(\varphi(t)\psi(x) f) \in L^2_\ell(\RR^7).
\end{equation}
Then, for all cutoff function and all $\alpha\in\NN, |\alpha|\leq 5, \ell\in\NN$,
\begin{equation}\label{5.2}
\Lambda^{\lambda+s}_v\, \partial^\alpha(\varphi(t)\psi(x)
f)\in L^2_\ell(\RR^7).
\end{equation}
\end{prop}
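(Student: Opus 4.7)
The plan is to mimic the proof of Proposition \ref{propIV-6.1}, enriching the mollifier by the smoothing factor $\Lambda^\lambda_N(D_v) = \la D_v \ra^\lambda S_N(D_v)$ introduced before Corollary \ref{IV-coro-2.15}. Setting $F = \partial^\alpha(\varphi(t)\psi(x) f)$ for $|\alpha|\leq 5$ and
$$
\tilde P_{N,\lambda,\ell} = \psi_2(x)\, W_\ell\, S_N(D_x)\, \Lambda^\lambda_N(D_v),
$$
the function $h := \tilde P^\star_{N,\lambda,\ell}\, \tilde P_{N,\lambda,\ell}\, F$ is an admissible test function for the equation \eqref{5.3.1}. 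After integration by parts (the free-transport inner product vanishes), the coercivity hypothesis \eqref{coercivity-nonlinear-a} applied to $\tilde P_{N,\lambda,\ell} F$ gives
$$
\eta_0\,\|\tilde P_{N,\lambda,\ell} F\|^2_{H^s_{\gamma/2}(\RR^7)} \leq C \|\tilde P_{N,\lambda,\ell} F\|^2_{L^2_{\gamma/2+s}(\RR^7)} + R_{\mathrm{tr}} + R_Q + \bigl|(G,h)_{L^2(\RR^7)}\bigr|,
$$
where $R_{\mathrm{tr}}$ and $R_Q$ collect the transport and collision commutators generated by moving the mollifier across $v\cdot\nabla_x$ and across $Q(f_2,\cdot)$ respectively.

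The first term on the right is bounded uniformly in $N$ by hypothesis \eqref{5.2+0}. The source term $(G,h)$ is handled as in Lemma \ref{lemmIV-6.1}: expanding $G$ by the Leibniz formula \eqref{5.3.2} and applying the weighted upper bound from Proposition \ref{prop-IV-2.5b}, the only novelty is that the extra $\Lambda^\lambda_v$ factor must be redistributed between the two arguments of each $Q(\partial^{\alpha_1}f_2, \partial^{\alpha_2}f_1)$; I would place $\Lambda^\lambda_v$ on whichever factor carries fewer $\partial_{x,v}$ derivatives, after which hypothesis \eqref{5.2+0} supplies the needed norm. The transport commutator $R_{\mathrm{tr}}$ reduces, just as in the proof of Proposition \ref{propIV-6.1}, to an $O\bigl(\|\Lambda^\lambda_v F\|_{L^2_\ell}\bigr)$ quantity, again admissible by \eqref{5.2+0}.

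The collision commutator $R_Q$ is the main obstacle, and Corollary \ref{IV-coro-2.15} is tailor-made for it. When $\lambda < 3/2$ it yields schematically
$$
|R_Q| \lesssim \|f_2\|_{H^{s'}} \, \|\Lambda^\lambda_N(D_v) F\|_{L^2} \, \|\tilde P_{N,\lambda,\ell} F\|_{H^{s'}}
$$
(with the appropriate weights from Corollary \ref{IV-coro-2.15}), and the last factor is absorbed into the coercivity lower bound via an $\varepsilon$-small term while the other two are controlled by \eqref{5.2+0}. When $\lambda \geq 3/2$ the alternative form in Corollary \ref{IV-coro-2.15} instead demands $f_2 \in H^{\lambda - s'}$ uniformly in $(t,x)$, which is supplied by combining the $\Lambda^\lambda_v$ and $\Lambda_{t,x}$ parts of \eqref{5.2+0} with Sobolev embedding in $x$ (here $|\alpha|\leq 5$ leaves ample room). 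Collecting all bounds yields an $N$-independent estimate on $\|\Lambda^s_v W_{\gamma/2} \tilde P_{N,\lambda,\ell} F\|_{L^2(\RR^7)}$; letting $N\to\infty$ gives $\Lambda^{\lambda+s}_v F \in L^2_{\gamma/2}(\RR^7)$, and the weight is then upgraded from $\gamma/2$ to arbitrary $\ell\in\NN$ by a standard bootstrap based on the weight-commutator estimate \eqref{IV-4.3} together with the arbitrariness of $\ell$ in \eqref{5.2+0}.
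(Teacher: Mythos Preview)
Your approach is essentially the paper's: the same mollifier $P_{N,\ell,\lambda}=\psi_2 W_\ell S_N(D_x)S_N(D_v)\Lambda^\lambda_v$ (your $\Lambda^\lambda_N(D_v)$ is exactly $S_N(D_v)\Lambda^\lambda_v$), the same coercivity step, and the same use of Corollary \ref{IV-coro-2.15} plus \eqref{IV-4.3} for the commutator $R_Q$.

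Two points need correcting. First, the transport commutator is \emph{not} $O(\|\Lambda^\lambda_v F\|_{L^2_\ell})$ alone: the new piece beyond Proposition \ref{propIV-6.1} is $[\Lambda^\lambda_v,\,v]\cdot\partial_x = \lambda\,\Lambda^{\lambda-2}_v\partial_v\cdot\partial_x$, which is bounded on $L^2$ but brings in $\partial_x F$, so the bound is $C\|\Lambda^\lambda_v F\|_{L^2_\ell}\|\Lambda_x F\|_{L^2_\ell}$. This is precisely where the second half of hypothesis \eqref{5.2+0} (the $\Lambda_{t,x}$ regularity) is consumed; without it the step fails. Second, since $\lambda<1$ throughout, the case $\lambda\geq 3/2$ in Corollary \ref{IV-coro-2.15} never arises, and your final ``weight bootstrap'' is unnecessary: the weight $W_\ell$ is already built into $\tilde P_{N,\lambda,\ell}$, so passing $N\to\infty$ gives $\Lambda^{\lambda+s}_v F\in L^2_{\ell+\gamma/2}$ directly, with $\ell$ arbitrary.
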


\begin{proof} Recall that, for $|\alpha|\leq 5$, $F=\partial^\alpha
(\varphi(t)\psi(x) f)$ is the weak solution of the equation :
$$
 \frac{\partial F}{\partial t} + v\,\cdot\,\partial_x {F }- Q(f,\,\, F)= G,
\enskip (t, x, v) \in \RR^7,
$$
where $G$ is given in \eqref{5.3.2}. Set
$$
P_{N, \ell, \lambda}=\psi_2(x) W_{\ell}\,S_N(D_x)\, S_N(D_v)\,
\Lambda^{\lambda}_v\, ,
$$
we take now
$$
P_{N, \ell, \lambda}^\star\,P_{N, \ell, \lambda} F
=P_{N, \ell, \lambda}^\star\,F_{N, \ell, \lambda}\in
C^1_0([T_1, T_2]; H^{+\infty}_p(\RR^6))\,
$$
as test function. Then, one has
\begin{align*}
&\Big( \big[P_{N, \ell, \lambda}, \,\, v\big]\,\cdot\,\partial_x \,
F ,\,\,  P_{N, \ell, \lambda} F\Big)_{L^2(\RR^7)}
-\Big(Q({f},\, F_{N, \ell, \lambda}),\,\, F_{N, \ell, \lambda}\Big)_{L^2(\RR^7)}\\
&=\Big( P_{N, \ell, \lambda}Q({f},\,\, F)-Q({f},\,\, P_{N, \ell, \lambda}F),\,\, P_{N, \ell, \lambda} F\Big)_{L^2(\RR^7)}\\
&\qquad+\Big( P_{N, \ell, \lambda}G,\,\, P_{N, \ell, \lambda} F\Big)_{L^2(\RR^7)}.
\end{align*}
Since
$$
\big[\Lambda^{\lambda}_v, \,\, v\big]\,\cdot\,\partial_x =\lambda
\Lambda^{\lambda-2}_v \, \partial_v\,\cdot\,\partial_x,
$$
and $\Lambda^{\lambda-2}_v \, \partial_v$ are bounded operators in
$L^2$, for any $0<\lambda< 1$, we have, by using the hypothesis (\ref{5.2+0}) that
$$
\Big|\Big( \big[P_{N, \ell, \lambda}, \,\, v\big]\,\cdot\,\partial_x \,
F ,\,\,  P_{N, \ell, \lambda} F\Big)_{L^2(\RR^7)}\Big| \leq C\|\Lambda^{\lambda}_{v}\,F\|_{L^2_l(\RR^7)}\|\Lambda_x
\,F\|_{L^2_l(\RR^7)} .
$$
Using the coercivity \eqref{coercivity-nonlinear-a}, we get as
\eqref{coercivity-2},
\begin{align}\label{coercivity-2b}
&\eta_0||\Lambda^s_v W_{\gamma/2}F_{N,\, \ell, \lambda}||^2_{L^2(\RR^7)} \leq
C\|\Lambda^{\lambda}_{v}\,F\|_{L^2_l(\RR^7)}\|\Lambda_x
\,F\|_{L^2_l(\RR^7)}
+ \Big|\Big( P_{N, \ell, \lambda}G,\,\, P_{N, \ell, \lambda} F\Big)_{L^2(\RR^7)}\Big|\\
&+\Big|\Big( P_{N, \ell, \lambda}Q({f}_2,\,\, F)-Q({f}_2,\,\, P_{N, \ell, \lambda}F),\,\, P_{N, \ell, \lambda} F\Big)_{L^2(\RR^7)}\Big|.\notag
\end{align}
We conclude the proof of Proposition \ref{IV-prop5.1} by using the
following Lemma.
\end{proof}

\begin{lemm}\label{lemmIV-6.1b}
Let $f\in L^\infty([0, T]; H^5_\ell (\Omega\times\RR^3)),\, \ell \geq \ell_0$ (large). Then, we have, for any $\varepsilon>0$,
\begin{align*}
&\left|\Big( P_{N, \ell, \lambda}G,\,\, P_{N, \ell, \lambda} F\Big)_{L^2(\RR^7)}\right|\leq \varepsilon
||\Lambda^s_v W_{\gamma/2}F_{N,\, \ell, \lambda}||^2_{L^2(\RR^7)}\\
&+
C_\varepsilon\Big(\|\Lambda^\lambda_v f_3\|^{2}_{L^2([0, T]; H^5_{2\ell} (\RR^6))}\|\Lambda^1_t  f_3\|^{2}_{L^2([0, T]; H^5_{2\ell} (\RR^6))}
+\|\Lambda^1_x  f_3\|^{2}_{L^2([0, T]; H^5_{2\ell} (\RR^6))}\Big).
\end{align*}
\end{lemm}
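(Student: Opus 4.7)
My plan is to proceed in parallel with the proof of Lemma \ref{lemmIV-6.1}, the new ingredient being the pseudo-differential factor $\Lambda^\lambda_v$ carried by $P_{N,\ell,\lambda}$. Following \eqref{5.3.2} I would split $G=(A)+(B)+(C)$ and estimate each piece separately, absorbing one factor of the high-order norm $\|\Lambda^s_v W_{\gamma/2}F_{N,\ell,\lambda}\|_{L^2}^2$ into the coercivity on the left-hand side of \eqref{coercivity-2b} via Young's inequality.

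For the non-collision contributions (B) and (C), I would observe that $\psi_2 W_\ell S_N(D_x) S_N(D_v)$ is bounded from $L^2_{2\ell}$ to $L^2$ uniformly in $N$, so Cauchy-Schwarz gives $\bigl|\bigl(P_{N,\ell,\lambda}((B)+(C)),\,P_{N,\ell,\lambda}F\bigr)_{L^2(\RR^7)}\bigr|\le \|P_{N,\ell,\lambda}((B)+(C))\|_{L^2(\RR^7)}\cdot\|F_{N,\ell,\lambda}\|_{L^2(\RR^7)}$. The factor $\|P_{N,\ell,\lambda}(B)\|_{L^2}$ involves $\Lambda^\lambda_v$ applied to at most five derivatives of $\varphi_t\psi f+v\cdot\psi_x\varphi f$, producing norms of the type $\|\Lambda^\lambda_v\Lambda^1_t f_3\|_{L^2([0,T];H^5_{2\ell})}$ and $\|\Lambda^\lambda_v f_3\|_{L^2([0,T];H^5_{2\ell})}$; similarly $\|P_{N,\ell,\lambda}(C)\|_{L^2}$ is controlled by $\|\Lambda^\lambda_v\Lambda^1_x f_3\|_{L^2([0,T];H^5_{2\ell})}$, since (C) contains a commutator of $\partial^\alpha$ with $v\cdot\partial_x$ and therefore carries exactly one extra $x$-derivative. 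Combined with $\|F_{N,\ell,\lambda}\|_{L^2}\lesssim\|\Lambda^\lambda_v f_3\|_{L^2([0,T];H^5_\ell)}$, Young's inequality delivers the $C_\varepsilon$-part of the right-hand side of the lemma.

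For the collision contribution (A), I would have to bound $\bigl(P_{N,\ell,\lambda}Q(\partial^{\alpha_1}f_2,\partial^{\alpha_2}f_1),\,P_{N,\ell,\lambda}F\bigr)$ for $|\alpha_1|\ge 1$ and $|\alpha_1|+|\alpha_2|\le 5$. I would push $P_{N,\ell,\lambda}$ across $Q$ in three successive steps: first the $x$-mollifier $S_N(D_x)$ via Lemma \ref{lemm3.3} (applied separately to $Q_c$ and $Q_{\bar c}$); then the weight $W_\ell$ via Proposition \ref{IV-prop-2.5} for $Q_c$ combined with \eqref{commutators-I} for $Q_{\bar c}$; finally the pseudo-differential factor $\Lambda^\lambda_v S_N(D_v)$ via Corollary \ref{IV-coro-2.15}. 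Every commutator error is of strictly lower order in regularity and, after Young's inequality, contributes only $C_\varepsilon\|\Lambda^\lambda_v f_3\|^2_{L^2([0,T];H^5_{2\ell})}$ plus an absorbable multiple of $\|\Lambda^s_v W_{\gamma/2}F_{N,\ell,\lambda}\|^2_{L^2}$.

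The remaining principal term is $\bigl(Q(\partial^{\alpha_1}f_2,\,\Lambda^\lambda_v W_\ell S_N(D_x)S_N(D_v)\partial^{\alpha_2}f_1),\,\psi_2 F_{N,\ell,\lambda}\bigr)$. To it I would apply the weighted upper bound Proposition \ref{prop-IV-2.5b} with $m=0$ and weight parameter $-\gamma/2$, so that the third slot is measured by $\|F_{N,\ell,\lambda}\|_{H^s_{\gamma/2}}$; Young's inequality then separates the absorbable piece $\varepsilon\|\Lambda^s_v W_{\gamma/2}F_{N,\ell,\lambda}\|^2$ from $C_\varepsilon$ times a product of $\|\partial^{\alpha_1}f_2\|^2_{L^2_{\ell_1}}$ and $\|\Lambda^\lambda_v\partial^{\alpha_2}f_1\|^2_{H^s_{\ell_2}}$. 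Since $|\alpha_1|\ge 1$ forces $|\alpha_2|\le 4$, the second factor has one spare $v$-derivative: interpolation between the hypothesis \eqref{5.2+0} (giving $\Lambda^\lambda_v\partial^{\alpha_2}f_1\in L^2_\ell$) and the $H^5_\ell$-regularity of $f$ (giving $\Lambda^1_v\partial^{\alpha_2}f_1\in L^2_\ell$) bounds it by $\|\Lambda^\lambda_v f_3\|_{L^2([0,T];H^5_{2\ell})}$. The principal obstacle is precisely this bookkeeping — tracking the correct weights through the three commutator steps, and verifying that the interpolation indeed produces only norms already permitted on the right-hand side of the lemma; once done, the three blocks of contributions sum to the stated estimate.
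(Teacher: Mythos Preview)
Your overall strategy is sound and close to the paper's, but the paper handles the collision term $(A)$ more economically. Rather than performing three successive commutator steps through $Q$, the paper simply moves $\psi_2(x) W_\ell S_N(D_x)$ to the test-function side by duality, writing
\[
\Big(P_{N,\ell,\lambda}\,Q(\partial^{\alpha_1}f_2,\partial^{\alpha_2}f_1),\,F_{N,\ell,\lambda}\Big)
=\Big(\Lambda^\lambda_N(D_v)\,Q(\partial^{\alpha_1}f_2,\partial^{\alpha_2}f_1),\,W_\ell S_N^\star(D_x)\psi_2 F_{N,\ell,\lambda}\Big),
\]
and then commutes only $\Lambda^\lambda_N(D_v)$ through $Q$ via Corollary~\ref{IV-coro-2.15}, applying Proposition~\ref{prop-IV-2.5b} with $m=0$ to the remaining main term. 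Your three-step decomposition would also work but generates more error terms to track.

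Your treatment of $(B)$ and $(C)$ overstates what is needed: $\varphi_t$ and $\psi_x$ are fixed smooth cutoffs, and $[\partial^\alpha,\,v\cdot\partial_x]$ is of order $|\alpha|$, so $P_{N,\ell,\lambda}((B)+(C))$ is controlled directly by $\|\Lambda^\lambda_v f_2\|^2_{L^2([0,T];H^5_{\ell+1})}$ with no extra $\Lambda^1_t$ or $\Lambda^1_x$ required.

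There is one genuine gap in your sketch: you do not explain where the factor $\|\Lambda^1_t f_3\|^2$ in the statement arises. In the paper's argument it comes from the H\"older step in $(A)$. The $v$-trilinear bound produces an integrand in $(t,x)$ of the form $\|\partial^{\alpha_1}f_2\|_{L^2_v}\,\|\partial^{\alpha_2}f_1\|_{H^{\lambda+s}_v}\,\|F_{N,\ell,\lambda}\|_{H^s_{\gamma/2}}$, and one factor must be placed in $L^\infty_{t,x}$. When $|\alpha_1|>3$ this falls on the $\partial^{\alpha_2}f_1$ factor (with $|\alpha_2|\le 1$), and Sobolev embedding in the four-dimensional $(t,x)$ variables then consumes time-regularity of $f$, producing the $\|\Lambda^5_{x,v}\Lambda_t f_2\|$ type norm. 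Your ``product of two $L^2$ norms after Young'' glosses over precisely this point, and your interpolation remark addresses only the extra $H^s_v$ on $\partial^{\alpha_2}f_1$, not the $L^\infty_{t,x}$ requirement.
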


\begin{proof} By using the decomposition in \eqref{5.3.2}, it is obvious that for the linear terms
$$
\left|\Big( P_{N, \ell, \lambda}((B)+(C)),\,\, P_{N, \ell, \lambda} F\Big)_{L^2(\RR^7)}\right|\leq
C\|\Lambda^\lambda_v f_2\|^{2}_{L^2([0, T]; H^5_{\ell+1} (\RR^6))}.
$$
For the term $(A)$, recall that $\alpha_1+\alpha_2=\alpha$,
$|\alpha|\leq 5$ and $|\alpha_2|<5$. Then, by separating the cases
$|\alpha_1|\leq 3$ and $|\alpha_1|> 3$, we get, with
$\Lambda^\lambda_N(D_v)=\Lambda^\lambda_v S_N(D_v)$,
\begin{align*}
&\Big|\Big(P_{N, \ell,\lambda}Q\big(\partial^{\alpha_1}
{f_2},\,\partial^{\alpha_2} f_1\big),\,
\, P_{N, \ell, \lambda} F\Big)_{L^2(\RR^7)}\Big|\\
&=\Big|\Big(\Lambda^\lambda_N(D_v)\big(Q\big(
\partial^{\alpha_1}
{f_2},\,\partial^{\alpha_2} f_1\big)\big),\,
\, W_\ell\, S^\star_N(D_x)\psi_2 F_{N, \ell, \lambda} \Big)_{L^2(\RR^7)}\Big|\\
&\leq\Big|\Big(\Lambda^\lambda_N(D_v)\big(Q\big(
\partial^{\alpha_1}
{f_2},\,\partial^{\alpha_2} f_1\big)\\
&\quad-\big(Q\big(
\partial^{\alpha_1}
{f_2},\,\Lambda^\lambda_N(D_v)\partial^{\alpha_2} f_1\big)\big),\,
\, W_\ell S^\star_N\psi_2(D_x)F_{N, \ell, \lambda} \Big)_{L^2(\RR^7)}\Big|\\
&\quad+\Big|\Big(\big(Q\big(
\partial^{\alpha_1}
{f_2},\,\Lambda^\lambda_N(D_v)\partial^{\alpha_2} f_1\big)\big),\,
\, W_\ell S^\star_N(D_x)\psi_2 F_{N, \ell, \lambda} \Big)_{L^2(\RR^7)}\Big|\,.
\end{align*}
Using Corollary \ref{IV-coro-2.15}, we have
\begin{align*}
&\Big|\Big(\Lambda^\lambda_N(D_v)\big(Q\big(
\partial^{\alpha_1}
{f_2},\,\partial^{\alpha_2} f_1\big)-\big(Q\big(
\partial^{\alpha_1}
{f_2},\,\Lambda^\lambda_N(D_v)\partial^{\alpha_2} f_1\big)\big),\,
\, W_\ell S^\star_N(D_x)\psi_2 F_{N, \ell, \lambda} \Big)_{L^2(\RR^7)}\Big|\\
&\leq C\int \|\partial^{\alpha_1} f_2\|_{H^{s'}_{2+\gamma^++(2s-1)^+}(\RR^3_v)}\|\partial^{\alpha_2} f_1\|_{H^{\lambda}_{\gamma^++(2s-1)^+}(\RR^3_v)}\|W_\ell \,S^\star_N(D_x)\psi_2 F_{N, \ell, \lambda}\|_{H^{s'}(\RR^3_v)} dx dt\\
&\leq C\left\{
\begin{array}{ll}
\|\partial^{\alpha_1} f_2\|_{L^\infty(\RR^4_{t, x}; H^{s'}_{2+\gamma^++(2s-1)^+}(\RR^3_v))}\|\Lambda^5_{x, v}\Lambda^\lambda_{v} f_1\|_{L^2_{\gamma^++(2s-1)^+}(\RR^7)}\|\Lambda^{s'}_{v} F_{N, \ell, \lambda}\|_{L^{2}_\ell(\RR^7)},& |\alpha_1|\leq 3\\
\|\partial^{\alpha_1} f_2\|_{L^2(\RR^4_{t, x}; H^{s'}_{2+\gamma^++(2s-1)^+}(\RR^3_v))}\|\Lambda^{1+3/2+\delta}_{x, v}\Lambda^{\lambda}_{v} \Lambda^{1/2+\delta}_{t} f_1\|_{L^2_{\gamma^++(2s-1)^+}(\RR^7)}\|\Lambda^{s'}_{v} F_{N, \ell, \lambda}\|_{L^{2}_\ell(\RR^7)},& |\alpha_1|> 3
\end{array}
\right.\\
&\leq C\|\Lambda^5_{x, v}\Lambda_{t} f_2\|_{L^2_{2+\gamma^++(2s-1)^+}(\RR^7)}\|\Lambda^5_{x, v}\Lambda^\lambda_{v} f_2\|_{L^2_{\gamma^++(2s-1)^+}(\RR^7)}\|\Lambda^{s'}_{v} F_{N, \ell, \lambda}\|_{L^{2}_\ell(\RR^7)}
\\
&\leq \epsilon\|\Lambda^{s}_{v}W_{\gamma/2} F_{N, \ell, \lambda}\|^2_{L^{2}(\RR^7)}+ C_\epsilon\|\Lambda^5_{x, v}\Lambda_{t} f_2\|^2_{L^2_{2\ell}(\RR^7)}\|\Lambda^5_{x, v}\Lambda^\lambda_{v} f_2\|^2_{L^2_{2\ell}(\RR^7)}\,,
\end{align*}
Proposition \ref{prop-IV-2.5b} with $m=0$ and Sobolev embedding for $x\in\RR^3$ and $t\in\RR$  give
\begin{align*}
&\Big|\Big(\big(Q\big(
\partial^{\alpha_1}
{f_2},\,\Lambda^\lambda_N(D_v)\partial^{\alpha_2} f_1\big)\big),\,
\, W_\ell S^\star_N(D_x)\psi_2 F_{N, \ell, \lambda} \Big)_{L^2(\RR^7)}\Big|\\
&\leq C\int \|\partial^{\alpha_1} f_2\|_{L^2_{2+(\gamma+2s)^++\ell-\gamma/2}(\RR^3_v)}\|\partial^{\alpha_2} f_1\|_{H^{\lambda+s}_{\ell-\gamma/2+(\gamma+2s)^+}(\RR^3_v)}\|W_\ell \,S^\star_N(D_x)\psi_2 F_{N, \ell, \lambda}\|_{H^{s}_{-\ell+\gamma/2}(\RR^3_v)} dx dt
\\
&\leq \epsilon\|\Lambda^{s}_{v}W_{\gamma/2} F_{N, \ell, \lambda}\|^2_{L^{2}(\RR^7)}+ C_\epsilon\|\Lambda^5_{x, v}\Lambda_{t} f_2\|^2_{L^2_{2\ell}(\RR^7)}\|\Lambda^5_{x, v}\Lambda^\lambda_{v} f_2\|^2_{L^2_{2\ell}(\RR^7)}\,.
\end{align*}
This ends the proof of Lemma \ref{lemmIV-6.1b}.
\end{proof}

Recall $\Lambda^\lambda_N(D_v) = \Lambda_v^\lambda S_N(D_v)$, we
have for the last term of \eqref{coercivity-2b}, (the term involving
$\mu$ is omitted since is easier than $f_2$),
\begin{align*}
&\Big( P_{N, \ell, \lambda}Q({f}_2,\,\, F)-Q({f}_2,\,\, P_{N, \ell, \lambda}F),\,\, P_{N, \ell, \lambda} F\Big)_{L^2(\RR^7)}\\
&=\Big( \Lambda^\lambda_N(D_v)Q({f}_2,\,\, F)-Q({f}_2,\,\, \Lambda^\lambda_N(D_v)F),\,\, W_\ell\,S^\star_N(D_x)\,P_{N, \ell, \lambda} F\Big)_{L^2(\RR^7)}\\
&+\Big(S_N(D_x)\, Q({f}_2,\,\, \Lambda^\lambda_N(D_v)F)-Q({f}_2,\,\, S_N(D_x)\,\Lambda^\lambda_N(D_v)F),\,\, W_\ell\,P_{N, \ell, \lambda} F\Big)_{L^2(\RR^7)}\\
&+\Big( W_\ell\,Q({f}_2,\,\, S_N(D_x)\,\Lambda^\lambda_N(D_v)F)-Q({f}_2,\,\, P_{N, \ell, \lambda}F),\,\, P_{N, \ell, \lambda} F\Big)_{L^2(\RR^7)}\\
&=(I)+(II)+(III)\, .
\end{align*}
Using again Corollary \ref{IV-coro-2.15}, we have by interpolation,
\begin{align*}
|(I)|&\leq C\int \|f_2\|_{H^{s'}_{2+(2s-1)^++\gamma^+}}
\|F\|_{H^\lambda_{(2s-1)^++\gamma^+}}
\|W_\ell\,S^\star_N(D_x)\,P_{N, \ell, \lambda} F\|_{H^{s'}} dx dt\\
&\leq C \|\Lambda_t \Lambda^2_x\Lambda^{s'}_v f_2\|_{L^2_{2+(2s-1)^++\gamma^+}(\RR^7)}
\|\Lambda^{\lambda}_v F\|_{L^2_{(2s-1)^++\gamma^+}}\|\Lambda^{s'}_v W_\ell\,S^\star_N(D_x)\,P_{N, \ell, \lambda} F\|_{L^{2}(\RR^7)}\\
&\leq C \|\Lambda_t \Lambda^4_{x, v} f_2\|_{L^2_{2+(2s-1)^++\gamma^+}(\RR^7)}
\|\Lambda^{\lambda}_v F\|_{L^2_{(2s-1)^++\gamma^+}}\Big(\epsilon\|\Lambda^{s}_v W_{\gamma/2}\,P_{N, \ell, \lambda} F\|_{L^{2}(\RR^7)}+
C_\epsilon \|\Lambda^{\lambda}_v F\|_{L^{2}_{3\ell}(\RR^7)}\Big)
\\
&\leq \epsilon\|\Lambda^{s}_v W_{\gamma/2}\,P_{N, \ell, \lambda} F\|^2_{L^{2}(\RR^7)}+
C_\epsilon \|\Lambda_t \Lambda^4_{x, v} f_2\|^2_{L^2_{2+(2s-1)^++\gamma^+}(\RR^7)}
\|\Lambda^{\lambda}_v \Lambda^5_{x, v} f_1\|^2_{L^2_{2\ell}(\RR^7)}\,.
\end{align*}
Using now \eqref{3.10+1}, similarly as for \eqref{3.10+1b}, we have
\begin{align*}
|(II)|&\leq C 2^{-N} \|\nabla_x f_2\|_{L^\infty(\RR^4_{t,
x},\,\, L^2_{\ell+2-\gamma/2+(2s+\gamma)^+}(\RR^3_v))}
\|M_\lambda(D_v)F\|_{L^2(\RR^4_{t, x},\,\,
H^s_{\ell-\gamma/2+(2s+\gamma)^+}(\RR^3_v))}
\|W_\ell\,P_{N, \ell, \lambda} F\|_{L^2(\RR^4_{t, x},\,\,
H^s_{-\ell+\gamma/2}(\RR^3_v))}\\
&\leq C \|\Lambda_t\Lambda^3_x f_2\|_{L^2_{\ell+2-\gamma/2+(2s+\gamma)^+}(\RR^7)}
\|\Lambda^\lambda_v F\|_{L^2_{\ell-\gamma/2+(2s+\gamma)^+}(\RR7)}
\|\Lambda^s_v W_{\gamma/2}\,P_{N, \ell, \lambda} F\|_{L^2(\RR^7)}\,.
\end{align*}
For the term $(III)$, we use \eqref{IV-4.3}
\begin{align*}
|(III)|&=\Big|\Big( W_\ell\,Q({f}_2,\,\, S_N(D_x)\,\Lambda^\lambda_N(D_v)F)-Q({f}_2,\,\, P_{N, \ell, \lambda}F),\,\, P_{N, \ell, \lambda} F\Big)_{L^2(\RR^7)}\Big|\\
&\leq C \|f_2\|_{L^\infty(\RR^4_{t, x}; L^2_{\ell+2+(2s-1)^++\gamma^+}(\RR^3_v))}
\|S_N(D_x)\,\Lambda^\lambda_N(D_v)F\|_{L^2(\RR^4_{t, x}; H^{(2s-1+\epsilon)^+}_{\ell+(2s-1)^++\gamma^+}(\RR^3_v))}
\|P_{N, \ell, \lambda} F\|_{L^2(\RR^7)}\\
&\leq \epsilon \|\Lambda^s_v W_{\gamma/2}\,P_{N, \ell, \lambda} F\|^2_{L^2(\RR^7)}+C_\epsilon\Big(\|\Lambda^\lambda_v F\|^2_{L^2_{2\ell}(\RR7)}+ \|\Lambda_t\Lambda^2_x f_2\|^2_{L^2_{\ell+2-\gamma/2+(2s+\gamma)^+}(\RR^7)}
\|\Lambda^\lambda_v F\|^2_{L^2_{\ell}(\RR7)}\Big)\,.
\end{align*}
Finally, from \eqref{coercivity-2b}, choose $\epsilon>0$ small enough, we get for big $\ell$,
\begin{align*}
\frac {\eta_0}{2}||\Lambda^s_v W_{\gamma/2}F_{N,\, \ell, \lambda}||^2_{L^2(\RR^7)} \lesssim&
\|\Lambda^{\lambda}_{v}\,\Lambda^5_{x, v} f_2\|^2_{L^2_{2\ell}(\RR^7)}+\|\Lambda_x
\,\Lambda^5_{x, v} f_2\|^2_{L^2_{2\ell}(\RR^7)}\\
&+ \|\Lambda_{t}\Lambda^5_{x, v} f_2\|^2_{L^2_{2\ell}(\RR^7)}\|\Lambda^\lambda_{v}
\Lambda^5_{x, v} f_2\|^2_{L^2_{2\ell}(\RR^7)}.
\end{align*}
Taking the limit $N\,\rightarrow\,\infty$, we have proved
\eqref{5.2}, and ended the proof of Proposition \ref{IV-prop5.1}.

We can now conclude that the following regularity result with
respect to the variable $v$ holds true.
\begin{prop}\label{IV-prop5.2}
Under the hypothesis of Theorem \ref{theo-IV-1.1}, one has
\begin{equation}\label{5.8}
\Lambda^{1+\varepsilon}_{v}
(\varphi(t)\psi(x) f)\in L^2([0, T];  H^5_\ell(\RR^6)),
\end{equation}
for any $\ell \in\NN$ and some $\varepsilon>0$.
\end{prop}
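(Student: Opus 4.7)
The plan is to iterate Proposition \ref{IV-prop5.1}, completely parallel to the bootstrap used in Proposition \ref{IV-prop4.3} for $(t,x)$-regularity.

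First, I will collect the base case. Proposition \ref{propIV-6.1} supplies $\Lambda_v^s\,\partial^\alpha(\varphi(t)\psi(x) f)\in L^2_\ell(\RR^7)$, while Proposition \ref{IV-prop4.3} supplies $\Lambda_{t,x}\,\partial^\alpha(\varphi(t)\psi(x) f)\in L^2_\ell(\RR^7)$, both for arbitrary cutoffs $\varphi\in C_0^\infty(]0,T[)$, $\psi\in C_0^\infty(\RR^3)$, every multi-index $|\alpha|\leq 5$ and all $\ell\in\NN$. This is exactly the hypothesis \eqref{5.2+0} of Proposition \ref{IV-prop5.1} with $\lambda=s$, whose conclusion already yields $\Lambda_v^{2s}\,\partial^\alpha(\varphi\psi f)\in L^2_\ell(\RR^7)$.

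Next I iterate. Assume inductively that for some integer $k\geq 1$ with $ks<1$, one has $\Lambda_v^{ks}\,\partial^\alpha(\varphi\psi f)\in L^2_\ell$ for all cutoffs, $|\alpha|\leq 5$ and $\ell\in\NN$. The $(t,x)$-regularity $\Lambda_{t,x}\,\partial^\alpha(\varphi\psi f)\in L^2_\ell$ is available unconditionally from the base case, so the hypothesis of Proposition \ref{IV-prop5.1} with $\lambda=ks$ is met, and its conclusion promotes the $v$-regularity to order $(k+1)s$. Now choose $k_0\in\NN$ such that $k_0 s<1$ and $(k_0+1)s>1$ (possible since $0<s<1$; the non-generic borderline $(k_0+1)s=1$ is avoided by replacing $s$ with a slightly smaller $s'$, for which Proposition \ref{propIV-6.1} still applies by interpolation). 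After $k_0$ iterations one obtains
$$
\Lambda_v^{(k_0+1)s}\,\partial^\alpha(\varphi\psi f)\in L^2_\ell(\RR^7),
$$
with $(k_0+1)s=1+\varepsilon$ for some $\varepsilon>0$. Summing over $|\alpha|\leq 5$ gives \eqref{5.8}.

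The only subtlety — and a mild one — is to respect the strict constraint $\lambda<1$ in Proposition \ref{IV-prop5.1}: this is precisely why the induction halts at $k_0$ with $k_0 s<1$, one last application then pushing the $v$-exponent past $1$. The bookkeeping of cutoff functions is routine since all statements being invoked hold for arbitrary cutoffs; at each step one enlarges the cutoffs slightly so as to absorb those of the previous stage.
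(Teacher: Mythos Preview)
Your proof is correct and follows essentially the same iteration argument as the paper: start from the base regularity $\Lambda_v^s(\varphi\psi f)\in L^2_\ell$ (Proposition~\ref{propIV-6.1}) together with $\Lambda_{t,x}(\varphi\psi f)\in L^2_\ell$ (Proposition~\ref{IV-prop4.3}), then apply Proposition~\ref{IV-prop5.1} repeatedly with $\lambda=ks$ until $k=k_0$ with $k_0s<1\le(k_0+1)s$. Your remark on handling the borderline $(k_0+1)s=1$ by slightly decreasing $s$ is a reasonable precaution that the paper leaves implicit.
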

Again, this result is indeed obtained by noticing that there
exists $k_0\in\NN$ such that
$$
k_0 s<1,\,\,\,\,\,\,\,\,\, (k_0+1) s=1+\varepsilon>1.
$$
Then we get (\ref{5.8}) from (\ref{3.3}), Proposition
\ref{IV-prop5.1} with $\lambda=k_0s$  by induction.

\smallbreak\noindent {\bf High order regularity by iterations}

{}From Proposition \ref{IV-prop4.3} and Proposition \ref{IV-prop5.2}, we can now deduce
that, for any $\ell\in\NN$, and any cutoff functions $\varphi(t)$
and $\psi(x)$,
$$
\Lambda_v(\varphi(t)\psi(x) f), \Lambda_{t, x}(\varphi(t)\psi(x) f)\in L^2([0, T]; H^5_\ell(\RR^6))\,,
$$
which is
$$
\varphi(t)\psi(x) f\in L^2([0, T]; H^6_\ell(\RR^6))\cap H^1([0, T]; H^5_\ell(\RR^6))
$$
The proof of full regularity is then completed by induction for $(x, v)$ variable
$$
\varphi(t)\psi(x) f\in L^2([0, T]; H^7_\ell(\RR^6))\cap H^1([0, T]; H^6_\ell(\RR^6))
$$
and using the equation to prove the regularity for $t$ variable.

\section{Uniqueness  of solutions}\label{sect-IV-7}
\smallskip \setcounter{equation}{0}

In this section, we prove precise versions for uniqueness results which will cover more general cases than those presented in Theorem \ref{theo-IV-1.2}.

We need the coercive estimate in a global version: For suitable function $f$, we say that $f$ satisfies the global coercive estimate, if there exist constants $c_0>$ and $C>0$ independent of
$ t\in ]0, T[$ such that

\begin{align}\label{cor-sol}
 -(Q(f(t),h),h)_{L^2(\RR^6)}  \ge c_0 \|h\|^2_{L^2(\RR^3_x; H^s_{\gamma/2}(\RR^3_v))} -C\|h\|^2 _{L^2(\RR^3_x; L^2_{ (\gamma/2+s)^+}(\RR^3_v))}
\end{align}
for any $h \in L^2(\RR^3_x; \cS(\RR^3_v))$.
Using the notations introduced in Section \ref{sect-IV-1}, we prove the following precise version of Theorem \ref{theo-IV-1.2}, where we do not assume that solution is a perturbation around a normalized Maxwellian.

\begin{theo}\label{uniqueness}  Assume that $0<s<1$ and $\max \big \{-3, -3/2-2s \big \} <\gamma < 2-2s $.
Let $ f_0 \in
\tilde \cE^{0}_0(\RR^6)$, $ 0<T < +\infty$ and
suppose that $f \in
\tilde { \mathcal E}^{m}([0, T]\times{\mathbb R}^6_{x, v}), m\geq s$
is a weak solution to the Cauchy problem
\eqref{IV-1.1}. If $f$ is non-negative, then solution $f$ is unique in the function space $
\tilde  { \mathcal E}^{2s}([0,T]\times{\mathbb R}^6_{x, v})$.

Moreover, if $f$ is non-negative and satisfies  the global coercive estimate
\eqref{cor-sol}, 
then the solution $f$ is unique in the function space $
\tilde  { \mathcal E}^s([0,T]\times{\mathbb R}^6_{x, v})$.
The same conclusion holds
without the non-negativity of $f $ if the term $\|h\|^2_{L^2(\RR^3_x; H^s_{\gamma/2}(\RR^3_v))}$
in the condition  \eqref{cor-sol} is replaced by $\int |||h|||^2_{\Phi_\gamma}dx $\,.

\end{theo}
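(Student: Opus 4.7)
The plan is a weighted $L^2$ energy estimate on the difference $w = f - \tilde f$ of two candidate solutions to \eqref{IV-1.1} sharing the same initial datum, followed by Gronwall. Subtracting the two equations,
\begin{equation*}
\partial_t w + v\cdot\nabla_x w = Q(f, w) + Q(w, \tilde f), \qquad w|_{t=0} = 0.
\end{equation*}
Choose $\rho_0 > 0$ strictly smaller than the common exponential decay rate available for $f$ and $\tilde f$, set $W(v) = e^{\rho_0\langle v\rangle^2}$ and $g = Ww$, so that $g|_{t=0}=0$, $g \in L^\infty_t L^\infty_x L^2_v$ and $W\tilde f \in L^\infty_t L^\infty_x H^{2s}_v$. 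Testing the equation for $g$ against $g$ and integrating over $\RR^6_{x,v}$, the transport term vanishes and matters reduce to controlling the two bilinear integrals $(W Q(f, W^{-1}g), g)_{L^2(\RR^6)}$ and $(W Q(W^{-1}g, \tilde f), g)_{L^2(\RR^6)}$ by $C\|g\|^2_{L^2(\RR^6)}$.

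For the first term I split $WQ(f, W^{-1}g) = Q(f, g) + [W, Q(f, \cdot)]W^{-1}g$. The main piece is handled via the cancellation identity
\begin{equation*}
(Q(f, g), g)_{L^2(\RR^3_v)} = -\tfrac{1}{2}\cD(f, g) + \tfrac{1}{2}\iiint B\, f_*(g'^2 - g^2)\, d\sigma\, dv\, dv_*,
\end{equation*}
and the assumption $f \ge 0$ makes the dissipative term $-\tfrac{1}{2}\cD(f,g)$ non-positive, so it is simply discarded; the gain--loss piece is controlled by the cancellation lemma of \cite{al-1}, exactly as in the proof of Proposition \ref{prop-coercivity-1}, yielding a bound $\lesssim \|g\|^2_{L^2(\RR^6)}$ after using the Gaussian tail of $g$ to absorb the polynomial weight $\langle v\rangle^{2(\gamma/2+s)^+}$ and the boundedness of $f$. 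The commutator $[W, Q(f, \cdot)]W^{-1}g$ is estimated by a Gaussian analogue of Proposition \ref{IV-prop-2.5}: the Taylor expansion of $W$ around $v'$, combined with the conservation identity $|v|^2+|v_*|^2 = |v'|^2+|v'_*|^2$ (equivalently $W(v)W(v_*) = W(v')W(v'_*)$), produces a polynomial factor $|v-v_*|\sin(\theta/2)$ that absorbs the angular singularity of $b$, leaving a contribution $\lesssim \|g\|^2_{L^2(\RR^6)}$.

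For the second term, a parallel decomposition $WQ(W^{-1}g, \tilde f) = Q(g, W\tilde f) + \text{commutator}$ reduces matters to $(Q(g, W\tilde f), g)$, and Proposition \ref{prop-IV-2.5b} with $m = s$ gives
\begin{equation*}
|(Q(g, W\tilde f), g)_{L^2(\RR^3_v)}| \lesssim \bigl(\|g\|_{L^1_v} + \|g\|_{L^2_v}\bigr)\|W\tilde f\|_{H^{2s}_v}\|g\|_{L^2_v}
\end{equation*}
uniformly in $x$; after $x$-integration and using $\tilde f \in \tilde\cE^{2s}$, this is bounded by $C(\tilde f)\|g\|^2_{L^2(\RR^6)}$, where the Gaussian absorbs all polynomial velocity weights appearing in the bilinear estimate. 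Summing both contributions yields
\begin{equation*}
\tfrac{d}{dt}\|g(t)\|^2_{L^2(\RR^6)} \le C\|g(t)\|^2_{L^2(\RR^6)},
\end{equation*}
and Gronwall together with $g|_{t=0}=0$ forces $g \equiv 0$, hence $w \equiv 0$. In the coercive case, $(Q(f, g), g)$ is instead estimated from below by \eqref{cor-sol}, producing a gain $c_0\|g\|^2_{L^2(\RR^3_x; H^s_{\gamma/2}(\RR^3_v))}$; this gain absorbs the sharper estimate $|(Q(g, W\tilde f), g)| \lesssim \|W\tilde f\|_{H^s}\|g\|_{L^2}\|g\|_{H^s}$ from Proposition \ref{prop-IV-2.5b} with $m = 0$, so only $\tilde f \in \tilde\cE^s$ is required. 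Non-negativity of $f$ was used only to discard $-\tfrac{1}{2}\cD(f, g) \le 0$; when the coercive estimate is strengthened to its triple-norm variant $\int|||\cdot|||^2_{\Phi_\gamma}dx$, the dissipation is already encoded on the left-hand side and the non-negativity assumption can be dropped.

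The main obstacle is the rigorous handling of the Gaussian weight inside the collision operator: the commutator and upper-bound estimates of Section \ref{sect-IV-2} are designed for polynomial weights $W_\ell$ and must be adapted. The adaptation combines Taylor expansion of $W$ with the symmetry $W(v)W(v_*) = W(v')W(v'_*)$, which keeps the exponential factor balanced between pre- and post-collisional states, leaving only polynomial residuals that are absorbed using the strict inequality $\rho_0 < \rho$. The restriction $\gamma < 2-2s$ enters precisely here: it guarantees that the singular factor $|v-v_*|^{\gamma+2s}$ produced by the kinetic cross-section is dominated by the polynomial gain from the Gaussian Taylor expansion, so that every weighted commutator term is controlled by $\|g\|^2_{L^2(\RR^6)}$ with a constant depending only on the ambient norms of $f$ and $\tilde f$.
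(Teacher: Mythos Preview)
Your overall strategy---energy estimate on the difference, drop $-\tfrac12\cD(f,g)\le 0$ by non-negativity, exploit the conservation $W(v)W(v_*)=W(v')W(v'_*)$ for the Gaussian weight, then Gronwall---is the paper's, but two steps do not close as written. First, the $x$-integrability: the space $\tilde\cE^m$ gives only $L^\infty_x$ control, so $g=Ww\in L^\infty([0,T]\times\RR^3_x;L^2(\RR^3_v))$, not $L^2(\RR^6)$. You note $g\in L^\infty_t L^\infty_x L^2_v$ and then immediately take $\|g(t)\|^2_{L^2(\RR^6)}$, which need not be finite; testing against $g$ over $\RR^6$ is not justified. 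The paper inserts the additional weight $\varphi(v,x)^{-1}=(1+|v|^2+|x|^2)^{-\alpha/2}$, $\alpha>3/2$, and runs Gronwall on $\|\varphi^{-1}\langle v\rangle^\ell g\|^2_{L^2(\RR^6)}$; the transport term no longer vanishes but produces only a harmless commutator $v\cdot\nabla_x(\varphi^{-2})$ bounded by $C\varphi^{-2}$.

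Second, and more structurally, your claim that ``the Gaussian tail of $g$ absorbs the polynomial weight $\langle v\rangle^{2(\gamma/2+s)^+}$'' is circular. The residual decay of $g$ is the a priori bound $\|e^{\rho\langle v\rangle^2}w\|_{L^\infty_xL^2_v}$, so $\|\langle v\rangle^p g\|_{L^2_v}\le C_p\|e^{\rho\langle v\rangle^2}w\|_{L^2_v}$, but the right-hand side is not $\|g\|_{L^2_v}$; raising $\rho_0$ to $\rho$ just reproduces the same loss one step up. With a \emph{fixed} Gaussian weight the inequality reads $\frac{d}{dt}\|g\|^2_{L^2}\lesssim\|g\|^2_{L^2_{s+\gamma/2}}$, which does not close when $\gamma+2s>0$. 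The paper's remedy is a \emph{time-dependent} weight $\mu_\kappa(t)^{-1}=e^{(\rho-\kappa t)\langle v\rangle^2}$: differentiating in $t$ manufactures an extra term $+\kappa\|\langle v\rangle g\|^2_{L^2}$ on the left, and the hypothesis $\gamma<2-2s$ is precisely $(s+\gamma/2)^+<1$, which permits the interpolation $\|g\|^2_{L^2_{(s+\gamma/2)^+}}\le\delta\|g\|^2_{L^2_1}+C_\delta\|g\|^2_{L^2}$ with the first piece absorbed by that new dissipation. This, not the commutator estimate you describe, is where the restriction $\gamma<2-2s$ actually enters.
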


\begin{rema}
In the case where $\gamma >-3/2$ and  $f\in
\tilde  { \mathcal E}^{s}([0,T]\times{\mathbb R}^6_{x, v})$ is non-negative,
it follows  that $f$ coincides with any another solution $f_2 \in
\tilde  { \mathcal E}^{2s}([0,T]\times{\mathbb R}^6_{x, v})$ without the coercivity condition \eqref{cor-sol}.
\end{rema}

The next result proves the uniqueness of perturbative solutions around a normalized Maxwellian obtained in \cite{amuxy4-2,amuxy4-3} where we do not assume the non-negativity of solutions.
\begin{theo}\label{uniqueness-small-pur}
Assume that $0<s<1$, $\max \{ -3, -3/2-2s \}  <\gamma $.
Let $\ell_1 > 3/2 + \max \{(\gamma +2s)^+, |\gamma|/2 \}$.
Then there exists an $\varepsilon_0 >0$ such that
if $f_1(t), f_2(t) \in \tilde  { \mathcal E}^s([0,T]\times{\mathbb R}^6_{x, v})$ are  two solutions of the Cauchy problem
\eqref{IV-1.1} with the properties
\[
\mu^{-1/2}\big ( f_j(t) - \mu \big) \in L^\infty([0,T]\times \RR_x^3; H^s_{\ell_1}) \, , \enskip j=1,2 \,,
\]
and the smallness condition for $f_1$
\begin{equation}\label{small}
||\mu^{-1/2}\big ( f_1(t) - \mu \big) ||_{L^\infty([0,T]\times \RR_x^3; { L^2(\RR^3))}} \le \varepsilon_0,
\end{equation}
then $f_1(t) \equiv f_2(t)$ for all $t \in [0,T]$.
\end{theo}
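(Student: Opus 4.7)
The strategy is to derive a Grönwall-type estimate for the difference $w=f_1-f_2$ in a weighted $L^2$-space, where the initial vanishing $w|_{t=0}\equiv 0$ (both solutions solve the same Cauchy problem) forces $w\equiv 0$ on $[0,T]$. Since $f_j=\mu+\mu^{1/2}g_j$ with $g_j\in L^\infty([0,T]\times\RR^3_x;H^s_{\ell_1})$, one has $w=\mu^{1/2}(g_1-g_2)$, enjoying Gaussian decay in $v$ uniformly in $(t,x)$. The difference satisfies
$$\partial_t w+v\cdot\nabla_x w=Q(f_1,w)+Q(w,f_2),\qquad w|_{t=0}=0,$$
and the asymmetric splitting on the right-hand side is the crux: $Q(f_1,\cdot)$ behaves like a linearization around $f_1$ whose coercivity is unlocked by the smallness of $g_1$, whereas $Q(\cdot,f_2)$ is treated by a pure upper bound needing only the uniform $H^s_{\ell_1}$-control of $g_2$.

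Fix an auxiliary weight $\ell_2\in[0,\ell_1-3/2-\max\{(\gamma+2s)^+,|\gamma|/2\}]$, and test the equation against $W_{2\ell_2}w$ in $L^2_{x,v}$ (cutting off in $x$ if needed to give meaning to the pairing against $L^\infty_x$ functions and passing to the limit at the end). The transport term integrates to zero. Commuting $W_{\ell_2}$ through $Q(f_1,w)$ via the weighted commutator estimate \eqref{IV-4.3}, and then invoking Corollary \ref{coro-coercivity-1} pointwise in $(t,x)$---its hypothesis $\|g_1(t,x,\cdot)\|_{L^2_v}\le\varepsilon_0$ being exactly \eqref{small}---yields
$$\bigl(Q(f_1,w),W_{2\ell_2}w\bigr)_{L^2_v}\le -\eta_0\,|||W_{\ell_2}w|||_{\Phi_\gamma}^2+C\|W_{\ell_2}w\|_{L^2_{\gamma/2+s}}^2,$$
modulo commutator errors that Cauchy--Schwarz partly absorbs into $\delta\,|||W_{\ell_2}w|||^2_{\Phi_\gamma}$ (using \eqref{IV-2.2}) and partly into $C_\delta\|w\|^2_{L^2_{\ell_2}}$ (using the Gaussian decay of $w$ to upgrade higher-weight $L^2$-norms into $\|w\|^2_{L^2_{\ell_2}}$).

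For the remaining term, Proposition \ref{prop-IV-2.5b} with $m=0$ (after the analogous weight commutation) gives
$$\bigl|\bigl(Q(w,f_2),W_{2\ell_2}w\bigr)_{L^2_v}\bigr|\le C\bigl(\|w\|_{L^1_{(\gamma+2s)^+}}+\|w\|_{L^2}\bigr)\|f_2\|_{H^s_{(\gamma+2s)^+}(\RR^3_v)}\|W_{\ell_2}w\|_{H^s_{\gamma/2}};$$
the middle factor is uniformly bounded by $1+C\|g_2\|_{L^\infty_{t,x}H^s_{\ell_1}(\RR^3_v)}$ since $\ell_1>(\gamma+2s)^+$, while the Gaussian decay of $w$ turns $\|w\|_{L^1_{(\gamma+2s)^+}}+\|w\|_{L^2}$ into $C\|w\|_{L^2_{\ell_2}}$. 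Weighted Cauchy--Schwarz with a small $\delta$, together with \eqref{IV-2.2}, then absorb this into $\delta\,|||W_{\ell_2}w|||^2_{\Phi_\gamma}+C_\delta\|w\|^2_{L^2_{\ell_2}}$. Summing these estimates and choosing $\delta=\eta_0/4$ yields
$$\tfrac{d}{dt}\|w(t)\|^2_{L^2_{\ell_2}(\RR^6)}\le C\|w(t)\|^2_{L^2_{\ell_2}(\RR^6)},$$
and Grönwall with $w(0)=0$ forces $w(t)\equiv 0$ on $[0,T]$.

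The main obstacle lies in the careful bookkeeping of weights in step three. The constraint $\ell_1>3/2+\max\{(\gamma+2s)^+,|\gamma|/2\}$ is finely tuned so that (i) $\|f_2\|_{H^s_{(\gamma+2s)^+}}$ is finite and enters Proposition \ref{prop-IV-2.5b} (the $(\gamma+2s)^+$ summand); (ii) the shift $\gamma/2$ appearing in the coercive equivalence $|||\cdot|||^2_{\Phi_\gamma}\gtrsim\|\cdot\|^2_{H^s_{\gamma/2}}$ is compensated by Gaussian decay when converting the upper bound back to an absorbable form (the $|\gamma|/2$ summand); and (iii) the $3/2$ absorbs Sobolev/Cauchy--Schwarz losses incurred when relating weighted $L^1$- and $L^2$-norms for the Gaussian-decaying $w$. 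That smallness is required only of $f_1$ is intrinsic to this scheme, since Corollary \ref{coro-coercivity-1} supplies coercivity on exactly the side where it can be verified, whereas an $H^s_{\ell_1}$-bound alone on $g_2$ suffices for the upper-bound side.
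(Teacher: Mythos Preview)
Your overall strategy---coercivity of $Q(f_1,\cdot)$ from Corollary \ref{coro-coercivity-1} under the smallness of $g_1$, an upper bound on $Q(\cdot,f_2)$ via Proposition \ref{prop-IV-2.5b}, weight commutation through \eqref{IV-4.3}, and a Gr\"onwall closure---matches the paper's. However, there is a genuine gap in the $x$-integrability.

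The solutions live in $\tilde{\cE}^s([0,T]\times\RR^6)$, which means $e^{\rho\langle v\rangle^2}f_j\in L^\infty([0,T]\times\RR^3_x;H^s(\RR^3_v))$: these are $L^\infty$ in $x$, not $L^2$. Hence $w=f_1-f_2$ is a priori only in $L^\infty_x$, and the quantity $\|w(t)\|^2_{L^2_{\ell_2}(\RR^6)}$ appearing in your Gr\"onwall inequality is not known to be finite. Your parenthetical fix (``cutting off in $x$ if needed and passing to the limit'') does not work: with a cutoff $\chi_R(x)$, the transport term $v\cdot\nabla_x$ no longer integrates to zero but produces $\int v\cdot\nabla_x(\chi_R^2)|W_{\ell_2}w|^2\,dx\,dv$, and since $w\in L^\infty_x$ this term need not be small (or even bounded) as $R\to\infty$. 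The same issue blocks the passage to the limit in the Gr\"onwall step itself.

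The paper resolves this by building $x$-decay into the test function: instead of $W_{\ell}=\langle v\rangle^\ell$, it uses
\[
W_{\varphi,\ell}(x,v)=\frac{\langle v\rangle^\ell}{(1+|v|^2+|x|^2)^{\alpha/2}},\qquad \alpha>3/2,
\]
and works with $g=\mu^{-1/2}(f_1-f_2)$ and the operator $\Gamma$. Then $\|W_{\varphi,\ell}g\|_{L^2(\RR^6)}$ is finite (because $\langle x\rangle^{-\alpha}\in L^2(\RR^3_x)$ and $g\in L^\infty_x L^2_v$), and the extra transport error $(v\cdot\nabla_x\varphi^{-1})W_\ell g$ is controlled by $C\|W_{\varphi,\ell}g\|^2_{L^2(\RR^6)}$ thanks to \eqref{dif-estime}. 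The commutator of $W_{\varphi,\ell}$ with $\Gamma$ is handled by Lemma \ref{commu-Gam} (rather than \eqref{IV-4.3}), the coercive term by \eqref{gamma-hyouka}, and the $\Gamma(g,g_2)$ term by Lemma \ref{upp-l-2}. Once this weight is in place, the remainder of the argument is exactly what you wrote.
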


To study the uniqueness of solutions constructed in Theorem 1.4 of \cite{amuxy4-2}, we define
another function space
with exponential decay in the velocity variable as follows:
For $m\in\RR$
and for $T>0$,  set
\begin{eqnarray*}
\tilde {\mathcal B}^m([0,T]\times{\mathbb R}^6_{x, v})&=&\Big\{f\in
C^0([0,T];{\mathcal D}'({\mathbb R}^6_{x, v}));\, \exists \,\rho>0
\\
&&\hskip 0.5cm s. t. \,\, e^{\rho \langle v \rangle^2} f\in L^\infty([0,
T]\times \RR^3_x ;\,\, L^2({\mathbb R}^3_{v})) \mathop{\cap} L^2([0,T]; L^\infty(\RR_x^3; H^m(\RR^3_v))) \Big\}.
\end{eqnarray*}
We get the following refinement of the last part of Theorem \ref{uniqueness}, in the case $\gamma+2s \le 0$.

\begin{theo}\label{unique-x-global}
Assume that $0<s<1$ and $\max \big \{-3, -3/2-2s \big \} <\gamma \le -2s $.
 Let $ 0<T < +\infty$ and
suppose that $f_1(t) \in
\tilde { \mathcal B}^{s}([0,T]\times{\mathbb R}^6_{x, v})$
is a  solution to the Cauchy problem
\eqref{IV-1.1} satisfying the global coercivity estimate
\eqref{cor-sol} with
the term $\|h\|^2_{L^2(\RR^3_x; H^s_{\gamma/2} (\RR^3_v))}$
 replaced by $\int |||h|||^2_{\Phi_\gamma}dx\, $.
Then $f_1(t)$ coincides with any another solution $f_2(t) \in
\tilde  { \mathcal B}^s([0,T]\times{\mathbb R}^6_{x, v})$.
\end{theo}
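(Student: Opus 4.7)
The plan is to run the same $L^2_{x,v}$ energy argument underlying Theorem~\ref{uniqueness} on $w:=f_1-f_2$ (which has zero initial data and exponential decay in $v$), but to take advantage of the sign condition $\gamma+2s\le 0$ (eliminating every weight loss) and of the global coercivity of $f_1$, so that only the time-integrated bound on $\|f_2(t)\|_{L^\infty_x H^s_v}$ provided by $\tilde{\mathcal{B}}^s$ is needed. Starting from
$$\partial_t w + v\cdot \nabla_x w = Q(f_1,w) + Q(w,f_2),\qquad w|_{t=0}=0,$$
I first regularize $w$ with a mollifier of the type $\psi(x)\,W_\ell\,S_N(D_x)S_N(D_v)$ used in Section~\ref{sect-IV-3}, take it as a test function, and apply the commutator bounds of Corollary~\ref{IV-coro-2.15} and \eqref{IV-4.3}. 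The exponential decay of $f_1,f_2,w$ in $v$ trivializes every polynomial weight, so passing to the limit $N\to\infty$ yields the energy identity
$$\tfrac12\tfrac{d}{dt}\|w(t)\|^2_{L^2_{x,v}} = \iint Q(f_1,w)\,w\,dx\,dv + \iint Q(w,f_2)\,w\,dx\,dv.$$

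The first integral is handled directly by the strengthened global coercivity assumption on $f_1$. Since $\gamma+2s\le 0$, we have $(\gamma/2+s)^+=0$, so
$$\iint Q(f_1,w)\,w\,dx\,dv \le -c_0\int |||w|||^2_{\Phi_\gamma}\,dx + C\|w\|^2_{L^2_{x,v}}.$$
For the second integral I invoke Proposition~\ref{prop-IV-2.5b} with $m=\ell=0$: under $\gamma+2s\le 0$ every weight exponent $(\gamma+2s)^+$, $(\ell+\gamma+2s)^+$, $\ell^+$ vanishes, so
$$|(Q(w,f_2),w)_{L^2_v}(x)| \lesssim \bigl(\|w(x)\|_{L^1_v}+\|w(x)\|_{L^2_v}\bigr)\|f_2(x)\|_{H^s_v}\|w(x)\|_{H^s_v}.$$
The exponential decay of $w$ absorbs $L^1_v$ into $L^2_v$ and lets me convert unweighted $H^s_v$ into $H^s_{\gamma/2,v}$; together with the pointwise-in-$x$ $L^\infty_x$ control on $f_2$, Cauchy--Schwarz in $x$, and \eqref{IV-2.2}, I arrive at
$$\Big|\iint Q(w,f_2)\,w\,dx\,dv\Big| \lesssim \|f_2(t)\|_{L^\infty_x H^s_v}\,\|w(t)\|_{L^2_{x,v}}\,\Big(\int |||w|||^2_{\Phi_\gamma}\,dx\Big)^{1/2}.$$
A Young's inequality then absorbs the coercivity quantity into the dissipation produced by $f_1$, yielding
$$\frac{d}{dt}\|w(t)\|^2_{L^2_{x,v}} \le C\bigl(1+\|f_2(t)\|^2_{L^\infty_x H^s_v}\bigr)\|w(t)\|^2_{L^2_{x,v}}.$$

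The decisive observation, and the only real novelty over Theorem~\ref{uniqueness}, is that $f_2\in\tilde{\mathcal{B}}^s$ merely asserts that $t\mapsto \|f_2(t)\|^2_{L^\infty_x H^s_v}$ lies in $L^1(0,T)$ (rather than in $L^\infty$, as is the case for $\tilde{\mathcal{E}}^s$), which is nonetheless exactly what Gronwall's lemma requires. Combined with $w(0)=0$, this forces $w\equiv 0$ on $[0,T]$, proving the theorem. The main obstacle will be the rigorous justification of the energy identity for solutions with only the low regularity supplied by $\tilde{\mathcal{B}}^s$: unlike in Section~\ref{sect-IV-3}, no $H^5$ bound is available, so the mollification argument must rely solely on the commutator estimates of Section~\ref{sect-IV-2} and on the exponential decay to trivialize weights, and a careful bookkeeping is needed to check that every estimate uses at most $H^s_v$ on each factor — exactly the regularity that the $L^2_t L^\infty_x H^s_v$ control of $\tilde{\mathcal{B}}^s$ provides after integration in $t$.
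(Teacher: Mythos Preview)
Your argument has a structural gap that precedes any of the commutator estimates: the quantity $\|w(t)\|_{L^2(\RR^6_{x,v})}$ that you differentiate is not known to be finite. Membership in $\tilde{\mathcal B}^s([0,T]\times\RR^6)$ gives only $L^\infty_x$ control on $e^{\rho\langle v\rangle^2}f_j$, never $L^2_x$, so $w=f_1-f_2$ is merely in $L^\infty_{t,x}L^2_v$. Your mollifier carries a cutoff $\psi(x)$, but you drop it when passing to the limit; if you keep it, the transport term $v\cdot\nabla_x$ brings in mass from outside $\mathrm{supp}\,\psi$ and the estimate does not close. The paper resolves exactly this point by first writing $g_j=\mu_\kappa(t)^{-1}f_j$ (producing the modified operator $\Gamma^t$ and the helpful absorption term $\kappa\langle v\rangle^2 g$) and then working with the weight
\[
W_{\varphi,\ell}(x,v)=\frac{\langle v\rangle^\ell}{(1+|v|^2+|x|^2)^{\alpha/2}},\qquad \alpha>3/2,
\]
so that $\|W_{\varphi,\ell}g(t)\|_{L^2(\RR^6)}$ is finite and $v\cdot\nabla_x(\varphi^{-2})$ contributes only a harmless lower-order term. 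The price is the commutator $[W_{\varphi,\ell},\Gamma^t]$, handled by Lemma~\ref{commu-Gam}, and the estimate of $\Gamma^t(g,g_2)$ via Lemma~\ref{upp-l-2}; both are tailored to this weight and replace your direct use of Proposition~\ref{prop-IV-2.5b}.

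A second, smaller issue: the step ``exponential decay lets me convert unweighted $H^s_v$ into $H^s_{\gamma/2,v}$'' is in the wrong direction (for $\gamma<0$ the weighted norm is weaker), and it is not clear how exponential decay of $w$ alone repairs this without either the weight machinery above or an explicit interpolation. In the paper the analogous loss is handled by choosing the parameter $\ell$ in Lemma~\ref{upp-l-2} so that the $H^s_{\gamma/2}$ norm lands on $W_{\varphi,\ell}g$, which is then absorbed by the coercivity \eqref{cor-sol-strong}. Your final Gronwall step (using that $\|f_2(t)\|^2_{L^\infty_xH^s_v}\in L^1(0,T)$) is morally right and parallels the paper's short-time continuation argument, but it only becomes meaningful once the $x$-integrability issue is fixed.
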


If the Cauchy problem
\eqref{IV-1.1} admits two solutions $f_1(t), f_2(t)  \in \tilde{\cE}^{s}([0,T] \times \RR^6_{x, v})$, then
 there exist
$\rho_0, \rho_1,  \rho_2 >0$  such that
$$
e^{\rho_0\la v\ra ^2} f_0\in L^\infty(\RR_x^3;  L^2(\RR^3_{v}) ),\,\,
e^{\rho_1\la v\ra ^2} f_1,\,\,e^{\rho_2\la v\ra ^2} f_2\in L^\infty([0,
T]\times \RR^3_x ;\,\,  H^s({\mathbb R}^3_{v}))
\,.
$$
Take  $0<\rho<\min\{\rho_0,\, \rho_1,\, \rho_2\}$ and $\kappa>0$
sufficiently small such that $\frac{\rho}{2\kappa}>T$.
Then we have
$$
g_0=e^{\rho\la v\ra ^2} f_0\in L^\infty(\RR_x^3; L^2_l(\RR^3_{v})),
\,\,
g_1=e^{(\rho-\kappa t)\la v\ra ^2} f_1,\,\,\, g_2= e^{(\rho-\kappa t)\la v\ra ^2}
f_2\in L^\infty([0,
T]\times \RR^3_x ;\,\, H^s_l({\mathbb R}^3_{v}))
$$
for any $l\in\NN$, and  $g_1, g_2$ are two solutions of the following Cauchy problem
\begin{equation}\label{E-Cauchy-B}
\left\{\begin{array}{l}
\tilde{g}_t+v\cdot\nabla_x \tilde{g}\  +\kappa (1+ |v|^2) \tilde{g}=\Gamma^t(\tilde{g}, \tilde{g}),\\
\tilde{g}|_{t=0}=g_0\,,
\end{array}\right.
\end{equation}
where
$$
\Gamma^t(g, h)=\mu_\kappa(t)^{-1}Q(\mu_\kappa(t) g,\,
\mu_\kappa(t) h)\,\,, \,\,
\mu_\kappa(t)=\mu(t,v)=e^{-(\rho-\kappa t)(1+ |v|^2)}.
$$
Set $g= g_1-g_2$. Then we have
\begin{equation}\label{E-Cauchy-B^diff}
\left\{\begin{array}{l}
g_t+v\cdot\nabla_x g\  +\kappa (1+ |v|^2) g=\Gamma^t(g_1, g) +\Gamma^t(g, g_2)   ,\\
g|_{t=0}= 0\,.
\end{array}\right.
\end{equation}

\subsection{Estimates for modified collisional operator}\label{subsct4.2}

We now prepare several  lemmas concerning
the estimates for $\Big(\Gamma^t(f,g),h\Big)_{L^2}$, where $L^2 = L^2(\RR^3_v)$. In this subsection, variables $t$ and $x$ are regarded as parameters.
For the brevity we often write $\Gamma$ and $\mu$ instead of $\Gamma^t$ and $\mu_\kappa(t,v)$,
respectively. All constants in estimates are uniform with respect to $t \in [0,T]$ and moreover
they hold with $\mu_k(t)$ replaced by $\mu^{1/2}$.

\begin{lemm}\label{differ-Gam-Q}
Let $0<s<1$ and $\gamma > \max \{-3, -2s -3/2\}$. Then for any $\beta \in \RR$ we have
\begin{align}\label{diff-G-Q}
&\Big|\Big(\Gamma^t( f, g)\,  , \,h \Big)_{L^2} -\Big( Q(\mu_\kappa f, g), h\Big)_{L^2}\Big|\\
&\lesssim \Big(\cD(\mu_\kappa { \,|f|} ,  \la v \ra^\beta g) \Big)^{1/2}
\|f\|_{L^2} ^{1/2}
\|h\|_{L^2_{s+\gamma/2-\beta}}
+ \|f\|_{L^2}
\|g\|_{L^2_{s +\gamma/2+ \beta}}
\|h \|_{L^2_{s+\gamma/2-\beta }} \,.\notag
\end{align}
\end{lemm}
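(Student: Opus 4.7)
The plan is to first derive an explicit integral expression for the commutator. Writing $\mu = \mu_\kappa$ as allowed in this subsection, the collisional invariance $\mu\mu_* = \mu'\mu'_*$ yields $\mu^{-1}(\mu f)'_*(\mu g)' = \mu^{-1}\mu'_*\mu' f'_*g' = \mu_* f'_*g'$, while $\mu^{-1}(\mu f)_*(\mu g) = \mu_* f_* g$. Subtracting the corresponding expressions for $\Gamma^t(f,g) = \mu^{-1}Q(\mu f,\mu g)$ and $Q(\mu f,g)$ gives
\[
\Gamma^t(f,g) - Q(\mu f, g) = \int B(\mu_* - \mu'_*) f'_* g'\, d\sigma dv_*,
\]
so that the quantity to bound is $E := \iiint B(\mu_* - \mu'_*) f'_* g' h\, d\sigma dv_* dv$.

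To bring in the weighted difference $G' - G$ with $G := \la v\ra^\beta g$, which is exactly what appears inside $\cD(\mu|f|,\la v\ra^\beta g)$, I would write $g' = (\la v\ra^\beta/\la v'\ra^\beta)g + \la v'\ra^{-\beta}(G' - G)$ and split $E = E_1 + E_2$, where
\[
E_1 = \iiint B(\mu_* - \mu'_*) f'_* \la v'\ra^{-\beta}(G' - G) h\, d\sigma dv_* dv,
\]
\[
E_2 = \iiint B(\mu_* - \mu'_*) f'_* \frac{\la v\ra^\beta}{\la v'\ra^\beta}\, g\, h\, d\sigma dv_* dv.
\]
The $E_1$-piece will generate the term containing $\cD^{1/2}$, while $E_2$ will produce the purely trilinear $L^2$ term.

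For $E_1$, I would apply the pre--post collisional change of variables $(v,v_*,\sigma)\to(v',v'_*,\sigma')$ (which preserves $B\, d\sigma dv_* dv$ and swaps primed with unprimed variables), giving
\[
E_1 = \iiint B(\mu_* - \mu'_*) f_* \la v\ra^{-\beta}(G' - G) h'\, d\sigma dv_* dv,
\]
followed by a Cauchy-Schwarz pairing $B^{1/2}\sqrt{\mu_*}|f_*|^{1/2}(G'-G)$ against $B^{1/2}\mu_*^{-1/2}|f_*|^{1/2}(\mu_*-\mu'_*)\la v\ra^{-\beta} h'$:
\[
|E_1|^2 \le \cD(\mu|f|,\la v\ra^\beta g)\cdot \iiint B|f_*|\,\mu_*^{-1}(\mu_* - \mu'_*)^2 \la v\ra^{-2\beta}(h')^2\, d\sigma dv_* dv.
\]
The first factor is exactly $\cD(\mu|f|, \la v\ra^\beta g)$; the second factor should be bounded by $\|f\|_{L^2}\|h\|^2_{L^2_{s+\gamma/2-\beta}}$, using the mean-value bound $|\sqrt{\mu_*}-\sqrt{\mu'_*}|\lesssim \sin(\theta/2)|v-v_*|$ coupled with the convexity estimate $\mu(v_*^\tau)\ge \mu_*^{1-\tau}(\mu'_*)^\tau$ (valid at $v_*^\tau = (1-\tau)v_* + \tau v'_*$) to control the ratio $\mu_*^{-1}(\sqrt{\mu_*}+\sqrt{\mu'_*})^2$, the non-cutoff integrability $\int b(\cos\theta)\sin^2(\theta/2)\, d\sigma < \infty$ (valid since $s<1$) to absorb the $\sigma$-integration, and finally Cauchy-Schwarz in $v_*$ against $|f_*|$.

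For $E_2$, there is no difference on $g$, so I would integrate in $\sigma$ first using the second-order Taylor expansion of $\mu$ about $v_*$: the first-order term reduces, by axial symmetry of $(\sigma-e)$-type integrals against $b(\cos\theta)$, to a factor proportional to $\Phi_\gamma(|v-v_*|)\nabla\mu(v_*)\cdot(v-v_*)$, while the second-order remainder contributes $|v-v_*|^2\sin^2(\theta/2)\|\nabla^2\mu\|_\infty$; both terms decay exponentially in $v_*$ and absorb the kinetic factor $\Phi_\gamma$ using $\gamma > -3/2-2s$ for integrability near $v_*=v$, so a direct trilinear Cauchy-Schwarz in $(v,v_*)$ produces the desired bound $\|f\|_{L^2}\|g\|_{L^2_{s+\gamma/2+\beta}}\|h\|_{L^2_{s+\gamma/2-\beta}}$. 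The main obstacle will be the second-factor bound in $|E_1|^2$: the ratio $\mu_*^{-1}(\mu_*-\mu'_*)^2$ can a priori blow up on the set $\{|v'_*| \ll |v_*|\}$ where $\mu'_*/\mu_*$ is large, and taming it relies crucially on the convexity bound on $\mu(v_*^\tau)$ together with the constraint $\gamma > -3/2 - 2s$ ensuring the resulting $v_*$-integral converges.
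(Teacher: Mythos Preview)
Your integral representation of the difference and your splitting $E=E_1+E_2$ are fine, and the pre--post change on $E_1$ is correct. The genuine gap is in the Cauchy--Schwarz bound for $E_1$. Your pairing places $\sqrt{\mu_*}$ on the $(G'-G)$ side, which forces the factor
\[
J=\iiint B\,|f_*|\,\mu_*^{-1}(\mu_*-\mu'_*)^2\,\la v\ra^{-2\beta}(h')^2\,d\sigma\,dv_*\,dv
\]
onto the other side. On the region $\mu'_*\gg\mu_*$ (i.e.\ $|v'_*|\ll|v_*|$) one has $\mu_*^{-1}(\mu_*-\mu'_*)^2\ge \mu_*^{-1}(\mu'_*)^2=\exp\{c(|v_*|^2-2|v'_*|^2)\}$, and by energy conservation $|v_*|^2-2|v'_*|^2=|v'|^2-|v|^2-|v'_*|^2$ can be arbitrarily large while $h'=h(v')$ only carries a polynomial weight. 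Your convexity estimate $\mu(v_*^\tau)\ge\mu_*^{1-\tau}(\mu'_*)^\tau$ goes in the wrong direction: it is a \emph{lower} bound on $\mu(v_*^\tau)$, whereas you need an \emph{upper} bound on the ratio $(\sqrt{\mu_*}+\sqrt{\mu'_*})^2/\mu_*=(1+\sqrt{\mu'_*/\mu_*})^2$. No polynomial control on $h$ can absorb this exponential blow-up, so $J$ is not bounded by $\|f\|_{L^2}\|h\|^2_{L^2_{s+\gamma/2-\beta}}$.

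The paper's proof avoids this by decomposing $\mu'_*-\mu_*$ via its square root \emph{before} Cauchy--Schwarz: writing $\mu'_*-\mu_*=(\sqrt{\mu'_*}-\sqrt{\mu_*})^2+2\sqrt{\mu_*}(\sqrt{\mu'_*}-\sqrt{\mu_*})$ and then introducing the difference $g-g'$ yields three pieces $D_1,D_2,D_3$. In the crucial term $D_3$ (the one producing $\cD^{1/2}$), the factor $\sqrt{\mu_*}$ is already sitting next to $(g-g')$, so Cauchy--Schwarz delivers $\cD(\mu|f|,\cdot)$ directly without any $\mu_*^{-1}$; the companion factor carries only $(\sqrt{\mu'_*}-\sqrt{\mu_*})^2$, which is bounded via a further quarter-power splitting. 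A secondary issue: in your $E_2$ the integrand contains $f'_*$ and $\la v'\ra^{-\beta}$, both of which depend on $\sigma$ non-radially, so the axial symmetry argument killing the first-order Taylor term does not apply as stated; the paper handles the analogous $D_1$ piece only after a double change of variables $(v,v_*)\leftrightarrow(v',v'_*)$ followed by $(v,v_*)\to(v_*,v)$ and a region splitting.
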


\begin{proof}
We write
\begin{align*}
\Big(\Gamma( f, g)\,  , \,h \Big)_{L^2} -\Big( Q(\mu f, g), h\Big)_{L^2}&= \iiint B \Big( \mu'_{*} -\mu_{*} \Big) \big( f_{*} \big)
g h ' d\sigma dv_*dv \\
& = 2
\iiint B \Big( (\mu'_{*})^{1/2} -\mu_{*}^{1/2} \Big) \big( \mu_{*}^{1/2}
f_{*} \big)
g'h' d\sigma dv_*dv \\
&+
\iiint B \Big( (\mu'_{*})^{1/2} -\mu_{*}^{1/2} \Big) ^2 f_{*}
g  h'
d\sigma dv_*dv \\
&+ 2
\iiint B \Big( (\mu'_{*})^{1/2} -\mu_{*}^{1/2} \Big)
h' f_*   \mu_{*} ^{1/2} \Big (
g       - g '\Big) d\sigma dv_*dv \\
&= D_1 + D_2 + D_3\,.
\end{align*}
By the Cauchy-Schwarz inequality we have for any $\beta \in \RR$
\begin{align*}
|D_3| &\lesssim  \Big(
\iiint B \Big( (\mu'_{*})^{1/2} -\mu_{*}^{1/2} \Big)^2 { | f_{*}|}
{\big(\la v \ra^{-\beta} h\big)'}^2 d\sigma dv_*dv \Big)^{1/2}\\
&\quad \times
 \Big( \iiint B   \mu_{*} \, { |f_*| } \la v' \ra^{2\beta}
\Big (  g- g '
\Big)^2 d\sigma dv_*dv \Big)^{1/2}\\
& = \Big( \widetilde D_{3}(f,\la v \ra^{-\beta}h) \Big)^{1/2}\Big(\cD_\beta(\mu f, g)\Big)^{1/2}\,.
\end{align*}
We have
\begin{align*}
\cD_\beta(\mu f, g)
&\le 2 \Big( \cD(\mu \, { | f|},  \la v \ra^\beta g) + \iiint B \mu_* \, { |f_*| } \Big(\la v \ra^\beta -
\la v' \ra^\beta\Big)^2g^2 dv dv_*d\sigma \Big)\\
&\lesssim \cD(\mu\,{ | f|},  \la v \ra^\beta g) + \|f \|_{L^2} \|g\|^2_{L^2_{\beta+\gamma/2}}\,,
\end{align*}
because it follows from the same arguments in the proof of Lemma \ref{upper-triple-Dirichlet} that
\begin{equation}\label{wet}
\Big|\la v \ra^\beta -
\la v' \ra^\beta\Big| \lesssim \sin \frac{\theta}{2} \Big(\la v \ra^\beta \la v_*\ra^{ 2|\beta|+1}{\bf 1}_{|v-v_*| > 1}+
\la v \ra^{\beta-1} |v-v_*| {\bf 1}_{|v-v_*| \leq 1}\Big)\,.
\end{equation}
Note that
\begin{align*}
\Big((\mu'_{*})^{1/2} -\mu_{\kappa,*}^{1/2} \Big)^2&\lesssim
 (\mu'_{*})^{1/4} -\mu_{*}^{1/4} )^2\Big(  (\mu'_{*})^{1/2} +\mu_{*}^{1/2} \Big)\\
&\lesssim        \min(|v'-v_*|\theta,1)  \min(|v'-v'_*|\theta,1)  (\mu'_{*})^{1/2}
+  \Big(\min(|v'-v_*|\theta,1) \Big)^2\mu_{*}^{1/2} \,.
\end{align*}
By this decomposition we estimate
\[ \widetilde D_3(f,\la v \ra^{-\beta}h) \lesssim \widetilde D_3^{(1)}+ \widetilde D_3^{(2)}.
\]
It follows from the Cauchy-Schwarz inequality that
\begin{align*}
\Big|\widetilde D_3^{(1)}\Big|^2 &\lesssim
\Big(\iiint B |v'-v'_*|^{\gamma+2s}  \Big( \min(|v'-v'_*|\theta,1)\Big)^2  \mu'_{*}
{\Big(W_{-\beta}h \Big)'}^2 \la v'\ra^{-(\gamma+2s)} d\sigma dv_*dv dx \Big)\\
& \times
\Big(\iiint B |v'-v_*|^{-(\gamma+2s)}  \Big(\min(|v'-v_*|\theta,1) \Big)^2 f_{*}^2
{\Big(W_{-\beta} h\Big)'}^2 \la v'\ra^{\gamma+2s} d\sigma dv_*dv  \Big)\\
&\lesssim \|h\|^2_{L^2_{s+\gamma/2-\beta }} \|f\|^2_{L^2}\,
\|h\|^2_{L^2_{s+\gamma/2-\beta}}\,.
\end{align*}
Here, we have taken the change of variables $(v,v_*) \rightarrow (v',v'_*)$ and $v \rightarrow v'$ in the first and second
factors, respectively, and moreover,
  in view of $2(\gamma + 2s) >-3$, we have used the fact that
\[
\int\Big( \int B |v-v_*|^{\gamma+2s}  \min(|v-v_*|^2\theta^2 ,1) d\sigma\Big)  \mu_{*}
dv_*\lesssim   \int |v-v_*|^{2(\gamma+2s)}  \mu_{*}dv_* \lesssim \la v\ra^{2(\gamma+2s)} \,.
\]
Since the estimation of $\widetilde D_3^{(2)}$ is quite similar as $\widetilde D_3^{(1)}$ we obtain
\begin{equation}\label{wide-D-3}
\widetilde D_3(f,\la v \ra^{-\beta}h) \lesssim
\|f\|_{L^2}
\|h\|^2_{L^2_{s+\gamma/2-\beta}}
\end{equation}
and hence
\[
|D_3| \lesssim  \Big(\cD(\mu { | f|},  \la v \ra^\beta g) \Big)^{1/2}
\|f\|_{L^2} ^{1/2}
\|h\|^2_{L^2_{s+\gamma/2-\beta}}
+ \|f\|_{L^2}
\|g\|_{L^2_{\beta +\gamma/2}}
\|h \|_{L^2_{s+\gamma/2-\beta } }
\]
The Cauchy-Schwarz inequality shows
\[
|D_2|^2 \le  \widetilde D_3 (f, \la v\ra^{-\beta}h)
\left(\iiint B \Big( (\mu'_{*})^{1/2} -\mu_{*}^{1/2} \Big) ^2 { |f_* |}\big(  \la v' \ra^\beta g\big)^2
d\sigma dv_*dv \right),
\]
so that it is easy to see, in view of  \eqref{wet},
\[
|D_2| \lesssim
\|f\|_{L^2}
\|g\|_{L^2_{s+\gamma/2+ \beta}}
\|h \|_{L^2_{s+\gamma/2-\beta } }\,.
\]
Take the change of variables $(v',v'_*) \rightarrow (v,v_*)$ and $(v,v_*) \rightarrow (v_*,v)$ for $D_1$.
Then we  consider
\begin{align*}
D_1 &=2
\iiint B \Big( \mu^{1/2} - \big(\mu')^{1/2} \Big)  \big(\mu^{1/2}
f )'
{(gh)}_*  d\sigma dv_*dv \\
&= 2 \iiint B \big(\nabla \mu^{1/2}\big) (v') \cdot (v-v')  \big(\mu^{1/2}
f \big)'
{(gh)}_*  d\sigma dv_*dv \\
&+\int_0^1\iiint B  \big( \nabla^2 \mu^{1/2}\big)(v'+ \tau(v-v') ) (v-v')^2
\big(\mu^{1/2} f \big)'
{(gh)}_* d\sigma dv_*dv d\tau\\
&= D_{1,1}+ D_{1,2}\,,
\end{align*}
by using the Taylor formula
\begin{align*}
{\mu}^{1/2} -{\mu'}^{1/2}
&= \big(\nabla \mu^{1/2}\big) (v')\cdot (v-v') + \frac{1}{2}
\int_0^1 \big( \nabla^2 \mu^{1/2}\big)(v'+ \tau(v-v') ) (v-v')^2  d\tau\,.
\end{align*}
Note
\[
|\big( \nabla^2 \mu_{\kappa}^{1/2}\big)(v'+ \tau(v-v') ) (v-v')^2| \lesssim  |v' - v_*|^2
(1-\cos \theta).
\]
and devide
\[
D_{1,1} = 2\Big( \iiint_{\{v'-v_*| ^2 (1-\cos \theta)  \le 1\}} +  \iiint_{\{v'-v_*| ^2 (1-\cos \theta)  > 1\} }\Big)\,.
\]
Then
it follows from the spherical symmetry that the first term of the decomposition $D_{1,1}$ vanishes, so that
we can estimate by the change of variables $v \rightarrow v'$
\begin{align*}
|D_1| & \lesssim
\iint |v'-v_*|^\gamma
\big(\mu^{1/2} \,{ |f|}\big)' \Big(\int_{\SS^2}  b(\cos \theta) \min\Big ( \theta^2 | v'-v_*|^2, 1\Big) d\sigma\Big)
(gh )_*
dv'dv_*\\
& \lesssim
\left(\iint|v'-v_*|^{2(\gamma+2s)} {\mu'} g^2_*\la v_*\ra^{-(\gamma+2s)+ \beta} dv' dv_* \right)^{1/2}\\
&\qquad \times
\left(\iint {f'}^2  h^2_*\la v_*\ra^{\gamma+2s-\beta} dv' dv_* \right)^{1/2}\\
&\lesssim
\|f\|_{L^2}
\|g\|_{L^2_{s+\gamma/2+ \beta}}
\|h \|_{L^2_{s+\gamma/2-\beta } }
\end{align*}
Summing up above estimates we obtain
the desired estimate.
\end{proof}

Since Lemma \ref{differ-Q-cD} holds with $\sqrt \mu$ replaced by $\mu_\kappa(t,v)$, the combination of Lemma \ref{differ-Q-cD} and Lemma \ref{differ-Gam-Q} with $\beta =0$ implies

\begin{lemm}\label{coer-gamma}
 Let $0<s<1$, $\gamma > \max\{-3, -2s-3/2\}$. If $f \ge 0$ then we have
\begin{align}\label{coe-Ga}
&\Big(\Gamma^t( f, \, g),\, g \Big ) _{L^2}
\le -\frac 1 4 \cD(\mu_\kappa f, g) \\
&\qquad + C \min\Big\{
\|f\|_{L^2}\big( \|g\|^2_{H^{s'}_{\gamma/2}} + \|g\|^2_{L^2_{s+\gamma/2}}\big)\,\,,\,\,
\|f\|_{H^{2s'}} \|g\|^2_{L^2_{s+\gamma/2}} \Big\} \notag
\end{align}
for any $s' \in ]0,s[ $ satisfying $\gamma +2s' >-3/2$ and
$s' <3/4$.

Furthermore, if $\gamma >-3/2$ then the second term on the right hand
side can be replaced by $C
\|f\|_{L^2}\|g\|^2_{L^2_{s+\gamma/2}}$.
\end{lemm}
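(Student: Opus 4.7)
The strategy is to reduce $(\Gamma^t(f,g),g)_{L^2}$ to $(Q(\mu_\kappa f,g),g)_{L^2}$ via Lemma \ref{differ-Gam-Q}, then replace the latter by $-\frac{1}{2}\cD(\mu_\kappa f,g)$ plus controlled error terms via Lemma \ref{differ-Q-cD}, and finally absorb a mixed term into $\cD$ by Young's inequality to conclude with the coefficient $-\frac{1}{4}$.

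First, I apply Lemma \ref{differ-Gam-Q} with $\beta=0$ and $h=g$. Since $f\ge 0$ we may drop absolute values and obtain
\begin{equation*}
\bigl|(\Gamma^t(f,g),g)_{L^2}-(Q(\mu_\kappa f,g),g)_{L^2}\bigr|
\lesssim \bigl(\cD(\mu_\kappa f,g)\bigr)^{1/2}\|f\|_{L^2}^{1/2}\|g\|_{L^2_{s+\gamma/2}}
+\|f\|_{L^2}\|g\|^2_{L^2_{s+\gamma/2}}.
\end{equation*}
Then by Young's inequality, the cross term satisfies
\begin{equation*}
\bigl(\cD(\mu_\kappa f,g)\bigr)^{1/2}\|f\|_{L^2}^{1/2}\|g\|_{L^2_{s+\gamma/2}}
\le \tfrac{1}{4}\cD(\mu_\kappa f,g)+C\,\|f\|_{L^2}\|g\|^2_{L^2_{s+\gamma/2}},
\end{equation*}
so this step costs only a loss of $\tfrac{1}{4}\cD$ plus an $L^2_{s+\gamma/2}$ weighted term in $g$.

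Next I apply Lemma \ref{differ-Q-cD} (with $\sqrt{\mu}$ replaced by $\mu_\kappa(t,v)$, as noted at the start of Subsection \ref{subsct4.2}) to write
\begin{equation*}
(Q(\mu_\kappa f,g),g)_{L^2}\le -\tfrac{1}{2}\cD(\mu_\kappa f,g)+R(f,g),
\end{equation*}
where $R(f,g)$ is bounded, for any admissible $s'$, either by $\|f\|_{L^2}\|g\|^2_{L^2_{\gamma/2}}$ (if $\gamma>-3/2$), or by $\|f\|_{L^2}\|g\|^2_{H^{s'}_{\gamma/2}}$, or by $\|f\|_{H^{2s'}}\|g\|^2_{L^2_{\gamma/2+s'}}$. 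Since $s'<s$, the last one is controlled by $\|f\|_{H^{2s'}}\|g\|^2_{L^2_{s+\gamma/2}}$, and summing the first two contributions gives the first option of the minimum in \eqref{coe-Ga}.

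Combining the three estimates yields
\begin{equation*}
(\Gamma^t(f,g),g)_{L^2}\le \bigl(-\tfrac{1}{2}+\tfrac{1}{4}\bigr)\cD(\mu_\kappa f,g)
+C\|f\|_{L^2}\|g\|^2_{L^2_{s+\gamma/2}}+R(f,g),
\end{equation*}
which is exactly \eqref{coe-Ga} after taking the minimum over the two admissible choices for $R(f,g)$ described above; the ``furthermore'' assertion follows by using the first case of Lemma \ref{differ-Q-cD} when $\gamma>-3/2$, which absorbs $\|g\|^2_{L^2_{\gamma/2}}$ into $\|g\|^2_{L^2_{s+\gamma/2}}$. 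The only delicate point will be verifying that Lemma \ref{differ-Q-cD} does indeed extend to the $t$-dependent weight $\mu_\kappa(t,v)=e^{-(\rho-\kappa t)(1+|v|^2)}$, but since $\mu_\kappa(t,v)$ is uniformly equivalent to a fixed Gaussian on $[0,T]$ and the cancellation lemma only uses the Gaussian decay, this extension is routine and the resulting constants remain uniform in $t\in[0,T]$.
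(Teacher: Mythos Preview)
Your proposal is correct and follows exactly the approach the paper intends: the authors simply state that the combination of Lemma~\ref{differ-Q-cD} (extended to $\mu_\kappa$) and Lemma~\ref{differ-Gam-Q} with $\beta=0$ yields the result, and you have supplied the missing details, in particular the Young inequality step that converts the cross term $\cD^{1/2}\|f\|_{L^2}^{1/2}\|g\|_{L^2_{s+\gamma/2}}$ into $\tfrac14\cD$ plus a lower-order remainder, producing the coefficient $-\tfrac14$. Your handling of the three cases of Lemma~\ref{differ-Q-cD} to recover both branches of the minimum, and the ``furthermore'' clause via the case $\gamma>-3/2$, is also in line with the paper.
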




\begin{lemm}\label{upp-l-2} Let $0<s<1$, $\gamma > \max\{-3, -2s-3/2\}$.  For any $\ell \in \RR$ and $m \in
[0,s]$ we have
\begin{align}\label{different-g-h}
\Big | \Big(\Gamma^t( f, \, g),\, h \Big ) _{L^2} \Big|
\lesssim
\|f\|_{L^2}
\|g\|_{H^{s+m}_{\ell + (\gamma+2s)^+}}\|h\|_{H^{s-m}_{-\ell}}\,.
\end{align}
Furthermore
\begin{align}\label{same-g}
\Big | \Big(\Gamma^t( f, \, g),\, g \Big ) _{L^2} \Big|
\lesssim \|f\|_{L^2} |||g|||^2_{\Phi_{\gamma}}\,.
\end{align}
\end{lemm}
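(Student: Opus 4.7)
\textbf{Proof plan for Lemma \ref{upp-l-2}.} The overall strategy is to use Lemma \ref{differ-Gam-Q} to reduce both estimates to the corresponding ones for the standard Boltzmann operator $Q(\mu_\kappa f, g)$, invoking the upper bound Proposition \ref{prop-IV-2.5b} for the principal term and the Dirichlet-form estimate Lemma \ref{upper-triple-Dirichlet} for the error. A basic preliminary observation is that the Gaussian decay of $\mu_\kappa(t,v)$ yields $\|\mu_\kappa f\|_{L^1_N} + \|\mu_\kappa f\|_{L^2} \lesssim \|f\|_{L^2}$ for every $N \in \RR$, uniformly in $t \in [0,T]$, by the Cauchy--Schwarz inequality.

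For \eqref{different-g-h}, I would apply Lemma \ref{differ-Gam-Q} with the choice $\beta = \ell + s + \gamma/2$, which makes $\|h\|_{L^2_{s+\gamma/2-\beta}} = \|h\|_{L^2_{-\ell}}$. The main term $(Q(\mu_\kappa f, g), h)_{L^2}$ is estimated by Proposition \ref{prop-IV-2.5b} (the assumption $m \in [0,s]$ is contained in the range $m \in [s-1,s]$ allowed by the Proposition, and satisfies $s+m \ge (2s-1+\varepsilon)^+$), yielding $\|f\|_{L^2}\|g\|_{H^{s+m}_{\ell + (\gamma+2s)^+}}\|h\|_{H^{s-m}_{-\ell}}$ after comparing $(\ell+\gamma+2s)^+ \le \ell + (\gamma+2s)^+$. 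For the Dirichlet-type error term, Lemma \ref{upper-triple-Dirichlet} (valid with $\sqrt{\mu}$ replaced by $\mu_\kappa$) gives $\cD(\mu_\kappa|f|, \la v\ra^\beta g) \lesssim \|f\|_{L^2}\,|||\la v\ra^\beta g|||^2_{\Phi_\gamma}$, which by \eqref{IV-2.2} is bounded by $\|f\|_{L^2}\|g\|^2_{H^s_{s+\gamma/2+\beta}} = \|f\|_{L^2}\|g\|^2_{H^s_{2s+\gamma+\ell}}$, a quantity dominated by the target norm. The pure weighted-$L^2$ remainder $\|f\|_{L^2}\|g\|_{L^2_{2s+\gamma+\ell}}\|h\|_{L^2_{-\ell}}$ is absorbed similarly via the Sobolev inclusion $H^{s+m}_\sigma \hookrightarrow L^2_\sigma$.

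For \eqref{same-g}, I would set $h=g$ and $\beta = 0$ in Lemma \ref{differ-Gam-Q}, and combine with Lemma \ref{differ-Q-cD} (also with $\sqrt \mu$ replaced by $\mu_\kappa$) to write
\[
\bigl(\Gamma^t(f,g),g\bigr)_{L^2} = -\tfrac{1}{2}\cD(\mu_\kappa f, g) + R,
\]
where $|R| \lesssim \|f\|_{L^2}\bigl(\|g\|^2_{H^{s'}_{\gamma/2}} + \|g\|^2_{L^2_{s+\gamma/2}}\bigr) + \bigl(\cD(\mu_\kappa|f|,g)\bigr)^{1/2}\|f\|^{1/2}_{L^2}\|g\|_{L^2_{s+\gamma/2}}$. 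Since $\cD$ is linear in its first argument, the pointwise bound $|\cD(\mu_\kappa f, g)| \le \cD(\mu_\kappa|f|, g)$ holds; Lemma \ref{upper-triple-Dirichlet} then gives $\cD(\mu_\kappa|f|, g) \lesssim \|f\|_{L^2}\,|||g|||^2_{\Phi_\gamma}$, while \eqref{IV-2.2} controls the lower-order norms inside $R$ by $|||g|||^2_{\Phi_\gamma}$. A final Cauchy--Schwarz step on the cross term $(\cD)^{1/2}\|g\|_{L^2_{s+\gamma/2}}$ delivers the desired bound. The only real technical point is the weight bookkeeping in the first estimate: one must carefully compare the exponents $(\ell+\gamma+2s)^+$, $\ell + (\gamma+2s)^+$, and $s+\gamma/2+\beta$ in each sign combination of $\ell$ and $\gamma+2s$ so that the target weight $\ell + (\gamma+2s)^+$ dominates.
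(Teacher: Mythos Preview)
Your approach is essentially the same as the paper's: for \eqref{different-g-h} you use Lemma \ref{differ-Gam-Q} together with Lemma \ref{upper-triple-Dirichlet} and \eqref{IV-2.2} to control the error, and Proposition \ref{prop-IV-2.5b} for the main $Q$-term, exactly as the paper does. For \eqref{same-g} the paper simply invokes Proposition \ref{prop-IV-2.5} in place of Proposition \ref{prop-IV-2.5b}; your route via Lemma \ref{differ-Q-cD} just unfolds that proposition's proof (it is itself obtained from Lemmas \ref{differ-Q-cD} and \ref{upper-triple-Dirichlet}), so the arguments coincide in substance.
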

\begin{proof}
Since Lemma \ref{upper-triple-Dirichlet} holds with $\sqrt \mu$ replaced by $\mu_\kappa(t,v)$, in view of \eqref{IV-2.2}
we have
\[
\Big|\cD(\mu_\kappa f, g)\Big| \lesssim \|f\|_{L^2}|||g|||^2_{\Phi_\gamma}
\lesssim \|f\|_{L^2} \|g\|^2_{H^s_{(s+\gamma/2)^+}}\,.
\]
Applying this to the right hand side of \eqref{diff-G-Q}, by Proposition \ref{prop-IV-2.5b} we obtain
the desired estimate \eqref{different-g-h}. The second estimate can be obtained by using
Proposition \ref{prop-IV-2.5} instead of   Proposition \ref{prop-IV-2.5b}.
\end{proof}

For $\alpha >3/2$, set $\varphi(v,x) = (1+|v|^2 +|x|^2)^{\alpha/2}$ and
\[
W_{\varphi,l} = \frac{\la v \ra^l}{\varphi(v,x)}= \frac{(1+|v|^2)^{\ell/2}}
{(1+|v|^2 +|x|^2)^{\alpha/2}} .
\]

\begin{lemm}\label{weight}
If  $\varphi(v,x) = (1+|v|^2 +|x|^2)^{\alpha/2}$ for $\alpha  >3/2$ and
\[
W_{\varphi,l} = \frac{\la v \ra^l}{\varphi(v,x)}= \frac{(1+|v|^2)^{\ell/2}}
{(1+|v|^2 +|x|^2)^{\alpha/2}} .
\]
for $\ell \in \RR$ then we have
\begin{align}\label{dif-estime}
|\partial_x^{\beta_1} \partial_v^{\beta_2} W_{\varphi,\ell}(v)|
\lesssim
 \la v \ra^{-|\beta_1| -|\beta_2|}\, W_{\varphi,\ell}(v)\,,
\end{align}
\begin{align}\label{wei-estime}
&\Big|W_{\varphi,\ell}(v')  - W_{\varphi,\ell} (v)\Big|\lesssim
\sin\Big(\frac{\theta}{2}\Big) \frac{|v-v_*'| \la v_*' \ra^{\alpha+ |\ell-1|}}{\la v\ra}
W_{\varphi,\ell}(v)\,,\\
&\Big|\nabla^2 W_{\varphi,\ell}(v+\tau(v'-v))\Big|\lesssim
 \frac{\la v_* \ra^{\alpha+ |\ell-2|}}{\la v\ra^2}W_{\varphi,\ell}(v)\,\enskip,
 \enskip  \tau \in[0,1]\,.\label{wei-est-2}
\end{align}

\end{lemm}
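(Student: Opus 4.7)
The plan is to treat \eqref{dif-estime} by direct computation and then deduce \eqref{wei-estime} and \eqref{wei-est-2} from it via the mean value theorem combined with Peetre-type weight comparisons.

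For \eqref{dif-estime} I would apply Leibniz to $W_{\varphi,\ell}(v)=\la v\ra^\ell\varphi(v,x)^{-1}$. Every $v$-derivative of $\la v\ra^\ell$ loses one power of $\la v\ra$, while each $v$- or $x$-derivative of $\varphi^{-1}$ produces a factor of the form $-\alpha v_i\varphi^{-1}/(1+|v|^2+|x|^2)$ or $-\alpha x_j\varphi^{-1}/(1+|v|^2+|x|^2)$, each bounded by $\varphi^{-1}/\la v\ra$ since $1+|v|^2+|x|^2\geq\la v\ra^2$. An induction on $|\beta_1|+|\beta_2|$, in which higher derivatives only generate sums of such terms, then yields the claim.

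For \eqref{wei-estime} I would use the integral form of the mean value theorem,
\[
W_{\varphi,\ell}(v')-W_{\varphi,\ell}(v)=\int_0^1\nabla_v W_{\varphi,\ell}(v_\tau)\cdot(v'-v)\,d\tau,\qquad v_\tau=v+\tau(v'-v),
\]
with $|\nabla_v W_{\varphi,\ell}(v_\tau)|\lesssim W_{\varphi,\ell-1}(v_\tau)=W_{\varphi,\ell}(v_\tau)/\la v_\tau\ra$ furnished by \eqref{dif-estime}. The prefactor satisfies $|v'-v|=|v-v_*|\sin(\theta/2)\leq\sqrt 2\sin(\theta/2)|v-v_*'|$, using $|v-v_*'|=|v-v_*|\cos(\theta/2)$ and $\cos(\theta/2)\geq 1/\sqrt 2$ on $[0,\pi/2]$. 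The remaining task is to bound $W_{\varphi,\ell-1}(v_\tau)/W_{\varphi,\ell-1}(v)$ by $\la v_*'\ra^{|\ell-1|+\alpha}$. The core tools are the Peetre-type estimates
\[
\bigl(\la v_\tau\ra/\la v\ra\bigr)^{\ell-1}\lesssim\la v-v_\tau\ra^{|\ell-1|},\qquad \varphi(v,x)/\varphi(v_\tau,x)\lesssim\la v-v_\tau\ra^{\alpha},
\]
where the second follows from the elementary inequality $1+|v|^2+|x|^2\leq 2(1+|v_\tau|^2+|x|^2)\la v-v_\tau\ra^2$. I would then split on the relative size of $\la v\ra$ and $\la v_*'\ra$: when $\la v\ra\lesssim\la v_*'\ra$, the bound $\la v-v_\tau\ra\leq\la v-v_*'\ra\lesssim\la v_*'\ra$ closes the estimate; when $\la v\ra\gg\la v_*'\ra$, the energy identity $|v'|^2=|v|^2+|v_*|^2-|v_*'|^2$ together with the relation $v_*-v_*'=v'-v$ forces $|v_\tau|\sim|v|$ and $\varphi(v_\tau,x)\sim\varphi(v,x)$, so the ratio is $\lesssim 1$.

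For \eqref{wei-est-2}, the same strategy applies: \eqref{dif-estime} with $|\beta_2|=2$ gives $|\nabla^2_v W_{\varphi,\ell}(v_\tau)|\lesssim W_{\varphi,\ell-2}(v_\tau)$, and the identical dichotomy, now using $|v_\tau-v|\leq|v'-v|\leq|v-v_*|$ and $\la v_*\ra$ in place of $\la v_*'\ra$, controls $W_{\varphi,\ell-2}(v_\tau)/W_{\varphi,\ell-2}(v)$ by $\la v_*\ra^{|\ell-2|+\alpha}$. The main technical obstacle throughout is precisely this weight comparison: a purely Peetre-based bound would introduce an unwanted factor $\la v\ra^{|\ell-1|+\alpha}$ or $\la v\ra^{|\ell-2|+\alpha}$ that one cannot afford, and only the case split above eliminates it, by exploiting the fact that $v_\tau$ remains quantitatively close to $v$ whenever $v$ dominates its collisional partner.
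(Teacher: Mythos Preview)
Your proposal is correct and shares the paper's overall architecture: direct computation for \eqref{dif-estime}, then the mean value theorem together with a comparison of $W_{\varphi,\ell-1}(v_\tau)$ against $W_{\varphi,\ell-1}(v)$ for \eqref{wei-estime} and \eqref{wei-est-2}. The difference is only in how that comparison is carried out. You invoke Peetre's inequality and then split into the regimes $\la v\ra\lesssim\la v_*'\ra$ and $\la v\ra\gg\la v_*'\ra$, arguing in the second case that the collision constraints force $|v_\tau|\sim|v|$. The paper avoids the dichotomy: from the orthogonality $(v-v_*')\perp(v'-v)$ one has $|v_\tau-v_*'|\ge|v-v_*'|$, and the single chain
\[
1+a^2+|v|^2\ \lesssim\ 1+a^2+|v-v_*'|^2+|v_*'|^2\ \le\ 1+a^2+|v_\tau-v_*'|^2+|v_*'|^2\ \lesssim\ (1+a^2+|v_\tau|^2)\la v_*'\ra^2
\]
(with $a=|x|$ or $a=0$) delivers $\varphi(v_\tau,x)^{-1}\lesssim\varphi(v,x)^{-1}\la v_*'\ra^{\alpha}$, $\la v_\tau\ra^{-1}\lesssim\la v\ra^{-1}\la v_*'\ra$, and by the symmetric chain $\la v_\tau\ra\lesssim\la v\ra\la v_*'\ra$, all at once; the analogue with $v_*$ (using $|v_\tau-v_*|\sim|v-v_*|$) handles \eqref{wei-est-2}. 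Your large-$\la v\ra$ case, once made rigorous, ultimately rests on the same orthogonality (it is what gives $|v_\tau|^2=|v|^2+2\tau v_*'\cdot(v'-v)+\tau^2|v'-v|^2$ and hence the needed lower bound on $|v_\tau|$), so the paper's route is simply a shorter packaging of the same geometric input.
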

\begin{proof}
The first inequality follows from the direct calculation.
Since
\[
W_{\varphi,\ell}(v')  - W_{\varphi,\ell} (v) = \int_0^1
\nabla_v W_{\varphi,\ell}(v+\tau(v'-v)) d\tau \cdot(v'-v)\,
\]
and since $|v-v'|=\sin (\theta/2)|v-v_*|, \, |v-v_*| \sim |v-v_*'|$,
for the proof of \eqref{wei-estime} it suffices to show
\begin{equation}\label{shyom}
\Big|\nabla W_{\varphi,\ell}(v_\tau) \Big| \lesssim \frac{W_{\varphi,\ell}(v_\tau)}
{\la v_\tau \ra}
\lesssim \frac{\la v_*' \ra^{\alpha+ (\ell-1)^+}}{\la v\ra}
W_{\varphi,\ell}(v)\,\enskip, \enskip  v_\tau =v+\tau(v'-v)\,.
\end{equation}
For $a\in \RR$ we have
\begin{align*}
1+a^2+ |v|^2 &\lesssim 1+ a^2+ |v-v_*'|^2 + |v_*'|^2 \lesssim 1+ a^2+ |v_\tau-v_*'|^2 + |v_*'|^2\\
&\lesssim 1+ a^2+ |v_\tau|^2 + |v_*'|^2 \lesssim (1+ a^2+ |v_\tau|^2 )\la v_*'\ra^2\,,
\end{align*}
from which we get $\varphi(v_\tau,x)^{-1} \lesssim \varphi(v,x)^{-1}\la v_*'\ra^\alpha$
by setting $a=|x|$.  Putting $a=0$ in the above inequality we have
$\la v \ra\lesssim \la v_\tau \ra \la v_*' \ra$. Since
$\la v_\tau \ra\lesssim \la v \ra \la v_*' \ra$ holds similarly
we have $\la v_\tau \ra^{\ell-1} \lesssim \la v \ra^{\ell-1} \la v_*' \ra^{|\ell-1|}$,
which concludes the second inequality of \eqref{shyom}. \eqref{wei-est-2}
also follows from the similar observation.
\end{proof}
\begin{lemm}\label{commu-Gam} Let $0<s<1$ and  $\gamma>\max\{-3, -2s-3/2\}$. Then for any $\ell \ge 0$ we have
\begin{align*}
&\Big|\Big( W_{\varphi,\ell} \Gamma^t(f,g) -\Gamma^t(f, W_{\varphi, \ell}g), h \Big)_{L^2}\Big|\\
&\lesssim \Big(\cD(\mu_\kappa \,{ |f|},  h) \Big)^{1/2}
\|f\|_{L^2} ^{1/2}
\|  W_{\varphi,\ell} g \|_{L^2_{\gamma/2}}
+ \|f\|_{H^{(2s'-1)^+}}
\|  W_{\varphi,\ell}  g\|_{L^2_{s' +\gamma/2}}
\|h \|_{L^2_{s+\gamma/2 } } \,,\notag
\end{align*}
for any $s' \in ](2s-1)^+, s[$ satisfying
$\gamma+2s' > -3/2$.
\end{lemm}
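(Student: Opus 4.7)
The plan is to follow the general strategy of the commutator estimate in Proposition \ref{IV-prop-2.5}, with additional care for the Maxwellian weight $\mu$ and for the $x$-dependence of $W_{\varphi,\ell}$. Writing $\Gamma^t(f,g) = \mu^{-1} Q(\mu f, \mu g)$ and exploiting the collisional invariance $\mu(v)\mu(v_*) = \mu(v')\mu(v'_*)$ (which holds because $\mu_\kappa(t,v) = e^{-(\rho-\kappa t)(1+|v|^2)}$ is a Maxwellian in $v$ for each fixed $t$), one computes
\[
\bigl(W_{\varphi,\ell}\,\Gamma^t(f,g) - \Gamma^t(f, W_{\varphi,\ell}\,g),\, h\bigr)_{L^2} = \iiint B\, \mu_* f'_* g'\, [W_{\varphi,\ell}(v) - W_{\varphi,\ell}(v')]\, h \, d\sigma dv_* dv.
\]
This is structurally the same as the commutator treated for $Q_c$ in Proposition \ref{IV-prop-2.5}, with the additional Maxwellian factor $\mu_*$ attached to $f'_*$.

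\textbf{Splitting and Dirichlet estimate.} I decompose $h = h' + (h - h')$, so that $I = J_1 + J_2$, where $J_2$ carries the factor $(h - h')$. A Cauchy--Schwarz splitting of $J_2$ using the weight $[B\mu_*|f'_*|]^{1/2}$ in each factor gives
\[
|J_2|^2 \le \Bigl(\iiint B\mu_*|f'_*|(g')^2 [W_{\varphi,\ell}(v) - W_{\varphi,\ell}(v')]^2\Bigr)\cdot \Bigl(\iiint B\mu_*|f'_*|(h-h')^2\Bigr).
\]
After the pre-postcollisional change of variables $(v,v_*) \leftrightarrow (v',v'_*)$, the second factor becomes $\iiint B\mu'_*|f_*|(h-h')^2$; writing $\mu'_* = \mu_* + (\mu'_* - \mu_*)$ and using the mean-value estimate $|\mu'_* - \mu_*| \lesssim \sin(\theta/2)|v-v_*|\la v_*\ra \mu_*^{1/2}$ (together with $|v'_* - v_*| = |v - v_*|\sin(\theta/2)$) reduces it to $\cD(\mu\,|f|, h)$ plus lower-order absorbable terms. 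For the first factor, I invoke \eqref{wei-estime} of Lemma \ref{weight} to bound $|W_{\varphi,\ell}(v') - W_{\varphi,\ell}(v)| \lesssim \sin(\theta/2)|v - v'_*|\la v'_*\ra^{\alpha + |\ell-1|} W_{\varphi,\ell}(v)/\la v\ra$, and absorb the polynomial factor $\la v'_*\ra^N$ into $\mu_*^{1/2}$ via the energy bound $|v'_*|^2 \le |v|^2 + |v_*|^2$. This yields an upper bound of the form $\|f\|_{L^2}\|W_{\varphi,\ell}\, g\|^2_{L^2_{\gamma/2}}$ for the first factor, which combined with the Dirichlet bound of the second factor gives the first (Dirichlet) term in the announced estimate.

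\textbf{Taylor part and main obstacle.} For $J_1 = \iiint B\mu_* f'_* g'[W_{\varphi,\ell}(v) - W_{\varphi,\ell}(v')]h'\, d\sigma dv_* dv$, I apply the second-order Taylor formula to $W_{\varphi,\ell}$:
\[
W_{\varphi,\ell}(v') - W_{\varphi,\ell}(v) = \nabla W_{\varphi,\ell}(v)\cdot(v' - v) + \tfrac12 \int_0^1 \nabla^2 W_{\varphi,\ell}(v_\tau)\,d\tau\,(v' - v)^2.
\]
Decomposing $v' - v = |v - v_*|(\sigma - k)/2$ with $k = (v - v_*)/|v - v_*|$, the linear term vanishes upon $\sigma$-integration by the spherical-symmetry argument used in Lemma \ref{A-1} (its component orthogonal to $k$ integrates to zero by rotational invariance of $b(\cos\theta)$ about $k$, while its parallel component inherits the extra $\sin^2(\theta/2)$-gain from $\cos\theta - 1 = -2\sin^2(\theta/2)$); the quadratic remainder is controlled via \eqref{wei-est-2}. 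The resulting expression then reduces to a standard weighted upper estimate of $\Gamma^t$-type, handled by invoking the upper bound \eqref{different-g-h} of Lemma \ref{upp-l-2} with a suitable choice of parameters, producing the non-Dirichlet contribution $\|f\|_{H^{(2s'-1)^+}}\|W_{\varphi,\ell}\,g\|_{L^2_{s' + \gamma/2}}\|h\|_{L^2_{s + \gamma/2}}$. The main technical obstacle throughout is the $\mu'_*$ factor that emerges after the pre-postcollisional change of variables in the Dirichlet estimate: since $\mu'_*$ is not dominated pointwise by $\mu_*$ (as $|v'_*|$ may be small when $|v_*|$ is large), the split $\mu'_* = \mu_* + (\mu'_* - \mu_*)$ and the precise $\sin(\theta/2)$-gain in $|\mu'_* - \mu_*|$ must be carefully combined with the angular singularity of $b$ and the polynomial factors in $\la v'_*\ra$ coming from \eqref{wei-estime}, so that every residual term is absorbed into $\cD(\mu\,|f|, h)$ or into the non-Dirichlet contribution.
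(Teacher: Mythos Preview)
Your overall plan---split into a Dirichlet-type piece and a Taylor/symmetry piece---is the paper's, but the specific decomposition $h = h' + (h-h')$ that you choose creates two problems that your sketch does not resolve.

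First, the symmetry argument in $J_1$ does not work as written. In $J_1 = \iiint B\,\mu_* f'_* g' h'\,[W_{\varphi,\ell}(v) - W_{\varphi,\ell}(v')]$, the factors $f'_*$, $g'$, $h'$ all depend on $v'$ and $v'_*$, hence on $\sigma$, in a way that is \emph{not} rotationally symmetric about $k=(v-v_*)/|v-v_*|$; equivalently, after the pre-post change the offending factor becomes $\mu'_*$, which depends on $(v+v_*)\cdot\sigma$ and not merely on $\cos\theta$. Consequently the orthogonal component of the first-order Taylor term does \emph{not} integrate to zero over $\SS^2$. The paper avoids this by writing
\[
\mu'_* h' = \big[(\mu_*^{1/2}h)'-\mu_*^{1/2}h\big]\,\mu_*'^{1/2} + \mu_*^{1/2}h\,(\mu_*'^{1/2}-\mu_*^{1/2}) + \mu_*\, h,
\]
which isolates a piece $\cA_3=\iiint B\,f_*\,\mu_*\,g\,h\,(W'_{\varphi,\ell}-W_{\varphi,\ell})$ in which every factor except the weight difference is $\sigma$-independent; only then does the symmetry reduce the linear term to its parallel component with the $(\cos\theta-1)$ gain, after which a direct Cauchy--Schwarz/Pitt-type estimate (not Lemma~\ref{upp-l-2}) produces the $\|f\|_{H^{(2s'-1)^+}}$ contribution.

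Second, the mean-value bound $|\mu'_* - \mu_*| \lesssim \sin(\theta/2)\,|v-v_*|\,\la v_*\ra\,\mu_*^{1/2}$ that you invoke for $J_2$ is false: if $|v_*|$ is large and $|v'_*|$ is small (which is allowed for $\theta$ near $\pi/2$), the left side is of order one while the right side is exponentially small in $|v_*|$. A correct pointwise bound must retain both $\mu_*^{1/2}$ and $\mu_*'^{1/2}$ (or a $\min(\cdot,1)$ truncation), which is precisely why the paper works with $\mu^{1/2}$-differences throughout and controls the resulting cross term via the quantity $\widetilde D_3$ of \eqref{wide-D-3}. The ``lower-order absorbable terms'' you mention are therefore not lower order without this additional machinery.
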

\begin{proof}
Note that
\begin{align*}
&\Big( W_{\varphi,\ell} \Gamma(f,g) -\Gamma(f, W_{\varphi, \ell}g), h \Big)_{L^2}=  \iiint B\,  f_{*} \, \, \Big(\mu_{*} h\Big)' g
\big ( W'_{\varphi,l} - W_{\varphi,l}  \big) dvdv_* d\sigma\\
&= \iiint B\,  f_{*} \, \, \Big\{ \Big({\mu_{*}}^{1/2} h  \Big)'
-  \Big({\mu_{*}}^{1/2} h\Big) \Big\} \,   {\mu'_{*}}^{1/2} \,
g
\big ( W'_{\varphi,l} - W_{\varphi,l}  \big) dvdv_* d\sigma \\
&\qquad +
\iiint B\,  f_{*} \, \, \Big({\mu_{*}}^{1/2} h \Big) \Big(
{\mu'_{*}}^{1/2} - {\mu_{*}}^{1/2}\Big)\,
g \big ( W'_{\varphi,l} - W_{\varphi,l} \big) dvdv_* d\sigma \\
&\qquad +
\iiint B\,  f_{*} \, \, \Big({\mu_{*}}^{1/2} h \Big)
{\mu_{*}}^{1/2} \, g
\big ( W'_{\varphi,l} - W_{\varphi,l}  \big) dvdv_* d\sigma \\
&=\cA_{1} + \cA_{2} + \cA_{3}\,.
\end{align*}
It follows from the Cauchy-Schwarz inequality that
\begin{align*}
\cA_{1}^2 \leq & \iiint
B\,   \, { |f_{*}|}\, \Big\{ \Big( {\mu_{*}}^{1/2} h\Big)'
-  \Big({\mu_{*}}^{1/2} h \Big) \Big\}^2dvdv_* d\sigma \\
&\quad \times
\iiint B\, { |f_{*}|} \,  \Big( {\mu'_{*}}^{1/2}  g
\big ( W'_{\varphi,l} - W_{\varphi,l}  \big) \Big)^2 dvdv_* d\sigma \\
=& \cA_{1,1}\times \cA_{1,2}\,.
\end{align*}
Writing
\[
 \Big({\mu_{*}}^{1/2} h \Big)'
-  \Big({\mu_{*}}^{1/2} h \Big)
= {\mu_{*}}^{1/2} \Big( h' - h\Big)
+  h'  \Big(  {\mu'_{*}}^{1/2}  - {\mu_{*}}^{1/2}   \Big)
\]
we obtain easily
\begin{align*}
\cA_{1,1}& \leq 2\Big \{\iiint B  (\mu \,{ |f|})_{*} \,
\Big( h'
-  h \Big)^2dvdv_* d\sigma \\
&\quad +
\iiint B \,{  |f_{*}| }\,
 \Big(  {\mu'_{*}}^{1/2}  - {\mu_{*}}^{1/2}   \Big)^2
{h '}^2dvdv_* d\sigma  \Big \}\\
&= 2 (\cD(\mu \,{ | f|}, h) +  \widetilde D_3(f,h))\\
&\lesssim \cD(\mu \,{| f|}, h) +\|f\|_{L^2}
\|h\|^2_{L^2_{s+\gamma/2}}
\,,
\end{align*}
where we have used \eqref{wide-D-3}.
By \eqref{wei-estime} and the Cauchy-Schwarz inequality  we have
\begin{align*}
\Big|\cA_{1,2}\Big|^2
&\lesssim \Big(\iiint
b\sin^2 (\theta/2)\,\frac{|v-v'_*|^{\gamma+2}}{\la v \ra^2}{\mu'_{*}}^{1/2}
{ |f_{*}|}\,\big(W_{\varphi,l}g\big)^2 dv dv_*d\sigma
\Big)^2
\\
&\lesssim
\Big(\iiint b\sin^2 (\theta/2)\,
\frac{|v-v'_*|^{2(\gamma+2)}}{\la v \ra^{\gamma+4}}{\mu'_{*}}\big(W_{\varphi,l}g\big)^2
d\sigma dv'_*dv  \Big)\\
& \times
\Big(\iiint    b\sin^2 (\theta/2)\,   f_{*}^2\la v \ra^{\gamma}
{\Big(W_{\varphi,l} g\Big)}^2 d\sigma dv_*dv  \Big)\\
&\lesssim \|W_{\varphi,l} g\|^2_{L^2_{\gamma/2}} \Big(\|f\|^2_{L^2}
\|W_{\varphi,l} g\|^2_{L^2_{\gamma/2}}\Big)\,,
\end{align*}
where we have used the change of variables $v_* \rightarrow v_*'$.
Hence we have
\begin{align*}
|\cA_{1}| &\lesssim  \Big(\cD(\mu {| f|}, h) + \|f\|_{L^2}^{1/2} \|h\|_{L^2_{s+\gamma/2}}\Big)
\|f\|^{1/2}_{L^2}
\|W_{\varphi,l} g\|_{L^2_{\gamma/2}}\,.
\end{align*}
By using the similar formula as \eqref{wei-estime} with $v_*'$ replaced by $v_*$  we have %
\[
\Big|\mu_{*}^{1/4} \Big({\mu'_{*}}^{1/2} - {\mu_{*}}^{1/2}\Big)\,
\big ( W'_{\varphi,l} - W_{\varphi,l} \big) \Big|
\lesssim \min \Big(|v-v_*|^2 \theta^2, |v-v_*| \theta\Big) W_{\varphi,l}\la v\ra^{-1}\,,
\]
so that for any $\delta>0$
\begin{align*}
|\cA_{2}|&\lesssim
\iiint |v-v_*|^\gamma\Big(|v-v_*|^{2s} + {\bf 1}_{|v-v_*|\gtrsim 1}|v-v_*|^{(2s-1+\delta)^++1}\Big)\mu_{*}^{1/4}\left|
f_{*} \frac{ \big(W_{\varphi,l} g \big) h}{\la v\ra} \right|dvdv_*\\
&\lesssim
\|f\|_{L^2}
 \| W_{\varphi,l} g \|_{L^2_{(2s-1+\delta)^+/2+\gamma/2}}\|h\|_{L^2_{(2s-1+\delta)^+/2+\gamma/2}}.
\end{align*}
In order to estimate $\cA_{3}$ we use the Taylor expansion for
$ W_{\varphi,l} - W_{\varphi,l} '$ of second order. Then
 we have
\begin{align*}
\cA_{3}= &\iiint B\, f_{*}\, \, {\mu_{*}} g h
\big ( \nabla_v W_{\varphi,l}\big) (x,v)  \cdot ( v' -v) dvdv_* d\sigma \\
&+ \frac{1}{2} \int_0^1 d \tau \iiint B\,  f_{*} \, \, {\mu_{*}}
 g h
\,\big(\nabla^2_v W_{\varphi,l} \big)(x,v+\tau(v'-v))(v'-v)^2 dvdv_* d\sigma \\
= & \cA_{3,1} + \cA_{3,2}\,\,.
\end{align*}
 Setting $ {\bf k} = \frac{v-v_*}{|v-v_*|}$ and
writing
\[v' -v = \frac{1}{2}|v-v_*|\Big( \sigma - (\sigma \cdot {\bf k})  {\bf k} \Big)
+ \frac{1}{2} ((\sigma \cdot {\bf k})-1)( v-v_*),
\]
 we have
$$
\cA_{3,1} =   \frac{1}{2}
\iiint B\,  f_{*} \, \mu_{*} g h
\big ( \nabla_v W_{\varphi,l}\big) (x,v)  \cdot (v-v_*)
\, \big(\cos \theta -1 \big)  dvdv_* d\sigma
$$
because it follows from  the symmetry that $\int_{\SS^2} b(\sigma
\cdot {\bf k})\Big( \sigma - (\sigma \cdot {\bf k})  {\bf k} \Big) d
\sigma =0$. Therefore, for any $0 \le s' < s$ satisfying
$\gamma+2s' < -3/2$ we have, in view of \eqref{dif-estime},
\begin{align*}
|\cA_{3,1}| &\lesssim \int \Big(\int|v-v_*|^{2(\gamma+2s')} \mu_{*}^2 dv_*\Big)^{1/2}
\Big( \int\frac{f_{*}^2}{|v-v_*|^{2(2s'-1)^+}}dv_*\Big)^{1/2}  |W_{\varphi,l-1}g| |h| dv \\
&\lesssim \|f\|_{H^{(2s'-1)^+}} \| W_{\varphi,l} g \|_{L^2_{s'+\gamma/2}}\|h\|_{L^2_{s'+\gamma/2}} \,,
 \end{align*}
 by means of $|\nabla_v
(W_{\varphi,l} ) (x, v)|\lesssim  W_{\varphi,l-1}$.
 The better  bound holds for $|\cA_{3,2}|$ since it follows from
 \eqref{wei-est-2} that
 \[
 |\big(\nabla^2_v W_{\varphi,l} \big)(v+\tau(v'-v))(v'-v)^2|\lesssim
\la v_*\ra^{\alpha+|\ell-2|} \, W_{\varphi,l-2}{\theta}^{2}|v-v_*|^2\,\,.
 \]

Therefore we have
\[
|\cA_{3}| \lesssim \|f\|_{H^{(2s'-1)^+}} \| W_{\varphi,l} g \|_{L^2_{s'+\gamma/2}}
\|h\|_{L^2_{s+\gamma/2}}\,.
\]
Summing up above estimates we obtain the conclusion.
\end{proof}

\subsection{Proofs of the uniqueness Theorems}

Using the notations introduced in subsection
\ref{subsect4.1},
the proof of Theorem \ref{uniqueness} is reduced to

\begin{prop}\label{prop-unique}  Assume that $0<s<1$ and $\max \big (-3, -3/2-2s \big ) <\gamma < 2-2s $. Let $\ell_0 > 3/2 + \max\{1, (\gamma+2s)^+\}$ and
$g_0\in L^\infty(\RR_x^3;\, L^2_{\ell_0}(\RR^3_v))$. Suppose that the Cauchy problem
\eqref{E-Cauchy-B} admits two solutions
$$
g_1,\, g_2\in L^\infty([0, T]\times \RR_x^3 \,;\, H^{m}_{\ell_0}({\mathbb R}^3_{v}))\,.
$$
Then $g_1 \equiv g_2 $ in $[0, T]$,

\noindent
{\em 1)}\,\,  if $m=2s$ and $g_1 \ge 0$. When $\gamma >-3/2\,$, we can suppose  $g_1 \in  L^\infty([0, T]\times \RR_x^3 \,;\, H^{s}_{\ell_0}({\mathbb R}^3_{v}))$.

\noindent
{\em 2)}\,
if $m=s$ and the coercivity inequality \eqref{cor-sol} is satisfied for $f_1 = \mu_\kappa(t) g_1\ge 0$.

\noindent
{\em 3)}\,
if $m=s$ and  $f = \mu_\kappa(t) g_1$ satisfies the following strong coercivity estimate
\begin{align}\label{cor-sol-strong}
 -(Q(f(t),h),h)_{L^2(\RR^6)}  \ge c_0 \int |||h|||^2_{\Phi_\gamma}dx  -C\|h\|^2 _{L^2(\RR^3_x; L^2_{ (\gamma/2+s)^+}(\RR^3_v))}\,.
\end{align}

\end{prop}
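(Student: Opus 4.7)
The plan is to derive an $L^2$-type energy estimate on the difference $g=g_1-g_2$, which satisfies the linearized equation \eqref{E-Cauchy-B^diff} with $g|_{t=0}=0$, and deduce $g\equiv 0$ by Gronwall's lemma. Since the solutions $g_j$ are only bounded in the spatial variable, I would test the equation against $W_{\varphi,\ell}^2 g$ with $W_{\varphi,\ell}=\la v\ra^\ell/\varphi(v,x)$, $\varphi=(1+|v|^2+|x|^2)^{\alpha/2}$, $\alpha>3/2$ and $\ell$ large enough: the factor $\varphi^{-1}$ provides the integrability in $x$ that the solutions themselves do not enjoy, while $\la v\ra^\ell$ compensates for the weight losses in the commutator bounds of Subsection \ref{subsct4.2}. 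After integration over $\RR^6$, the time derivative yields $\tfrac12\tfrac{d}{dt}\|W_{\varphi,\ell}g\|_{L^2(\RR^6)}^2$; the pointwise bound $|v_j\partial_{x_j}W_{\varphi,\ell}^2|\lesssim W_{\varphi,\ell}^2$ from Lemma \ref{weight} together with an integration by parts shows that the transport term is $O(\|W_{\varphi,\ell}g\|_{L^2}^2)$, and the drag $\kappa(1+|v|^2)g$ yields the coercive quantity $\kappa\|\la v\ra W_{\varphi,\ell}g\|_{L^2}^2$.

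For the collision contribution $(\Gamma^t(g_1,g),W_{\varphi,\ell}^2 g)_{L^2}$, I split it as $(\Gamma^t(g_1,W_{\varphi,\ell}g),W_{\varphi,\ell}g)_{L^2}$ plus a commutator, the latter estimated by Lemma \ref{commu-Gam} (whose conclusion extends to the $x$-dependent weight $W_{\varphi,\ell}$ thanks to Lemma \ref{weight}) and absorbed by Young's inequality. In case (1), the non-negativity of $g_1$ together with Lemma \ref{coer-gamma} in the form requiring $g_1\in H^{2s'}$ with $s'<s$ and $\gamma+2s'>-3/2$, which is precisely why the regularity $m=2s$ is imposed (under $\gamma>-3/2$ the sharper form needs only $g_1\in L^2$, so $m=s$ then suffices) delivers the dissipation bound $-\tfrac14\cD(\mu_\kappa g_1,W_{\varphi,\ell}g)+C\|W_{\varphi,\ell}g\|_{L^2_{s+\gamma/2}}^2$, whose error term is absorbed by the $\kappa$-damping thanks to $s+\gamma/2<1$, which follows from the assumption $\gamma+2s<2$. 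In cases (2) and (3), the assumed coercivity \eqref{cor-sol} resp.\ \eqref{cor-sol-strong}, combined with Lemma \ref{differ-Gam-Q} integrated in $x$ to replace $\Gamma^t$ by $Q(\mu_\kappa g_1,\cdot)$ up to a $\cD$-controlled error, delivers the analogous global dissipative lower bound.

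The remaining collision term $(\Gamma^t(g,g_2),W_{\varphi,\ell}^2 g)_{L^2}$ is the trickiest one, because the unknown $g$ sits in the first slot of $\Gamma^t$ and is therefore not directly absorbable by the coercivity, which only handles expressions of the form $(\Gamma^t(\textrm{reference},g),g)$. I would handle it by commuting $W_{\varphi,\ell}$ through $\Gamma^t$ via Lemma \ref{commu-Gam}, then applying the upper bound \eqref{different-g-h} of Lemma \ref{upp-l-2} with a suitable choice of the splitting parameter: the high $v$-regularity $H^{s+m}$ required of $g_2$ is uniformly bounded in $L^\infty_{t,x}$ by hypothesis, while the dual factor $\|W_{\varphi,\ell}g\|_{H^{s-m}}$ is either directly controlled by the coercive quantity ($m=s$ in cases 2--3) or by interpolation between the coercive term and $\|W_{\varphi,\ell}g\|_{L^2}^2$ ($m=2s$ in case 1). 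Collecting all pieces and absorbing the errors leaves a Gronwall inequality $\tfrac{d}{dt}\|W_{\varphi,\ell}g\|_{L^2(\RR^6)}^2\le C\|W_{\varphi,\ell}g\|_{L^2(\RR^6)}^2$; since $g(0)=0$, this forces $g\equiv 0$, hence $g_1\equiv g_2$ on $[0,T]$. The main obstacle throughout is this second collision term together with the matching between the spatial weight $\varphi^{-1}$ and the commutator machinery of Subsection \ref{subsct4.2}; the design of Lemma \ref{weight} is exactly what makes the weighted commutator bounds for $W_{\varphi,\ell}$ behave, vis-à-vis the collision kernel, like the polynomial weight $\la v\ra^\ell$.
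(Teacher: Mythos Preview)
Your overall strategy matches the paper's: test \eqref{E-Cauchy-B^diff} against $W_{\varphi,\ell}^2 g$, control the transport term via Lemma~\ref{weight}, split $(\Gamma^t(g_1,g),W_{\varphi,\ell}^2 g)$ into a diagonal piece handled by Lemma~\ref{coer-gamma} (case~1) or the assumed coercivity via Lemma~\ref{differ-Gam-Q} (cases~2--3) plus a commutator handled by Lemma~\ref{commu-Gam}, and absorb the $L^2_{s+\gamma/2}$ errors by the $\kappa$-damping thanks to $\gamma+2s<2$. All of this is exactly right.

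The gap is in your treatment of $\cB_3=(\Gamma^t(g,g_2),W_{\varphi,\ell}^2 g)$. Lemma~\ref{commu-Gam} commutes $W_{\varphi,\ell}$ onto the \emph{second} argument of $\Gamma^t$, so after commuting you obtain $(\Gamma^t(g,W_{\varphi,\ell}g_2),W_{\varphi,\ell}g)$ plus a remainder, and applying \eqref{different-g-h} yields \emph{three} factors
\[
\|g\|_{L^2_v}\,\|W_{\varphi,\ell}g_2\|_{H^{s+m'}}\,\|W_{\varphi,\ell}g\|_{H^{s-m'}}\,.
\]
You account only for the last two; the first factor $\|g\|_{L^2_v}$ is the unweighted norm of the difference and cannot be bounded by $\|W_{\varphi,\ell}g\|_{L^2_v}$. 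Bounding it instead by the constant $\|g_1\|_{L^\infty_{t,x}L^2_v}+\|g_2\|_{L^\infty_{t,x}L^2_v}$ leaves, after Young's inequality, an additive constant on the right-hand side of the differential inequality, and Gronwall no longer gives $g\equiv 0$.

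The paper does \emph{not} commute here. It applies \eqref{different-g-h} directly to $\cB_3$ with the $v$-weight parameter $\ell-\gamma/2$, and then redistributes the $x$-weight by inserting $1=\la x\ra^{-\alpha}\cdot\la x\ra^{\alpha}$ pointwise in $x$: the factor $\la x\ra^{-\alpha}$ attaches to $g$ and, via $\la x\ra^{-\alpha}\le W_{\varphi,\alpha}$, converts $\|g\|_{L^2_v}$ into $\|W_{\varphi,\alpha}g\|_{L^2_v}\le\|W_{\varphi,\ell}g\|_{L^2_v}$; the factor $\la x\ra^{\alpha}$ attaches to the test-function side, where $\la x\ra^{\alpha}\la v\ra^{-\ell}W_{\varphi,\ell}=\la x\ra^{\alpha}\varphi^{-1}\le 1$ is a bounded multiplier on $H^{s-m'}_v$ by Lemma~\ref{weight}. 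The outcome
\[
|\cB_3|\lesssim \|g_2\|_{L^\infty_{t,x}(H^{s+m'}_{\ell-\gamma/2+(\gamma+2s)^+})}\,\|W_{\varphi,\alpha}g\|_{L^2(\RR^6)}\,\|W_{\varphi,\ell}g\|_{L^2_x(H^{s-m'}_{\gamma/2})}
\]
is genuinely quadratic in the weighted energy of $g$, so Gronwall closes. Your commutator step for $\cB_3$ is therefore superfluous, and the missing ingredient is this $\la x\ra^{-\alpha}$ redistribution, which is precisely the device that passes the $x$-decay of $W_{\varphi,\ell}$ from the test function to the first slot of $\Gamma^t$.
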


\begin{proof}
Let $S(\tau) \in C_0^\infty(\RR)$ satisfy $0 \leq S \leq 1$ and
$$S(\tau)=1, \enskip |\tau| \leq 1 \enskip; \enskip S(\tau)=0, \enskip |\tau| \geq 2. $$
Set $S_N(D_x) = S(2^{-2N}|D_x|^2)$ and multiply $W_{\varphi, l} S_N(D_x)^2 W_{\varphi, l} g$
to \eqref{E-Cauchy-B^diff},
where we choose $\ell, \alpha$ such that
\[
\ell_0 - \max\{1, \, (\gamma+2s)^+\} > \ell  > \alpha >3/2.\]
Integrating and letting $N \rightarrow \infty$, we have
\begin{align*}
\frac 12 \frac{d}{d t} \|  W_{\varphi, l} g(t)\|^2_{L^2(\RR^6)} + \kappa \|
W_{\varphi, l+1} g(t)\|^2_{L^2(\RR^6)}&=\Big(W_{\varphi,l}\,
\Gamma^t (g_1,\, g)+W_{\varphi, l}\, \Gamma^t(g,\, g_2)\, , W_{\varphi,l} g\Big)_{L^2(\RR^6)}\\
&- ( v\cdot \nabla_x (\varphi^{-1}) W_l g, W_{\varphi,l} g)_{L^2(\RR^6)}, 
\end{align*}
because
$( v\cdot \nabla_x S_N(D_x)W_{\varphi, l }g, S_N(D_x)W_{\varphi,l} g)_{L^2(\RR^6)} =0$.
The second  term on the right hand side is estimated by $\|W_{\varphi,l} g\|^2_{L^2(\RR^6)}$
because of \eqref{dif-estime}.
Write the first term on the right hand side as
\begin{align*}
&
\Big(
\Gamma^t (g_1,\, W_{\varphi,l}\,g)\, , W_{\varphi,l} g\Big)_{L^2(\RR^6)}
+
\Big(W_{\varphi,l}\,
\Gamma^t (g_1,\, g)-  \Gamma^t(g_1 ,\, W_{\varphi, l}\,g)\, , W_{\varphi,l} g\Big)_{L^2(\RR^6)}\\
&+ \Big(\,
\Gamma^t(g,\, g_2)\, , W_{\varphi,l}\, W_{\varphi,l} g\Big)_{L^2(\RR^6)} = \cB_1 + \cB_2 +\cB_3\,.
\end{align*}
If $g_1 \ge 0$ then it follows from Lemma \ref{coer-gamma} that
\begin{align}\label{B-1}
&\cB_1 \le  -  \frac 1 4\int \cD(\mu_\kappa g_1 , W_{\varphi,l} g) dx  \\
& + C \min\Big(
\|g_1\|_{L^\infty_{t,x}(L^2)} \big( \|W_{\varphi,l} g\|^2_{L^2_x(H^{s'}_{\gamma/2})}
+ \|W_{\varphi,l}g\|^2_{L^2_x(L^2_{s+\gamma/2})}\big)\,\,,\,\,
\|g_1\|_{L^\infty_{t,x}(H^{2s'})} \|W_{\varphi,l}g\|^2_{L^2_x(L^2_{s+\gamma/2})} \Big) \,.\notag
\end{align}
We notice that the last term can be replaced by $\|g_1\|_{L^\infty_{t,x}(L^2)} \|W_{\varphi,l}g\|^2_{L^2_x(L^2_{s+\gamma/2})}$
if $\gamma >-3/2$.
By means of Lemma \ref{commu-Gam} we obtain for any $\delta >0$
\begin{align}\label{B-2}
\Big| \cB_2\Big| \le \delta  \int \cD(\mu_\kappa  |g_1|\, , W_{\varphi,l} g) dx
+ C_\delta \|g_1\|_{L^\infty_{t,x}(H^{(2s-1)^+})} \|W_{\varphi,l}g\|_{L^2_x(L^2_{s'+\gamma/2})} \|W_{\varphi,l}g\|_{L^2_x(L^2_{s+\gamma/2})}\,.
\end{align}
Lemma \ref{upp-l-2} with $\ell = \ell -\gamma/2$ implies that for $m= s, 0 $
\begin{align}\label{B-3}
\Big |\cB_3 \Big| &\lesssim
\int \|\la x \ra^{-\alpha} g\|_{L^2(\RR^3)} \|g_2\|_{H^{s+m}_{{ \ell}-\gamma/2 + (2s+\gamma)^+}(\RR^3)}
\|\frac{\la x \ra^\alpha}{\la v \ra^{\ell}}W_{\varphi,l} W_{\varphi,l} g\|_{H^{s-m}_{\gamma/2}} dx \\
&\lesssim
\|g_2\|_{L^\infty_{t,x}(H^{s+m}_{{ \ell}-\gamma/2 + (2s+\gamma)^+})}
\|W_{\varphi,\alpha}g\|_{L^2_x(L^2_{s+\gamma/2})} \|W_{\varphi,\ell}g\|_{L^2_x(H^{s-m}_{\gamma/2})} \,,\notag
\end{align}
because $\la x\ra^{-\alpha} \leq W_{\varphi, \alpha}$ and ${\la x \ra^\alpha}{\la v \ra^{-\ell}}W_{\varphi,l} $ is
a bounded operator on $H^{s-m}$.
Note that for any $\delta >0 $
\[
\|W_{\varphi, \ell+(s+\gamma/2)^+} g(t) \|^2_{L^2(\RR^6)}  \leq             \delta  \|
W_{\varphi, \ell+1} g(t)\|^2_{L^2(\RR^6)} + C_\delta \|W_{\varphi, \ell} g(t)\|^2_{L^2(\RR^6)}\,.
\]
If $g_1, g_2 \in \tilde{\cE}^{2s}([0,T] \times \RR^6)$ and $g_1 \ge 0$ then by summing up \eqref{B-1}, \eqref{B-2} and \eqref{B-3} with
$m=s$ we have
$$
\frac{d}{d t} \|W_{\varphi,l} g(t)\|^2_{L^2(\RR^6)}
\leq C_\kappa \left(
\|g_1\|_{L^{\infty}_{t,x}(H^{2s})} +
\|g_2\|_{L^\infty_{t,x}( H^{2s}_{\ell-\gamma/2 + (2s+\gamma)^+})}
\right)\|W_{\varphi,l}g(t)\|^2_{L^2(\RR^6)},
$$
where $\|g_1\|_{L^{\infty}_{t,x}(H^{2s})}$ can be replaced by $\|g_1\|_{L^{\infty}_{t,x}(L^2)}$
if $\gamma >-3/2$. Here it should be noted that the term $\cD(\mu_\kappa g_1 , W_{\varphi,l} g)  \ge 0$ follows from
the non-negativity of $g_1$.
Therefore, $\| W_{\varphi,l}g(0)\|_{L^2(\RR^6)}=0$ implies $\|
W_{\varphi,l}g(t)\|_{L^2(\RR^6)}=0$ for all $t\in [0, T]$.
And this gives $g_1=g_2$, and concludes the  part 1)  of Proposition  \ref{prop-unique}.

For the part 2) of Proposition \ref{prop-unique},  using $\|h\|^2_{H^{s'}_{\gamma/2} }\le \delta \|h\|^2_{H^{s}_{\gamma/2}} + C_\delta\|h\|^2_{L^2_{\gamma/2}}$
and summing up \eqref{B-1}, \eqref{B-2} and \eqref{B-3} with
$m=0$, we have
\begin{align*}
&\frac{d}{d t} \|W_{\varphi,l} g(t)\|^2_{L^2(\RR^6)} \leq -\frac{1}{16} \Big(\int \cD(f_1(t), W_{\varphi,l} g(t)) dx
+ \kappa \|
W_{\varphi, \ell+1} g(t)\|^2_{L^2(\RR^6)} \Big)\\
& + \delta
 \left(
\|g_1\|_{L^{\infty}_{t,x}(H^{(2s-1)^+})} +
\|g_2\|_{L^\infty_{t,x}( H^{s}_{\ell-\gamma/2 + (2s+\gamma)^+})}
\right)\|W_{\varphi,l}g(t)\|^2_{L^2_x (H^s_{\gamma/2})} \\
&+ C_{\kappa,\delta} \left(
\|g_1\|_{L^{\infty}_{t,x}(H^{(2s-1)^+})} +
\|g_2\|_{L^\infty_{t,x}( H^{s}_{\ell-\gamma/2 + (2s+\gamma)^+})}
\right)\|W_{\varphi,l}g(t)\|^2_{L^2(\RR^6)},
\end{align*}
then the coercivity condition \eqref{cor-sol}(with  $(\gamma/2+s)^+<1$) together with \eqref{df} leads us to
\begin{align}\label{d-e}
&\frac{d}{d t} \|W_{\varphi,l} g(t)\|^2_{L^2(\RR^6)}
\lesssim \left(
\|g_1\|_{L^{\infty}_{t,x}(H^{(2s-1)^+})} +
\|g_2\|_{L^\infty_{t,x}( H^{s}_{\ell-\gamma/2 + (2s+\gamma)^+})}
\right)\|W_{\varphi,l}g(t)\|^2_{L^2(\RR^6)}\,,
\end{align}
where it should be noted that \eqref{df} holds with $\mu^{1/2}$ replaced by $\mu_\kappa(t,v)$.
Thus, the part 2) of Proposition  \ref{prop-unique} is proved.

When $g_1$ is not necessarily non-negative, by using Lemma \ref{differ-Gam-Q} we obtain
 \begin{align*}
\cB_1 &\le  \Big( Q(\mu_\kappa g_1 , W_{\varphi,l} g)\, , \,W_{\varphi,l} g\Big)_{L^2(\RR^6)} +
\delta \int \cD(\mu_\kappa \,|g_1|\,  , W_{\varphi,l} g) dx
+ C_\delta
 \|W_{\varphi,l}g\|^2_{L^2_x(L^2_{s+\gamma/2})} 
\end{align*}
instead of \eqref{B-1}.  Since Lemma \ref{upper-triple-Dirichlet} holds with $\sqrt \mu$ replaced by
$\mu_\kappa$\,, by means of \eqref{cor-sol-strong} we get
 \begin{align}\label{B-1-dautre}
\cB_1 &\le- (c_0 -\delta)  \int||| W_{\varphi,l} g)|||^2_{\Phi_\gamma}dx
+ C_\delta
 \|W_{\varphi,l}g\|^2_{L^2_x(L^2_{(s+\gamma/2)^+})} \,.
\end{align}
This estimate and \eqref{B-3}, together with \eqref{B-2} applied by Lemma \ref{upper-triple-Dirichlet}, imply
\eqref{d-e}. Hence the part 3) of Proposition \ref{prop-unique} is also proved.
\end{proof}

\noindent
{\bf Proof of Theorem \ref{uniqueness-small-pur} :}

If we set $g_j(t) = \mu^{-1/2}f_j(t)$ ($j=1,2$) and $g=g_1-g_2$, then we have
$$
\left\{\begin{array}{l}
g_t+v\cdot\nabla_x g =\Gamma (g_1, g)+\Gamma (g,g_2),\\
g|_{t=0}=0\,,
\end{array}\right.
$$
where
$
\Gamma (g, h)=\mu^{-1/2}Q(\mu^{1/2}g, \mu^{1/2}h)$. Take the inner product with $W_{\varphi, \ell} S_N(D_x)^2 W_{\varphi, \ell} g$ {where we choose
$\ell, \alpha$ such that $\ell_1 -(\gamma+2s) > \ell> \alpha>3/2$.  Then we
obtain
\begin{align*}
\frac 12 \frac{d}{d t} \|  W_{\varphi, \ell} g(t)\|^2_{L^2(\RR^6)} &=\Big(W_{\varphi,\ell}\,
\Gamma (g_1,\, g)+W_{\varphi, \ell}\, \Gamma(g,\, g_2)\, , W_{\varphi,\ell} g\Big)_{L^2(\RR^6)}\\
&- ( v\cdot \nabla_x (\varphi^{-1}) W_\ell g, W_{\varphi,\ell} g)_{L^2(\RR^6)}, 
\end{align*}
where the second  term on the right hand side is estimated by $\|W_{\varphi,\ell} g\|^2_{L^2(\RR^6)}$ because of  \eqref{dif-estime}.
We write the first term on the right hand side as
\begin{align*}
&
\Big(
\Gamma (g_1,\, W_{\varphi,\ell}\,g)\, , W_{\varphi,\ell} g\Big)_{L^2(\RR^6)}
+ \widetilde \cB_2 + \widetilde \cB_3\,,
\end{align*}
where $\widetilde \cB_2, \tilde \cB_3$ are defined by the same way as the above $\cB_2, \cB_3$ with $\Gamma^t$ replaced by $\Gamma$ and satisfy
the similar  estimates as \eqref{B-2} and \eqref{B-3}, respectively, that is,
\begin{align*}
\Big| \widetilde \cB_2\Big|& \le \delta  \int \cD(\mu^{1/2} {|g_1|} , W_{\varphi,\ell} g) dx
+ C_\delta \|g_1\|_{L^\infty_{t,x}(H^{(2s-1)^+})} \|W_{\varphi,\ell}g\|_{L^2_x(L^2_{s'+\gamma/2})} \|W_{\varphi,\ell}g\|_{L^2_x(L^2_{s+\gamma/2})}\,,
\\
\Big |\widetilde \cB_3 \Big|
&\lesssim
\|g_2\|_{L^\infty_{t,x}(H^{s}_{{\ell }-\gamma/2 + (2s+\gamma)^+})}
\|W_{\varphi,\alpha}g\|_{L^2_x(L^2_{\gamma/2})} \|W_{\varphi,\ell}g\|_{L^2_x(H^{s}_{\gamma/2})} \\
&\lesssim
\|g_2\|_{L^\infty_{t,x}(H^{s}_{{\ell} -\gamma/2 + (2s+\gamma)^+}}\Big( \delta \int |||W_{\varphi,\ell}g|||^2_{\Phi_\gamma}dx +
C_\delta \|W_{\varphi,\ell}g\|_{L^2(\RR^6)}\Big)
 \,,\notag
\end{align*}
where the non-isotropic norm $|||\,\cdot\,|||_{\Phi_\gamma}$ is recalled in  \eqref{trip-norm} .
By means of Lemma \ref{upper-triple-Dirichlet},  we have
\begin{align*}
\Big| \widetilde \cB_2\Big|&
\le \delta \|g_1\|_{L^\infty_{t,x}(H^{(2s-1)^+})} \, \int||| W_{\varphi,\ell}g|||^2_{\Phi_\gamma} dx
+ C'_\delta \|g_1\|_{L^\infty_{t,x}(H^{(2s-1)^+})} \|W_{\varphi,\ell}g\|^2_{L^2(\RR^6)}\,.
\end{align*}
On the other hand,  it follows from Proposition 2.1 of \cite{amuxy4-2} and \eqref{same-g}  that for suitable $C_1, C_2 >0$
we have
\begin{align}\label{gamma-hyouka}
 &
\Big(
\Gamma (g_1,\, W_{\varphi,\ell}\,g)\, , W_{\varphi,\ell} g\Big)_{L^2(\RR^6)} =
-\Big (\cL_1 \, \big(W_{\varphi,\ell} g\big)\, ,W_{\varphi,\ell} g\Big)_{L^2(\RR^6)}  +\Big( \Gamma (\tilde g_1,\, W_{\varphi,\ell} g)\, ,W_{\varphi,\ell} g \Big)_{L^2(\RR^6)} \\
& \le - C_1\int||| W_{\varphi,\ell}g|||^2_{\Phi_\gamma} dx\notag \\
& \quad + C_2\Big( \big(\sup_{[0,T]\times \RR_x^3} \|\tilde
g _1 \|_{L^2(\RR^3_v)} \big)\int||| W_{\varphi,\ell}g|||^2_{\Phi_\gamma} dx+ ||W_{\varphi,\ell}g||^2_{L^2(\RR^6}\Big)\,,\notag
\end{align}
where $g_1 =\sqrt\mu +\tilde g_1$.
Therefore, \eqref{IV-2.2} and the smallness condition \eqref{small} imply
\begin{align*}
&\frac{d}{d t} \|W_{\varphi,\ell} g(t)\|^2_{L^2(\RR^6)}\\
& \le -\Big( C_1 - C_2 \varepsilon_0
-\delta\big( \|g_1\|_{L^\infty_{t,x}(H^{(2s-1)^+})} + \|g_2\|_{L^\infty_{t,x}( H^s_{ \ell+2s+\gamma/2})}\big ) \Big )
\int||| W_{\varphi,\ell}g|||^2_{\Phi_\gamma} dx\\
&+  C_\delta \left(
\|g_1\|_{L^{\infty}_{t,x}(H^{(2s-1)^+})} +
\|g_2\|_{L^\infty_{t,x}( H^s_{ \ell+ 2s+\gamma/2})}
\right)\|W_{\varphi,\ell}g(t)\|^2_{L^2(\RR^6)}.
\end{align*}
which  shows  $g(t) =0$ for all $t \in[0,T]$  if $\varepsilon_0 < C_1/C_2$. Thus we have proved  Theorem \ref{uniqueness-small-pur}.

\smallbreak
\noindent
{\bf Proof of Theorem \ref{unique-x-global}}
\smallbreak

Let now $f_j(t) \in \tilde {\mathcal B}^s([0,T]\times{\mathbb R}^6_{x, v})$, ($j=1,2$)
and set
$g_j(t) = \mu_\kappa(t)^{-1} f_j(t)$ for a suitable
$\mu_\kappa(t) =e^{-(\rho-\kappa t)(1+ |v|^2)}
$. Then we have for any $\ell \in \NN$
\[
g_j(t)  \in L^\infty([0,
T]\times \RR^3_x ;\,\, L^2_\ell({\mathbb R}^3_{v})) \mathop{\cap} L^2([0,T]; L^\infty(\RR_x^3; H^m_{\ell}(\RR^3_v))).
\]
The proof of Theorem \ref{unique-x-global} is reduced to

\begin{prop}\label{unique-prop-th1-2}
Assume that $0<s<1$ and $\max \{ -3, -3/2-2s \} <\gamma \le -2s $.
 Let $ 0<T < +\infty$ and $\ell_2 \ge 3 $.
Suppose that the Cauchy problem
\eqref{E-Cauchy-B} admits two solutions
$$
g_1,\, g_2\in  L^\infty([0,
T]\times \RR^3_x ;\,\, L^2_{\ell_2}({\mathbb R}^3_{v})) \mathop{\cap} L^2([0,T]; L^\infty(\RR_x^3; H^s_{\ell_2}(\RR^3_v)))\,.
$$
If
\eqref{cor-sol-strong}
is satisfied for $f = \mu_\kappa(t) g_1$
then $g_1(t) \equiv g_2(t)$ for all $t \in [0,T]$\,.
\end{prop}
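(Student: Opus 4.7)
The plan is to adapt the energy argument used in the proof of part 3) of Proposition~\ref{prop-unique}, but to accommodate the fact that $g_2$ is only controlled in $L^2([0,T];\, L^\infty(\RR_x^3;\, H^s_{\ell_2}))$ rather than in $L^\infty_{t,x}(H^s_{\ell_2})$. Set $g=g_1-g_2$, which satisfies
\[
g_t + v\cdot\nabla_x g + \kappa (1+|v|^2) g = \Gamma^t(g_1, g) + \Gamma^t(g, g_2), \qquad g|_{t=0}=0,
\]
and choose $\alpha,\ell$ with $3/2<\alpha<\ell<\ell_2$, recalling that $(\gamma+2s)^+ =0$ by hypothesis. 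The first step is to take $W_{\varphi,\ell} S_N(D_x)^2 W_{\varphi,\ell} g$ as a test function, integrate over $\RR^6$, and let $N\to\infty$; the transport term is killed by the commuting $S_N(D_x)$, while the $v\cdot \nabla_x \varphi^{-1}$ term is absorbed using \eqref{dif-estime}. This yields an energy identity of the form
\[
\tfrac12 \tfrac{d}{dt}\|W_{\varphi,\ell}g\|_{L^2(\RR^6)}^2 + \kappa \|W_{\varphi,\ell+1}g\|_{L^2(\RR^6)}^2 = \cB_1 + \cB_2 + \cB_3 + O\bigl(\|W_{\varphi,\ell}g\|^2_{L^2(\RR^6)}\bigr),
\]
with the same splitting as in the proof of Proposition~\ref{prop-unique}, namely $\cB_1=(\Gamma^t(g_1, W_{\varphi,\ell}g), W_{\varphi,\ell}g)$, $\cB_2$ the $W_{\varphi,\ell}$--commutator with $\Gamma^t(g_1,\cdot)$, and $\cB_3=(\Gamma^t(g,g_2), W_{\varphi,\ell}^2 g)$.

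Next, I would dispose of $\cB_1$ exactly as in the derivation of \eqref{B-1-dautre}: applying Lemma~\ref{differ-Gam-Q} with $\beta=0$ followed by Lemma~\ref{upper-triple-Dirichlet} to replace $\Gamma^t$ by $Q(\mu_\kappa g_1,\cdot)$ up to an absorbable $\cD(\mu_\kappa|g_1|, W_{\varphi,\ell}g)$, then invoking the strong coercivity hypothesis \eqref{cor-sol-strong} for $f_1=\mu_\kappa(t)g_1$ to obtain
\[
\cB_1 \le -(c_0-\delta)\!\int\! |||W_{\varphi,\ell}g|||_{\Phi_\gamma}^2\,dx + C_\delta \|W_{\varphi,\ell}g\|^2_{L^2(\RR^3_x; L^2_{(s+\gamma/2)^+}(\RR^3_v))}.
\]
For $\cB_2$, Lemma~\ref{commu-Gam} combined with Lemma~\ref{upper-triple-Dirichlet} gives a bound by $\delta\int |||W_{\varphi,\ell}g|||_{\Phi_\gamma}^2 dx$ plus $C_\delta \|g_1\|_{L^\infty_{t,x}(H^{(2s-1)^+})}\|W_{\varphi,\ell}g\|_{L^2(\RR^6)}^2$, where the coefficient is uniformly finite since $g_1\in \tilde{\mathcal B}^s$ supplies $g_1\in L^\infty_{t,x}(L^2_{\ell_2})$ and the Gaussian weight absorbs any derivative loss in $v$.

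The crucial step, and the main obstacle, is the new treatment of $\cB_3$: rather than extracting $\|g_2\|_{L^\infty_{t,x}(H^s)}$ as in \eqref{B-3}, I must leave the $L^\infty_x H^s_{\ell_2}$ norm of $g_2$ as a function of $t$. Applying Lemma~\ref{upp-l-2} with $m=0$ at each fixed $t$ (integrating in $x$ after pulling out the $L^\infty_x$ norm of $g_2$) yields
\[
|\cB_3(t)|\lesssim \|g_2(t)\|_{L^\infty_x(H^s_{\ell_2})}\, \|W_{\varphi,\alpha}g(t)\|_{L^2_x(L^2_{s+\gamma/2})}\, \|W_{\varphi,\ell}g(t)\|_{L^2_x(H^s_{\gamma/2})},
\]
which, by an AM--GM splitting together with \eqref{IV-2.2}, is bounded by $\delta\int|||W_{\varphi,\ell}g|||_{\Phi_\gamma}^2 dx + C_\delta\|g_2(t)\|_{L^\infty_x H^s_{\ell_2}}^2 \|W_{\varphi,\ell}g(t)\|_{L^2(\RR^6)}^2$. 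Choosing $\delta$ small enough to absorb the dissipation into the coercive part from $\cB_1$, the resulting differential inequality is
\[
\tfrac{d}{dt}\|W_{\varphi,\ell}g(t)\|_{L^2(\RR^6)}^2 \le a(t)\, \|W_{\varphi,\ell}g(t)\|_{L^2(\RR^6)}^2,
\]
with $a(t) = C\bigl(1 + \|g_1\|_{L^\infty_{t,x}(H^{(2s-1)^+})} + \|g_2(t)\|_{L^\infty_x H^s_{\ell_2}}^2\bigr)\in L^1([0,T])$ thanks to the $\tilde{\mathcal B}^s$ membership of $g_2$. Since $g(0)=0$, Gronwall's inequality yields $W_{\varphi,\ell}g(t)\equiv 0$, hence $g_1\equiv g_2$, which completes the proof of Proposition~\ref{unique-prop-th1-2} and therefore of Theorem~\ref{unique-x-global}.
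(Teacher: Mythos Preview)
Your overall strategy matches the paper's proof: same energy identity, same $\cB_1,\cB_2,\cB_3$ decomposition, same use of \eqref{B-1-dautre} for $\cB_1$, Lemma~\ref{commu-Gam} plus Lemma~\ref{upper-triple-Dirichlet} for $\cB_2$, and Lemma~\ref{upp-l-2} with $m=0$ for $\cB_3$. The only substantive slip is in your treatment of $\cB_2$: you claim $\|g_1\|_{L^\infty_{t,x}(H^{(2s-1)^+})}$ is finite because ``the Gaussian weight absorbs any derivative loss in $v$''. This is incorrect: $g_1=\mu_\kappa^{-1}f_1$ carries no Gaussian factor, and in any case multiplication by a Gaussian yields \emph{decay}, not \emph{regularity}. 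When $s\ge 3/4$ (so that $(2s'-1)^+>0$ for every admissible $s'$ in Lemma~\ref{commu-Gam}), the hypothesis only gives $g_1\in L^\infty_{t,x}(L^2_{\ell_2})\cap L^2_t(L^\infty_x(H^s_{\ell_2}))$, and the $H^{(2s'-1)^+}$ norm need not be uniformly bounded in $t$.

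The repair is immediate and is exactly what the paper does: keep the $g_1$-coefficient time-dependent, bounding it by $\|g_1(t)\|_{L^\infty_x(H^s)}$ (which dominates $\|g_1(t)\|_{L^\infty_x(H^{(2s'-1)^+})}$ since $(2s'-1)^+<s$). Then your Gronwall coefficient becomes
\[
a(t)=C\bigl(1+\|g_1(t)\|_{L^\infty_x(H^s)}+\|g_2(t)\|_{L^\infty_x(H^s_{\ell_2})}^2\bigr),
\]
which lies in $L^1([0,T])$ because $L^2([0,T])\subset L^1([0,T])$ on a bounded interval, and your Gronwall closing goes through. The paper instead integrates the differential inequality to obtain $\max_{[0,T_1]}\|W_{\varphi,\ell}g\|^2\le \varepsilon(T_1)\max_{[0,T_1]}\|W_{\varphi,\ell}g\|^2$ with $\varepsilon(T_1)\to 0$, and then iterates in time; once $a\in L^1$ this is equivalent to (indeed a reformulation of) your Gronwall argument.
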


\begin{proof}
Noting $\gamma+2s \le 0$, we estimate more carefully $\cB_2, \cB_3$ in the proof of Proposition \ref{prop-unique}.
It follows from Lemma \ref{commu-Gam} and Lemma \ref{upper-triple-Dirichlet} that
\begin{align*}
\Big| \cB_2\Big| \le \delta  \int |||W_{\varphi,l} g|||_{\Phi_\gamma}dx
+ C_\delta \|g_1(t)\|_{L^\infty_{x}(H^s)} \|W_{\varphi,l}g(t)\|^2_{L^2(\RR^6)}\,.
\end{align*}
Lemma \ref{upp-l-2} with $\ell = \ell -\gamma/2$ and $m=  0 $ yields
\begin{align*}
\Big |\cB_3 \Big|
&\lesssim
\|g_2(t)\|_{L^\infty_{x}(H^{s}_{{ \ell}+|\gamma|/2 })}
\|W_{\varphi,\alpha}g(t)\|_{L^2(\RR^6)} \|W_{\varphi,\ell}g(t)\|_{L^2_x(H^{s}_{\gamma/2})}  \\
&\le \delta \|W_{\varphi,\ell}g(t)\|^2_{L^2_x(H^{s}_{\gamma/2})}  + C_\delta
\|g_2(t)\|^2_{L^\infty_{x}(H^{s}_{{ \ell}+|\gamma|/2 })}
\|W_{\varphi,\alpha}g(t)\|^2_{L^2(\RR^6)} \,.
\end{align*}
Above estimates for $\cB_j$ ($j=2,3$) and \eqref{B-1-dautre}  imply that 
$$
\max_{t \in [0,T_1]} \|W_{\varphi,l}g(t)\|^2_{L^2(\RR^6)} \le \|W_{\varphi,l}g(0)\|^2_{L^2(\RR^6)} +
 \varepsilon (T_1) \max_{t \in [0,T_1]} \|W_{\varphi,l}g(t)\|^2_{L^2(\RR^6)} \,,
$$
where
\[
 \varepsilon (T_1)  \lesssim  T_1 +T_1 \|g_1\|_{L^2([0,T_1];L_x^\infty(H^s))} + \|g_2\|^2_{L^2([0,T_1];L_x^\infty(H^s_{\ell+|\gamma|/2}))}.
\]
By assumption $\varepsilon (T_1)\rightarrow  0$ as $T_1 \rightarrow 0\,$.
Therefore there exists a $T_* >0$ such that $g(t) \equiv 0$ for $t \in [0,T_*]$. Replacing the initial time $0$ by $T_*$ if needed, we
finally obtain $g(t) \equiv 0$ for $t\in [0,T]$.
\end{proof}

\subsection{Uniqueness of known solutions}
 Firstly we consider the uniqueness of global solutions given in \cite{amuxy4-2, amuxy4-3}.
Theorem \ref{uniqueness-small-pur}  is applicable to show the uniqueness
of global solutions in Theorem 1.5 of \cite{amuxy4-2}, and also solutions in Theorem 1.1 of \cite{amuxy4-3}
because the global solutions given there are of the form $\mu + \sqrt \mu \tilde g$ with
\[
 \enskip \tilde g(t,x,v) \in L^\infty([0,\infty[; {\cH}_\ell^m(\RR^6))
\]
for $m \ge 6$ and a suitable $\ell$.  It should be noted that the uniqueness holds under the smallness condition
\eqref{small} of
the perturbation $\tilde g$, without the non-negativity of solution $\mu + \mu^{1/2}\tilde g $.

It follows from  Corollary
\ref{coro-coercivity-1} that the smallness condition \eqref{small} implies
\eqref{cor-sol-strong} for the global solution given in Theorem 1.4 of \cite{amuxy4-2}, because $\tilde g$ there
satisfies
 $\|\tilde g \|_{L^\infty([0,\infty) ; H^3(\RR^3_x ; L^2(\RR^3_v))} < \varepsilon_0$ and for any
$0<T<+\infty$
\[
\int_0^T \Big(\sum_{| \alpha|\le 3} \int |||\partial_x^\alpha \tilde g(t,x)|||^2_{\Phi_\gamma}dx \Big) dt < + \infty\,.
\]
Therefore Theorem \ref{unique-x-global} shows the uniqueness of the solution given in Theorem 1.4 of
\cite{amuxy4-2} by means of the Sobolev embedding.

In \cite{amuxy6},
bounded solutions of the Boltzmann equation 
in the whole space have been constructed  without specifying
any limit behaviors at the spatial infinity and without assuming the smallness condition on initial data.
More precisely, it has been shown  that if the initial data
is non-negative and belongs to a uniformly local Sobolev space
with the Maxwellian decay property in the velocity variable, then the Cauchy problem of the Boltzmann
equation possesses
a non-negative local solution in the same function
space, both for the cutoff and non-cutoff collision cross section  with
mild singularity. Since  solutions there are non-negative and belong to  ${ \mathcal E}^{2s}([0,T]\times{\mathbb R}^6_{x, v})$,
Theorem \ref{uniqueness} yields their uniqueness.

\section{Non-negativity of solutions}\label{sect-IV-8}
\smallskip \setcounter{equation}{0}
The purpose of this section is to show the non-negativity of solutions constructed in
\cite{amuxy4-2, amuxy4-3}, where the solution $f = \mu + \sqrt \mu g$ is a perturbation around a normalized Maxwellian distribution $\mu(v)$, that means $g$ is solution of following Cauchy problem :
\begin{equation}\label{IV-5.1}
\left\{\begin{array}{l} \partial_t g + v\,\cdot\,\nabla_x
g +\cL(g)=
\Gamma (g, g), \\ g|_{t=0} = g_0\, ,
\end{array} \right.
\end{equation}
where
$$
\cL(g)=-\Gamma(\sqrt{\mu}, g)-\Gamma(g, \sqrt{\mu})=\cL_1(g)+\cL_2(g).
$$
It is the limit of a sequence constructed successively by the following linear Cauchy problem,
\begin{equation}\label{4.4.3}
\left\{\begin{array}{l} \partial_t f^{n+1} + v\,\cdot\,\nabla_x
f^{n+1} =Q (f^n, f^{n+1}), \\ f^{n+1}|_{t=0} = f_0 =\mu + \mu^{1/2}
 g_0\geq 0\, ,
\end{array} \right.
\end{equation}
if one returns to the original Boltzmann equation.
Hence the non-negativity of solution comes  from the following induction argument: Let $f^0=f_0 = \mu + \mu^{1/2} g_0 \geq 0$, suppose  that
\begin{equation}\label{4.4.1+100}
f^n = \mu + \mu^{1/2}  \tilde g^n \geq 0\,,
\end{equation}
for some $n\in\NN$. Then  (\ref{4.4.1+100}) is true
for $n+1$.


\begin{prop}\label{induct-1}
Assume that $\max\{-3, -\frac32 -2s\}<\gamma<2-2s $.  Let $\{f^n\}$ is a sequence of solutions of Cauchy problem \eqref{4.4.3} with
\[
\exists \rho >0 \, \,; \,\, e^{\rho \la v\ra^2}  f^n(t,x,v)  \in L^\infty([0,T] \times \RR^3_x
;  H^N (\RR^3_v) )\enskip
\mbox{for} \enskip \forall n = 1,2,3,\cdots\,,
\]
for some $N \ge 4$. Then for any $n\in\NN$,  $f^n \geq 0$ on $ [0,T]$ implies $f^{n+1} \geq 0$  on the same interval.
\end{prop}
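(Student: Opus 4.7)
The strategy is to study $F(t,x,v) := (f^{n+1}(t,x,v))_- = \max(-f^{n+1}(t,x,v),0)$, which is nonnegative with $F(0) = (f_0)_- \equiv 0$, and to show that it satisfies a closed Gronwall inequality
\[
\frac{d}{dt}\|F(t)\|_{L^2(\RR^6)}^2 \lesssim \|F(t)\|_{L^2(\RR^6)}^2,
\]
which, combined with $F(0) \equiv 0$, forces $F \equiv 0$ on $[0,T]$ and hence $f^{n+1} \ge 0$. The assumed Maxwellian decay and Sobolev regularity $e^{\rho\la v\ra^2} f^{n+1} \in L^\infty([0,T]\times\RR^3_x; H^N(\RR^3_v))$ with $N\ge 4$ guarantee that every integral written below makes sense, once $f^{n+1}$ has first been mollified by $S_N(D_x) S_N(D_v)$ as in Section \ref{sect-IV-3} and the limit is taken at the end.

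Formally, multiplying $\partial_t f^{n+1} + v\cdot\nabla_x f^{n+1} = Q(f^n,f^{n+1})$ by $-F$ and integrating in $(x,v)$, using $f^{n+1}\cdot F = -F^2$ pointwise together with $\int v\cdot\nabla_x F^2\,dv\,dx=0$, gives
\[
\tfrac{1}{2}\tfrac{d}{dt}\|F(t)\|_{L^2(\RR^6)}^2 \;=\; -\int F\,Q(f^n,f^{n+1})\,dv\,dx.
\]
Decomposing $f^{n+1} = f^{n+1}_+ - F$ and using linearity of $Q$ in the second argument splits the right-hand side into $I_1 + I_2$ with
\[
I_1 := -\int F\,Q(f^n, f^{n+1}_+)\,dv\,dx, \qquad I_2 := \int F\,Q(f^n, F)\,dv\,dx.
\]

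For $I_1$, write $Q(f^n, f^{n+1}_+)(v) = \int B\bigl[(f^n)'_* (f^{n+1}_+)' - f^n_* f^{n+1}_+\bigr]d\sigma\,dv_*$. On the support of $F$ (where $f^{n+1}<0$) one has $f^{n+1}_+ = 0$, so the loss term disappears pointwise when multiplied by $F$, leaving only the gain, and thus
\[
I_1 = -\iiint F(v)\,(f^n)'_*\,(f^{n+1}_+)'\,B\,d\sigma\,dv_*\,dv\,dx \le 0,
\]
since $f^n\ge 0$ by the induction hypothesis and $f^{n+1}_+\ge 0$. It is crucial that gain and loss be kept together (neither converges separately under the non-cutoff assumption); the pointwise vanishing $F\cdot f^{n+1}_+\equiv 0$ is what kills the loss piece and makes the gain piece integrable. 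For $I_2$, apply the usual pre/post collisional symmetrization
\[
I_2 = -\tfrac{1}{2}\iiiint B\,f^n_*\,(F' - F)^2 \,d\sigma\,dv_*\,dv\,dx \;+\; \tfrac{1}{2}\iiiint B\,f^n_*\,\bigl(F^2 - (F')^2\bigr)\,d\sigma\,dv_*\,dv\,dx.
\]
The first quadratic form is $\le 0$ because $f^n\ge 0$, while the second is treated by the cancellation lemma of \cite{al-1} (as in the proofs of Proposition \ref{prop-coercivity-1} and Lemma \ref{upper-triple-Dirichlet}) and is bounded by $C\|f^n\|_{L^\infty_x L^1_{|\gamma|+2s}}\,\|F(t)\|_{L^2(\RR^6)}^2$, with finite constant thanks to the Gaussian decay of $f^n$. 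Combining $I_1\le 0$ with this bound on $I_2$ yields the desired Gronwall inequality.

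The main technical obstacle is the rigorous justification of the computation: $F$ is only Lipschitz in $f^{n+1}$ and is not regular enough to be used directly as a test function in the equation. One therefore works with $F_N := \bigl(S_N(D_x) S_N(D_v) f^{n+1}\bigr)_-$, tests the regularized equation for $S_N(D_x)S_N(D_v) f^{n+1}$ against $-F_N$, and controls the commutators $[S_N(D_x)S_N(D_v),\,Q(f^n,\cdot)]$ by combining Proposition \ref{comu-deriv-2}, Corollary \ref{IV-coro-2.15}, Lemma \ref{lemm3.3} and the weighted commutator bound \eqref{IV-4.3}. The Gaussian decay together with the assumed $H^N_v$ regularity ($N\ge 4$) leaves enough margin to absorb both the derivative and weight losses coming from the mollifier commutators and from the singular factor $|v-v_*|^\gamma b(\cos\theta)$, in particular in the split $\Phi_c + \Phi_{\bar c}$ and in the soft potential subrange $\gamma<0$. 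Letting $N\to\infty$ then gives the Gronwall inequality above, and closes the induction.
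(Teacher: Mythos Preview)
Your overall strategy --- testing the equation against the negative part, using the sign of the gain term together with coercivity --- is the same Stampacchia-type argument the paper uses. However, two genuine gaps prevent your version from closing.

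First, the quantity $\|F(t)\|_{L^2(\RR^6)}$ on which you want to Gronwall need not be finite. The hypothesis only gives $e^{\rho\langle v\rangle^2} f^{n+1} \in L^\infty([0,T]\times\RR^3_x; H^N(\RR^3_v))$, i.e.\ $L^\infty$ in $x$, with no decay or integrability in the space variable. The paper handles this by inserting the weight $\varphi(v,x)^{-2} = (1+|v|^2+|x|^2)^{-\alpha}$, $\alpha>3/2$, and testing against $g^{n+1}_-\,\varphi^{-2}$; the extra term $v\cdot\nabla_x(\varphi^{-2})$ produced by the transport operator is harmless because $|v\cdot\nabla_x\varphi^{-2}|\lesssim\varphi^{-2}$.

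Second, and more seriously, your bound on the cancellation-lemma term is wrong when $\gamma+2s>0$. The cancellation lemma produces $\int (S\ast f^n)\,F^2$ with $S(z)\sim|z|^\gamma$, and for $\gamma>0$ this is of order $\|f^n\|_{L^1_\gamma}\|F\|^2_{L^2_{\gamma/2}}$, not $\|F\|^2_{L^2}$. More generally the terms coming from $\big(Q(f^n,F),F\big)$ carry the weight $\langle v\rangle^{s+\gamma/2}$ (cf.\ Lemmas~\ref{differ-Q-cD} and~\ref{coer-gamma}), and a Gronwall on the unweighted $L^2$ norm cannot absorb them. The paper gets around this by first conjugating: setting $g^{n+1}=\mu_\kappa(t)^{-1}f^{n+1}$ with $\mu_\kappa(t)=e^{-(\rho-\kappa t)\langle v\rangle^2}$ adds the dissipative term $+\kappa\langle v\rangle^2 g^{n+1}$ on the left of~\eqref{maytta}. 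The restriction $\gamma<2-2s$ in the hypothesis is precisely what allows $\|\varphi^{-1}g^{n+1}_-\|^2_{L^2_{s+\gamma/2}}$ to be interpolated between $\kappa\|\langle v\rangle\,\varphi^{-1}g^{n+1}_-\|^2_{L^2}$ and $\|\varphi^{-1}g^{n+1}_-\|^2_{L^2}$, closing the inequality. In your argument the condition $\gamma<2-2s$ plays no role, which is a clear sign that a mechanism is missing.

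A smaller remark: your proposed justification via $F_N=(S_N(D_x)S_N(D_v)f^{n+1})_-$ is problematic because $(\cdot)_-$ does not commute well with mollifiers, and the commutator lemmas of Section~\ref{sect-IV-2} do not apply to $F_N$. The paper instead uses directly that $|||g_-|||_{\Phi_\gamma}\le|||g|||_{\Phi_\gamma}$ together with the assumed $H^N$ regularity ($N\ge4$) to make sense of all pairings without mollification.
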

\begin{proof}
Taking a $\kappa >0$ such that $\frac{\rho}{2\kappa} > T$, we set
 $g^n(t,x,v) =\mu_\kappa(v)^{-1} f^n(t,x,v)$  with $\mu_\kappa(t) = e^{-(\rho-\kappa t)\la v\ra^2}$ then it follows from \eqref{4.4.3} that
\begin{equation}\label{maytta}
\partial_t g^{n+1} + v\,\cdot\,\nabla_x
g^{n+1} + \kappa \la v \ra^2  g^{n+1 }= \Gamma^t (g^n, g^{n+1})\,.
\end{equation}
We notice that for any $\ell \in \NN$
\[
g^n \in L^\infty([0,T] \times \RR^3_x
;  H^N_\ell (\RR^3_v) )\,
\]
so that $\sup_{t,x}  |||g^n |||_{\Phi_\gamma} < \infty$.
If $g$ satisfies $|||g||| < \infty$ and if $g_{\pm}= \pm \max(\pm g, 0)$, then we have
\[
|||g_{+} |||^2_{\Phi_\gamma} + |||g_-|||^2_{\Phi_\gamma} \leq |||g|||^2_{\Phi_\gamma},
\]
because
\begin{align*}
|||g|||^2_{\Phi_\gamma}=& \iiint B \mu_*\, \Big( (g'_{+}  + g'_{-} )-( g_+ + g_{-})\,\Big)^2\, \notag \\
&+
\iiint B  (g_{*,+}  +  g_{*,-}  ) ^2 \big(\sqrt{\mu'}\,\, - \sqrt{\mu}\,\,
\big)^2\, \, \\
=& |||g_{+} |||^2_{\Phi_\gamma} + |||g_-|||^2_{\Phi_\gamma}  - 2 \iiint b(\cos\theta) \Phi(|v-v_*|)\mu_*\, \big(g'_{+} g_{-}+  g_+ g'_{-}\big)
\end{align*}
and the third term is non-negative. Therefore $g^n_- \in L^\infty_{t,x} (H^s_\ell(\RR^3))$.
Take the convex function $\beta (s) = \frac 1 2 (s^- )^2= \frac 1 2
s\,(s^- )  $ with $s^-=\min\{s, 0\}$.  Let $\varphi(v,x) = (1+|v|^2 +|x|^2)^{\alpha/2}$ with $\alpha >3/2$, and notice that
$$
\beta_s (g^{n+1}) \varphi(v,x)^{-2}=\left(\frac{d}{ds}\,\,\beta
\right)(g^{n+1}) \varphi(v,x)^{-2}=g^{n+1}_{-} \varphi(v,x)^{-2}\in
L^\infty([0,T];L^1(\RR_x^3; L^2(\RR^3_v)),
$$
because $\tilde g^{n+1} \in L^\infty_t(H^N(\RR^6))$ with $N \geq 4$ implies $g^{n+1} \in L_{t,x}^\infty(L^2_v)$.
 Multiplying \eqref{maytta} by $\beta_s (g^{n+1})\varphi(v,x)^{-2}$ $
= g_-^{n+1}\varphi(v,x)^{-2}$ we have
\begin{align*}
&\frac{d}{dt} \int_{\RR^6} \beta ( g^{n+1}) \varphi(v,x)^{-2}dxdv
+\kappa \int_{\RR^6}   \la v \ra^2\beta ( g^{n+1}) \varphi(v,x)^{-2}dxdv\\
&\qquad =\int_{\RR^6}
\Gamma^t(g^n,\, g^{n+1})\,\, \beta_s(g^{n+1}) \varphi(v,x)^{-2} \,\,dxdv \\
&\qquad- \int_{\RR^6} { v\,\cdot\, \nabla_x \enskip  (  \beta
(g^{n+1}) \varphi(v,x)^{-2}) }dxdv - \int_{\RR^6} {\big (\varphi(v,x)^{2}\,\,v\, \cdot\,
\nabla_x \,\varphi(v,x)^{-2} \big) }\enskip  \beta (g^{n+1})\varphi(v,x)^{-2}dxdv,
\end{align*}
where the first term on the right hand side is well defined because $g^{n+1} , g^{n+1}_- \in
L^\infty_{t,x} (H^s_\ell)$.
Since the second term vanishes
and $|v\, \cdot\, \nabla_x\, \varphi(v,x)^{-2} | \leq C \varphi(v,x)^{-2}$,
we obtain
\begin{align}\label{gronwall}
&\frac{d}{dt} \int_{\RR^6}  \beta ( g^{n+1}) \varphi(v,x)^{-2}dxdv + \kappa \int_{\RR^6}   \la v \ra^2\beta (g^{n+1})
\varphi(v,x)^{-2} dxdv \\
&\quad \leq
\int_{\RR^6}  \Gamma^t(g^n, g^{n+1} ) \beta_s(g^{n+1}) \varphi(v,x)^{-2}dxdv + C
\int_{\RR^6}   \beta ( g^{n+1}) \varphi(v,x)^{-2}dxdv. \notag
\end{align}
The first term on the right hand side is equal to
\begin{align*}
&\int_{\RR^6}  \Gamma^t(g^n, g^{n+1}_{-} ) g^{n+1}_{-} \varphi(v,x)^{-2} dxdv
 + \iiiint   B \, \mu_{\kappa,*}( g^n_*)' ( g^{n+1}_{+})'  g^{n+1}_{-} \varphi(v,x)^{-2}dvdv_* d\sigma dx \\
 &=A_1 + A_2\,.
\end{align*}
{}From the induction hypothesis, the second term $A_2$ is non-positive.

On the other hand, 
we have
\begin{align*}
A_1 &= \int( \Gamma^t  (g^n, g_-^{n+1}),  \varphi(v,x)^{-2}g_-^{n+1})_{L^2(\RR_v^3)} dx
\\
&= \int( \Gamma^t  (g^n, \varphi(v,x)^{-1} g_-^{n+1}),  \varphi(v,x)^{-1}g_-^{n+1})_{L^2(\RR_v^3)} dx\\
&+  \int( \varphi(v,x)^{-1} \Gamma^t  (g^n, g_-^{n+1}) -\Gamma^t  (g^n, \varphi(v,x)^{-1} g_-^{n+1}),   \varphi(v,x)^{-1}g_-^{n+1})_{L^2(\RR_v^3)} dx. \\
&=A_{1,1} + A_{1,2}.
\end{align*}
It follows from Lemma \ref{coer-gamma}  that
\[
A_{1,1} \leq  -\frac 1 4 \int \cD(\mu_\kappa g^n, \varphi(v,x)^{-1}g^{n+1}_-) dx +  C
\|g^n\|_{L^\infty([0,T] \times \RR_x^3; H^{2s})} \int \|     \varphi(v,x)^{-1}  g^{n+1}_-\|^2_{L^2_{s+\gamma/2}} dx.
\]
By means of
 Lemma \ref{commu-Gam}  we have
 \begin{align*}
 |A_{1,2} | & \le \delta  \int\Big( \cD(\mu_\kappa g^n, \varphi(v,x)^{-1}g^{n+1}_-)dx + \|g^n\|_{L^\infty([0,T] \times \RR_x^3; H^{s})} \int \|     \varphi(v,x)^{-1}  g^{n+1}_-\|^2_{L^2_{s+\gamma/2}} dx \Big)\\
 &\qquad \qquad  + C_\delta
 \|g^n\|_{L^\infty([0,T] \times \RR_x^3; H^{s})} \int \|     \varphi(v,x)^{-1}  g^{n+1}_-\|^2_{L^2} dx.
\end{align*}
Therefore
\begin{align*}
A_1
\lesssim \|g^n\|_{L^\infty([0,T] \times \RR_x^3; H^{2s})} \int \|     \varphi(v,x)^{-1}  g^{n+1}_-\|^2_{L^2_{s+\gamma/2}} dx\,,
\end{align*}
where we have used the fact that $\cD(\mu_\kappa g^n, \varphi(v,x)^{-1}g^{n+1}_-) \ge 0$ because of
the induction hypothesis $\mu_\kappa g^n=f^n \ge 0$. If $\gamma +2s <2$ then we have
\[
\int \|     \varphi(v,x)^{-1}  g^{n+1}_-\|^2_{L^2_{s+\gamma/2}} dx \leq
\delta \int_{\RR^6}   \la v \ra^2\beta (g^{n+1})
\varphi(v,x)^{-2} dxdv + C_\delta \int_{\RR^6}   \beta (g^{n+1})
\varphi(v,x)^{-2} dxdv.
\]
Therefore, from  \eqref{gronwall} we have
\begin{align}\label{final-gronwall-g}
&\frac{d}{dt} \int_{\RR^6}   \beta ( g^{n+1})  \varphi(v,x)^{-2} dxdv
\lesssim  \Big(1+  \|g^n\|_{L^\infty([0,T] \times \RR_x^3; H^{2s})}\Big) \int_{\RR^6} \beta(  g^{n+1})  \varphi(v,x)^{-2} dx dv\,.
\end{align}
Since $\beta(g^{n+1})|_{t=0} =0$ we obtain $\int_{\RR^6} \beta(  g^{n+1})  \varphi(v,x)^{-2} dx dv=0$ for all $t \in
[0,T]$,
which  implies that $g^{n+1}(t,x,v) \ge 0$ for $(t,x,v) \in [0,T]\times \RR^6$. This implies that $f^{n+1}\ge 0$ and then it completes the proof of the proposition.\end{proof}


\begin{prop}\label{induct-2}
Assume that $\gamma\ge 2-2s $.  Let $\{f^n\}$ with $f^n=\mu+\mu^{\frac 12}{\tilde g}^n$ be
 sequence of solutions of Cauchy problem \eqref{4.4.3} with
 $\sup_{[0,T]\times \RR_x^3} |||\tilde g^n|||_{\Phi_\gamma} $ being
 sufficiently small uniform in $n$. If
\[
e^{\frac 12 \la v\ra^2}  f^n(t,x,v)  \in L^\infty([0,T] \times \RR^3_x
;  H^N (\RR^3_v) )\enskip
\mbox{for} \enskip \forall n = 1,2,3,\cdots\,,
\]
for some $N \ge 4$, then for any $n\in\NN$,  $f^n \geq 0$ on $ [0,T]$ implies $f^{n+1} \geq 0$  on the same interval.
\end{prop}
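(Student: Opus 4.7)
The plan is to adapt the induction scheme of Proposition \ref{induct-1} to the hard potential regime $\gamma\ge 2-2s$, where the elementary absorption used there fails because $s+\gamma/2\ge 1$, so the dissipation term $\kappa\int\la v\ra^2\beta(g^{n+1})\varphi^{-2}$ no longer controls $\|\varphi^{-1}g^{n+1}_-\|^2_{L^2_{s+\gamma/2}}$. The new ingredient will be to synthesize the missing dissipation from the Dirichlet form $\cD(f^n,h)$ itself, by using the smallness of $\sup_{[0,T]\times\RR^3_x}|||\tilde g^n|||_{\Phi_\gamma}$ to treat $\cD(f^n,\cdot)$ as a perturbation of the exactly coercive $\cD(\mu,\cdot)$ from the proof of Proposition \ref{prop-coercivity-1}.

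I would proceed up to the test-function identity exactly as in the proof of Proposition \ref{induct-1}: pass to $g^n=\mu_\kappa^{-1}f^n$ with $\mu_\kappa(t,v)=e^{-(\rho-\kappa t)\la v\ra^2}$, $\rho=1/2$ and $\kappa<\rho/(2T)$; observe that non-negativity of $f^n$ and of $g^n$ coincide; rewrite \eqref{4.4.3} as \eqref{maytta}; take $\beta(s)=\tfrac12 (s^-)^2$; and test against $g^{n+1}_-\varphi^{-2}$ with $\varphi(v,x)=(1+|v|^2+|x|^2)^{\alpha/2}$, $\alpha>3/2$. This produces
\begin{equation*}
\tfrac{d}{dt}\!\!\int\!\beta(g^{n+1})\varphi^{-2}dxdv+\kappa\!\!\int\!\la v\ra^2\beta(g^{n+1})\varphi^{-2}dxdv\le A_1+A_2+C\!\!\int\!\beta(g^{n+1})\varphi^{-2}dxdv,
\end{equation*}
where $A_2\le 0$ for the same reason as in Proposition \ref{induct-1} (the gain term acting on $g^{n+1}_+$ vanishes on the disjoint support of $g^{n+1}_-$, while the loss term has the favorable sign), and $A_1=\int\Gamma^t(g^n,g^{n+1}_-)g^{n+1}_-\varphi^{-2}dxdv$ is the genuinely new term.

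The heart of the argument is the estimate of $A_1$. Writing $h=\varphi^{-1}g^{n+1}_-$, I split $A_1=A_{1,1}+A_{1,2}$ with $A_{1,1}=\int(\Gamma^t(g^n,h),h)_{L^2_v}dx$ and $A_{1,2}$ the weight commutator controlled by Lemma \ref{commu-Gam}. Since $g^n\ge 0$ and $\gamma>-3/2$, Lemma \ref{coer-gamma} gives $A_{1,1}\le -\tfrac14\int\cD(f^n,h)dx+C\|g^n\|_{L^\infty_{t,x}(L^2_v)}\int\|h\|^2_{L^2_{s+\gamma/2}}dx$, reducing the task to a lower bound on $\int\cD(f^n,h)dx$. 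Decomposing $f^n=\mu+\mu^{1/2}\tilde g^n$, the proof of Proposition \ref{prop-coercivity-1} gives $\cD(\mu,h)\ge |||h|||^2_{\Phi_\gamma}-C_0\|h\|^2_{L^2_{s+\gamma/2}}$ while Lemma \ref{upper-triple-Dirichlet} yields $|\cD(\mu^{1/2}\tilde g^n,h)|\le\cD(\mu^{1/2}|\tilde g^n|,h)\lesssim \|\tilde g^n\|_{L^2}|||h|||^2_{\Phi_\gamma}$; using $\|\tilde g^n\|_{L^2}\lesssim|||\tilde g^n|||_{\Phi_\gamma}$ (valid from \eqref{IV-2.2} because $s+\gamma/2\ge 1\ge 0$), the smallness hypothesis upgrades this to
\begin{equation*}
\cD(f^n,h)\ge\tfrac12|||h|||^2_{\Phi_\gamma}-C_0\|h\|^2_{L^2_{s+\gamma/2}}.
\end{equation*}
Since the other half of \eqref{IV-2.2} gives $|||h|||^2_{\Phi_\gamma}\ge C_1\|h\|^2_{L^2_{s+\gamma/2}}$, taking $\sup|||\tilde g^n|||_{\Phi_\gamma}$ small enough relative to the fixed ratio $(C_0+C\|g^n\|_{L^\infty_{t,x}(L^2_v)})/C_1$ absorbs the unavoidable $L^2_{s+\gamma/2}$ error into a fraction of the coercive part, yielding $A_{1,1}\le -c\int|||h|||^2_{\Phi_\gamma}dx$ for some $c>0$. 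The commutator $A_{1,2}$ from Lemma \ref{commu-Gam}, together with Lemma \ref{upper-triple-Dirichlet} and Cauchy--Schwarz, contributes $\delta\int|||h|||^2_{\Phi_\gamma}dx$ (swallowed by the dissipation) plus $C_\delta\int\beta(g^{n+1})\varphi^{-2}dxdv$ (sent to the Gronwall right-hand side).

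All in all one obtains $\tfrac{d}{dt}Y(t)\le CY(t)$ with $Y(t)=\int\beta(g^{n+1})(t)\varphi^{-2}dxdv$ and $Y(0)=0$, whence $Y\equiv 0$, $g^{n+1}_-\equiv 0$, and $f^{n+1}=\mu_\kappa g^{n+1}\ge 0$ on $[0,T]$. The main obstacle is quantitative rather than conceptual: in the range $\gamma\ge 2-2s$ the dissipation needed to dominate $\|h\|^2_{L^2_{s+\gamma/2}}$ cannot arise from the transport/relaxation structure on the left-hand side, so it has to be manufactured from $\cD(f^n,h)$ using the approximate linear coercivity of $\cD(\mu,\cdot)$ modulo a fixed multiple of $\|h\|^2_{L^2_{s+\gamma/2}}$; the closure of this circular-looking absorption is possible only because \eqref{IV-2.2} makes $|||h|||^2_{\Phi_\gamma}$ dominate $\|h\|^2_{L^2_{s+\gamma/2}}$ and the constants line up once $\sup_{[0,T]\times\RR_x^3}|||\tilde g^n|||_{\Phi_\gamma}$ is sufficiently small, which is exactly what the hypothesis is tailored to supply.
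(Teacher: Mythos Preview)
Your overall strategy is right and close to the paper's, but the absorption step you rely on to close $A_{1,1}$ does not actually close, and the smallness hypothesis cannot rescue it.

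Concretely, after Lemma \ref{coer-gamma} (case $\gamma>-3/2$) and your lower bound on $\cD(f^n,h)$ you arrive at
\[
A_{1,1}\le -\tfrac14\big(1-c\|\tilde g^n\|_{L^2}\big)\int|||h|||^2_{\Phi_\gamma}\,dx
+\Big(\tfrac{C_0}{4}+C\|g^n\|_{L^\infty_{t,x}(L^2_v)}\Big)\int\|h\|^2_{L^2_{s+\gamma/2}}\,dx,
\]
with $h=\varphi^{-1}g^{n+1}_-$. Making $|||\tilde g^n|||_{\Phi_\gamma}$ small only drives the prefactor $1-c\|\tilde g^n\|_{L^2}$ toward $1$; it does \emph{not} touch either $C_0$ (which comes from the fixed bound $J_2(h)\le C_0\|h\|^2_{L^2_{s+\gamma/2}}$) or $\|g^n\|_{L^\infty_{t,x}(L^2_v)}$, since $g^n=\mu_\kappa^{-1}f^n=\mu_\kappa^{-1}\mu+\mu_\kappa^{-1}\mu^{1/2}\tilde g^n$ contains the background piece $\mu_\kappa^{-1}\mu$ whose $L^2$ norm is a fixed $O(1)$ number. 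So the inequality you need, namely $\tfrac14 C_1>\tfrac{C_0}{4}+C\|g^n\|_{L^\infty_{t,x}(L^2_v)}$ with $C_1$ the lower constant in \eqref{IV-2.2}, is a relation between structural constants that you have no mechanism to enforce. In fact $J_2(h)$ is itself comparable to $\|h\|^2_{L^2_{s+\gamma/2}}$, so there is no reason to expect $C_1>C_0$, let alone with the extra $C\|g^n\|$ margin.

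The paper avoids this trap by changing the setup slightly: it takes $g^n=\mu^{-1/2}f^n=\sqrt\mu+\tilde g^n$ (time-independent weight, hence no $\kappa\la v\ra^2$ term, which is harmless here) and writes $\Gamma(g^n,h)=-\cL_1 h+\Gamma(\tilde g^n,h)$. The point is that the linearized operator $\cL_1$ enjoys the sharper coercivity $(\cL_1 h,h)_{L^2}\ge C_1|||h|||^2_{\Phi_\gamma}-C_2\|h\|^2_{L^2}$ with an \emph{unweighted} $L^2$ remainder (Proposition 2.1 of \cite{amuxy4-2}); that remainder goes straight into the Gronwall functional with no absorption needed. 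The cross term $(\Gamma(\tilde g^n,h),h)$ is handled by \eqref{same-g}, giving $C\|\tilde g^n\|_{L^2}|||h|||^2_{\Phi_\gamma}\lesssim |||\tilde g^n|||_{\Phi_\gamma}\,|||h|||^2_{\Phi_\gamma}$, and \emph{this} is the term the smallness hypothesis kills. The commutator $\widetilde A_{1,2}$ is then treated, as you do, via Lemma \ref{commu-Gam} and Lemma \ref{upper-triple-Dirichlet}. In short: route the coercivity through $\cL_1$ rather than through Lemma \ref{coer-gamma} and the bare Dirichlet form, so that the unavoidable lower-order remainder lands in unweighted $L^2$ instead of $L^2_{s+\gamma/2}$.
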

\begin{proof}
This case can be treated by the same way as the proof of Theorem \ref{uniqueness-small-pur}. In fact,
if we put $g^n = \mu^{-1/2} f^n$, then we have
\begin{align*}
&\frac{d}{dt} \int_{\RR^6}  \beta ( g^{n+1}) \varphi(v,x)^{-2}dxdv 
\\
&\quad \leq
\int_{\RR^6}  \Gamma(g^n, g^{n+1} ) \beta_s(g^{n+1}) \varphi(v,x)^{-2}dxdv + C
\int_{\RR^6}   \beta ( g^{n+1}) \varphi(v,x)^{-2}dxdv \notag
\end{align*}
instead of  \eqref{gronwall}.  We need to estimate $\widetilde A_{1,1}$ and $\widetilde A_{1,2}$
defined by replacing $\Gamma^t$ by $\Gamma$ in above $A_{1,1}$ and $A_{1,2}$.
By the same way as in \eqref{gamma-hyouka} we have for suitable $C_1,C_2>0$
\begin{align*}
\widetilde A_{1,1}
 &\le - C_1\int|||\varphi(v,x)^{-1}  g^{n+1}_-|||^2_{\Phi_\gamma} dx\\
&+ C_2\Big(\big(\sup_{[0,T]\times \RR_x^3} |||\tilde g^n|||_{\Phi_\gamma} \big)
\int||| \varphi(v,x)^{-1}  g^{n+1}_-|||^2_{\Phi_\gamma} dx
+\int ||\varphi(v,x)^{-1}  g^{n+1}_-||^2_{L^2}dx\Big)\,.\notag
\end{align*}
 It follows from Lemma \ref{commu-Gam} and
 Lemma \ref{upper-triple-Dirichlet} that
\begin{align*}
|\widetilde A_{1,2}| &\le \delta \big(1+ \sup_{[0,T]\times \RR_x^3} ||\tilde g^n||_{L^2} \big)
\int|||\varphi(v,x)^{-1}  g^{n+1}_-|||^2_{\Phi_\gamma} dx\\
&\qquad +C_\delta
 \|g^n\|_{L^\infty([0,T] \times \RR_x^3; H^{s})} \int \|     \varphi(v,x)^{-1}  g^{n+1}_-\|^2_{L^2} dx.
\end{align*}
If $|||\tilde g^n|||_{\Phi_\gamma}$ is sufficiently small, then both estimates lead us to \eqref{final-gronwall-g}. Hence we have
$f^{n+1}\ge 0$ and then it completes the proof of the proposition.
\end{proof}

\noindent
{\bf Completion of the proof of Theorem \ref{theo-IV-1.3}.}

We recall now the existence and convergence of the sequence $\{\tilde{g}^n\}$ constructed in \cite{amuxy4-2, amuxy4-3} for different cases of index :

{\bf The hard potential case $\gamma+2s>0$.} (Theorem 1.1
of \cite{amuxy4-3})  Let $g_0 \in H^{k}_{\ell_0}(\RR^6)$ for some $k\geq 6,\, \ell_0> 3/2+2s+\gamma$. There exists $\varepsilon_0 >0$, such that if ~
$\|g_0\|_{H^{k}_{\ell_0}(\RR^6)}\leq \varepsilon_0$, then the sequence $\{\tilde{g}^n\}$ converges in $L^\infty([0, +\infty[\,;\,\,H^{k}_{\ell_0}(\RR^6))$
to a global solution $\tilde{g}$ with  $\|\tilde{g}\|_{L^\infty([0, +\infty[; H^{k}_{\ell_0}(\RR^6))}\leq C\varepsilon_0$.

{\bf The soft potential case $\gamma+2s\leq 0$.}\,\,  (Theorem 1.5  of \cite{amuxy4-2}) Assume $\gamma >
\max\{-3, -\frac32 -2s\}$. Let $g_0 \in {\tilde \cH}^{k}_k(\RR^6)$ for some $k\geq 6$. There exits
$\varepsilon_0>0$ such that  if
$ \|g_0\|_{{\tilde \cH}^{k}_k(\RR^6)}\leq \varepsilon_0,$
then the sequence $\{\tilde{g}^n\}$ converges in $ L^\infty([0, +\infty[\,;\,\,\tilde{\cH}^{k}_k(\RR^6))$ to a global solution $\tilde{g}$.
Remark that the approximate sequence $\{\tilde{g}^n\}$ is convergent
in $L^\infty([0, T]; H^k (\RR^6))$.

So in both cases, the sequence $f^n=\mu+\mu^{1/2} \tilde{g}^n$ satisfies the conditions of Propositions \ref{induct-1} and \ref{induct-2} with $N=k-2$, which implies that the limit $f=\mu+\mu^{1/2} \tilde{g}\ge0$. We have proved Theorem \ref{theo-IV-1.3}.

\section{Convergence to the equilibrium state}\label{sect-IV-9}
\smallskip \setcounter{equation}{0}

In this section, the convergence rates of the solutions to the
equilibrium will be discussed for both the soft and hard potentials.
Precisely, for the hard potential, the
optimal convergence rates in the Sobolev space can be obtained by
combining the energy estimates proven previously and the $L^p-L^q$
estimate on the solution operator of the linearized equation. Such
$L^p-L^q$ estimate can be obtained  either by spectrum analysis \cite{ukai-2} or
by using the compensating functions introduced by Kawashima \cite{kawashima}. On the other hand, for soft potential,
the convergence rate presented here is solely based on the energy
estimate and is not optimal.

\subsection{Hard potential.}

In this subsection, we will combine the compensating function and
the energy estimate to obtain the optimal convergence rate for the
hard potential case $\gamma+2s>0$, that is, the first part of
Theorem \ref{theo-IV-1.4}. Note that the  decay estimate in the theorem can be  generalized to the
case when the initial lies in $Z_q(\RR^6)$ with $1< q <2$,
where $Z_q(\RR^6)=L^q(\RR^3_x;L^2_v(\RR^3))$.

The compensating function is useful in deriving $L^p-L^q$ estimates
for linear dissipative kinetic equations in the form of
\begin{equation}\label{con-1}
g_t +v\cdot \nabla_x g +\cL g =h,
\end{equation}
where $h$ is a given function and $\cL$ is  the linearized Boltzmann collision operator.

Let us now recall the definition of compensating function introduced
by Kawashima \cite{kawashima}.
\begin{defi}
 $S(\omega)$ is called a compensating function
if it has the following properties:

(i) $S(\cdot)$ is $C^\infty$ on $\SS^2$ (the unit sphere in $\RR^3$)
 with values in the space
of bounded linear operators on $L^2({\RR}^3)$, and
$S(-\omega)=-S(\omega)$ for all $\omega\in \SS^2$.

(ii) $iS(\omega)$ is self-adjoint on $L^2({\RR}^3)$ for all
$\omega\in \SS^2 $.

(iii) There exist constants $\lambda>0$ and $c_0>0$ such that for all $g\in
L^2(\RR^3)$ and $\omega\in \SS^2$,
\begin{equation}\label{comp-fct}
Re( S(\omega)(v\cdot\omega)g,g
)_{L^2(\RR_v^3)}+( \cL g,g)_{L^2(\RR_v^3)}\geq c_0(\| {\pP}g\|_{L^2(\RR^3_v)}^2+||| ({\bf
I-P})g|||^2).
\end{equation}
\end{defi}

The construction of $S(\omega)$ was given in \cite{kawashima}, but
for completeness and the convenience of the readers, we recall some
basic derivation and estimates.

 Let $\cW$ be the subspace spanned by the thirteen
moments containing the null space $\mathcal{N}$ of
$\cL$
and the
images of $\mathcal{N}$ under the mappings $g(v)\mapsto v_jg(v)$
$(j=1,2,3)$ denoted by:
$$
\cW=\mbox{span}\{e_j|j=1,2, \cdots,13\}.
$$
Here, the orthonormal set of functions $e_j$ is given by
$$
e_1=\mu^{\frac 12}, \quad e_{i+1}=v_i\mu^{\frac 12},\quad i=1,2,3,\quad
e_5=\frac{1}{\sqrt{6}}(|v|^2-3)\mu^{\frac 12},
$$
and
\begin{eqnarray*}
e_{j+4}&=&\sum_{i=1}^3 \frac{c_{ji}}{\sqrt{2}}(v_i^2 -1)\mu^{\frac 12}, j=2,3\\
e_8&=&v_1v_3\mu^{\frac 12},\quad e_9=v_2v_3\mu^{\frac 12},\quad e_{10} =v_3v_1 \mu^{\frac 12},\\
e_{i+10}&=&\frac{1}{\sqrt{10}}(|v|^2-5)v_i\mu^{\frac 12},\quad i=1,2,3,
\end{eqnarray*}
where the constant vectors
$c_i=(c_{i1}, c_{i2}, c_{i3})$, $i=2,3$ together with $c_1=(\frac{1}{\sqrt 3},
\frac{1}{\sqrt 3},\frac{1}{\sqrt 3})$ form an orthonormal basis of $\RR^3$.

Let $\pP_0$ be the orthogonal projection from $L^2({\RR}^3_v)$ onto
$\cW$, that is,
$$
\pP_0g=\sum_{k=1}^{13}( g,e_k)_{L^2(\RR^3_v)} e_k.
$$

Set $ { W_k=\langle f,e_k\rangle} $, $k=1,2,\cdots, 13$, and
$W=[W_1,...,W_{13}]^T$. For later use, set
$W_I=[W_1,...,W_{5}]^T$, and $W_{II}=[W_6,...,W_{13}]^T$. Then we have
$$
\partial_tW+\sum_{j}V^j\partial_{x_j}W+\overline{L}W=\overline{h}+R,
$$
where $V^j$ $(j=1,2,3)$ and $\overline{L}$ are the symmetric
matrices defined by
$$
\overline{L}=\{( \cL e_l,e_k)_{L^2(\RR^3_v)}\}_{k,l=1}^{13},\ \ \ V^j=\{(
v_je_k,e_l)_{L^2(\RR^3_v)}\}_{k,l=1}^{13},
$$
and $\overline{h}=[( h,e_1)_{L^2(\RR^3_v)},...,( h,e_{13})_{L^2(\RR^3_v)}]^T$. Here $R$ denotes the remaining
term which  contains  the factor $({\bf I}-\pP_0)g$. Straightforward
calculation gives
$$
V(\xi)
=\sum_{j=1}^3V^j\xi_j=\begin{pmatrix}
 V_{11} & V_{12}\\
 V_{21} & V_{22} \end{pmatrix},$$
with
$$V_{11}(\xi)=\begin{pmatrix}
 0 & \xi_1 & \xi_2 & \xi_3&0\\
 \xi_1 & 0 & 0 & 0 &a_1\xi_1  \\
 \xi_2& 0 & 0 & 0 & a_1\xi_2\\
 \xi_3& 0 & 0 & 0 &a_1\xi_3\\
0&a_1\xi_1&a_1\xi_2&a_1\xi_3&0\end{pmatrix},$$
and
$$V_{21}(\xi)=V_{12}(\xi)^T=\begin{pmatrix}
 0 & a_{21}\xi_1 & a_{22} \xi_2 & a_{23}\xi_3&0\\
 0 & a_{31}\xi_1 & a_{32} \xi_2 & a_{33}\xi_3&0\\
 0 & \xi_2 & \xi_1 & 0 & 0  \\
 0 & 0 & \xi_3 & \xi_2 & 0\\
 0& \xi_3 & 0 & \xi_1 & 0\\
0& 0 &0 & 0&a_4\xi_1\\
0& 0 &0 & 0&a_4\xi_2\\
0& 0 &0 & 0&a_4\xi_3\\
\end{pmatrix},$$
where $a_1=\sqrt{\frac 23}, a_{kj}=\sqrt{2} c_{kj}$, $k=2,3,$ $j=1,2,3$,
and $a_4=\sqrt{\frac 35}$. By setting
$$
R(\xi)=\sum_{j=1}^3R^j\xi_j=\begin{pmatrix}
 \alpha \tilde{R}_{11} & V_{12}\\
 -V_{21} & 0\end{pmatrix},
$$
with
$$\tilde{R}_{11}=\begin{pmatrix}
 0 & \xi_1 & \xi_2 & \xi_3&0\\
 -\xi_1 & 0 & 0 & 0 &0  \\
 -\xi_2& 0 & 0 & 0 & 0\\
 -\xi_3& 0 & 0 & 0 &0\\
0&0&0&0&0\end{pmatrix} .$$
It was shown in \cite{kawashima} that
 there exist positive constants $c_1$ and $c_2$ such
that
$$
Re \langle R(\omega)V(\omega)W,W\rangle\geq c_1|W_I|^2-c_2\sum_{k=2}^4|W_{II}|^2,
$$
for any $\omega\in \SS^2$ with the constant $\alpha$ suitably chosen.
Here $\langle\cdot,\cdot\rangle$ represents the standard inner product in $\CC^{13}$.

Hence,  a compensating
function $S(\omega)$ can be defined as follows.
 For any given $\omega\in \SS^2$, set $R(\omega)\equiv\{r_{ij}(\omega)\}_{i,j=1}^{4}$ and let
$$
S(\omega)g\equiv\sum_{k,\ell=1}^{4}\lambda
r_{k\ell}(\omega)( g,e_\ell)_{L^2(\RR^3_v)}e_k, \qquad f\in L^2(\RR^3).
$$
When the parameter $\lambda>0$ is chosen small enough,
it was shown in \cite{kawashima}
 that the estimate (\ref{comp-fct}) holds because of the dissipation
 of $\cL$ on the space $\mathcal{N}^\perp$.

To obtain the $L^p-L^q$ estimate,
 by taking the Fourier transform in the variable x, the equation
(\ref{con-1}) yields
\begin{equation}\label{con-2}
\hat{g}_t +i|\xi| (v\cdot\omega) \hat{g} +\cL \hat{g}= \hat{h},
\end{equation}
where $\omega=\frac{\xi}{|\xi|}$. Take the inner product of \eqref{con-2} with
$((1+|\xi|^2) -i\kappa S(\omega))\hat{g}$ and use the properties of the
compensating function, to get
\begin{eqnarray*}
((1+|\xi|^2)\|\hat{g}\|^2_{L^2(\RR^3_v)}&-&\kappa|\xi|(i S(\omega) \hat{g},\hat{g})_{L^2(\RR^3_v)})_t +\delta_0((1+|\xi|^2)|||({\bf I}-\pP) \hat{g}|||^2+
|\xi|^2 ||\pP \hat{g}\|^2_{L^2(\RR^3_v)})\nonumber\\
&\le& C(1+|\xi|^2)Re(\hat{g}, \hat{h})_{L^2(\RR^3_v)},
\end{eqnarray*}
which implies that
\begin{equation*}
E(\hat{g})_t +\delta_0 \frac{|\xi|^2}{1+|\xi|^2} E(\hat{g})\le C\|\hat{h}\|^2_{L^2(\RR^3_v)},
\end{equation*}
where
$$
E(\hat{g})=\|\hat{g}\|^2_{L^2(\RR^3_v)} -\kappa\frac{|\xi|}{1+|\xi|^2}(i S(\omega) \hat{g},\hat{g})_{L^2(\RR^3_v)}\sim \|\hat{g}\|^2_{L^2(\RR^3_v)},
$$
when $\kappa$ is chosen to be small. And this estimate yields
\begin{equation}\label{con-5}
\|\hat{g}\|^2_{L^2(\RR^3_v)}\le C\exp\{-\frac{\delta_0|\xi|^2|t}{1+|\xi|^2}\}
\|\hat{g_0}\|^2_{L^2(\RR^3_v)}+C\int_0^t\exp\{-\frac{\delta_0|\xi|^2|(t-s)}{1+|\xi|^2}\}\|\hat{h}\|^2_{L^2(\RR^3_v)}(s) ds.
\end{equation}

Based on \eqref{con-5}, we have the following $L^p-L^q$ estimate on the
solution operator of (\ref{con-1}) obtained in \cite{kawashima}.

\begin{lemm}
Let $k\geq k_1\geq 0$ and $N\geq 4$. Assume that

 $(i)$  $g_0\in H^N(\RR^6)\cap Z_q$,

 $(ii)$ $h\in C^0([0,\infty[; H^N\cap Z_q)$,

 $(iii)$ $\pP h(t,x,v)=0$ for all $(t,x,v)\in[0,\infty)\times{\RR}^3\times{\RR}^3$.

 $(iv)$
 $g(t,x,v)\in C^0([0,\infty[; H^N(\RR^6))\cap C^1([0,\infty[; H^{N-1}(\RR^6)) $ is a solution of
(\ref{con-1}).

Then we have
$$\|\nabla_x^kg\|^2_{L^2(\RR^6)}\leq
C(1+t)^{-2\sigma_{q,m}}(\|\nabla_x^{k_1}g_0\|_{Z_q(\RR^6)}+\|\nabla_x^{k}g_0\|_{L^2(\RR^6)})^2
$$
$$+\int_0^t(1+t-s)^{-2\sigma_{q,m}}(\|\nabla_x^{k_1}h\|_{Z_q(\RR^6)}+\|\nabla_x^{k}h\|_{L^2(\RR^6)})^2ds
,\eqno(2.11)$$ for any integer $m=k-k_1\geq 0$, where $q\in[1,2]$
and
$$\sigma_{q,m}=\frac 32\Big(\frac 1q-\frac 12\Big)+\frac
m2.$$
\end{lemm}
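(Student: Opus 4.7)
The plan is to take the square modulus of \eqref{con-5}, multiply by $|\xi|^{2k}$, and integrate in $\xi\in\RR^3$; by Plancherel this produces $\|\nabla_x^k g(t)\|_{L^2(\RR^6)}^2$ on the left-hand side. Writing $|\xi|^{2k}=|\xi|^{2k_1}|\xi|^{2m}$ transfers $k_1$ derivatives onto the data, so that the task reduces to producing the decay factor $(1+t)^{-2\sigma_{q,m}}$ from the integrals
\begin{equation*}
 I_0(t)=\int_{\RR^3} |\xi|^{2m} e^{-\delta_0|\xi|^2 t/(1+|\xi|^2)} \|\widehat{\nabla_x^{k_1} g_0}(\xi,\cdot)\|^2_{L^2_v}\, d\xi
\end{equation*}
and the analogous Duhamel integral involving $\hat h$.

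To extract the rate, I split the $\xi$-integration into a low-frequency region $|\xi|\leq 1$ and a high-frequency region $|\xi|\geq 1$. On the high-frequency piece, $\delta_0|\xi|^2/(1+|\xi|^2)\ge \delta_0/2$, so the Fourier factor is dominated by $e^{-\delta_0 t/2}$ and, combined with Plancherel, yields $C e^{-\delta_0 t/2}\|\nabla_x^k g_0\|^2_{L^2(\RR^6)}$, which is absorbed in $(1+t)^{-2\sigma_{q,m}}\|\nabla_x^k g_0\|^2_{L^2}$. On the low-frequency piece, $\delta_0|\xi|^2/(1+|\xi|^2)\ge (\delta_0/2)|\xi|^2$, and H\"older's inequality with the pair $(a,b)$ determined by $1/a+2/b=1$, $b=q'$ (where $1/q+1/q'=1$) gives
\begin{equation*}
 \int_{|\xi|\leq 1} |\xi|^{2m} e^{-c|\xi|^2 t} |\widehat{\nabla_x^{k_1} g_0}|^2\, d\xi \le \Big(\int_{|\xi|\le 1}|\xi|^{2ma} e^{-ca|\xi|^2 t}\,d\xi\Big)^{1/a} \big\|\widehat{\nabla_x^{k_1} g_0}\big\|^2_{L^{q'}_\xi L^2_v}.
\end{equation*}
The Hausdorff--Young inequality in $x$ bounds the second factor by $\|\nabla_x^{k_1} g_0\|^2_{Z_q}$, while the rescaling $\xi\mapsto\xi/\sqrt{t}$ in the first factor together with the identity $1/a=2/q-1$ yields $C(1+t)^{-(m+3(1/q-1/2))} = C(1+t)^{-2\sigma_{q,m}}$.

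For the inhomogeneous contribution, I apply the identical two-region decomposition to the Duhamel integrand $e^{-\delta_0|\xi|^2(t-s)/(1+|\xi|^2)}\|\hat h(s,\xi,\cdot)\|^2_{L^2_v}$ at each fixed $s\in[0,t]$, producing $C(1+t-s)^{-2\sigma_{q,m}}\big(\|\nabla_x^{k_1}h(s)\|_{Z_q}+\|\nabla_x^k h(s)\|_{L^2}\big)^2$ under the time integral. The hypothesis $\pP h=0$ is used, as in the derivation of \eqref{con-5}, to ensure that the forcing lies in the range where the coercivity of $\cL$ is effective, so that the compensating-function estimate applies uniformly in $s$ without leaking into the hydrodynamic modes. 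The main obstacle is the bookkeeping of H\"older exponents: one must verify that $(a,b)=(q'/(q'-2),q'/2)$ is admissible for every $q\in[1,2]$, including the endpoint $q=2$ where $a=\infty$ and the estimate reduces to Plancherel plus scaling, and that $1/a=2/q-1$ feeds cleanly into the scaling of the Gaussian-like factor to produce exactly $3(1/q-1/2)$ powers of $t$. Beyond this, the argument is classical and follows the framework of \cite{kawashima}.
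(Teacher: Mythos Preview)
Your approach is correct and is exactly the classical Kawashima argument that the paper cites rather than reproves: the lemma is simply quoted from \cite{kawashima}, with \eqref{con-5} displayed as the key input, and your frequency splitting plus H\"older/Hausdorff--Young on the low-frequency piece is precisely how one derives the decay rate from \eqref{con-5}. One minor slip: there is no ``square modulus'' to take, since \eqref{con-5} is already an inequality for $\|\hat g\|^2_{L^2_v}$; you simply multiply by $|\xi|^{2k}$ and integrate in $\xi$, which is what your subsequent computation actually does.
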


We now recall the energy estimates obtained for the global existence
of solutions for the hard potential in \cite{amuxy4-3}.
Firstly, we have when $N\ge 6$ and $l>3/2+2s +\gamma$,
\begin{equation*}
\frac{d}{dt} \mathcal{E} +D\le 0,
\end{equation*}
where $\mathcal{E} =\|g\|^2_{H^N_l(\RR^6)}$ and $D=\|\nabla_x \pP g\|^2_{H^{N-1}(\RR^6)}
+|||({\bf I}-\pP) g|||^2_{\mathcal{B}^N_l(\RR^6)}$. We claim that
 the following energy estimate also holds
\begin{equation}\label{con-7}
\frac{d}{dt} \mathcal{E}_1 +D\le C\|\nabla_x \pP g\|^2_{L^2_{x,v}(\RR^6)},
\end{equation}
where $\mathcal{E}_1=\|\nabla_x\pP g\|^2_{H^{N-1}(\RR^6)} +\|({\bf I}-\pP) g\|^2_{H^N_l(\RR^6)}$. In fact, for the energy estimate on the macroscopic component of
the solution, by Lemma 4.4 of  \cite{amuxy4-3} and by taking the sum over
$1\le |\alpha|\le N-1$, we have
\begin{align}\label{pabc-1}
\|\nabla_x  \cA\|_{H^{N-1}(\RR^3_x)}^2
&\lesssim
-\frac{d}{dt}\sum_{1\le |\alpha|\le N-1}\Big\{(\partial^\alpha r,\nabla_x \partial^\alpha (a, - b, c))_{L^2(\RR^3_x)}
+(\partial^\alpha b, \nabla_x \partial^\alpha a)_{L^2(\RR^3_x)}\Big\}
\\& +\notag
\|g_2\|_{H^N(\RR^3_x;L^2(\RR^3_v))}^2+\mathcal{E}^{\frac 12} D,
\end{align}
where we use the same notations used in \cite{amuxy4-3} except that we replace
$E_{N,1}D_{N,0}$ by $\mathcal{E}^{\frac 12}D$ in an obvious way.

On the other hand, for the energy estimate on the microscopic component
without weight, one can follow the proof of Lemma 4.5 in \cite{amuxy4-3} except
for $\alpha=0$, we take the $L^2(\RR^6_{x,v})$ inner product of
\eqref{con-1} with $(\iI-\pP)g=g_2$ to have
$$
\frac{d}{dt} \|g_2\|^2_{L^2(\RR^6_{x,v})} +
|||g_2|||^2_{{\cB}^0_0(\RR^6)}\lesssim \|\nabla_x\pP g\|^2_{L^2(\RR^6_{x,v})} +\mathcal{E}^{\frac 12}D.
$$
This together with the estimate given in Lemma 4.5 of \cite{amuxy4-3} for $1\le
|\alpha|\le N$ gives
\begin{align}\label{micro-1}
\frac{d}{dt}\big(\|g_2\|^2_{L^2(\RR^6_{x,v})}+ \|\nabla_x g\|^2_{H^{N-1}(\RR^3_x;L^2(\RR^3_v))}\big) +|||g_2|||^2_{{\cB}^0_0(\RR^6)}
+\sum_{1\le |\alpha|\le N}|||\partial^\alpha g|||^2_{{\cB}^0_0(\RR^6)}
\lesssim
\|\nabla_x\pP g\|^2 +\mathcal{E}^{\frac 12}D.
\end{align}
Moreover, for $\ell\ge 0$ and $\beta\neq 0$, (4.10) in \cite{amuxy4-3} gives
\begin{align}\notag
 \frac{d}{dt}&\| W_\ell\partial^\alpha_\beta g_2\|^2_{L^2(\RR^6)}+
|| W_\ell\partial^\alpha_\beta g_2 ||_{\cB^0_0}
\\&\lesssim\label{beta-1}
\|g_2\|_{H^N(\RR^3_x, L^2(\RR^3_v))}^2
+\cE_{N,\ell}^{1/2}\cD_{N,\ell}
\\&\notag+\delta_0\|\nabla_x\cA\|_{H^{N-1}(\RR^3)}^2
+\sum_{|\alpha'|=|\alpha|+1, |\beta'|=|\beta|-1}\|\partial^{\alpha'}_{\beta'}g_2\|_{\cB^{0}_\ell}^2.
\end{align}
Here $\delta_0>0$ is a small constant.
By using induction on $|\beta|$ and $|\alpha|+|\beta|$, a suitable linear
combination of \eqref{pabc-1}, \eqref{micro-1} and \eqref{beta-1} gives
\eqref{con-7} for sufficiently small $\mathcal{E}$.

In the following, we also need some $L^p$ estimate on the nonlinear
collision operator. Recall that
Lemma \ref{upp-l-2} implies that
\[
\|\Gamma( f, \, g)\|_{L^2(\RR^3_v)}\lesssim
\|f\|_{L^2(\RR^3_v)}
\|g\|_{H^{2s}_{(\gamma+2s)^+}(\RR^3_v)}.
\]
Hence, by using the fact that $N\ge 6$ and $\ell>3/2+2s+\gamma$,
Sobolev imbedding implies
\begin{eqnarray*}
\|\Gamma(g,g)\|^2_{L^2(\RR^6_{x,v})}+\|\nabla_x\Gamma(g,g)\|^2_{L^2(\RR^6_{x,v})}&\lesssim& \mathcal{E}^2\lesssim
\mathcal{E}_1\mathcal{E} +\|\pP g\|^4_{L^2(\RR^6_{x,v})},\nonumber\\
\|\Gamma(g,g)\|^2_{Z_1}&\lesssim& \mathcal{E}_1\mathcal{E} +\|\pP g\|^4_{L^2(\RR^6_{x,v})}.
\end{eqnarray*}
Define
$$
M(t)=\sup_{0\le s\le t}\{(1+s)^{\frac 52} \mathcal{E}_1(s)\},
\quad M_0(t)=\sup_{0\le s\le t}\{(1+s)^{\frac 32}\|g(s)\|^2_{L^2(\RR^6_{x,v})}\}.
$$
Then by the $L^p-L^q$ estimate, we have
\begin{eqnarray*}
\|\nabla_x g(t)\|^2_{L^2(\RR^6_{x,v})}&\lesssim & (1+t)^{-\frac 52}(\|g_0\|^2_{Z_1(\RR^6)} + \|\nabla_x g_0\|^2_{L^2(\RR^6_{x,v})})\nonumber\\
&&+ \int_0^t (1+t-s)^{-\frac 52}(\|\Gamma(g,g)\|_{Z_1(\RR^6)} +\|\nabla_x\Gamma(g,g)\|_{L^2(\RR^6_{x,v})})^2 ds\nonumber\\
&\lesssim & \eta(1+t)^{-\frac 52} + \int_0^t(1+t-s)^{-\frac 52}(\mathcal{E}\mathcal{E}_1 +\|\pP g\|^4_{L^2(\RR^6_{x,v})})(s) ds\nonumber\\
&\lesssim & \eta (1+t)^{-\frac 52} +\delta M(t)\int_0^t (1+t-s)^{-\frac 52}(1+s)^{-\frac 52}ds \nonumber\\
&&+M_0^2(t)\int_0^t (1+t-s)^{-\frac 52}(1+s)^{-3} ds \nonumber\\
&\lesssim & \eta(1+t)^{-\frac 52} +\delta (1+t)^{-\frac 52}M(t) +(1+t)^{-\frac 52}M_0^2(t),
\end{eqnarray*}
where $\eta=\|g_0\|^2_{Z_1(\RR^6)} + \| g_0\|^2_{H^N_\ell(\RR^6)}$.
Here, we use $\delta>0$ to denote the upper bound
of  $\mathcal{E}$ for all time.

Thus, we have
\begin{eqnarray*}
\mathcal{E}_1(t)&\le & \mathcal{E}_1(0)e^{-t}
 +\int_0^t e^{-(t-s)}\|\nabla_x g\|^2_{L^2(\RR^6_{x,v})} (s) ds\nonumber\\
&\lesssim &\delta e^{-t} +\eta (1+t)^{-\frac 52} +\delta (1+t)^{-\frac 52}M(t)
+(1+t)^{-\frac 52}M_0^2(t),
\end{eqnarray*}
that is,
\begin{eqnarray*}
M(t)\lesssim (\delta+\eta) +\delta M(t) +M^2_0(t).
\end{eqnarray*}
By applying the $L^p-L^q$ estimate again, we have
\begin{eqnarray*}
\|g(t)\|^2_{L^2(\RR^6_{x,v})}&\lesssim& (1+t)^{-\frac 32}(\|g_0\|^2_{Z_1(\RR^6)} +\|g_0\|^2_{L^2(\RR^6_{x,v})})\nonumber\\
&&+\int_0^t (1+t-s)^{-\frac 32}(\|\Gamma(g,g)\|_{Z_1(\RR^6)}+\|\Gamma(g,g)\|_{L^2(\RR^6_{x,v})})^2(s) ds\nonumber\\
&\lesssim& \eta (1+t)^{-\frac 32} +\int_0^t (1+t-s)^{-\frac 32}(\mathcal{E}\mathcal{E}_1+\|\pP g\|^4_{L^2(\RR^6_{x,v})})(s) ds\nonumber\\
&\lesssim& \eta (1+t)^{-\frac 32} +\delta(1+t)^{-\frac 32}M(t) +(1+t)^{-\frac 32} M^2_0(t).
\end{eqnarray*}
Hence,
\begin{eqnarray*}
M_0(t)&\lesssim& \eta + \delta M(t) +M^2_0(t)\nonumber\\
&\lesssim & (\eta +\delta)+ M^2_0(t).
\end{eqnarray*}
By assumption, $\eta +\delta$ is small. The above estimate and the
continuity argument give
$
M_0(t)\le C_{\eta,\delta},
$
and then
$
M(t)\le \bar{C}_{\eta,\delta},
$
where $C_{\eta,\delta}$ and $\bar{C}_{\eta,\delta}$ are two constants depending
on $\eta$ and $\delta$ only.
This completes the proof of the first part in Theorem
\ref{theo-IV-1.4}.

\subsection{Soft Potential.}
Finally, in this subsection, we will
prove the second part of Theorem \ref{theo-IV-1.4} t for the soft potential case, that is, when $2s+\gamma\le 0$.

As for the case with angular cutoff, here we need to apply the
following basic inequality from \cite{Deckelnick}.

\begin{lemm}\label{6-l-1}
 Let $f(t)\in C^1([t_0,\infty))$ such
that $f(t)\geq 0,\
A={\displaystyle\int^\infty_{t_0}}f(t)dt<+\infty$ and $f'(t)\le
a(t)f(t)$ for all $t\geq t_0$. Here $a(t)\geq 0,\
B={\displaystyle\int^\infty_{t_0}}a(t)dt<+\infty$. Then
$$
f(t)\leq \frac{(t_0f(t_0)+1)\exp(A+B)-1}{t},\quad \mbox{ for all} \quad t\geq
t_0.
$$
\end{lemm}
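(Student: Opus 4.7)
\medskip

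\noindent\textbf{Proof proposal for Lemma \ref{6-l-1}.} The plan is to show that $G(t) := t\,f(t)$ stays bounded by a constant that can be read off from $A$, $B$ and the initial data $t_0 f(t_0)$, using a standard integrating factor argument combined with the integrability of $f$. The point is that the differential inequality $f'\le af$ by itself only yields the exponential upper bound $f(t)\le f(t_0)\exp\int_{t_0}^t a$, which is just a uniform estimate; the $1/t$ decay must come from the hypothesis $\int_{t_0}^\infty f<\infty$.

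First I would compute, for $t\ge t_0$,
\[
G'(t)=f(t)+t f'(t)\le f(t)+t\,a(t) f(t)=f(t)+a(t)\,G(t),
\]
so that $G$ satisfies the linear differential inequality $G'(t)-a(t)G(t)\le f(t)$. Multiplying by the integrating factor $\exp\!\bigl(-\int_{t_0}^t a(s)\,ds\bigr)$ (which is in $[e^{-B},1]$ by the assumptions $a\ge 0$ and $B<\infty$), I get
\[
\frac{d}{dt}\!\left[G(t)\exp\!\Bigl(-\int_{t_0}^t a(s)\,ds\Bigr)\right]\le f(t)\exp\!\Bigl(-\int_{t_0}^t a(s)\,ds\Bigr)\le f(t).
\]
Integrating this from $t_0$ to $t$ and using $G(t_0)=t_0 f(t_0)$ together with $\int_{t_0}^t f\le A$ gives
\[
G(t)\exp\!\Bigl(-\int_{t_0}^t a(s)\,ds\Bigr)\le t_0 f(t_0)+A,
\]
so that $G(t)\le e^B\bigl(t_0 f(t_0)+A\bigr)$, i.e.\
\[
f(t)\le \frac{e^B\bigl(t_0 f(t_0)+A\bigr)}{t}.
\]

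Finally, to recover the exact form stated in the lemma, I would use the elementary inequality $A\le e^A-1$ to bound
\[
e^B\bigl(t_0 f(t_0)+A\bigr)\le t_0 f(t_0)\,e^B+(e^A-1)e^B\le \bigl(t_0 f(t_0)+1\bigr)e^{A+B}-1,
\]
which yields the claimed estimate. There is no real obstacle here: the only subtle point is recognizing that one should work with $G(t)=t f(t)$ rather than $f(t)$ directly, so that the $L^1$ control of $f$ over $[t_0,\infty)$ can be converted into a pointwise $1/t$ bound after absorbing the factor $a(t)G(t)$ by an integrating factor.
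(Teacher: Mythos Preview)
Your proof is correct. The paper does not supply its own argument for this lemma; it simply quotes the inequality from Deckelnick \cite{Deckelnick} and moves on, so there is nothing to compare against on that side. Your approach via $G(t)=tf(t)$ and an integrating factor is the natural one and in fact yields the sharper intermediate bound $f(t)\le e^{B}\bigl(t_0 f(t_0)+A\bigr)/t$, which already suffices for the application in Section~\ref{sect-IV-9}; the weakening to the exact constant $(t_0 f(t_0)+1)e^{A+B}-1$ is cosmetic. One small remark: your final chain
\[
t_0 f(t_0)\,e^B+(e^A-1)e^B\le (t_0 f(t_0)+1)e^{A+B}-1
\]
tacitly uses $t_0 f(t_0)\ge 0$, hence $t_0\ge 0$. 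This is not stated in the lemma but is of course the only regime in which the conclusion is meaningful, and it holds in the paper's application.
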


Now it remains to find the appropriate functionals $f(t)$ and $a(t)$
that satisfy the above differential inequality.

First of all, the basic energy estimate derived in \cite{amuxy4-2} for the
global existence is
$$
\frac{d}{dt} \cE_{N,\ell} + c_0 \cD_{N,\ell}\le 0,
$$
where $c_0>0$ is a constant. Here,
\begin{align*}
\cE_{N,\ell}&\sim \|\cA\|^2_{H^N(\RR^3)}+\|g_2\|_{\widetilde{\cH}^N_\ell(\RR^6)}^2,
 \\ \cD_{N,\ell}& =\|\nabla_x \cA\|_{H^{N-1}(\RR^3)}^2+
\|g_2\|_{\widetilde{\cB}^N_\ell(\RR^6)}^2,
\end{align*}
and
\begin{align*}
{\widetilde\cB\,}^N_\ell(\RR^6)=\Big\{ g\in\cS'(\RR^6);\,\,
||g||^2_{{\widetilde\cB\,}^N_\ell(\RR^6)}
=\sum_{|\alpha|+|\beta|\leq N}\int_{\RR^3_x}|||\tilde W_{\ell-|\beta|}\,
\partial^\alpha_{\beta} g(x,\, \cdot\,)|||^2_{\Phi_\gamma}dx <+\infty\Big\}\, .
\end{align*}

Later we introduce
 another functional $\bar{\cE}_{N-1,\ell-1}$ that has the following
property
\begin{align}\label{6-3}
\bar{\cE}_{N-1,\ell-1} \sim \|\nabla_x\cA\|_{H^{N-2}(\RR^3)}^2
+\|\nabla_x g_2\|^2_{\widetilde{\cH}^{N-2}_{\ell-1}(\RR^6)}.
\end{align}
Clearly, by the property of the $|||\cdot|||$, $\bar{\cE}_{N-1,\ell-1}\lesssim
\cD_{N,\ell}$ so that $\int_0^\infty\bar{\cE}_{N-1,\ell-1}(t) dt<\infty$.
Note that this functional contains spatial differentiation of at least
one order, and the maximum order of differentiation is $N-1$. The reason
for this is to have the time integrability
of the functional coming from the dissipation effect. And the functional excludes
the $N$-th order differentiation because we need to estimate the term like
\begin{align*}
\int_{\RR^3} \|\cA\|_{L^2(\RR^3_x)}\|\partial^\alpha_x \cA\|_{L^2(\RR^3_x)}|||\partial^\alpha_x g_2||| dx\lesssim
&\|\cA\|_{L^6(\RR^3_x)}\|\partial^\alpha_x \cA\|_{L^3(\RR^3_x)}||\partial^\alpha_x g_2||_{\cX^0(\RR^6)}\\
\lesssim \|\nabla_x\cA\|_{L^2(\RR^3_x)}\|\partial^\alpha_x \cA\|_{H^1(\RR^3_x)}||\partial^\alpha_x g_2||_{\cX^0(\RR^6)},
\end{align*}
where
$$
\cX^{N}(\RR^6)=\Big\{ g\in\cS'(\RR^6);\,\,\
||g||_{\cX^N(\RR^6)}^2=\sum_{|\alpha|\le N}
\int_{\RR^3_x}|||\partial_x^\alpha g|||^2_{\Phi_\gamma}dx<+\infty\Big \}.
$$
Hence, since the maximum order of differentiation is $N$, the above
estimate requires that $|\alpha|\le N-1$.

We now construct $\bar{\cE}_{N-1,\ell-1}$ following the argument used in
Lemma 6.3, Lemma 6.4 and (6.15) in \cite{amuxy4-2}. Firstly, by taking
$1\le |\alpha|\le N-2$ in Lemma 6.3 of \cite{amuxy4-2}, we have
\begin{align}\label{6-4}
\|\nabla_x \partial^\alpha \cA\|_{L^2(\RR^3_x)}^2
\le&
-\frac{d}{dt}\Big\{(\partial^\alpha r,\nabla_x \partial^\alpha (a, - b, c))_{L^2(\RR^3_x)}
+(\partial^\alpha b, \nabla_x \partial^\alpha a)_{L^2(\RR^3_x)}\Big\}
\\ &\quad +\notag\|\nabla_x g_2\|_{\cX^{N-2}(\RR^6)}^2+
\big(\|\cA\|_{H^{N-1}(\RR^3_x)}+\|g_2\|_{H^N(\RR^3_x; L^2(\RR^3_v))}\big)\bar{\cD}_{N-1}\\
&\quad +\notag \|\nabla_x \cA\|^2_{H^{N-1}(\RR^3_x)}
\big(\|\nabla_x \cA\|^2_{H^{N-2}(\RR^3_x)}
+\|g_2\|^2_{L^2(\RR^6)}\big),
\end{align}
where
$$
\bar{\cD}_{N-1}=\sum_{1\le \alpha\le N-2}\|\nabla_x\partial^\alpha \cA\|^2_{L^2(\RR^3_x)} +\|\nabla_x g_2\|^2_{\cX^{N-2}}.
$$
Note that $\bar{\cD}_{N-1}$ is different from $\cD_{N-1}$ defined in \cite{amuxy4-2}.
In particular, in $\bar{\cD}_{N-1}$,
the usual dissipation terms $\|\nabla_x \cA\|^2_{L^2(\RR^3_x)}$ and
$\|g_2\|^2_{L^2(\RR^6)}$ are not included. And this is also why there
is the last term on the right hand side of \eqref{6-4}.

Next, following the argument used for Lemma 6.4 in \cite{amuxy4-2}, we can derive
\begin{align*}
\frac{d}{dt} \bar{\cE}_{N-1} +c_0 \|\nabla_x g_2\|^2_{\cX^{N-2}}\lesssim
\cE^{\frac 12}_N \bar{\cD}_{N-1} +\sum_{0\le |\alpha|\le N-2}\|\nabla_x\partial^\alpha \cA\|^4_{L^2(\RR^3_x)},
\end{align*}
where ${\cE}_N=\sum_{0\le |\alpha|\le N-1}\|\partial^\alpha_x g\|^2_{L^2(\RR^6)}$ and $\bar{\cE}_{N-1}=\sum_{1\le |\alpha|\le N-2}\|\partial^\alpha_x g\|^2_{L^2(\RR^6)}$.

Finally, corresponding to the weighted estimate (6.15) in \cite{amuxy4-2},
one can show that for $|\alpha|\ge 1$ and $|\beta|\ge 1$ with
$|\alpha+\beta|\le N-1$, it holds
\begin{align*}
\frac{d}{dt}\Big(
\|\partial^\alpha_{\beta}&g_2\|_{L^2_{\ell-1-|\beta|}(\RR^6)}+
(\partial^\alpha r,\nabla_x \partial^\alpha (a, -b,c))_{L^2(\RR^3_x)}
+(\partial^\alpha b, \nabla_x \partial^\alpha a)_{L^2(\RR^3_x)}
\Big)
\\&\notag\hspace{1cm}
+c_0\| \
||| \tilde W_{\ell-1-|\beta|}\partial^\alpha_{\beta} g_2 |||_{\Phi_\gamma} \ \|_{L^2(\RR^3_x)}^2
\\&\lesssim\notag
 || \partial^\alpha g_2 ||^2_{ L^2_{s+\gamma/2}(\RR^6)}+
\cE_{N,\ell-1}^{1/2}\bar{\cD}_{N-1,\ell-1}
\\&\notag+\sum_{|\alpha'|=|\alpha|+1,|\beta'|=|\beta|-1}\| \ |||\tilde W^{\ell-1-(|\beta|-1)}\partial_x^{\alpha'}\partial_v^{\beta'}g_2|||_{\Phi_\gamma} \ \|_{L^2(\RR^3_x)}^2
+\delta_0
\|\nabla_xg_2\|_{\widetilde{\cB}^{N-1}_\ell(\RR^6)}^2\\
&\notag +\|\nabla_x \cA\|^2_{H^{N-1}(\RR^3_x)}\big(\|\nabla_x \cA\|^2_{H^{N-2}(\RR^3_x)}
+\|g_2\|^2_{L^2(\RR^6)}\big),
\end{align*}
where $\delta_0>0$ is a small constant, and
\begin{align*}
 \bar{\cD}_{N-1,\ell-1} =\sum_{1\le |\alpha|\le N-2}\|\nabla_x\partial
 \cA\|_{L^{2}(\RR^3_x)}^2+
\|\nabla_x g_2\|_{\widetilde{\cB}^{N-2}_{\ell-1}(\RR^6)}^2.
\end{align*}
Here, we have used the assumption that $\ell-1\ge N$.

Now we can define the functional $\bar{\cE}_{N-1,\ell-1}$ as follows:
\begin{align*}
\bar{\cE}_{N-1,\ell-1}&=\bar{\cE}_{N-1} +c_1\sum_{1\le |\alpha|\le N-2}
\Big\{(\partial^\alpha r,\nabla_x \partial^\alpha (a, - b, c))_{L^2(\RR^3_x)}
+(\partial^\alpha b, \nabla_x \partial^\alpha a)_{L^2(\RR^3_x)}\Big\}\\+
&\sum_{|\alpha|, |\beta|\ge 1, |\alpha+\beta|\le N-1} c_{\alpha,\beta}\Big(
\|\partial^\alpha_{\beta}g_2\|_{L^2_{\ell-1-|\beta|}(\RR^6)}+
(\partial^\alpha r,\nabla_x \partial^\alpha (a, -b,c))_{L^2(\RR^3_x)}
+(\partial^\alpha b, \nabla_x \partial^\alpha a)_{L^2(\RR^3_x)}
\Big),
\end{align*}
where $c_1>0$ and $c_{\alpha,\beta}>0$ are small constants which
can be  chosen so that $\bar{\cE}_{N,\ell-1}$ satisfies
\eqref{6-3}. It is straightforward
to check that by induction on $|\beta|$, we have
\begin{align*}
\frac{d}{dt} \bar{\cE}_{N-1,\ell-1} &+\eta_0 \bar{\cD}_{N,\ell-1}\lesssim
\|\nabla_x \cA\|^2_{H^{N-1}(\RR^3_x)}\big(\|\nabla_x \cA\|^2_{H^{N-2}(\RR^3_x)}
+\|g_2\|^2_{L^2(\RR^6)}\big)\nonumber\\
&\lesssim \|\nabla_x \cA\|^2_{H^{N-1}(\RR^3_x)}\bar{\cE}_{N-1,\ell-1},
\end{align*}
where $\eta_0>0$ is a  constant.
Here, we have used the fact that $\cE_{N,\ell}(t)$ is sufficiently small
for all time from the global existence.
Since
$$
\int_0^\infty(\bar{\cE}_{N-1,\ell-1}+ \|\nabla_x \cA\|^2_{H^{N-1}(\RR^3_x)})dt<\infty,
$$
Lemma \ref{6-l-1} implies that
$$
\bar{\cE}_{N-1,\ell-1}\lesssim (1+t)^{-1}.
$$
By using the fact that $\ell-1\ge N$, the Sobolev imbedding theorem
implies the decay estimate given in the second part of Theorem \ref{theo-IV-1.4}.

\end{document}